\tikzset{
  commutative diagrams/.cd, 
  arrow style=tikz, 
  diagrams={>=stealth}
}
\space\href{https://doi.org/#1}{#1}
\space\href{http://arxiv.org/\abx@arxivpath/#1}{#1}
\space\href{http://www.ams.org/mathscinet-getitem?mr=MR#1}{#1}
\space\href{http://zbmath.org/?q=an:#1}{#1}
\ifundef{\abstract}{}{\patchcmd{\abstract}%
    {\quotation}{\quotation\noindent\ignorespaces}{}{}}
\numberwithin{equation}{section}
\renewcommand{\qedsymbol}{$\blacksquare$}
\newcommand{\CorollaryQED}{\qedsymbol}
\newcommand{\ConjectureQED}{$\square$}
\newcommand{\SituationQED}{$\times$}
\newcommand{\DefinitionQED}{$\bullet$}
\newcommand{\NotationQED}{$\circ$}
\newcommand{\ExampleQED}{$\spadesuit$}
\newcommand{\RemarkQED}{$\clubsuit$}
\newcommand{\ExerciseQED}{?!}
  \declaretheoremstyle[
  bodyfont=\itshape,
  mdframed={    
    backgroundcolor=solarized-base3!90!solarized-blue,
    linewidth=0,
    innerleftmargin=.5em,
    innerrightmargin=.5em,
    innertopmargin=.5em,
    innerbottommargin=.5em,
    leftmargin=-.5em,
    rightmargin=-.5em,
  }
]{theorem}
\declaretheoremstyle[
mdframed={
  backgroundcolor=solarized-base3!90!solarized-green,
  linewidth=0,
  innerleftmargin=.5em,
  innerrightmargin=.5em,
  innertopmargin=.5em,
  innerbottommargin=.5em,
  leftmargin=-.5em,
  rightmargin=-.5em,
  }
]{definition}
\declaretheoremstyle[
  mdframed={
    backgroundcolor=solarized-base3!90!solarized-yellow,
    linewidth=0,
    innerleftmargin=.5em,
    innerrightmargin=.5em,
    innertopmargin=.5em,
    innerbottommargin=.5em,
    leftmargin=-.5em,
    rightmargin=-.5em,
  }
]{example}
\declaretheoremstyle[
  mdframed={
    backgroundcolor=solarized-base3!90!solarized-orange,
    linewidth=0,
    innerleftmargin=.5em,
    innerrightmargin=.5em,
    innertopmargin=.5em,
    innerbottommargin=.5em,
    leftmargin=-.5em,
    rightmargin=-.5em,
  }
]{remark}
  \declaretheoremstyle[
  bodyfont=\itshape
  ]{theorem}
  \declaretheoremstyle[]{definition}
  \declaretheoremstyle[]{example}
  \declaretheoremstyle[]{remark}
\declaretheorem[numberlike=equation,style=theorem]{theorem}
\declaretheorem[numbered=no,name=Theorem,style=theorem]{theorem*}
\declaretheorem[numberlike=equation,name=Lemma,style=theorem]{lemma}
\declaretheorem[numberlike=equation,name=Proposition,style=theorem]{prop}
\declaretheorem[numberlike=equation,name=Corollary,qed=\CorollaryQED,style=theorem]{cor}
\declaretheorem[numberlike=equation,name=Hypothesis]{hypothesis}
\declaretheorem[numberlike=equation,name=Definition,style=definition,qed=\DefinitionQED]{definition}
\declaretheorem[numbered=no,name=Definition,style=definition,qed=\DefinitionQED]{definition*}
\declaretheorem[numberlike=equation,style=definition,qed=\ExampleQED]{example}
\declaretheorem[numberlike=equation,style=remark,qed=\RemarkQED]{remark}
\declaretheorem[numbered=no,style=remark,name=Remark,qed=\RemarkQED]{remark*}
\def\makeautorefname#1#2{\AtBeginDocument{\expandafter\def\csname#1autorefname\endcsname{#2}}}
\newtheorem{step}{Step}
\numberwithin{substep}{step}
\setlist[description]{leftmargin=!,labelindent=1em}
\setlist[enumerate]{label={\rm (\arabic*)},ref=\arabic*}
\setlist[enumerate,2]{label={\rm (\alph*)},ref=\theenumi.\alph*}
\setlist[enumerate,3]{label={\rm (\roman*)},ref=\theenumii.\roman*}
\let\C\undefined
\let\U\undefined
\DeclareFontFamily{U}{mathx}{\hyphenchar\font45}
\DeclareFontShape{U}{mathx}{m}{n}{
      <5> <6> <7> <8> <9> <10>
      <10.95> <12> <14.4> <17.28> <20.74> <24.88>
      mathx10
      }{}
\DeclareSymbolFont{mathx}{U}{mathx}{m}{n}
\DeclareMathAccent{\widecheck}{0}{mathx}{"71}
\DeclareMathAccent{\wideparen}{0}{mathx}{"75}
\DeclareMathOperator{\Ad}{Ad}
\DeclareMathOperator{\Aut}{Aut}
\DeclareMathOperator{\Bl}{Bl}
\DeclareMathOperator{\End}{End}
\DeclareMathOperator{\GL}{GL}
\DeclareMathOperator{\Gr}{Gr}
\DeclareMathOperator{\HF}{\HF}
\DeclareMathOperator{\Hom}{Hom}
\DeclareMathOperator{\Map}{Map}
\DeclareMathOperator{\Res}{Res}
\DeclareMathOperator{\Sym}{Sym}
\DeclareMathOperator{\ad}{ad}
\DeclareMathOperator{\coker}{coker}
\DeclareMathOperator{\im}{im}
\DeclareMathOperator{\ind}{index}
\DeclareMathOperator{\rk}{rk}
\DeclareMathOperator{\sign}{sign}
\DeclareMathOperator{\supp}{supp}
\DeclarePairedDelimiter\paren{\lparen}{\rparen}
\DeclarePairedDelimiter\sqparen{[}{]}
\DeclarePairedDelimiter{\Abs}{\|}{\|}
\DeclarePairedDelimiter{\Inner}{\langle}{\rangle}
\DeclarePairedDelimiter{\abs}{\lvert}{\rvert}
\DeclarePairedDelimiter{\bracket}{\langle}{\rangle}
\DeclarePairedDelimiter{\set}{\lbrace}{\rbrace}
\def\({\left(}
\def\){\right)}
\def\<{\left\langle}
\def\>{\right\rangle}
\newcommand{\CP}{{\C P}}
\newcommand{\C}{{\mathbf{C}}}
\newcommand{\N}{{\mathbf{N}}}
\newcommand{\PSL}{\P\SL}
\newcommand{\R}{\mathbf{R}}
\newcommand{\SL}{\mathrm{SL}}
\newcommand{\SO}{\mathrm{SO}}
\newcommand{\SU}{\mathrm{SU}}
\newcommand{\Span}[1]{\bracket{#1}}
\newcommand{\Spin}{\mathrm{Spin}}
\newcommand{\Sp}{\mathrm{Sp}}
\newcommand{\U}{\mathrm{U}}
\newcommand{\Z}{\mathbf{Z}}
\newcommand{\co}{\mskip0.5mu\colon\thinspace}
\newcommand{\cyl}{{\rm{cyl}}}
\newcommand{\defined}[2][\key]{\def\key{#2}\textbf{#2}\index{#1}}
\newcommand{\delbar}{\bar{\del}}
\newcommand{\del}{\partial}
\newcommand{\ev}{\mathrm{ev}}
\newcommand{\hkred}{{/\!\! /\!\! /}}
\newcommand{\id}{\mathrm{id}}
\newcommand{\incl}{\hookrightarrow}
\newcommand{\into}{\hookrightarrow}
\newcommand{\iso}{\cong}
\newcommand{\loc}{\mathrm{loc}}
\newcommand{\one}{\mathbf{1}}
\newcommand{\onto}{\twoheadrightarrow}
\newcommand{\pr}{\mathrm{pr}}
\newcommand{\qandq}{\quad\text{and}\quad}
\newcommand{\qwithq}{\quad\text{with}\quad}
\newcommand{\su}{\mathfrak{su}}
\newcommand{\vol}{\mathrm{vol}}
\renewcommand{\H}{\mathbf{H}}
\renewcommand{\Im}{\operatorname{Im}}
\renewcommand{\O}{\mathrm{O}}
\renewcommand{\P}{\mathbf{P}}
\renewcommand{\Re}{\operatorname{Re}}
\renewcommand{\det}{\operatorname{det}}
\renewcommand{\emptyset}{\varnothing}
\renewcommand{\epsilon}{\varepsilon}
\renewcommand{\setminus}{{\backslash}}
\renewcommand{\leq}{\leqslant}
\renewcommand{\geq}{\geqslant}
\newcommand{\Wedge}{\Lambda}
\renewcommand*\env@matrix[1][*\c@MaxMatrixCols c]{%
  \hskip -\arraycolsep
  \let\@ifnextchar\new@ifnextchar
  \array{#1}}
\renewcommand\xleftrightarrow[2][]{%
  \ext@arrow 9999{\longleftrightarrowfill@}{#1}{#2}}
\newcommand\longleftrightarrowfill@{%
  \arrowfill@\leftarrow\relbar\rightarrow}
\newcommand{\Crit}[1]{\mathrm{Crit}(#1)}
\newcommand{\Reg}{\mathrm{Reg}}
\newcommand{\rc}{{\rm c}}
\newcommand{\rd}{{\rm d}}
\newcommand{\rC}{{\rm C}}
\newcommand{\rH}{{\rm H}}
\newcommand{\rI}{{\rm I}}
\newcommand{\rII}{{\rm II}}
\newcommand{\bp}{{\mathbf{p}}}
\newcommand{\bD}{{\mathbf{D}}}
\newcommand{\bE}{{\mathbf{E}}}
\newcommand{\bF}{{\mathbf{F}}}
\newcommand{\bL}{{\mathbf{L}}}
\newcommand{\bM}{{\mathbf{M}}}
\newcommand{\bP}{{\mathbf{P}}}
\newcommand{\bS}{{\mathbf{S}}}
\newcommand{\bV}{{\mathbf{V}}}
\newcommand{\bW}{{\mathbf{W}}}
\newcommand{\cM}{\mathcal{M}}
\newcommand{\sA}{\mathscr{A}}
\newcommand{\sB}{\mathscr{B}}
\newcommand{\sC}{\mathscr{C}}
\newcommand{\sE}{\mathscr{E}}
\newcommand{\sF}{\mathscr{F}}
\newcommand{\sG}{\mathscr{G}}
\newcommand{\sH}{\mathscr{H}}
\newcommand{\sJ}{\mathscr{J}}
\newcommand{\sL}{\mathscr{L}}
\newcommand{\sM}{\mathscr{M}}
\newcommand{\sN}{\mathscr{N}}
\newcommand{\sO}{\mathscr{O}}
\newcommand{\sP}{\mathscr{P}}
\newcommand{\sR}{\mathscr{R}}
\newcommand{\sS}{\mathscr{S}}
\newcommand{\sT}{\mathscr{T}}
\newcommand{\sU}{\mathscr{U}}
\newcommand{\sV}{\mathscr{V}}
\newcommand{\sW}{\mathscr{W}}
\newcommand{\sX}{\mathscr{X}}
\newcommand{\sZ}{\mathscr{Z}}
\newcommand{\fa}{{\mathfrak a}}
\newcommand{\fd}{{\mathfrak d}}
\newcommand{\fg}{{\mathfrak g}}
\newcommand{\fl}{{\mathfrak l}}
\newcommand{\fm}{{\mathfrak m}}
\newcommand{\fn}{{\mathfrak n}}
\newcommand{\fo}{{\mathfrak o}}
\newcommand{\fr}{{\mathfrak r}}
\newcommand{\fu}{{\mathfrak u}}
\newcommand{\fw}{{\mathfrak w}}
\newcommand{\fL}{{\mathfrak L}}
\newcommand{\fR}{{\mathfrak R}}
\newcommand{\fV}{{\mathfrak V}}
\newcommand{\ubR}{{\underline \R}}
\author{
  Aleksander Doan
  \and
  Thomas Walpuski
}
\title{  
  Chambered invariants of real Cauchy--Riemann operators
}
\date{2025-08-19}
\newcommand{\Flag}{\mathrm{Fl}}
\newcommand{\Conf}{\mathrm{Conf}}
\newcommand{\Br}{\mathrm{Br}}
\newcommand{\dom}{\mathrm{dom}}
\newcommand{\SpaceOfRealCauchyRiemannOperators}{\sC\sR}
\newcommand{\ubC}{\underline\C}
\newcommand{\SpaceOfHomogeneousAlmostComplexStructures}{\sJ_h}
\newcommand{\ConfigurationSpace}{\mathrm{Conf}}
\newcommand{\SpaceOfRamifiedEuclideanLineBundles}{\mathrm{RamLinBun}}
\newcommand{\ver}{\mathrm{vert}}
\newcommand{\SpaceOfBlowUpAlmostComplexStructures}{\sJ_{\varheartsuit}(\sO_{\P V}(-2))}
\newcommand{\BlowUpInvariant}{n_{\Bl}}
\newcommand{\ADHMCurvesInvariant}{n_{1,2}}
\newcommand{\ADHMGaugeInvariant}{n_{2,1}}
\newcommand{\BlowUpModuliSpace}{\sM_{\Bl}}
\newcommand{\ADHMCurvesModuliSpace}{\sM_{1,2}}
\newcommand{\ADHMGaugeModuliSpace}{\sM_{2,1}}
\newcommand{\ADHMrkModuliSpace}{\sM_{r,k}}
\newcommand{\HOMT}{\sH}
\newcommand{\SerreOperator}{\gamma}
\newcommand{\SpaceOfWallCrossingFormulae}{\mathrm{WCF}}
\newcommand{\Val}{G}
\newcommand{\TheWall}{\sW}
\begin{document}

\maketitle

\begin{abstract}
  Motivated by counting pseudo-holomorphic curves in symplectic Calabi--Yau $3$--folds, this article studies a chamber structure in the space of real Cauchy--Riemann operators on a Riemann surface, and constructs three chambered invariants associated with such operators:
  $\BlowUpInvariant$, $\ADHMCurvesInvariant$, $\ADHMGaugeInvariant$.
  The first of them is defined by counting pseudo-holomorphic sections of bundles whose fibre is the blow-up of $\C^2/\set{\pm 1}$.  
  The other two are defined by counting solutions to the ADHM vortex equations. 
  We conjecture that $\ADHMCurvesInvariant$ and $\ADHMGaugeInvariant$ are related to the Pandharipande--Thomas and rank $2$ Donaldson--Thomas invariants in algebraic geometry.
\end{abstract}


\section{Introduction}
\label{Sec_Introduction}

\subsection{Pseudo-holomorphic curves in dimension six}

This article is motivated by counting pseudo-holomorphic curves in symplectic Calabi--Yau $3$--folds;
that is: symplectic manifolds $(X,\omega)$ with $\dim_\R X = 6$ and $\rc_1(X,\omega)=0$. 
The standard approach in symplectic geometry is Gromov--Witten theory \cites{LiTian1998,FukayaOno1999,Ruan1999,Pardon2016}.
In contrast, algebraic geometry abounds with other curve counting invariants constructed using sheaf theory, such as Donaldson--Thomas \cite{Thomas2000,Maulik2006a,Maulik2006} and Pandharipande--Thomas invariants \cite{Pandharipande2009}.
Physics also suggests the existence of the Gopakumar--Vafa BPS counts \cite{Gopakumar1998,Gopakumar1998a},
whose direct mathematical construction remains elusive despite numerous efforts \cite{Hosono2001,Katz2008,Kiem2012,Maulik2018}.

The sheaf-theoretic methods used in algebraic geometry have no obvious counterpart in symplectic topology. 
However, the recent proof of the MNOP conjecture \cite{Pardon2023} shows that the DT/PT invariants of projective Calabi--Yau $3$--folds are invariant under symplectomorphisms, suggesting the possibility of a purely symplectic construction. 
Foundational ideas in this direction were introduced in \cites{Donaldson1998,Donaldson2009} and developed in \cite{Haydys2011,Walpuski2013} and \cite{Doan2017d}.

The goal of the present article is to advance the proposal \cite{Doan2017d}.
Its basic idea is simple.
The DT/PT invariants in algebraic geometry are indexed by a homology class $A \in \rH_2(X;\Z)$ but not the genus of curves like the Gromov--Witten invariants.
Thus, a naive approach to defining them in symplectic topology is to count embedded $J$--holomorphic curves \emph{of arbitrary genus} in class $A$ for an almost complex structure $J$ tamed by $\omega$.
For a generic $J$, such a count is finite \cite{Doan2018a} and, 
if $A$ is a primitive class, independent of $J$ \cites{Zinger2011,Doan2019}.
However, if $A$ is not primitive, this count depends on $J$. 
Indeed, suppose that $A$ is divisible by $k \in \N$ in $\rH_2(X,\Z)$. 
As $J$ varies in the space $\sJ(X,\omega)$ of tamed almost complex structures, a sequence of $J$--holomorphic curves $\tilde C$ with $[\tilde C] = A$ may collapse with multiplicity $k$ to a curve $C$ with $A = k[C]$, which changes the count of curves in class $A$.

This collapse phenomenon is related to the notion of \emph{$k$--rigidity}.  
Recall that deformations of $C$ are governed by a differential operator $\fd_{C,J}^N \co \Gamma(C,NC) \to \Omega^{0,1}(C,NC)$ with $NC \to C$ denoting the normal bundle.
This is a \emph{real Cauchy--Riemann operator}: the sum of a $\C$--linear Dolbeault operator and a $\C$--anti-linear operator of order zero.
Let $\pi \co \tilde C \to C$ be a holomorphic branched cover of degree $k$. 
A collapse of the kind described above is infinitesimally modeled on a non-zero element of the kernel of $\pi^*\fd_{C,J}^N$, the pull-back operator.
If the curve $C$ is \emph{$k$--rigid} in the sense that $\pi^*\fd_{C,J}^N$ has trivial kernel for all $\pi$, then such a collapse cannot happen.

Denote by $\sW_k \subset \sJ(X,\omega)$ the \emph{wall of failure of $k$--rigidity}: the set of those $J$ for which at least one $J$--holomorphic curve fails to be $k$--rigid, and let $\sW_\infty = \bigcup_{k=1}^\infty \sW_k$. 
The summary of the above discussion is that the count of $J$--holomorphic curves in class $A$ depends only on the connected component of $J$ in  $\sJ(X,\omega)\setminus\sW_\infty$.
Wendl proved that this set has codimension one \cite{Wendl2016}, so that a path of $\sJ(X,\omega)$ will, in general, intersect $\sW_\infty$, causing a jump in the number of curves. 
In \cite{Doan2017d}, we proposed to replace the naive count by a weighted count:
\begin{equation}
	\label{Eq_SymplecticInvariant}
  N(X,\omega)
  =
  \sum_{A \in \rH_2(X,\Z)}
  \sum_{g=0}^\infty
  \sum_{[C] \in \sM_{A,g}^*(X,J)}
  \sum_{k = 1}^\infty
  n_k(\fd_{C,J}^N) \cdot
  q^{kA},
\end{equation} 
where $J$ is generic, $\sM_{A,g}^*(X,J)$ is the moduli space of simple $J$--holomorphic maps of genus $g$, $q$ is a formal variable, and $n_k$ are certain integer weights.
These weights themselves should depend on $\fd_{C,J}^N$ and jump whenever there exists a branched cover $\pi \colon \tilde C \to C$ such that $\pi^*\fd_{C,J}^N$ has non-trivial kernel. 
To obtain an invariant of $\sJ(X,\omega)$ from \autoref{Eq_SymplecticInvariant}, the wall-crossing formulae of the $n_k$ and the wall-crossing arising from multiple-cover phenomena need to cancel precisely.

Ansatz \autoref{Eq_SymplecticInvariant} is inspired by similar ideas in higher-dimensional gauge theory \cites[§6.2]{Donaldson2009}[Chapter 6]{Walpuski2013}{Haydys2014}{Haydys2017} and low-dimensional topology \cite{BodenHerald1998,BodenHeraldKirk2001,CappellLeeMiller2002,Lim2000,Lim2003,BaiZhang2020}.
More importantly, in the context of the present article,
Taubes' Gromov invariant for symplectic $4$--manifolds \cite{Taubes1996b}
is of the form \autoref{Eq_SymplecticInvariant} with a judicious choice of weights.
Recently, \citet{Bai2021} defined new invariants of symplectic Calabi--Yau $3$--folds and homology classes $A$ with $A/2$ primitive.
Their invariant is also of the form \autoref{Eq_SymplecticInvariant}, with the key difference that the sum is taken over curves of genus $g \leq h$ for some $h$, so that the resulting invariant is indexed by $A$ and $h$. From the analytic point of view, this means that it suffices to understand the part of $\sW_2$ corresponding to covers $\pi \colon \tilde C \to C$ with $g(\tilde C) \leq h$. 

\subsection{Walls in the space of real Cauchy--Riemann operators}

In contrast to the situations considered by \citeauthor{Taubes1996b}  and Bai--Swaminathan, in the context of DT/PT theory it seems necessary to understand how a generic path in $\sJ(X,\omega)$ intersects $\sW_\infty$ without any genus bound.
We conjecture that the number of such intersections is finite for a generic path.
If this is the case, the proof of independence of \eqref{Eq_SymplecticInvariant} on $J$ can be reduced to studying local wall-crossing formulae. 
Unfortunately, this conjecture does not follow from \citeauthor{Wendl2016}'s work \cite{Wendl2016}, even when combined with the techniques of geometric measure theory used to prove similar statements in \cite{Doan2018a,Doan2021}.
Proving the conjecture with current methods appears challenging, which reflects fundamental gaps in present understanding of degenerations of $J$--holomorphic curves to multiple covers without assuming a genus bound.

The first result of this article is a local version of the finiteness conjecture for $k=2$, i.e. for double covers.
Here,  local means that we consider only a neighborhood of the curve $C$, in the following sense.
Given a complex vector bundle $V \to C$, denote by $\SpaceOfRealCauchyRiemannOperators(V)$ the space of real Cauchy--Rieman operators on $V$.
The main example is $V = NC$ as before, so that $\fd_{C,J}^N \in  \SpaceOfRealCauchyRiemannOperators(NC)$.
In the local setting, the Calabi--Yau condition is replaced by 
\begin{equation}
	\label{Eq_LocalCalabiYau}
    2 \deg V + \rk_\C V \cdot \chi(C) = 0.
\end{equation}
The space $\SpaceOfRealCauchyRiemannOperators(V)$ serves as a local version of $\sJ(X,\omega)$. 
Let $\sW_2 \subset \SpaceOfRealCauchyRiemannOperators(V)$ be the analogue of the wall of failure of $2$--rigidity, that is: the set of operators $\fd$ for which there exists a double branched cover $\pi \colon \tilde C \to C$ such that $\pi^*\fd$ has non-trivial kernel. 

\begin{theorem}[see~\autoref{Thm_WallOfFailureOf2RigidityIsProper}]
  \label{Thm_WallOfFailureOf2RigidityIsProper_0}
  Let $C$ be a closed connected Riemann surface.
  Let $V$ be a complex vector bundle over $C$ satisfying \autoref{Eq_LocalCalabiYau}. 
  The  set
  $\sW_2 \subset \SpaceOfRealCauchyRiemannOperators(V)$
  is a proper wall in the sense of \autoref{Def_ProperWall}.
  In particular: a generic path in  $\SpaceOfRealCauchyRiemannOperators(V)$ intersects $\sW_2$ in finitely many points.
\end{theorem}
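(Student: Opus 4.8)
The plan is to realise $\sW_2$ as a locally finite union of images of proper Fredholm maps of index $\le -1$ from separable Banach manifolds. This is the content of \autoref{Def_ProperWall}, and it at once forces a generic path to meet $\sW_2$ in finitely many points: such a path is transverse to each of the maps, avoids those of index $<-1$, and meets each index-$(-1)$ piece in a compact — hence finite — zero-dimensional manifold, with only finitely many pieces relevant along a compact path.

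\textbf{Stratification by the cover.} A double branched cover $\pi\co\tilde C\to C$ is determined by an effective reduced branch divisor $B\subset C$ of even degree $2m$ together with a square root $L$ of $\cO_C(B)$; for fixed $m$ these are parametrised by a finite covering $\cM_m$ of the configuration space $\ConfigurationSpace_{2m}(C)$, a complex manifold of complex dimension $2m$, so $\dim_\R\cM_m=4m$. The deck involution $\sigma$ acts on $\pi^*V$ and commutes with $\pi^*\fd$, so $\ker(\pi^*\fd)$ splits into its $\sigma$–invariant and $\sigma$–anti-invariant parts; via $\pi_*\cO_{\tilde C}=\cO_C\oplus L^{-1}$ the former is $\ker\fd$ and the latter is $\ker\fd^\eta$ for a real Cauchy--Riemann operator $\fd^\eta$ on $W_\eta:=V\otimes L^{-1}$ built from $\fd$ and the cover. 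Hence $\fd\in\sW_2$ if and only if $\ker\fd\ne0$, or $\ker\fd^\eta\ne0$ for some $m\ge1$ and some $\eta\in\cM_m$; write $\sW_2=\sW_1\cup\bigcup_{m\ge1}\sW_2^{(m)}$ accordingly.

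\textbf{Universal moduli spaces and the index.} For $m\ge1$ and $d\ge1$ set $\sM_m^{(d)}=\bigl\{(\fd,\eta,\phi):\eta\in\cM_m,\ \dim_\R\ker\fd^\eta=d,\ 0\ne\phi\in\ker\fd^\eta\bigr\}/\R^{\times}$, with projection $p_m^{(d)}$ to $\SpaceOfRealCauchyRiemannOperators(V)$, and let the analogous spaces for $\sW_1$ replace $\ker\fd^\eta$ by $\ker\fd$. A Sard--Smale argument — perturbing the zeroth-order term of $\fd$, using that a nonzero $\phi$ is nonvanishing on a dense open set together with unique continuation for the formal adjoint of $\fd^\eta$ — shows that $\sM_m^{(d)}$ is a separable Banach manifold and $p_m^{(d)}$ is Fredholm. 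Riemann--Roch gives $\ind_\R\fd^\eta=2\deg W_\eta+\rk_\C V\cdot\chi(C)=\bigl(2\deg V+\rk_\C V\cdot\chi(C)\bigr)-2m\,\rk_\C V$, so the Calabi--Yau condition \autoref{Eq_LocalCalabiYau} is precisely what turns this into $-2m\,\rk_\C V$. Therefore
\[
  \ind p_m^{(1)}=\dim_\R\cM_m+\ind_\R\fd^\eta-1=4m-2m\,\rk_\C V-1\le-1,
\]
with strict inequality — hence strictly higher codimension — for the strata with $d\ge2$, and likewise the map covering $\sW_1$ has index $\ind_\R\fd-1=-1$. (Without \autoref{Eq_LocalCalabiYau} this index would carry the nonzero constant $2\deg V+\rk_\C V\cdot\chi(C)$, and $\sW_2^{(m)}$ would fail to be a wall at all.)

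\textbf{Properness, local finiteness, and the main obstacle.} It remains to show that each $p_m^{(d)}$ is proper and that $\{\sW_2^{(m)}\}_{m\ge1}$ is locally finite, and I expect this to be the crux. Local finiteness follows from an a priori bound on the number of branch points: if $0\ne\tilde\phi\in\ker(\pi^*\fd)$ with $\pi$ having $2m$ simple branch points, then, being $\sigma$–anti-invariant, $\tilde\phi$ vanishes at each of the $2m$ ramification points of $\pi$; on the other hand $\tilde\phi$ obeys a Cauchy--Riemann inequality $\abs{\delbar_A\tilde\phi}\le c\abs{\tilde\phi}$ on $(\tilde C,\pi^*g)$ with $A=\pi^*A_\fd$ the connection of the $\C$–linear part of $\fd$, and the total vanishing order of such a section is controlled by a curvature-and-area integral over $\tilde C$; since that integral equals twice the corresponding integral over $C$, it is bounded for $\fd$ in a compact set, so $2m\le C(\fd)$ with $C$ locally bounded, and only finitely many $\sW_2^{(m)}$ meet a neighbourhood of a given operator. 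Properness of the individual $p_m^{(d)}$ is where one must control degenerations of the branch divisor, the $\cM_m$ being noncompact: as branch points collide the cover develops nodes, and a limit of anti-invariant kernel elements descends to a kernel element on the normalisation — a double cover with fewer branch points — constrained by matching conditions at the nodes, hence lying on a stratum of strictly higher codimension (or in $\sW_1$); adjoining these degenerate strata makes the union closed, and the bound above keeps it locally finite. The zero-counting estimate for anti-invariant kernel elements, together with this compactness analysis of colliding branch points, is exactly the input that goes beyond \citeauthor{Wendl2016}'s codimension-one theorem and is the technical heart of the proof; the Sard--Smale transversality and the index computation are routine by comparison.
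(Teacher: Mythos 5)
Your skeleton—stratifying $\sW_2$ by the branch divisor together with the square root/ramified Euclidean line bundle, building universal moduli spaces by Sard--Smale, and computing the index of the projection to be $4m-2m\,\rk_\C V-1$ using \autoref{Eq_LocalCalabiYau}—is essentially the paper's setup, and your count agrees with the index formula $\ind\fd^\fl=2\deg V+\rk_\C V\cdot\chi(C)-\#\Br(\fl)\cdot\rk_\C V$ of \autoref{Thm_Index+} (note that the conclusion ``$\le-1$'' uses $\rk_\C V\ge 2$). The genuine gap is at what you yourself identify as the crux. Your local-finiteness step rests on the claim that the total vanishing order of an anti-invariant $\tilde\phi\in\ker\pi^*\fd$ is controlled by a curvature-and-area integral over $\tilde C$. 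That is a line-bundle phenomenon (number of zeros equals degree, a Chern--Weil integral); for the rank-two bundle $\pi^*V$ no such bound exists. Concretely, for a Dolbeault operator the anti-invariant kernel is $\rH^0(C,\sV\otimes_\C\sL^*)$ with $\deg\sL=m$, and a nonzero element exists exactly when $\sL$ embeds into $\sV$; such a section vanishes at all $2m$ ramification points upstairs however large $m$ is, compatibly with the fixed degree of $\pi^*V$, so no curvature integral of a fixed connection bounds $2m$. Whether $m$ is locally bounded at all is essentially as hard as the properness you are trying to establish; and the notion of proper wall (\autoref{Def_ProperWall}, \autoref{Prop_ProperWall}) is set up precisely so that no local finiteness of the strata is required—what is required is properness of the single map $\pi$ away from images of further Fredholm maps of index $\le-1$.

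The second half of the crux—extracting a limit when branch points collide or their number grows—is also not yet an argument. The pulled-back operators live on covers whose geometry degenerates, there are no uniform elliptic estimates near colliding branch points, and a normalized sequence of kernel elements a priori converges only on the complement of a closed bad set, which need not be finite and over which all the ``mass'' could concentrate, leaving a zero limit; your assertion that the limit ``descends to the normalisation'' and ``lies on a stratum of strictly higher codimension'' presupposes exactly what must be proved. This is where the paper brings in machinery absent from your proposal: kernel elements of $\fd^\fl$ are reinterpreted as two-valued holomorphic sections, i.e. sections of $V/\set{\pm 1}$, compactness from geometric measure theory produces a limit defined off a closed set $B$ with $\abs{\tilde\phi}$ continuous and vanishing on $B$, and the twist on Radó's theorem (\autoref{Prop_ZeroLocusOfTwistedHolomorphicSectionIsFinite}) shows that $B$ is finite, so the limit is again a point of the universal space or lies over a codimension-two locus (this is the ``essentially proper'' clause). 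Until you supply this compactness-and-finiteness input, or a substitute for it, the essential properness—and hence the theorem—remains unproven in your write-up.
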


The key difficulty is that $\sW_2$ is a countable union of walls corresponding to covers $\pi \colon \tilde C \to C$ of different genera.
Therefore, a priori, generic paths might intersect $\sW_2$ in a countable dense set.
The proof of \autoref{Thm_WallOfFailureOf2RigidityIsProper_0} builds on ideas from \cite{Doan2018a,Doan2019,Doan2021} and combines transversality techniques and geometric measure theory \cite{Allard1972,DeLellis2017a,DeLellis2017b,DeLellis2017c}.

\subsection{Chambered invariants from blow-ups}

In what follows, $V \to C$ is a complex vector bundle satisfying $\rk_\C V = 2$ and \autoref{Eq_LocalCalabiYau}. 
The remainder of this article is concerned with the construction of three weights
\begin{equation*}
  \BlowUpInvariant, \quad
  \ADHMCurvesInvariant, \qandq  
  \ADHMGaugeInvariant
\end{equation*}
which are candidates for $n_2$ in \autoref{Eq_SymplecticInvariant} and can potentially be used to define curve counting invariants for homology classes $A$ such that $A/2$ is primitive. 
The key property we require is that they depend on a real Cauchy--Riemann operator on $V$ and are constant on the connected components of  $\SpaceOfRealCauchyRiemannOperators(V) \setminus \sW_2$; we will refer to any function with this property as a \emph{chambered invariant}.

The first of these chambered invariants counts pseudo-holomorphic sections of a non-compact fibration over $C$.
Let $\check p \co V/\set{\pm 1} \to C$ be the quotient of $V$ by fibrewise multiplication by $\pm 1$..
Define a fibration $q \co \sO_{\P V}(-2) \to C$ by blowing-up each fibre $\C^2/\set{\pm 1}$ of $\check p$.

\begin{theorem}[see~\autoref{Sec_BlowUp}]
  \label{Thm_BlowUpInvariant_0}
  There is a chambered invariant
  $\BlowUpInvariant
  \in
  \rH^0\paren{\SpaceOfRealCauchyRiemannOperators(V)\setminus\sW_2; \Z[[x]]}$
  such that for generic $\fd \in \SpaceOfRealCauchyRiemannOperators(V)\setminus \sW_2$ and generic $J \in \SpaceOfBlowUpAlmostComplexStructures$ compatible with $\fd$, 
  \begin{equation*}
    \BlowUpInvariant(\fd)
    = \sum_{d \in \N_0} \#\BlowUpModuliSpace(J,d) \cdot x^d.
  \end{equation*}
  Here
  $\SpaceOfBlowUpAlmostComplexStructures$, the space of admissible almost complex structures on $\sO_{\bP V}(-2)$, the compatibility condition means $\fd_\infty(J) = \fd$ where
$\fd_\infty \co \SpaceOfBlowUpAlmostComplexStructures \to \SpaceOfRealCauchyRiemannOperators(V)$
  is a map defined in \autoref{Sec_AlmostComplexStructuresOnBlowup}, and
  $\BlowUpModuliSpace(J,d)$ denotes the space of $J$--holomorphic sections of $q$ of degree $d$.
\end{theorem}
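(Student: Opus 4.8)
The plan is to define, for generic $\fd$ and generic compatible $J$, the generating function $\sum_{d\in\N_0}\#\BlowUpModuliSpace(J,d)\,x^d\in\Z[[x]]$; to prove via transversality and compactness that the counts $\#\BlowUpModuliSpace(J,d)$ are finite and well defined; and then via a cobordism argument that this generating function is unchanged when $(\fd,J)$ is varied over a path with $\fd$ confined to one connected component of $\SpaceOfRealCauchyRiemannOperators(V)\setminus\sW_2$. This exhibits it as a locally constant $\Z[[x]]$--valued function, i.e.\ the desired class $\BlowUpInvariant\in\rH^0(\SpaceOfRealCauchyRiemannOperators(V)\setminus\sW_2;\Z[[x]])$.

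First I would set up the Fredholm theory. Completing the space of sections of $q$ of a fixed degree $d$ to a Banach manifold and writing the $\delbar_J$--equation as a section of the obvious Banach bundle, the linearisation at a $J$--holomorphic section $s$ is a real Cauchy--Riemann operator on the pull--back $s^{*}T^{\ver}$ of the vertical tangent bundle of $q$, a complex vector bundle of rank $2$ over $C$. Since a real Cauchy--Riemann operator has the same index as its $\C$--linear part, Riemann--Roch gives real index $2\deg(s^{*}T^{\ver})+2\chi(C)$; a Chern--number computation using that the fibres of $q$ have trivial canonical bundle, together with \eqref{Eq_LocalCalabiYau}, shows $\deg(s^{*}T^{\ver})=\deg V=-\chi(C)$ for every $d$, so the index is $0$. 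A Sard--Smale argument applied to the universal moduli space over $\SpaceOfBlowUpAlmostComplexStructures$ then shows that for generic $\fd$ and generic $J$ in the fibre $\fd_\infty^{-1}(\fd)$ the space $\BlowUpModuliSpace(J,d)$ is a smooth, canonically oriented, zero--manifold, the orientation coming from the complex orientation of the determinant line of the $\C$--linear part of the linearisation; here I use that $\fd_\infty^{-1}(\fd)$ is non-empty and connected, which is part of the construction of $\fd_\infty$ in \autoref{Sec_AlmostComplexStructuresOnBlowup}.

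The crux is compactness of $\BlowUpModuliSpace(J,d)$ whenever $\fd=\fd_\infty(J)\notin\sW_2$. Given a sequence $s_i\in\BlowUpModuliSpace(J,d)$, its symplectic area is a fixed function of $d$, so by Gromov compactness a subsequence converges to a stable map \emph{provided} the images remain in a fixed compact subset of the non-compact total space. To obtain such a $C^{0}$--bound, suppose $\lambda_i:=\sup_{C}\abs{s_i}\to\infty$ in the conical end $(V\setminus 0)/\set{\pm 1}$ of $q$; rescaling by $\lambda_i$ and passing to the limit, with a point--rescaling analysis near concentration points as in \cite{Doan2018a,Doan2021}, produces a non-zero section $\sigma$ of $V/\set{\pm 1}\to C$ solving the real Cauchy--Riemann equation determined by $\fd_\infty(J)=\fd$. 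The section $\sigma$ is single--valued on a double branched cover $\pi\co\tilde C\to C$, and its lift is a non-zero element of $\ker(\pi^{*}\fd)$, contradicting $\fd\notin\sW_2$. Granted the $C^{0}$--bound, the only holomorphic spheres that can bubble off lie in the exceptional $\bP^{1}$'s of the fibres (a sphere with non-constant projection to $C$ would force the total degree over $C$ to exceed $1$); because these $\bP^{1}$'s have vanishing first Chern number of $T^{\ver}$, the strata of stable maps carrying such bubble components, or carrying nodes, have negative expected dimension, hence for generic $J$ are empty in a zero--dimensional moduli space. Thus $\BlowUpModuliSpace(J,d)$ is finite and $\#\BlowUpModuliSpace(J,d)\in\Z$ is well defined.

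For invariance, given generic $\fd_0,\fd_1$ in one component of $\SpaceOfRealCauchyRiemannOperators(V)\setminus\sW_2$ and compatible generic $J_0,J_1$, I would pick a path $(\fd_t,J_t)$ with $\fd_\infty(J_t)=\fd_t$ and $\fd_t$ in that component for all $t$, generic among such paths. The parametrised space $\bigsqcup_{t}\set{t}\times\BlowUpModuliSpace(J_t,d)$ is an oriented $1$--manifold, and it is compact because the compactness argument above applies verbatim at each $t$ --- precisely because $\fd_t\notin\sW_2$ throughout. Its oriented boundary is $\BlowUpModuliSpace(J_1,d)-\BlowUpModuliSpace(J_0,d)$, so the signed counts agree; running the same argument with $\fd$ fixed and $J$ varying inside $\fd_\infty^{-1}(\fd)$ shows independence of the auxiliary $J$. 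Summing over $d\in\N_0$, the generating function is constant on connected components of $\SpaceOfRealCauchyRiemannOperators(V)\setminus\sW_2$ and hence defines $\BlowUpInvariant$. The main obstacle is the compactness statement, and within it the extraction of a genuine element of $\ker(\pi^{*}\fd)$ from a sequence of sections escaping to infinity: this needs a careful rescaling and removable--singularity analysis near the conical end and at the branch points of $\pi$, and it is exactly here that the hypothesis $\fd\notin\sW_2$ (hence \autoref{Thm_WallOfFailureOf2RigidityIsProper_0}) and the precise definition of $\SpaceOfBlowUpAlmostComplexStructures$ are essential.
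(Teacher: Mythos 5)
Your overall architecture---index-zero Fredholm setup, transversality over the space of admissible $J$, a $C^0$ bound forced by $\fd \notin \sW_2$, ruling out bubbles by a codimension argument, cobordism invariance, and descending along the connected fibres of $\fd_\infty$---is the same as the paper's. The genuine gap is in the step you yourself flag as the main obstacle: extracting a limit when $\max_C \tilde\rho^2 \circ s_n \to \infty$. After rescaling by $R_n$, the taming forms $\Omega_{\tilde I,1} + \Lambda(1+R_n^2)\, q^*\omega_C$ and the almost complex structures degenerate: the rescaled data converge only on compact subsets of $V^\times/\set{\pm 1}$, i.e.\ away from the zero section, so neither Gromov compactness nor the usual point-rescaling/bubbling analysis you invoke from \cite{Doan2018a,Doan2021} applies. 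The paper instead treats the rescaled sections as pseudo-holomorphic \emph{cycles} of uniformly bounded mass, uses Federer--Fleming compactness and the monotonicity formula to extract a limiting cycle, upgrades it to a proper map $u \co C^\times \to V^\times/\set{\pm 1}$, and then shows by a current-theoretic degree computation that $\check p \circ u$ is an embedding of an open subset $C^\times = C \setminus B$, so that $u$ really is a ``section at infinity''. None of these ingredients appears in your proposal, and without them the sentence ``rescaling and passing to the limit produces a non-zero section $\sigma$ of $V/\set{\pm 1}$'' is an assertion, not an argument.

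Second, even granting such a limit, your conclusion that its lift gives a non-zero element of $\ker(\pi^*\fd)$ requires the associated Euclidean line bundle to be \emph{ramified}, i.e.\ the bad set $B$ (equivalently the vanishing locus of the twisted section) must be finite; a priori $B$ is only a closed subset with no size control, so standard removable-singularity theorems do not apply. The paper's key new tool here is the twist on Rad\'o's theorem, \autoref{Prop_ZeroLocusOfTwistedHolomorphicSectionIsFinite}, combined with the Carleman similarity principle, and your proposal offers no substitute for it. A smaller but real omission: since admissible $J$ agree with $\check J_\fd$ on $\set{\tilde\rho^2 \geq 1}$, the allowed perturbations are supported in $\set{\tilde\rho^2 < 1}$, and your Sard--Smale step needs every holomorphic section to enter that region---which is again exactly where $\fd \notin \sW_2$ is used in \autoref{Prop_SpaceOfPseudoHolomorphicSections}.
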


This is reminiscent of the construction of the Gromov--Witten invariants of fibrations in \cite[§8.6]{McDuff2012}.
However, a substantial novel complication arises because $\sO_{\P V}(-2)$ is non-compact and there are no a priori $C^0$ or energy bounds for $J$--holomorphic sections.
Indeed, sequences of $J$--holomorphic sections can escape to infinity,
introducing a non-compactness phenomenon in $ \BlowUpModuliSpace(J,d)$ different from the familiar bubbling phenomenon.

At the heart of \autoref{Thm_BlowUpInvariant_0} lies a relationship between this non-compactness phenomenon and the failure of $2$--rigidity.
An operator $\fd \in\SpaceOfRealCauchyRiemannOperators(V)$ induces an almost complex structure $J_\fd$ on $V/\{\pm 1\}$ with the property that $\fd \in \sW_2$ if and only if $V/\{\pm 1\}$ admits a $J_\fd$--holomorphis section. 
An admissible $J$ agrees with $J_\fd$ away from a compact set, and if a sequence of $J$--holomorphic sections of $\sO_{\P V}(-2)$ escapes to infinity, then after rescaling it converges to a $J_\fd$--holomorphic section of $V/\{\pm 1\}$. 
In particular: this cannot occur if $\fd \notin \sW_2$.
The key difficulty in the proof is extracting a convergent  subsequence.
Since the rescaling causes the almost complex structure to degenerate,
Gromov's compactness theorem cannot be applied.
The proof uses a combination of geometric measure theory and a twist on Radó's theorem in complex analysis \cite{Rado1924}.


\subsection{Chambered invariants from gauge theory}

\medskip

The chambered invariants $\ADHMCurvesInvariant$ and $\ADHMGaugeInvariant$ are constructed in a different way using gauge theory.
They are virtual counts of solutions of the \emph{$(r,k)$ ADHM vortex equation} for $(r,k) = (1,2)$ and $(r,k) = (2,1)$ respectively.
For every $r,k \in \N$,
the $(r,k)$ ADHM vortex equation is a system of partial differential equations for a $\U(k)$ connection and a Higgs field on a Riemann surface.
For $r=k=1$ this is the well-known vortex equation  \cite{Noguchi1987,Bradlow1990,GarciaPrada1993}.
In general, it shares many features with the vortex equation and Hitchin equation and can be thought of as a generalization of the two. 
The Higgs field takes values in a vector bundle associated with the quaternionic representation
\begin{equation*}
  \U(k) \circlearrowright S_{r,k} \coloneqq \Hom_\C\paren{\C^r,\R^4 \otimes_\C \C^k} \oplus \R^4 \otimes_\R \fu(k).
\end{equation*}
This representation appears in \citeauthor{Atiyah1978}'s construction of instantons.
For $r\geq 2$ its hyperkähler quotient  $S_{r,k} \hkred \U(k)$ is the Uhlenbeck completion of the moduli space of $\SU(r)$ ASD instantons of charge $k \in \N$ on $\R^4$ \cites{Atiyah1978}[§3.3]{Donaldson1990}.
For $r =1$ it is the $k$--fold symmetric product $\Sym^k\R^4$ \cite[Proposition 2.9]{Nakajima1999}.

The $(r,k)$ ADHM vortex equations depend on a choice of $\fd \in \SpaceOfRealCauchyRiemannOperators(V)$ as well as an auxilliary $\U(k)$ bundle determined by its degree $d \in \Z$.
For reasons that shall be explained in \autoref{Sec_ChamberedInvariantsFromQuaternionicVortexEquations} it is convenient to choose a $K_C$--valued complex symplectic form $\Omega \co \Wedge_\C^2 V \to K_C$ and restrict attention to $\SpaceOfRealCauchyRiemannOperators(V,\gamma)$, the subspace of those real Cauchy--Riemann operators which are self-adjoint with respect to the isomorphism $\gamma \co V \iso V^* \otimes_\C K_C$ induced by $\Omega$.

The moduli spaces $\ADHMrkModuliSpace(\fd,d)$ of solutions to the $(r,k)$ ADHM vortex equation are of virtual dimension zero,
but possibly plagued by reducibles and non-compactness.
Non-compactness arises from pseudo-holomorphic sections of bundles with fibres $S_{r,k} \hkred \U(k)$\cite{Haydys2011}.
If $(r,k) = (1,2)$ or $(r,k) = (2,1)$, then $S_{r,k} \hkred \U(k) = \Sym^2\R^4$ and, therefore, one might expect a relation with the failure of $2$--rigidity.
Indeed, in both cases, a variation of the compactness theorem in \cite{Walpuski2019} explained in \autoref{Sec_CompactnessQuaternionicVortexEquation} confirms this expectation, leading to the following result.

\begin{theorem}[see \autoref{Thm_ChamberedInvariantsFromADHM12}]
  \label{Thm_ADHMCurvesInvariants}
  Assume that $g(C) \geq 1$.
  There is a chambered invariant
  \begin{equation*}
    \ADHMCurvesInvariant \in \rH^0\paren{\SpaceOfRealCauchyRiemannOperators(V,\SerreOperator)\setminus\sW_2,\Z[[x,x^{-1}]]}
  \end{equation*}
  such that for every $\fd \in \SpaceOfRealCauchyRiemannOperators(V,\SerreOperator)\setminus\sW_2$  \begin{equation*}
    \ADHMCurvesInvariant(\fd) = \sum_{d\in\Z} \#\ADHMCurvesModuliSpace(\fd,d) \cdot x^d
  \end{equation*}
  with $\#\ADHMCurvesModuliSpace(\fd,d)$ denoting a virtual count of $\ADHMCurvesModuliSpace(\fd,d)$.
\end{theorem}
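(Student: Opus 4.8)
The plan is to construct $\ADHMCurvesInvariant$ by the standard recipe for virtual counts of a zero-dimensional moduli problem, combined with a wall-crossing analysis that shows invariance on each connected component of $\SpaceOfRealCauchyRiemannOperators(V,\SerreOperator)\setminus\sW_2$. First I would set up the deformation theory of the $(1,2)$ ADHM vortex equation: for fixed $\fd$ and fixed degree $d$, the linearisation is a Fredholm operator of index zero (this is the virtual dimension count), and I would identify the obstruction bundle and the reducible locus. Reducibles occur where the $\U(2)$ connection is not irreducible — one must check, using $g(C)\geq 1$ and the precise form of $S_{1,2}$, that for a generic perturbation of the defining data the irreducible part of $\ADHMCurvesModuliSpace(\fd,d)$ is a finite set of nondegenerate points, and that reducibles either do not arise or are handled by a local excision/Kuranishi model as in the analogous Seiberg--Witten-type arguments. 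This produces a well-defined signed count $\#\ADHMCurvesModuliSpace(\fd,d)\in\Z$ for generic $\fd$, and the sum over $d$ lies in $\Z[[x,x^{-1}]]$ (bounded below in $d$ by an energy/degree estimate, hence a genuine formal power series in $x$ after shifting, with possibly negative powers).

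Next I would establish compactness of $\ADHMCurvesModuliSpace(\fd,d)$ for $\fd\notin\sW_2$. Here I invoke the variant of the compactness theorem of \cite{Walpuski2019} announced for \autoref{Sec_CompactnessQuaternionicVortexEquation}: a sequence of solutions with unbounded Higgs field, after rescaling, converges to a nonzero $J_\fd$--holomorphic section of the bundle with fibre $S_{1,2}\hkred\U(2)=\Sym^2\R^4$, i.e.\ (after the identification with $V/\{\pm1\}$ sections used in the blow-up story) to a witness that $\fd\in\sW_2$. Since $\fd\notin\sW_2$, no such section exists, so the Higgs field stays bounded; elliptic bootstrapping then gives a convergent subsequence and $\ADHMCurvesModuliSpace(\fd,d)$ is compact. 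Combined with the transversality of the previous step, this makes $\#\ADHMCurvesModuliSpace(\fd,d)$ a finite integer.

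Then I would prove chamber-invariance. Fix a connected component $\mathcal{C}$ of $\SpaceOfRealCauchyRiemannOperators(V,\SerreOperator)\setminus\sW_2$ and a generic path $\{\fd_t\}_{t\in[0,1]}$ in $\mathcal{C}$; by \autoref{Thm_WallOfFailureOf2RigidityIsProper_0} (applied inside the self-adjoint slice, for which one needs that $\sW_2\cap\SpaceOfRealCauchyRiemannOperators(V,\SerreOperator)$ is still a proper wall — this should follow by restricting the transversality arguments to the $\SerreOperator$--self-adjoint subspace) a generic path avoids $\sW_2$ entirely. The parametrised moduli space $\bigcup_t\ADHMCurvesModuliSpace(\fd_t,d)$ is then, for generic perturbation data, a compact oriented $1$--manifold with boundary over $t\in\{0,1\}$, provided one rules out reducibles appearing along the path and rules out Higgs-field blow-up along the path — the latter again by the compactness theorem, since every $\fd_t\notin\sW_2$. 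An oriented cobordism argument gives $\#\ADHMCurvesModuliSpace(\fd_0,d)=\#\ADHMCurvesModuliSpace(\fd_1,d)$ for each $d$, hence equality of the generating functions in $\Z[[x,x^{-1}]]$; this defines the locally constant section $\ADHMCurvesInvariant\in\rH^0(\mathcal{C},\Z[[x,x^{-1}]])$.

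The main obstacle I expect is not the Fredholm or cobordism bookkeeping but the interaction of \emph{two} potential failures of compactness with the reducible locus: namely, showing that along a generic path one can simultaneously achieve transversality for the irreducible solutions, exclude reducibles (or model them), and exclude Higgs-field blow-up, all while staying in the $\SerreOperator$--self-adjoint slice where the available perturbations are more constrained. Concretely, the delicate point is to verify that the compactness theorem of \autoref{Sec_CompactnessQuaternionicVortexEquation} really does characterise the limit as a section of the $\Sym^2\R^4$--bundle that is detected by $\sW_2$ — i.e.\ that the rescaled limit is genuinely nonzero and genuinely $J_\fd$--holomorphic in the sense of \autoref{Thm_BlowUpInvariant_0}'s correspondence — and that the degree/energy constraint forces the generating function to be a Laurent series rather than a general formal bi-infinite sum. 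Everything else is a matter of assembling standard gauge-theoretic virtual-count machinery.
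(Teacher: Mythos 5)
Your overall skeleton (index-zero set-up, compactness via the adapted compactness theorem linking Higgs-field blow-up to failure of $2$--rigidity, then deformation invariance along a path in the chamber) matches the paper's, and your identification of the delicate compactness point is sound: the paper makes it precise through \autoref{Prop_ADHM12LimitingConfiguration}, which unpacks a limiting configuration of \autoref{Thm_CompactnessQuaternionicVortexEquations} into a Euclidean line bundle $\fl$ on $C\setminus B$ and a non-zero element of $\ker\fd^\fl$, with $B$ finite by the Rad\'o-type result \autoref{Prop_ZeroLocusOfTwistedHolomorphicSectionIsFinite} — equivalent to, but more concrete than, your ``rescaled limit is a $J_\fd$--holomorphic section of the $\Sym^2\R^4$--bundle'' formulation. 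However, there are two genuine gaps. First, your treatment of reducibles (``either do not arise or are handled by a local excision/Kuranishi model'') misses the actual mechanism and, with it, the actual role of the hypothesis $g(C)\geq 1$: the paper perturbs the equation by an inhomogeneous term $\eta=(\eta_\C,\eta_\R)$ with $\eta_\C$ a \emph{non-zero holomorphic $(1,0)$--form}, and rules out reducibles by pure algebra — for a covariantly constant splitting $H=L_1\oplus L_2$ one gets $[\xi\wedge\xi]=0$ and $\alpha\beta$ of rank one, which cannot equal $\eta_\C\otimes\id_H$. The hypothesis $g(C)\geq 1$ enters because $\rH^{1,0}(C)\neq 0$ and because $(\rH^{1,0}(C)\oplus\R)\setminus\{(0,0)\}$ is connected, so the count is independent of the choice of $\eta$; in genus zero one is forced to use $\eta_\R\notin(\pi/2)\Z$ and the invariant acquires a dependence on the chamber of $\eta_\R$. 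Your appeal to ``the precise form of $S_{1,2}$'' does not supply this, and without killing reducibles the counting machinery does not apply.

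Second, your claim that a generic perturbation makes the irreducible solutions a finite set of nondegenerate points is both unsupported and unnecessary: the admissible perturbations are constrained (homogeneous perturbations in the $\SerreOperator$--self-adjoint slice plus the $\eta$ above), and the paper does not prove transversality there. Instead it invokes \autoref{Prop_ChamberedInvariantsFromGaugeTheory}: for a \emph{compact} moduli space consisting of \emph{irreducible} (possibly obstructed) solutions, a virtual signed count is defined and is invariant under deformations preserving compactness and irreducibility. This also streamlines your third step: since $\SpaceOfRealCauchyRiemannOperators(V,\SerreOperator)\setminus\sW_2$ is open, two operators in the same component are joined by a path avoiding $\sW_2$ by definition, so no appeal to \autoref{Thm_WallOfFailureOf2RigidityIsProper_0} or to a genericity-of-paths/cobordism argument is needed; one lifts the path to $(\Upsilon_t,\eta_t)$, checks compactness and irreducibility for every $t$ exactly as at the endpoints, and applies the deformation-invariance clause of \autoref{Prop_ChamberedInvariantsFromGaugeTheory}.
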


The count being virtual has to do with the fact $\ADHMCurvesModuliSpace(\fd,d)$ is only compact and of virtual dimension zero, but possibly obstructed and containing reducibles.
We conjecture that $\ADHMCurvesInvariant$ is the weight $n_2$ a symplectic version of Pandharipande--Thomas theory.
In fact, if $\fd$ lies in the chamber containing Dolbeault operators,
then the correspondence between $(1,2)$ ADHM vortices and local stable pairs \cites{Bradlow2003}{Consul2003}[§7]{Doan2017d}{Diaconescu2012} indicate a relationship between $\ADHMCurvesInvariant(\fd)$ and local Pandhariphande--Thomas invariants of $C$.

There is an analogous result for the $(2,1)$ ADHM vortex equation.

\begin{theorem}[see \autoref{Thm_ChamberedInvariantsFromADHM21}]
  \label{Thm_ADHMGaugeInvariants}
  Assume that $g(C) \geq 1$.
  There is a chambered invariant
  \begin{equation*}
    \ADHMGaugeInvariant \in \rH^0\paren{\SpaceOfRealCauchyRiemannOperators(V,\SerreOperator)\setminus\sW_2,\Z[[x,x^{-1}]]}
  \end{equation*}
  such that for every $\fd \in \SpaceOfRealCauchyRiemannOperators(V,\SerreOperator)\setminus\sW_2$  \begin{equation*}
    \ADHMGaugeInvariant(\fd) = \sum_{d\in\Z} \#\ADHMGaugeModuliSpace(\fd,d) \cdot x^d
  \end{equation*}
  with $\#\ADHMGaugeModuliSpace(\fd,d)$ denoting a virtual count of $\ADHMGaugeModuliSpace(\fd,d)$.
\end{theorem}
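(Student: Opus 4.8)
The plan is to run the argument in parallel with the proof of \autoref{Thm_ChamberedInvariantsFromADHM12}: the only features of the $(1,2)$ equation used there are the vanishing of its Fredholm index, the fact that the relevant hyperkähler quotient is $\Sym^2\R^4$, and the compactness theorem of \autoref{Sec_CompactnessQuaternionicVortexEquation}, and all three have counterparts for the $(2,1)$ equation. Concretely, I would fix a Hermitian line bundle of degree $d$, form the configuration space of pairs consisting of a $\U(1)$ connection and a Higgs field valued in the $S_{2,1}$-bundle it and $\fd$ determine, with gauge group $\Map(C,\U(1))$, and linearise the $(2,1)$ ADHM vortex equation together with a gauge-fixing condition. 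Using \autoref{Eq_LocalCalabiYau}, the self-adjointness constraint built into $\SpaceOfRealCauchyRiemannOperators(V,\SerreOperator)$, and Riemann--Roch, the index of the resulting elliptic operator is zero, so $\ADHMGaugeModuliSpace(\fd,d)$ has virtual dimension zero; a determinant-line argument orients it, coherently over the parameter space.

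\textbf{Compactness and the wall.}
Next I would invoke the compactness theorem for the $(2,1)$ equation established in \autoref{Sec_CompactnessQuaternionicVortexEquation} (a variant of \cite{Walpuski2019}): over a compact set of parameters $\fd$, a sequence of solutions of fixed degree $d$ either converges modulo gauge to a solution, or, after rescaling, yields a nonzero pseudo-holomorphic section of the bundle over $C$ with fibre $S_{2,1}\hkred\U(1)=\Sym^2\R^4$, defined a priori away from finitely many points and extending across them by a removable-singularity estimate. From such a section one extracts, as in \autoref{Sec_ChamberedInvariantsFromQuaternionicVortexEquations}, a double branched cover $\pi\colon\tilde C\to C$ with $\ker(\pi^*\fd)\neq 0$, so that $\fd\in\sW_2$. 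Consequently, for $\fd\notin\sW_2$ the moduli space $\ADHMGaugeModuliSpace(\fd,d)$ is compact, and this holds uniformly over compact subsets of $\SpaceOfRealCauchyRiemannOperators(V,\SerreOperator)\setminus\sW_2$.

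\textbf{Virtual count and invariance.}
Because $\ADHMGaugeModuliSpace(\fd,d)$ is compact of expected dimension zero but may be obstructed and may contain solutions of non-minimal stabiliser --- notably the locus where the matter field vanishes, on which the equation degenerates to (essentially) the abelian Hermite--Einstein equation and whose solution set is governed by the moduli of constant-curvature $\U(1)$ connections, a torus isomorphic to the Jacobian of $C$ --- I would define $\#\ADHMGaugeModuliSpace(\fd,d)\in\Z$ by the same equivariant virtual-count construction as for $\ADHMCurvesModuliSpace(\fd,d)$; this is where $g(C)\geq 1$ is used, as it makes the degenerate locus positive-dimensional with vanishing Euler characteristic, hence invisible to the count. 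For a path $(\fd_t)_{t\in[0,1]}$ in $\SpaceOfRealCauchyRiemannOperators(V,\SerreOperator)\setminus\sW_2$, the uniform compactness above makes $\bigcup_t\ADHMGaugeModuliSpace(\fd_t,d)$ compact, and a parametrised-transversality-plus-cobordism argument --- choosing the obstruction data consistently along the path, and using that $\sW_2$ is a \emph{proper} wall by \autoref{Thm_WallOfFailureOf2RigidityIsProper_0} so that generic paths meet it in finitely many points --- gives $\#\ADHMGaugeModuliSpace(\fd_0,d)=\#\ADHMGaugeModuliSpace(\fd_1,d)$. Thus $\fd\mapsto\#\ADHMGaugeModuliSpace(\fd,d)$ is locally constant on $\SpaceOfRealCauchyRiemannOperators(V,\SerreOperator)\setminus\sW_2$ for each $d$, and assembling over $d$ produces $\ADHMGaugeInvariant\in\rH^0(\SpaceOfRealCauchyRiemannOperators(V,\SerreOperator)\setminus\sW_2,\Z[[x,x^{-1}]])$ with the stated formula.

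\textbf{Main obstacle.}
The genuinely hard input is the compactness theorem invoked above; it is isolated in \autoref{Sec_CompactnessQuaternionicVortexEquation}, and its difficulty is to rule out boundary behaviour not captured by a $\Sym^2\R^4$-valued pseudo-holomorphic section --- in particular the connection bubbling off a small instanton on $\R^4$, a phenomenon genuinely present for $r\geq 2$ and absent in the $(1,2)$ case --- and to carry the removable-singularity analysis through at the singular locus of $\Sym^2\R^4$. Within the present argument, the delicate point is instead to arrange the equivariant obstruction model in the virtual count so that the degenerate, matter-field-zero locus is treated uniformly along paths of operators, which is exactly what the hypothesis $g(C)\geq 1$ is there to enable; the Fredholm and orientation computations I expect to be routine.
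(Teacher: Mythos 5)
Your overall architecture --- index zero, orientation via deformation to a complex-linear operator, compactness through the adapted Walpuski--Zhang theorem with limiting configurations identified with elements of $\ker\fd^\fl$ for a ramified Euclidean line bundle and hence with failure of $2$--rigidity --- does match the paper's proof of \autoref{Thm_ChamberedInvariantsFromADHM21}, which indeed runs in parallel with \autoref{Thm_ChamberedInvariantsFromADHM12}. The genuine gap is in how you treat reducibles and, correlatively, in your reading of the hypothesis $g(C)\geq 1$. The paper never counts a moduli space containing the $\Psi=0$ (Jacobian) stratum: it introduces inhomogeneous perturbations $\eta=(\eta_\C,\eta_\R)\in\rH^{1,0}(C)\times\R\setminus\set{(0,0)}$ into \autoref{Eq_ADHM21}, shows by integrating the moment-map equations that every solution has $\Psi\neq 0$ and hence trivial $\U(1)$ stabiliser, and only then applies the virtual-count machinery of \autoref{Prop_ChamberedInvariantsFromGaugeTheory}, which is stated (and imported from Brussee and Cieliebak et al.) only for compact moduli consisting of irreducible solutions. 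The hypothesis $g(C)\geq 1$ enters because $\rH^{1,0}(C)\neq 0$ makes the space of admissible $\eta$ connected, so the count is independent of the choice of $\eta$ and descends to an invariant of $\fd$ alone; for $g=0$ the invariant retains a dependence on the component of $\eta_\R$, which is exactly the extra factor in the genus-zero clause of \autoref{Thm_ChamberedInvariantsFromADHM21}.

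Your alternative --- keep the reducible locus and declare it ``invisible to the count'' because the Jacobian has vanishing Euler characteristic --- is not justified and cannot be borrowed from the $(1,2)$ case, since there too the paper removes reducibles by perturbation rather than counting equivariantly; there simply is no equivariant virtual-count construction in the paper that tolerates reducibles. The contribution of a reducible stratum to a virtual count in virtual dimension zero is governed by an equivariant Kuranishi/obstruction-bundle analysis, not by the Euler characteristic of the stratum, and deformation invariance along paths of $\fd$ would require controlling that contribution uniformly --- none of which you set up. So as written, the definition of $\#\ADHMGaugeModuliSpace(\fd,d)$ and its invariance rest on a tool that does not exist; either introduce the inhomogeneous perturbations and the irreducibility argument as the paper does (with the connectivity-of-$\eta$ use of $g\geq 1$), or actually construct the equivariant count and compute the reducible contribution. (Two smaller inaccuracies: the compactness input here does not have to exclude small-instanton bubbling --- the gauge group is $\U(1)$ and the operative algebraic condition is \autoref{Hyp_Compactness}; and finiteness of the singular set $B$ is a conclusion of \autoref{Prop_ADHM21LimitingConfiguration} combined with the Radó-type argument of \autoref{Prop_ZeroLocusOfTwistedHolomorphicSectionIsFinite}, not an a priori removable-singularity statement.)
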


We conjecture that $\ADHMGaugeInvariant$ is related to a symplectic version of Donaldson--Thomas theory counting rank $2$ instantons on symplectic Calabi--Yau $3$--folds;
this connection to $6$--dimensional gauge theory is discussed in \cites{Donaldson2009,Walpuski2013,Haydys2017}. 
When $\fd$ is a Dolbeault operator, $\ADHMGaugeInvariant(\fd,d)$ has been computed in various examples in \cite{Doan2017}.
The precise relation of $\ADHMCurvesInvariant$ and $\ADHMGaugeInvariant$ to algebro-geometric invariants will be explored in future articles.

\medskip

In addition to potential connections to enumerative geometry,
the motivation for \autoref{Thm_ADHMCurvesInvariants} and \autoref{Thm_ADHMGaugeInvariants} comes from gauge theory and low-dimensional topology.
The $(r,k)$ ADHM vortex equations are dimensional reductions of generalized Seiberg--Witten equations on manifolds of dimension $3$ and $4$.
The study of these equations has been an active area of research starting with Taubes's work on the compactness problem for the moduli space of flat $\PSL_2(\C)$ connections \cite{Taubes2012} and subsequent work \cites{Taubes2013,Haydys2014, Taubes2016,Taubes2017,Walpuski2019}. 
The picture emerging from this work is that the count of solutions to generalized Seiberg--Witten invariant is not a topological invariant.
This is in stark contrast to the well-studied invariants defined using moduli spaces of instantons and Seiberg--Witten monopoles.
Instead, this count depends on the Riemannian metric on the manifold as well as other perturbations of the equations. 
In dimension $3$ and $4$, it is an open problem to show that in the space of such perturbations $\sP$ there is a proper wall $\sW \subset \sP$ such that the count of solutions is invariant in  each connected component of $\sP\setminus\sW$. 
\autoref{Thm_WallOfFailureOf2RigidityIsProper_0}, \autoref{Thm_ADHMCurvesInvariants}, and \autoref{Thm_ADHMGaugeInvariants} establish this conjectural picture in dimension $2$ for the Seiberg--Witten equations associated with the $(1,2)$ and $(2,1)$ ADHM representations. 

\paragraph{Acknowledgements}
The authors are indebted to Jacek Rzemieniecki for a careful proofreading of parts of this article and to Clifford Taubes for answering questions about his work on harmonic $\Z_2$ spinors. 
Aleksander Doan is supported by Trinity College, Cambridge, and was a MATH+ Visiting Scholar at Humboldt-Universität zu Berlin while writing this article.


\newcommand{\Invariant}{\rI}
\newcommand{\ChamberedInvariant}{\rC\rI}

\section{Chambered invariants and wall-crossing formulae}
\label{Sec_ChamberedInvariantsWallCrossingFormulae}

The following concept is pervasive in many areas of geometry,
especially gauge theory and symplectic topology.

\begin{definition}
  \label{Def_Invariant}
  Let $\Val$ be an abelian group.
  Let $\sP$ be a topological space.
  A \defined{$\Val$--valued invariant of $\sP$} is an element
  \begin{equation*}
    \Invariant \in \rH^0(\sP;\Val) = \Hom(\pi_0(\sP);\Val)
    \subset \Map(\sP;\Val).
    \qedhere
  \end{equation*}
\end{definition}

Of course, if $\sP$ is path-connected,
then $\rH^0(\sP;\Val) = \Val$;
therefore: a posteriori, $\Invariant \in \Val$.
Typically, a choice of $p \in \sP$ determines an elliptic partial differential equation and $\Invariant(p)$ is extracted from the moduli space of solutions.
Here is an abstract caricature.

\begin{example}
  \label{Ex_InvariantsFromProperFredholmMaps}
  Let $\sM,\sP$ be a Banach manifolds.
  Let $\pi \co \sM \to \sP$ be a \emph{proper} Fredholm map of index $d \in \N_0$ together with an orientation of its index bundle.
  Denote by $\Omega_d^\SO$ the oriented bordism group (of a point) in dimension $d$.
  There is a unique
  \begin{equation*}
    \Invariant \in \rH^0(\sP;\Omega_d^\SO)
    \qwithq
    \Invariant(p) = [\pi^{-1}(p)]
  \end{equation*}
  for every $p \in \sP \setminus \pi(\Crit{\pi})$.
  If $\sM \subset \sX$,
  then this can be promoted to $\Invariant \in \rH^0(\sP;\Omega_d^\SO(\sX))$ and combined with any $\alpha \in \Hom(\Omega_d^\SO(\sX),\Val)$ to obtain $\alpha_*\Invariant \in \rH^0(\sP;\Val)$.
\end{example}

In order to implement the above in relevant geometric situations,
issues with transversality, compactness, and symmetries need to be overcome.
Sometimes this is not quite possible,
but one can still obtain the following.

\begin{definition}
  \label{Def_ChamberedInvariant}
  Let $\Val$ be an abelian group.
  Let $\sP$ be a topological space.
  \begin{enumerate}
  \item
    A \defined{$\Val$--valued chambered invariant of $\sP$} consists of a subset,
    $\sW \subset \sP$, the \defined{wall},
    and an element
    \begin{equation*}
      \ChamberedInvariant \in \rH^0(\sP\setminus\sW;\Val).
    \end{equation*}
  \item
    The \defined{wall-crossing formula} of $\ChamberedInvariant \in
    \rH^0(\sP\setminus\sW;\Val)$
    is the equivalence class
    \begin{equation*}
      [\ChamberedInvariant]
      \in
      \SpaceOfWallCrossingFormulae(\sP,\sW;\Val)
      \coloneqq
      \coker \paren{\rH^0(\sP;G) \to \rH^0(\sP\setminus \sW;G)}.
      \qedhere
    \end{equation*}
  \end{enumerate}
\end{definition}

A chambered invariant lifts to an invariant if and only if its wall-crossing formula vanishes.
This applies, in particular, to the sum of two chambered invariants with opposite wall-crossing formulae.

The space of wall-crossing formulae $\SpaceOfWallCrossingFormulae(\sP,\sW;\Val)$ depends on the intersection behavior of $\sW$ and paths $\bp \co [0,1] \to \sP$.
Indeed,
the long exact sequence of relative cohomology induces an isomorphism
\begin{equation*}
  \Delta \co \SpaceOfWallCrossingFormulae(\sP,\sW;\Val)
  \iso
  \ker\paren{\rH^1(\sP,\sP\setminus\sW;\Val) \to \rH^1(\sP;\Val)}
\end{equation*}
such that
\begin{equation*}
  \Delta([\ChamberedInvariant])([\bp])
  = \ChamberedInvariant(\bp(1)) - \ChamberedInvariant(\bp(0)),
\end{equation*}
identifying
$\rH^1(\sP,\sP\setminus\sW;\Val) \iso \Hom(\rH_1(\sP,\sP\setminus\sW);\Val)$
via the universal coefficient theorem.

\begin{example}
  \label{Ex_WallCrossingHypersurface}
  Let $\sW \subset \sP$ is a closed subset and a codimension $1$ submanifold.
  Denote by $\fo$ the coorientation local system of $\sW$.
  \begin{enumerate}
  \item
    There is a homomorphism
    \begin{equation*}
      \sharp \co \rH^0(\sW;\Val \otimes_\Z \fo) \to \Hom(\rH_1(\sP,\sP\setminus\sW),\Val) \iso \rH^1(\sP,\sP\setminus\sW;\Val)
    \end{equation*}
    such that
    if $\bp \co [0,1] \to \sP$ is a $C^1$ path with $\bp(0),\bp(1) \notin \sW$ and transverse to $\sW$,
    then
    \begin{equation}
      \label{Eq_Sharp}
      \sharp(\alpha)[\bp]
      \coloneqq
      \sum_{t \in \bp^{-1}(\sW)} \alpha(\bp(t)).   
    \end{equation}
    \emph{This sum is finite because $\sW$ is closed.}
    Here $\alpha$ is evaluated at $\bp(t)$ using the coorientation of $\sW$ induced by $\dot\bp(t)$.
  \item
    \label{Ex_WallCrossingHypersurface_Thom}
    If $\sT$ is a closed tubular neighborhood $\sW$,
    then excision and direct inspection produce an injective homomorphism
    \begin{equation*}
      \SpaceOfWallCrossingFormulae(\sP,\sW;\Val)
      \stackrel{\Delta_{\loc}}{\into}
      \SpaceOfWallCrossingFormulae(\sT,\sW;\Val)
      \iso
      \rH^0(\sW;\Val \otimes_\Z \fo).
    \end{equation*}
    (This is an elementary instance of the Thom isomorphism.)
    These assemble into the commutative diagram
    \begin{equation*}    
      \begin{tikzcd}
        \SpaceOfWallCrossingFormulae(\sP,\sW;\Val) \ar{r}{\Delta_\loc} \ar[swap]{rd}{\Delta} &
        \rH^0(\sW;\Val \otimes_\Z \fo) \ar{d}{\sharp}  \\
        & \rH^1(\sP,\sP\setminus\sW;\Val).
      \end{tikzcd}
    \end{equation*} 
  \item
    As a consequence of the above, there is a short exact sequence
    \begin{equation*}
      \SpaceOfWallCrossingFormulae(\sP,\sW;\Val) \stackrel{\Delta_{\loc}}{\into} \rH^0(\sW;\Val \otimes_\Z \fo) \stackrel{\sharp}{\to} \rH^1(\sP;\Val).
      \qedhere
    \end{equation*}
  \end{enumerate}
\end{example}

Unfortunately, $\sW \subset \sP$ rarely is a codimension $1$ submanifold.
The following weaker condition is a reasonable substitute and does hold in the situation considered in this article.

\begin{definition}
  \label{Def_ProperWall}
  Let $\sP$ be a Banach manifold.
  A subset $\sW \subset \sP$ is a \defined{proper wall in $\sP$} if the following hold:
  \begin{enumerate}
  \item
    \label{Def_ProperWall_Closed}
    $\sW \subset \sP$ is a closed subset.
  \item
    \label{Def_ProperWall_Parametrisation}
    There are a Banach manifold $\sE$ and a Fredholm map $\pi \co \sE \to \sP$ of index at most $-1$ with
    \begin{equation*}
      \sW = \im \pi.
    \end{equation*}
    Moreover:
    \begin{enumerate}
    \item
      \label{Def_ProperWall_EssentiallyInjective}
      The map $\pi$ is \defined{essentially injective};
      that is:
      there are a Banach manifold $\sN$ and a Fredholm map $\nu \co \sN \to \sE$ of index at most $-1$ such that
      \begin{equation*}
        \pi|_{\sE\setminus \im\nu} \co \sE\setminus \im\nu \to \sP\setminus \im \paren{\pi\circ\nu}
      \end{equation*}
      is injective.
    \item
      \label{Def_ProperWall_EssentiallyProper}
      The map $\pi$ is \defined{essentially proper};
      that is: there are a Banach manifold $\sS$ and a Fredholm map $\sigma \co \sS \to \sE$ of index at most $-1$ such that
      \begin{equation*}
        \pi|_{\sE\setminus \paren{\im\nu \cup \im\sigma}} \co \sE\setminus \paren{\im\nu \cup \im\sigma} \to \sP\setminus \paren{\im \paren{\pi \circ \nu} \cup \im \paren{\pi\circ\sigma}}
      \end{equation*}
      is proper.
      \qedhere
    \end{enumerate}
  \end{enumerate}
\end{definition}

\begin{prop}
  \label{Prop_ProperWall}
  Assume the situation of \autoref{Def_ProperWall}.
  If $\bp \co [0,1] \to \sP$ is a $C^1$ path with $\bp(0), \bp(1) \notin \sW$ and transverse to
  $\pi$, $\pi \circ \nu$, and $\pi \circ \sigma$,
  then
  $\bp^*\sE$ is a finite set and the map $\bp^*\sE \to (0,1)$ is injective;
  that is: along $\bp$ (potential) wall-crossing occurs in a finite subset $\set{t_1,\ldots,t_N} \subset (0,1)$.
\end{prop}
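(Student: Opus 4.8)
The plan is to exploit the transversality hypotheses one Fredholm map at a time, together with the fact that a path is a $1$--dimensional compact manifold, so that the preimages under negative-index maps are empty and the preimages of the crucial index-$(-1)$ map are $0$--dimensional compact, hence finite.

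\medskip

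First I would reduce everything to the three pullback constructions. Since $\bp$ is transverse to $\pi$, the fibre product $\bp^*\sE := [0,1] \times_\sP \sE$ is a Banach manifold, and the projection $\bp^*\sE \to [0,1]$ is a Fredholm map whose index is $\ind(\pi) + \dim[0,1] = \ind(\pi) + 1 \leq 0$ (using $\ind\pi \leq -1$). The subtlety is that $\ind = 0$ is allowed, so $\bp^*\sE$ may be genuinely $0$--dimensional rather than empty; I must \emph{not} try to conclude emptiness here. Similarly, transversality of $\bp$ to $\pi\circ\nu$ makes $\bp^*\sN := [0,1]\times_\sP\sN$ a Banach manifold with a Fredholm projection to $[0,1]$ of index $\ind(\pi\circ\nu)+1 = \ind\pi + \ind\nu + 1 \leq -1$; being a $0$--dimensional, actually negative-dimensional manifold, $\bp^*\sN = \emptyset$. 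The same argument with $\sigma$ in place of $\nu$ gives $\bp^*\sS = \emptyset$. (I would state once the standard fact — e.g. from Smale's density theorem setup — that a Fredholm map of negative index from a Banach manifold has empty domain when that domain is itself forced to be a manifold of that negative dimension; more precisely, transversality to a map of index $m$ produces a manifold of dimension $m+1$, and for $m \leq -2$ there is no such nonempty manifold, so the fibre product is empty.)

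\medskip

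Next I would show $\bp^*\sE$ is compact. Because $\bp^*\sN = \emptyset$ and $\bp^*\sS = \emptyset$, every point of $\bp^*\sE$ lying over $t \in (0,1)$ actually lies in $\sE \setminus (\im\nu \cup \im\sigma)$: if a point $e \in \sE$ with $\pi(e) = \bp(t)$ lay in $\im\nu$, say $e = \nu(n)$, then $(t,n)$ would be a point of $\bp^*\sN$, contradiction; likewise for $\sigma$. Moreover the endpoints contribute nothing, since $\bp(0), \bp(1) \notin \sW = \im\pi$, so $\bp^*\sE$ maps into $(0,1)$. Hence $\bp^*\sE = [0,1] \times_\sP (\sE\setminus(\im\nu\cup\im\sigma))$, and on this locus $\pi$ is \emph{proper} by \autoref{Def_ProperWall_EssentiallyProper}\ref{Def_ProperWall_EssentiallyProper}. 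Properness of $\pi$ restricted there, pulled back along the compact $[0,1]$, makes $\bp^*\sE \to [0,1]$ a proper map, so $\bp^*\sE$ is compact. A compact $0$--dimensional manifold is a finite set; this gives finiteness of $\bp^*\sE$, and writing its image as $\{t_1,\dots,t_N\}\subset(0,1)$ identifies the finite (potential) wall-crossing set.

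\medskip

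Finally I would prove the map $\bp^*\sE \to (0,1)$ is injective. Suppose $(t,e_1)$ and $(t,e_2)$ are two points of $\bp^*\sE$ over the same $t$. Both $e_1, e_2$ lie in $\sE\setminus(\im\nu\cup\im\sigma)$ by the argument above, and $\pi(e_1) = \bp(t) = \pi(e_2) \in \sP \setminus (\im(\pi\circ\nu)\cup\im(\pi\circ\sigma))$. By \autoref{Def_ProperWall_EssentiallyInjective}\ref{Def_ProperWall_EssentiallyInjective} the restriction $\pi|_{\sE\setminus\im\nu}$ is injective onto $\sP\setminus\im(\pi\circ\nu)$; since $e_1,e_2$ lie in that domain and have equal image, $e_1 = e_2$. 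Hence the fibre over $t$ is a single point, i.e. the map to $(0,1)$ is injective, as claimed. \hfill$\blacksquare$

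\medskip

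The main obstacle I anticipate is purely bookkeeping rather than conceptual: one must be careful that transversality of a \emph{path} (a $1$--manifold with boundary) to a Banach-manifold map in the Fredholm sense really does yield a finite-dimensional manifold fibre product of the expected dimension, including the boundary behaviour — but the hypothesis that $\bp(0),\bp(1)\notin\sW$ sidesteps the boundary issue entirely, so the only real work is assembling the three ``essentially trivial'' ($\bp^*\sN = \bp^*\sS = \emptyset$) reductions and chaining them correctly to deduce properness and injectivity of $\pi$ on the relevant strata.
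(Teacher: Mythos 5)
Your proposal is correct and follows essentially the same route as the paper's proof: transversality to $\pi$ makes $\bp^*\sE$ a $0$--dimensional manifold, transversality to $\pi\circ\nu$ and $\pi\circ\sigma$ (with their indices $\leq -2$) forces the path to avoid $\im(\pi\circ\nu)\cup\im(\pi\circ\sigma)$, and then essential injectivity and essential properness of $\pi$ give that $\bp^*\sE\to[0,1]$ is injective and proper, hence $\bp^*\sE$ is compact, $0$--dimensional, and finite. Your write-up merely makes explicit the emptiness of $\bp^*\sN$ and $\bp^*\sS$ and the endpoint bookkeeping, which the paper leaves implicit.
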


\begin{proof}
  Since $\bp$ is transverse to $\pi$,
  $\bp^*\sE$ is a $0$--dimensional manifold.
  Since $\bp$ is transverse to $\pi \circ \nu$ and $\pi \circ \nu$,
  it $\bp^*\sE \to [0,1]$ is injective and proper.
  In particular, $\bp^*\sE$ is compact; hence: finite.
\end{proof}

\autoref{Sec_LocalWallCrossing} explains the analogue of \autoref{Ex_WallCrossingHypersurface} if $\sW$ is a proper wall.


\section{The wall of failure of $2$--rigidity}
\label{Sec_WallOfFailureOf2Rigidity}

The purpose of this section is to recall (or introduce) the notion of $2$--rigidity,
define the wall of failure of $2$--rigidity in the space of real Cauchy--Riemann operators,
and to prove that it is a proper wall in the sense of \autoref{Def_ProperWall}.

\medskip

Much of the material in this section is similar to the discussion in \cite{Wendl2016,Doan2018},
which itself is inspired by \cite{Taubes1996b,Eftekhary2016}.
The essential technical novelty is contained in establishing properties
\autoref{Def_ProperWall_Closed} and \autoref{Def_ProperWall_EssentiallyProper} in \autoref{Def_ProperWall}.
This is achieved using tools from geometric measure theory:
an idea already employed in \cite{Doan2018a,Doan2021}.

\subsection{Real Cauchy--Riemann operators}
\label{Sec_SelfAdjointRealCauchyRiemannOperators}

Let $C$ be a Riemann surface.
Let $V$ be a complex vector bundle over $C$.

\begin{definition}
  \label{Def_RealCauchyRiemanOperator}
  A \defined{real Cauchy--Riemann operator} on $V$ is an $\R$--linear differential operator
  \begin{equation*}
    \fd \co \Gamma\paren{C,V} \to \Omega^{0,1}(C,V) = \Gamma\paren{C,V \otimes_\C \overline{K}_C}
  \end{equation*}
  satisfying
  \begin{equation*}
    \fd (f\cdot s) = s \otimes_\C \delbar f + f \cdot \fd s
  \end{equation*}
  for every $f \in C_c^\infty(C,\R)$ and $s \in \Gamma(C,V)$.
  Here
  $\overline{K}_C \coloneqq \Hom_\C(\overline{TC},\C)$ and
  $\delbar f \coloneqq \tfrac12\paren{\rd f + i \cdot \rd f \circ j}$ with $j$ denoting the complex structure on $C$.
\end{definition}

\begin{remark}
  \label{Rmk_RealCauchyRiemanOperator~>DolbeaultOperator}
  A \defined{Dolbeault operator} on $V$ is a real Cauchy--Riemann operator $\delbar$ on $V$ which is also $\C$--linear.
  The Koszul--Malgrange theorem \cite{KoszulMalgrange1958} establishes a correspondence between
  holomorphic vector bundles $\sV$ over $C$
  and
  pairs $(V,\delbar)$ consisting of
  a complex vector bundle $V$ over $C$ and a Dolbeault operator $\delbar$ on $V$.
  If $\fd$ is a real Cauchy--Riemann operator on $V$,
  then
  \begin{equation*}
    \fd = \delbar + \fa
    \qwithq
    \delbar \coloneqq \frac12\paren*{\fd - i \fd i}
    \qandq
    \fa \coloneqq \frac12\paren*{\fd + i \fd i}.
  \end{equation*}
  By construction,
  $\delbar$ is Dolbeault operator,
  and $\fa \in \Gamma\paren{C,\Hom_\C(\overline{V},V \otimes_\C \overline{K}_C)}$.
\end{remark}

\begin{prop}
  \label{Prop_RealCauchyRiemanOperator_AffineSpace}
  The subspace 
  \begin{equation*}
    \SpaceOfRealCauchyRiemannOperators(V) \subset \Hom\paren{\Gamma(C,V),\Omega^{0,1}(C,V)}
  \end{equation*}
  of real Cauchy--Riemann operators on $V$ is an affine subspace modelled on
  \begin{equation*}
    \Gamma\paren{C,\HOMT(V)}
    \qwithq
    \HOMT(V) \coloneqq \Hom\paren{V,V \otimes_\C \overline{K}_C}.
    \pushQED{\qed} 
    \qedhere
  \end{equation*}
\end{prop}

\begin{remark}
  \label{Rmk_RiemannRoch}
  If $C$ is closed,
  then $\fd$ extends to a Fredholm operator $\fd \co W^{1,2}\Gamma(C,V) \to L^2\Omega^{0,1}(C,V)$ with
  \begin{equation*}
    \ind \fd = 2 \deg V + \rk_\C V \cdot \chi(C)
  \end{equation*}
  by Riemann--Roch.
  Here $W^{1,2}$ and $L^2$ are with respect to arbitrary (and immaterial) choices of
  a Riemannian metric on $C$ and
  a Hermitian inner product and a unitary covariant derivative on $V$.
\end{remark}

\medskip

Set
\begin{equation*}
  V^\dagger \coloneqq \Hom_\C(V,K_C) = V^* \otimes_\C K_C  
\end{equation*}
and $K_C \coloneqq \Hom_\C(TC,\C)$.
The isomorphism $K_C \otimes_\C \overline{K}_C \iso \Wedge^2 T^*C \otimes \C$ induces (complex) perfect pairings
\begin{equation}
  \label{Eq_PerfectPairing}
  \Inner{\cdot,\cdot} \co \paren{V^\dagger \otimes_\C \overline{K}_C} \otimes_\C V \to \Wedge^2 T^*C \otimes \C
  \qandq
  \Inner{\cdot,\cdot} \co V^\dagger \otimes_\C \paren{V \otimes_\C \overline{K}_C} \to \Wedge^2 T^*C \otimes \C
\end{equation}
with respect to which
\begin{equation*}
  \Inner{\lambda \otimes_\C \overline\zeta,v} = \Inner{\lambda,v \otimes_\C \overline\zeta}
\end{equation*}
for every $\lambda \in V^\dagger$, $v \in V$, and $\overline\zeta \in \overline{K}_C$.

\begin{definition}
  \label{Def_RealCauchyRiemannOperator_Adjoint}
  The \defined{adjoint} of a real Cauchy--Riemann operator $\fd$ on $V$ is the real Cauchy--Riemann operator $\fd^\dagger$ on $V^\dagger$ over $C$ characterised by
  \begin{equation*}
    \int_C \Re\Inner{\fd^\dagger s,t} = \int_C \Re\Inner{s,\fd t}
  \end{equation*}
  for every $s \in \Gamma_c(C,V^\dagger)$ and $t \in \Gamma_c(C,V)$.
\end{definition}

\begin{prop}
  \label{Prop_RealCauchyRiemannOperator_Adjoint_Affine}
  The map
  \begin{equation*}
    \cdot^\dagger \co \SpaceOfRealCauchyRiemannOperators(V) \to \SpaceOfRealCauchyRiemannOperators(V^\dagger)
  \end{equation*}
  is an isomorphism of affine spaces;
  in fact:
  $\paren{\fd^\dagger}^\dagger = \fd$
  with respect to the identification $(V^\dagger)^\dagger = V \otimes_\C K_C^* \otimes_\C K_C = V$.
  \qed
\end{prop}

\medskip

The bulk of this article is concerned with the case
\begin{equation*}
  \ind \fd = 2\deg V + \rk_\C V \cdot \chi(C) = 0.
\end{equation*}
This is equivalent to $\deg(V) = \deg(V^\dagger)$;
hence: $V \iso V^\dagger$.

\begin{definition}
  \label{Def_RealCauchyRiemannOperator_SelfAdjoint}
  Let $\SerreOperator \co V \iso V^\dagger$ be an isomorphism.
  A real Cauchy--Riemann operator $\fd$ on $V$ over $C$ is \defined{$\SerreOperator$--self-adjoint} if
  \begin{equation*}
    \fd^\dagger = \fd^\SerreOperator \coloneqq \paren{\SerreOperator \otimes_\C \one_{\overline{K}_C}} \circ \fd \circ \SerreOperator^{-1}.
    \qedhere
  \end{equation*}
\end{definition}

\begin{prop}
  \label{Prop_RealCauchyRiemanOperator_SelfAdjoint_AffineSpace}
  Let $\SerreOperator \co V \iso V^\dagger$ be an isomorphism.
  The subspace
  \begin{equation*}
    \SpaceOfRealCauchyRiemannOperators(V,\SerreOperator) \subset \Hom\paren{\Gamma(C,V),\Omega^{0,1}(C,V)}
  \end{equation*}
  of $\SerreOperator$--self-adjoint real Cauchy--Riemann operators on $V$ is an affine subspace modelled on
  \begin{equation*}
    \Gamma\paren{C,\HOMT(V,\SerreOperator)}
    \qwithq    
    \HOMT(V,\SerreOperator)
    \coloneqq
    \set{
      \fa \in \HOMT(V)
      :
      \fa^\dagger = \fa^\SerreOperator
    }.
  \end{equation*}
\end{prop}

\begin{proof}
  In light of \autoref{Prop_RealCauchyRiemanOperator_AffineSpace} it suffices to prove that $\SpaceOfRealCauchyRiemannOperators(V,\SerreOperator)$ is non-empty.  
  A moment's thought shows that
  if $\fd \in \SpaceOfRealCauchyRiemannOperators(V)$,
  then
  $\frac12\paren{\fd + \paren{\SerreOperator\circ \one_{\overline{K}_C}}^{-1} \circ \fd^\dagger \circ \SerreOperator}$ is $\SerreOperator$--self-adjoint.
  In particular, $\SpaceOfRealCauchyRiemannOperators(V,\SerreOperator) \neq \emptyset$.  
\end{proof}

\begin{remark}
  \label{Rmk_SelfAdjointLinearStructure}
  The isomorphism $\SerreOperator$ induces an isomorphism
  $V \otimes_\C \overline{K}_C \iso V^\dagger \otimes_\C \overline{K}_C \iso V^* \otimes \Wedge^2 T^*C$.
  This, in turn, induces an isomorphism  
  \begin{equation*}
    \HOMT(V,\SerreOperator)
    \iso
    S^2V^* \otimes \Wedge^2 T^*C.
  \end{equation*}
  Here $S^2 V^*$ denotes the second symmetric power of $V^*$.
\end{remark}

\begin{remark}
  \label{Rmk_SelfAdjointRealCauchyRiemannOperators}
  Let $X$ be a Calabi--Yau $3$--fold together with a choice of holomorphic volume form $\theta \in \Omega^{3,0}(X,\C)$.
  If $C \subset X$ is a holomorphic curve,
  then $\rho \coloneqq \Re \theta$ induces an isomorphism $\gamma \co NC \iso NC^\dagger$ and the Dolbeault operator $\delbar_{NC}$ governing the infinitesimal deformation theory of $C$ is $\gamma$--self-adjoint.
  In fact, $\delbar_{NC}$ is the (formal) Hessian at $C$ of the (real part) of the Chern--Simons functional $\Psi$ mentioned in \cite[§8]{Donaldson1998}.
  
  If $(X,J)$ is an almost complex $3$--fold with $c_1(X,J) = 0$,
  then there is a definite $3$--form $\rho \in \Omega^3(X)$ in the sense of   \cite[Definition 1]{Donaldson2018};
  this amounts to a reduction of structure group along $\SL_3(\C) \incl \GL_3(\C)$.
  If $C \subset X$ is a $J$--holomorphic curve,
  then $\rho$ induces an isomorphism $\gamma \co NC \iso NC^\dagger$ and the corresponding real Cauchy--Riemann operator $\fd_{NC}$ on $NC$ is $\gamma$--self-adjoint.
  If $\rho$ is closed, then $\fd_{NC}$ is self-adjoint and $\Re\Psi$ is defined.

  Typically, $\rho$ cannot be arranged to be to be closed.
  However, there is an $h$--principle for closed definite $3$--forms \cites[§4]{Donaldson2018}[Theorem 7.2]{Mayther2023:HPrinciples}.
  In particular, there is a closed definite $3$--form $\rho'$ isotopic to $\rho$.
  Unfortunately, the $C^0$--dense version of this $h$--principle does not hold.
  In particular,
  if $(X,\omega)$ is a symplectic Calabi--Yau $3$--fold,
  then it is not clear whether or not a closed definite $3$--form tamed by $\omega$ exists.  
  It would be interesting to better understand this issue. 
\end{remark}


\subsection{Homogeneous almost complex structures}
\label{Sec_HomogeneousAlmostComplexStructure}

This section reviews the correspondence between
real Cauchy--Riemann operators on $V$ and
homogeneous almost complex structures on the total space of $V$. 
Denote by $p \co V \to C$ the projection map.

\begin{definition}
  \label{Def_HomogeneousAlmostComplexStructure}
  An almost complex structure $J \in \Gamma(V,\End(TV))$ on the total space of $V$ is \defined{homogeneous}  
  if:
  \begin{enumerate}
  \item
    \label{Def_HomogeneousAlmostComplexStructure_Compatible}
    The linear maps
    \begin{equation*}
      p^*V \stackrel{\iota}{\incl} TV \qandq TV \stackrel{p_*}{\onto} p^*TC
    \end{equation*}
    are complex linear.
  \item
    \label{Def_HomogeneousAlmostComplexStructure_Scaling}
    For every $\lambda \in \R$
    the linear map $\lambda \co V \to V$ given by multiplication with $\lambda$ is $J$--holomorphic.
    \qedhere
  \end{enumerate} 
\end{definition}

\begin{lemma}
  \label{Lem_HomogeneousSections}
  Denote by $\tau \in \Gamma(V,p^*V)$ the tautological section
  \begin{equation*}
  	\tau(v) = v.
  \end{equation*}
  Let $W$ be a vector bundle over $C$.
  Let $s \in \Gamma(V,p^*W)$.
  The following are equivalent:
  \begin{enumerate}
  \item
    \label{Lem_HomogeneousSections_Scaling}
    The section $s$ is \defined{homogeneous};
    that is:
    for every $\lambda \in \R$
    \begin{equation*}
      \lambda^*s = \lambda s
    \end{equation*}
    with respect to the identification $\Gamma(V,\lambda^*p^*W) = \Gamma(V,p^*W)$ induced by $p\circ \lambda = p$.
  \item
    \label{Lem_HomogeneousSections_Pullback}
    There is an $\hat s \in \Gamma\paren{X,\Hom(V,W)}$ such that for every $v \in V$
    \begin{equation*}
      s(v) = \hat s(p(v))(v);
    \end{equation*}
    that is:
    \begin{equation*}
      s \in \im\sqparen{\Inner{p^*\cdot,\tau} \co \Gamma(X,\Hom(V,W)) \incl \Gamma(V,p^*W)}.
    \end{equation*}
  \end{enumerate}
\end{lemma}

\begin{proof}
  Evidently, \autoref{Lem_HomogeneousSections_Pullback} implies \autoref{Lem_HomogeneousSections_Scaling}.

  To prove that \autoref{Lem_HomogeneousSections_Scaling} implies \autoref{Lem_HomogeneousSections_Pullback} it suffices to consider $V = \ubC^r$ and $W = \ubR^s$, by locality.
  If $s \in C^\infty(C\times \C^r,\R^s)$ is homogeneous,
  then
  \begin{equation*}
    s(x,v) = \rd_{(x,0)}s(v).
  \end{equation*}
  Therefore, $s \in \im \Inner{p^*\cdot,\tau}$.  
\end{proof}

\autoref{Lem_HomogeneousSections} and the inclusion
\begin{equation*}
  p^*\paren{\overline{K}_C \otimes_\C V}
  \subset p^*\Hom(TC,V)
  \xhookrightarrow{\iota \circ \cdot \circ p_*} \End(TV)
\end{equation*}
induce an inclusion
\begin{equation*}
  \Gamma\paren{C,\Hom(V,V\otimes_\C\overline{K}_C)} \incl \Gamma(V,\End(TV)).
\end{equation*}

\begin{prop}
  \label{Prop_HomogeneousAlmostComplexStructure_AffineSpace}
  The subspace
  \begin{equation*}
    \SpaceOfHomogeneousAlmostComplexStructures(V) \subset \Gamma(V,\End(TV))
  \end{equation*}
  of homogeneous almost complex structures on the total space of $V$ is an affine subspace modelled on $\Gamma\paren{C,\Hom(V,V\otimes_\C\overline{K}_C)}$.
\end{prop}

\begin{proof}
  Evidently, $\SpaceOfHomogeneousAlmostComplexStructures(V)$ is non-empty.
  
  A moment's thought shows that
  if $J_0 \in \SpaceOfHomogeneousAlmostComplexStructures(V)$ and $\fa \in \Gamma\paren{C,\Hom(V,V\otimes_\C\overline{K}_C)}$,
  then $J_0 + \fa \in \SpaceOfHomogeneousAlmostComplexStructures(V)$.
    
  Let $J_0,J \in \SpaceOfHomogeneousAlmostComplexStructures(V)$.
  Set $\fa \coloneqq J-J_0 \in \Gamma(V,\End(TV))$.
  By \autoref{Def_HomogeneousAlmostComplexStructure}~\autoref{Def_HomogeneousAlmostComplexStructure_Compatible},
  $\fa \in \Gamma(V,p^*\Hom(TC,V))$.
  Since $J^2 = (J_0+\fa)^2$ and also using \autoref{Def_HomogeneousAlmostComplexStructure}~\autoref{Def_HomogeneousAlmostComplexStructure_Compatible},
  $\fa \in \Gamma(V,p^*\paren{V\otimes_\C\overline{K}_C})$.
  By \autoref{Def_HomogeneousAlmostComplexStructure}~\autoref{Def_HomogeneousAlmostComplexStructure_Scaling},
  $\fa$ is homogeneous.
  Therefore, by \autoref{Lem_HomogeneousSections},
  $\fa \in \Gamma\paren{C,\Hom(V,V\otimes_\C\overline{K}_C)}$.
\end{proof}

\begin{prop}
  \label{Prop_HomogeneousAlmostcomplexStructure=RealCauchyRiemanOperator}
  There is an isomorphism of affine spaces 
  \begin{equation*}
    \fd_\cdot \co \SpaceOfHomogeneousAlmostComplexStructures(V) \to \SpaceOfRealCauchyRiemannOperators(V)   
  \end{equation*}  
  such that for every $J \in \SpaceOfHomogeneousAlmostComplexStructures$ and $s \in \Gamma(C,V)$
  \begin{equation*}
    \fd_J s = \tfrac12\paren{Ts - J \circ Ts \circ j} \in \Omega^{0,1}(C,V) \subset \Omega^{0,1}(C,s^*TV);
  \end{equation*}
  in particular:
  $s \in \ker \fd_J$ if and only if $s$ is $J$--holomorphic.
\end{prop}

\begin{proof}
  Let $s \in \Gamma(C,V)$.
  Define $\tilde \fd_Js \in L^2\Gamma\paren{C,s^*TV\otimes_\C\overline{K}_C}$ by the above formula.
  
  By \autoref{Def_HomogeneousAlmostComplexStructure}~\autoref{Def_HomogeneousAlmostComplexStructure_Compatible}, $p$ is $J$--holomorphic.
  Therefore and since $p \circ s = \id_C$, 
  $\tilde \fd_J s \in \Gamma\paren{C,V\otimes_\C\overline{K}_C}$.

  To show that $\tilde\fd_J$ is a real Cauchy--Riemann operator, by locality, it suffices to consider $V = \ubC^r$. 
  In this case, $TV = p^*(TC \oplus V)$ and, by \autoref{Prop_HomogeneousAlmostComplexStructure_AffineSpace},
  \begin{equation*}
    J =
    \begin{pmatrix}
      j & 0 \\
      \fa & i
    \end{pmatrix}.
  \end{equation*}  
  A direct computation shows that $\fd_J = \delbar - \frac12\fa$.
  
  This constructs the map $J \mapsto \fd_J$.
  Evidently, it is affine and its underlying linear map is the isomorphism $-\frac12$.
\end{proof}


\subsection{Four perspectives on $2$--rigidity}
\label{Sec_ThreePerspectivesOn2Rigidity}

The purpose of this section is to introduce the concept of $2$--rigidity and present four points of view on it.
Henceforth, assume that $C$ is closed and connected (unless said otherwise).
Moreover, choose a Hermitian metric on $V$.

\begin{prop}
  \label{Prop_RealCauchyRiemanOperator_Pullback}
  Let $\fd$ be a real Cauchy--Riemann operator on $V$.
  Let $\tilde C$ be a Riemann surface.
  Let $\pi \co \tilde C \to C$ be a holomorphic map.
  There is a unique real Cauchy--Riemann operator $\pi^*\fd$ on $\pi^*V$,
  the \defined{pull-back} of $\fd$ along $\pi$,
  such that the diagram
  \begin{equation*}
    \begin{tikzcd}
      \Gamma(C,V) \ar{r}{\fd} \ar{d}{\pi^*} & \Omega^{0,1}(C,V) \ar{d}{\pi^*} \\
      \Gamma(\tilde C,\pi^*V) \ar{r}{\pi^*\fd} & \Omega^{0,1}(\tilde C,\pi^*V)
    \end{tikzcd}
  \end{equation*}
  commutes.
\end{prop}

\begin{proof}
  By locality,
  it suffices to prove this for $V = \ubC^r$.
  In this case, $\pi^*V = \ubC^r$ and
  the assertion is a consequence of the isomorphism $C^\infty(C,\C^r) \otimes_{C^\infty(C)} C^\infty(\tilde C) \iso C^\infty(\tilde C,\C^r)$.
\end{proof}

\begin{definition}
  \label{Def_2Rigid}
  Suppose that $C$ is closed.
  A real Cauchy--Riemann operator $\fd$ on $V$ is \defined{$2$--rigid} if
  for every closed, connected Riemann surface $\tilde C$ and
  every non-constant holomorphic map $\pi \co \tilde C \to C$ of degree at most two
  \begin{equation*}
    \ker \pi^*\fd = 0.
    \qedhere
  \end{equation*}
\end{definition}

\begin{remark}
  \label{Rmk_KRigidity}
  There are, of course, concepts of $k$--rigidity for $k \in \N \cup\set{\infty}$;
  cf.~\cites[Definition 1.6]{Bryan2001}[§1]{Eftekhary2016}{Wendl2016}.
  \autoref{Rmk_WhyRestrictToK=2} contains an attempt to justify the restriction to $k=2$ in the present article.
\end{remark}

\medskip

The second perspective on $2$--rigidity is a translation of the above into homogeneous almost complex structures using the correspondence reviewed in \autoref{Sec_HomogeneousAlmostComplexStructure}.

\begin{prop}
  \label{Prop_HomogeneousAlmostComplexStructure_Pullback}
  Let $J \in \SpaceOfHomogeneousAlmostComplexStructures(V)$.
  Let $\pi \co \tilde C \to C$ be holomorphic.
  The following hold:
  \begin{enumerate}
  \item
    \label{Prop_HomogeneousAlmostComplexStructure_Pullback_Defined}
    $\pi^*V \subset \tilde C \times V$ is an almost complex submanifold, and
    $\pi^*J \coloneqq \paren{\tilde j \times J}|_{\pi^*V}$ is homogeneous.
  \item
    \label{Prop_HomogeneousAlmostComplexStructure_Pullback_CauchyRiemann}
    $\fd_{\pi^*J} = \pi^*\fd_J$.
  \end{enumerate}
\end{prop}

\begin{proof}
  Since $p$ is $J$--holomorphic and $\pi$ is holomorphic,
  $\pi^*V \subset \tilde C \times V$ is an almost complex submanifold.
  Since $J$ is homogeneous, so is $\pi^*J$.
  This proves \autoref{Prop_HomogeneousAlmostComplexStructure_Pullback_Defined}.

  Let $s \in \Gamma(C,V)$.
  Inspection of the commutative diagram
  \begin{equation*}
    \begin{tikzcd}
      \pi^*V \arrow[r, "p^*\pi"] \arrow[d, "\pi^*p"] & V \arrow[d,swap,"p"] \\
      \tilde C \arrow[u, bend left, "\pi^*s"] \ar[r,"\pi"] & C \arrow[u,bend right,swap,"s"]
    \end{tikzcd}
  \end{equation*}
  reveals that
  \begin{equation*}
    T p^*\pi \circ \paren{T \pi^*s - \pi^*J \circ T \pi^*s \circ \tilde j}
    =
    \paren{T s - J \circ T s \circ j} \circ T\pi.
  \end{equation*}
  This proves \autoref{Prop_HomogeneousAlmostComplexStructure_Pullback_CauchyRiemann}.
\end{proof}

\begin{cor}
  \label{Cor_JHolomorphicMapVsKernelElementOfPullback}
  Let $J \in \SpaceOfHomogeneousAlmostComplexStructures(V)$.
  Set $\fd \coloneqq \fd_J$.
  The following hold:
  \begin{enumerate}
  \item
    If $\pi \co \tilde C \to C$ is holomorphic and $s \in \ker \pi^*\fd$,
    then $u \coloneqq (p^*\pi) \circ s \co \tilde C \to V$ is $J$--holomorphic.
  \item
    If $u \co \tilde C \to V$ is $J$--holomorphic,
    then $\pi \coloneqq p \circ u$ is holomorphic and
    $s \coloneqq (\id_{\tilde C},u) \in \ker \pi^*\fd$.
    \qedhere
  \end{enumerate}
\end{cor}

\medskip

The above directly leads to the third perspective,
since $J$--holomorphic maps induce $J$--holomorphic cycles.
This is spelled out in detail in \cite[§4,§5]{Doan2018a}.
Here is a brief summary.

\begin{definition}
  \label{Def_JHolomorphicCycle}
  Let $X$ be a smooth manifold equipped with
  an almost complex structure $J$,
  a Hermitian form $\sigma$, and
  the Riemannian metric $g \coloneqq \sigma(\cdot,J\cdot)$.
  A \defined{$J$--holomorphic cycle} is
  a closed integral $2$--current $T \in \Hom\paren{\Omega_c^2(X),\R}$ which is semi-calibrated by $\sigma$ and has finite mass: $\bM(T) < \infty$.
\end{definition}

\begin{remark}
  \label{Rmk_HermitianFromAlmostRedHerring}
  The choice of a Hermitian form $\sigma$ is somewhat of a red herring:
  if $\sigma$, $\sigma'$ are Hermitian forms (with respect to $J$),
  then $T \in \Hom\paren{\Omega_c^2(X),\R}$ is a closed integral current which is semi-calibrated by $\sigma$ if and only if the same holds for $\sigma'$.
  However, the mass $\bM(T)$ depends on $\sigma$.
\end{remark}

\begin{remark}
  \label{Rmk_CycleHomologyClass}
  A $J$--holomorphic cycle $T$ with compact support $\supp(T)$ represents a homology class $[T] \in \rH_2(X,\Z)$.
  If $X$ is the total space of $V$,
  then $\rH_2(V,\Z) \iso \rH_2(C,\Z) \iso \Z$ is generated by $[0]$, the fundamental class of the zero section.
  Therefore, $[T]$ is determined by
  \begin{equation*}
    \deg(T) \coloneqq [T]/[0] \in \Z.
    \qedhere
  \end{equation*}
\end{remark}

\begin{prop}
  \label{Prop_JHolomorphicMapsDefineJHolomorphicCycles}
  Let $X$ be a smooth manifold equipped with an almost complex structure $J$ and a Hermitian form $\sigma$.
  If $C$ is a not necessarily closed Riemann surface and $u \co C \to X$ is a proper $J$--holomorphic map,
  then the closed integral $2$--current $T_u \in \Hom\paren{\Omega_c^2(X),\R}$ defined by
  \begin{equation*}
    T_u(\alpha) \coloneqq \int_C u^*\alpha
  \end{equation*}
  is a $J$--holomorphic cycle with
  \begin{equation*}
    \bM(T_u) = T_u(\sigma) \qandq
    \supp(T_u) = \im u;
  \end{equation*}
  moreover,
  if $C$ is closed,
  then $[T_u] = u_*[C]$.
  \qed
\end{prop}

\begin{theorem}[{\citet{DeLellis2017,DeLellis2017a,DeLellis2017b,DeLellis2017c}; cf.~\cite[Lemma 5.5]{Doan2018a}}]
  \label{Lem_EveryPseudoHolomorphicCycleArisesFromAPseudoHolomorphicMap}
  Let $X$ be a smooth manifold equipped with an almost complex structure $J$ and a Hermitian form $\sigma$.
  If $T$ is a $J$--holomorphic cycle,
  then there are a not necessarily closed Riemann surface $C$ and a proper $J$--holomorphic map $u \co C \to X$ with $T = T_u$;
  moreover: if $\supp T$ is compact, then $C$ is closed.
  \qed
\end{theorem}

\medskip

Here is the fourth, more involved, perspective on $2$--rigidity.
It is based on the following concept.

\begin{definition}
  \label{Def_RamifiedEuclideanLineBundle}
  A \defined{ramified Euclidean line bundle} over $C$
  consists of:
  \begin{enumerate}
  \item
    \label{Def_RamifiedEuclideanLineBundle_BranchLocus}
    a finite subset $\Br(\fl) \subset C$, the \defined{branching locus}, and
  \item
    \label{Def_RamifiedEuclideanLineBundle_Bundle}
    a Euclidean line bundle $\fl$ over $\mathring C \coloneqq C \setminus \Br(\fl)$
  \end{enumerate}
  such that:
  \begin{enumerate}[resume]
  \item
    \label{Def_RamifiedEuclideanLineBundle_Monodromy}
    the monodromy representation $\mu \co \pi_1(\mathring{C},*) \to \O(1) = \set{\pm 1}$ restricts to a non-trivial homomorphism on
    $\ker \paren{\pi_1(\mathring{C},*) \to \pi_1(\mathring{C} \cup \set{b},*)} \iso \Z$
    for every 
    $b \in \Br(\fl)$.
    \qedhere
  \end{enumerate}
\end{definition}

\begin{remark}
  \label{Rmk_EuclideanLineBundle_Monodromy}
  By Seifert--van Kampen,
  \begin{equation*}
    \pi_1(\mathring{C},*)
    \iso
    \Span{\alpha_1,\beta_1,\cdots,\alpha_g,\beta_g,x_1,\ldots,x_k \mid [\alpha_1,\beta_1]\cdots[\alpha_g,\beta_g] = x_1\cdots x_k }
  \end{equation*}
  with $g$ denoting the genus of $C$ and $k \coloneqq \#\Br(\fl)$.
  In particular,
  \autoref{Def_RamifiedEuclideanLineBundle_Monodromy} enforces that $\#\Br(\fl)$ is even.
\end{remark}

\begin{definition}
  \label{Def_RamifiedEuclideanLineBundle_Isomorphic}
  Two ramified Euclidean line bundles $\fl_1,\fl_2$ over $C$ are \defined{isomorphic} if $\Br(\fl_1) = \Br(\fl_2)$ and $\fl_1 \iso \fl_2$ as Euclidean line bundles over $\mathring{C}$.
\end{definition}

\begin{remark}
  \label{Rmk_EuclideanLineBundle_StiefelWhitney}
  If $\fl_1,\fl_2$ are Euclidean line bundles over $\mathring{C}$,
  then they are isomorphic if and only if their monodromy representations
  $\mu(\fl_i) \in \Hom(\pi_1(\mathring{C},*),\set{\pm 1})$
  or, equivalently, their first Stiefel--Whitney classes
  \begin{equation*}
    w_1(\fl_i) \in  \rH^1(\mathring{C},\Z/2\Z)
  \end{equation*}
  agree.
\end{remark}

\begin{prop}
  \label{Prop_RealCauchyRiemannOperator_Twist}
  Let $\fd$ be a real Cauchy--Riemann operator on $V$.
  Let $\fl$ be a ramified Euclidean line bundle over $C$.
  There is a unique real Cauchy--Riemann operator
  \begin{equation*}
    \fd^\fl \co \Gamma(\mathring{C},V\otimes\fl) \to \Omega^{0,1}(\mathring{C},V\otimes\fl)
  \end{equation*}
  on $V \otimes \fl$ over $\mathring{C}$,
  the \defined{twist} of $\fd$ by $\fl$,
  satisfying
  \begin{equation*}
    \fd^\fl(s\otimes t)
    = \fd s \otimes t + (s \otimes \nabla t)^{0,1}
  \end{equation*}
  for every $s \in \Gamma\paren{\mathring{C},V}$ and $t \in \Gamma\paren{\mathring{C},\fl}$.
  Here $\nabla$ denotes the unique orthogonal covariant derivative on $\fl$.
\end{prop}

\begin{proof}
  By locality, it suffices to consider $\fl = \ubR$.
  In this case $V \otimes \fl = V$ and $\fd^\fl = \fd$.
\end{proof}

\begin{remark}
  \label{Rmk_RealCauchyRiemannOperator_Twist_BoundedLinear}
  Evidently,
  the restriction of $\fd^\fl$ to $\Gamma_c\paren{\mathring{C},V\otimes\fl}$ extends to a bounded linear operator
  \begin{equation}
    \label{Eq_RealCauchyRiemannOperator_Twist_BoundedLinear}
    \fd^\fl \co W^{1,2}\Gamma\paren{\mathring{C},V\otimes\fl} \to L^2\Omega^{0,1}\paren{\mathring{C},V\otimes\fl}.
  \end{equation}
  Here $W^{1,2}$ and $L^2$ are with respect to (immaterial) choices of
  a Riemannian metric on $C$,
  a Hermitian inner product and a unitary covariant derivative on $V$; and
  the Euclidean inner product and unique orthogonal covariant derivative on $\fl$.
  In fact, $\fd^\fl$ is Fredholm; cf.~\autoref{Sec_IndexFormulaAndResidues}.
\end{remark}

\begin{prop}
  \label{Prop_RiemannSurface_DegreeTwo_+-}
  If $\fl$ is a ramified Euclidean line bundle over $C$,
  then the following hold:
  \begin{enumerate}
  \item
    \label{Prop_RiemannSurface_DegreeTwo_+-_Principal}
    The smooth map
    \begin{equation*}
      \mathring{\pi} \co \mathring{\tilde C} \coloneqq \set{ (x,v) \in \fl : \abs{v} = 1 } \to \mathring{C}
    \end{equation*}
    is a $\set{\pm 1}$--principal covering map, and
    $\fl \iso \mathring{\tilde C} \times_{\set{\pm 1}} \R$.
  \item
    \label{Prop_RiemannSurface_DegreeTwo_+-_Extension}
    The covering map $\mathring{\pi}$ extends to a holomorphic map $\pi \co \tilde C \to C$ (unique up to unique biholomorphism).
  \end{enumerate}
  Moreover,
  every holomorphic map $\pi \co \tilde C \to C$ of degree two arises in this way.
\end{prop}

\begin{proof}
  Let $\fl$ be a ramified Euclidean line bundle over $C$.
  \autoref{Prop_RiemannSurface_DegreeTwo_+-_Principal} is obvious.
  To verify \autoref{Prop_RiemannSurface_DegreeTwo_+-_Extension} it suffices to consider the following ramified Euclidean line bundle over $D \coloneqq \set{ z \in \C : \abs{z} < 1 }$:
  \begin{equation}
    \label{Eq_MobiusBundle}
    \fm \coloneqq \set{ (z,w) \in \mathring{D} \times \C : w^2 \in \R_{\geq 0}\cdot z } \to \mathring{D} \coloneqq D \setminus \set{0}
  \end{equation}
  with the Euclidean inner product
  $\Inner{(z,w_1),(z,w_2)} \coloneqq w_1w_2 z^{-1}$.
  For $\fl = \fm$,
  \begin{equation*}
    \mathring{\tilde C} = \set{ (z,w) \in \mathring{D} \times \mathring{D} : w^2 = z } \qandq
    \mathring{\pi}(z,w) = w^2.
  \end{equation*}
  Since $\mathring{\tilde C} \iso \mathring{D}$ via $(z,w) \mapsto w$,
  the double cover $\mathring{\pi} \co \mathring{\tilde C} \iso \mathring{D} \to \mathring{D}$ extends to the holomorphic map $\pi \co D \to D, w \mapsto w^2$.

  Let $\pi \co \tilde C \to C$ be a holomorphic map of degree two.
  Denote by $\Br(\fl) \subset C$ the set of critical values of $\pi$.
  Set $\mathring{C} \coloneqq C \setminus \Br(\fl)$ and $\mathring{\tilde C} \coloneqq \pi^{-1}(\mathring{C})$.
  The restriction $\mathring{\pi} \coloneqq \pi|_{\mathring{\tilde C}} \co \mathring{\tilde C} \to \mathring{C}$ is a double covering;
  hence, a $\set{\pm 1}$--principal covering map.
  The associated Euclidean line bundle
  \begin{equation*}
    \fl \coloneqq \paren{\mathring{\tilde C} \times \R}/\set{\pm 1}
  \end{equation*}
  is a ramified Euclidean line bundle over $C$ with branching locus $\Br(\fl)$.
  Evidently, the above construction applied to $\fl$ yields $\pi \co \tilde C \to C$.
\end{proof}

\newcommand{\JetSpace}{J}
\begin{remark}
  \label{Rmk_SquareRoot}
  Assume the situation of \autoref{Prop_RiemannSurface_DegreeTwo_+-}.
  Let $b \in \Br(\fl)$.
  The Euclidean line bundle $\fl$ determines a square root of $T_b^*C$:
  \begin{enumerate}
  \item
    \label{Rmk_SquareRoot_TildeC}
    Set $\set{\tilde b} \coloneqq \pi^{-1}(b)$.
    Denote by $\sO_b$ and $\sO_{\tilde b}$ the local ring of germs of holomorphic functions at $b \in C$ and $\tilde b \in \tilde C$ respectively.
    Denote by $\fm_b \subset \sO_b$ and $\fm_{\tilde b} \subset \sO_{\tilde b}$ the maximal ideals (of germs of holomorphic functions vanishing at $b \in C$ and $\tilde b \in  \tilde C$ respectively).
    The holomorphic map $\pi$ induces an injective ring homomorphism $\pi^* \co \sO_b \into \sO_{\tilde b}$ satisfying
    $\pi^*\fm_b = \fm_{\tilde b}^2$.
    Therefore,
    \begin{equation*}
      T_b^*C
      =
      \fm_b \otimes_{\sO_b} \paren{\sO_b/\fm_b}
      =
      \frac{\fm_b}{\fm_b^2}
      \iso
      \paren[\Big]{\frac{\fm_{\tilde b}}{\fm_{\tilde b}^2}}^{\otimes 2} = \paren{T_{\tilde b}^*\tilde C}^{\otimes 2}.
    \end{equation*}
  \item
    \label{Rmk_SquareRoot_12JetSpace}
    Set
    \begin{equation*}
      (\fl\otimes\C(-b))_b \coloneqq \varinjlim_{U \ni b} L^\infty\rH^0\paren{U\setminus\set{b},\fl\otimes \C}
    \end{equation*}
    with $L^\infty\rH^0$ denoting bounded holomorphic sections.
    (In the situation discussed in the proof of \autoref{Prop_RiemannSurface_DegreeTwo_+-},
    such a section has an asymptotic expansion
    $a_0 z^{1/2} + a_1 z^{3/2} + \ldots$ at $0$.)
    $(\fl\otimes\C(-b))_b$ is an $\sO_b$--module.
    Define the \defined{$\tfrac12$--jet space of $\fl\otimes\C$ at $b$} by
    \begin{equation*}
      J_b^{1/2}(\fl\otimes\C) \coloneqq (\fl\otimes\C(-b))_b \otimes_{\sO_b} \paren{\sO_b/\fm_b}.
    \end{equation*}
    The canonical isomorphism $\fl^{\otimes 2} \iso \ubR$ induces a canonical isomorphism $(\fl\otimes\C(-b))_b^{\otimes 2} \iso \fm_b$.
    Therefore, $\paren{J_b^{1/2}(\fl\otimes\C)}^{\otimes 2} \iso T_b^*C$.
  \end{enumerate}
  Of course, these constructions are related.
  The holomorphic map $\pi$ induces an $\sO_b$--module isomorphism $\pi^* \co \paren{\fl\otimes\C}_b \iso \fm_{\tilde b}$;
  therefore:
  \begin{equation*}
    J_b^{1/2}(\fl\otimes\C) \iso T_{\tilde b}\tilde C.
    \qedhere
  \end{equation*}
\end{remark}

\begin{remark}
  \label{Rmk_12JetSpace_Notation}
  The notation $(\fl\otimes\C(-b))_b$ is motivated by \autoref{Prop_X}.
\end{remark}

\begin{prop}
  \label{Prop_RealCauchyRiemannOperator_+-}
  Assume the situation of \autoref{Prop_RiemannSurface_DegreeTwo_+-}.
  Let $\fd$ be a real Cauchy--Riemann operator on $V$.
  The following hold:
  \begin{enumerate}
  \item
    \label{Prop_RealCauchyRiemannOperator_+-_Equivariant}
    The pull-back $\pi^*\fd$ is $\set{\pm 1}$--equivariant.
    In particular,
    \begin{equation*}
      \ker \pi^*\fd = (\ker \pi^*\fd)^+ \oplus (\ker \pi^*\fd)^-
    \end{equation*}
    with $(\cdot)^+$ and $(\cdot)^-$ denoting the invariant and anti-invariant subspaces respectively.
  \item
    \label{Prop_RealCauchyRiemannOperator_+-_Isomorphism_+}
    The isomorphism $\pi^* \co \Gamma(C,V) \iso \Gamma\paren{\tilde C,\pi^*V}^+$ induces an isomorphism
    \begin{equation*}
      \pi^* \co \ker \fd \iso \paren{\ker \pi^*\fd}^+.
    \end{equation*}
  \item
    \label{Prop_RealCauchyRiemannOperator_+-_Isomorphism_-}
    The inclusion $\mathring{\pi}_! \co \Gamma\paren{\tilde C,\pi^*V}^- \incl
    W^{1,2}\Gamma\paren{\mathring{C},V\otimes\fl}$ induces an isomorphism
    \begin{equation*}
      \mathring{\pi}_! \co (\ker \pi^*\fd)^- \iso \ker \fd^\fl.
    \end{equation*}
    Here (and in the following) $\ker \fd^\fl$ denotes the kernel of the extension \autoref{Eq_RealCauchyRiemannOperator_Twist_BoundedLinear}.
  \end{enumerate}
\end{prop}

\begin{proof}
  \autoref{Prop_RealCauchyRiemannOperator_+-_Equivariant} and \autoref{Prop_RealCauchyRiemannOperator_+-_Isomorphism_+} are immediate from \autoref{Prop_RealCauchyRiemanOperator_Pullback}.

  A moment's thought shows that
  if $s \in (\ker \fd)^-$,
  then its descend $\mathring{\pi}_! s \in W^{1,2}\Gamma(\mathring{C},V\otimes \fl)$ and satisfies $\fd^\fl \mathring{\pi}_! s = 0$;
  therefore, $\mathring{\pi}_! s \in \ker \fd^\fl$.
  Hence, $\mathring{\pi}_!$ induces an inclusion $\mathring{\pi}_! \co (\ker \pi^*\fd)^- \incl \ker \fd^\fl$.

  Let $s \in \ker \fd^\fl$.
  By elliptic regularity,
  $s \in \Gamma(\mathring{C},V\otimes \fl)$;
  indeed, for $B_R(x) \subset \mathring{C}$ with $R \ll 1$
  \begin{equation*}
    \abs{s}^2(x) \lesssim R^{-2} \int_{B_R(x)} \abs{s}^2.
  \end{equation*}
  It follows from \autoref{Def_RamifiedEuclideanLineBundle}~\autoref{Def_RamifiedEuclideanLineBundle_Monodromy} that
  if $B_R(b) \subset C$ with $B_R(b) \cap \Br(\fl) = \set{b}$ and $R \ll 1$,
  then
  \begin{equation*}
    R^{-2} \int_{\del B_R(b)} \abs{s}^2 \lesssim \int_{\del B_R(b)} \abs{\nabla s}^2,
  \end{equation*}
  and, therefore,
  \begin{equation}
    \label{Eq_DerivativeControlsValueAtPuncture}
    \int_{B_R(b)} d(\cdot,b)^{-2}\abs{s}^2 \lesssim \int_{B_R(b)} \abs{\nabla s}^2.
  \end{equation}
  Consequently, $s \in L^\infty\Gamma(\mathring{C},V\otimes \fl)$.

  Evidently, $\mathring{\pi}^*s$ satisfies $(\pi^*\fd)(\mathring{\pi}^*s) = 0$ on $\mathring{\tilde C}$.
  Since $\mathring{\pi}^*s \in L^\infty\Gamma(\tilde C,\pi^*V)$, it also satisfies $(\pi^*\fd)(\mathring{\pi}^*s) = 0$ in the sense of distributions.
  Therefore, by elliptic regularity, $\mathring{\pi}^*s \in \paren{\ker \pi^*\fd}^-$.
  This finishes the proof of \autoref{Prop_RealCauchyRiemannOperator_+-_Isomorphism_-}.
\end{proof}

\begin{remark}
  \label{Rmk_W12=W12_0}
  As a consequence of \autoref{Eq_DerivativeControlsValueAtPuncture},
  $W^{1,2}\Gamma(\mathring{C},V\otimes \fl) = W_0^{1,2}\Gamma(\mathring{C},V\otimes \fl)$.
\end{remark}

To summarise the above discussion,
the following four are equivalent pieces of evidence for the failure of $2$--rigidity of a real Cauchy--Riemann operator $\fd = \fd_J$ on $V$:
\begin{enumerate}
\item
  A closed Riemann surface $\tilde C$, a holomorphic map $\pi \co \tilde C \to C$ of degree at most two, and $s \in \ker \pi^*\fd \setminus \set{0}$.
\item
  A closed Riemann surface $\tilde C$ and a $J$--holomorphic map $u \co \tilde C \to V$ which does not factor through the zero section and with $\pi \circ u$ of degree at most two.
\item
  A $J$--holomorphic cycle $T$ in $V$ whose
  support is not contained in the zero section and with $\deg(T)$ at most two.
\item
  A ramified Euclidean line bundle $\fl$ over $C$ and $s \in \ker \fd^\fl\setminus\set{0}$.
\end{enumerate}


\subsection{$\sW$ is a proper wall}
\label{Sec_TheWallIsProper}

Recall the standing assumption that $C$ is closed and connected.
Henceforth, assume that
\begin{equation*}
  2\deg V + \rk_\C V \cdot \chi(C) = 0.
\end{equation*}
The aim of the upcoming sections is to prove that \defined{wall of failure of $2$--rigidity},
defined by
\begin{equation*}
  \TheWall
  \coloneqq
  \set[\big]{
    \fd \in \SpaceOfRealCauchyRiemannOperators(V)
    :
    \fd \textnormal{ fails to be $2$--rigid}
  },
\end{equation*}
is a proper wall in the sense of \autoref{Def_ProperWall}. 

Of course, $\SpaceOfRealCauchyRiemannOperators(V)$ is not a Banach manifold but an affine space modelled on the Frechet space $\Gamma\paren{C,\HOMT(V)}$;
see \autoref{Prop_RealCauchyRiemanOperator_AffineSpace}.
To rectify this,
choose $K \gg 1$ and set
\begin{equation*}
  C^K\SpaceOfRealCauchyRiemannOperators(V)
  \coloneqq
  \SpaceOfRealCauchyRiemannOperators(V) + C^K\Gamma\paren{C,\Hom_\C(\overline{TC},\C)},
\end{equation*}
and similarly define $C^K\SpaceOfRealCauchyRiemannOperators(V,\SerreOperator)$ and $C^K\SpaceOfHomogeneousAlmostComplexStructures(V)$.
The results discussed so far
continue to hold with these spaces---%
with minimal cosmetic modifications.
To ease notation,
henceforth, the prefix $C^K$ shall be omitted;
in other words, $\SpaceOfRealCauchyRiemannOperators(V)$, $\SpaceOfRealCauchyRiemannOperators(V,\SerreOperator)$, and $\SpaceOfHomogeneousAlmostComplexStructures(V)$ are \emph{redefined} to be the aforementioned spaces.
With this technicality out of the way,
the aim of the remainder of this section is to prove the following.

\begin{theorem}
  \label{Thm_WallOfFailureOf2RigidityIsProper}
  The wall of failure of $2$--rigidity $\TheWall$ is a proper wall in $ \SpaceOfRealCauchyRiemannOperators(V)$.
\end{theorem}

The upcoming discussion also establishes the following variant.

\begin{theorem}
  \label{Thm_WallOfFailureOf2RigidityIsProper+SelfAdjoint}
  If $\SerreOperator \co V \iso V^\dagger$ is an isomorphism,
  then $\TheWall \cap \SpaceOfRealCauchyRiemannOperators(V,\SerreOperator)$ is a proper wall in
  $\SpaceOfRealCauchyRiemannOperators(V,\SerreOperator)$.
\end{theorem}

\subsubsection{$\sW$ is closed}
\label{Sec_TheWallIsClosed}

\begin{prop}
  \label{Prop_TheWallIsClosed}
  The wall of failure of $2$--rigidity $\sW \subset \SpaceOfRealCauchyRiemannOperators(V)$ is closed.
\end{prop}

The proof of \autoref{Prop_TheWallIsClosed} relies on the following.

\begin{theorem}[{\citet[\S 8]{Federer1960}; \cites[\S 4.2.17]{Federer1969}[Theorem 27.3]{Simon1983}{White1989}}]
  \label{Thm_FedererFlemming}
  Let $(X,g)$ be a Riemannian manifold.
  Let $(T_n)$ be a sequence of closed integral $k$--currents.
  If there is a $\Lambda > 0$ such that
  \begin{equation*}
    \bM(T_n) \leq \Lambda,
  \end{equation*}
  then a subsequence of 
  $(T_n)$ converges to a closed integral $k$--current $T_\infty$ in the weak--$*$--topology.
  \qed
\end{theorem}

\begin{proof}[Proof of \autoref{Prop_TheWallIsClosed}]
  Suppose that $(\fd_n) \in \sW^\N$ converges to $\fd_\infty \in \SpaceOfRealCauchyRiemannOperators(V)$.
  For $n \in \N\cup\set{\infty}$ set $J_n \coloneqq J_{\fd_n}$.
  Define $\rho^2 \in C^\infty(V)$ by $\rho^2(v) \coloneqq \abs{v}^2$ (with respect to the chosen Hermitian metric).
  For every $n \in \N$
  choose a $J_n$--holomorphic cycle $T_n$ in $V$ whose support is not contained in the zero section and with $\deg(T_n)$ at most two;
  and, moreover:
   \begin{equation*}
    \max_{\supp T_n} \rho^2 = 1.
  \end{equation*}

  Set $U \coloneqq \set{ v \in V \co \abs{v} < 2 }$.
  Choose a Kähler form $\omega_C$ on $C$.
  For $\Lambda \gg 1$,
  for every $n \in \N \cup\set{\infty}$,
  \begin{equation*}
    \omega_n \coloneqq \Lambda \cdot \omega_C  - \tfrac14 \rd (\rd \rho^2 \circ J_n) \in \Omega^2(U)
  \end{equation*}
  is symplectic and tames $J_n|_U$;
  that is: $\sigma_n \coloneqq \frac12\sqparen{\omega_n + \omega_n(J_n\cdot,J_n\cdot)}$ is a Hermitian form;
  cf.~\cite[Proof of Lemma 3.6~(5)]{Doan2018a}.
  Since $T_n$ is $J_n$--holomorphic,
  $\bM(T_n) = T_n(\sigma_n)$;
  therefore,
  \begin{equation*}
    \bM(T_n) = T_n(\sigma_n) = T_n(\omega_n) \leq 2\Lambda \Inner{[C],[\omega_C]}.
  \end{equation*}

  By \autoref{Thm_FedererFlemming},
  after passing to a subsequence,
  $(T_n)$ converges to a closed integral $2$--current $T_\infty$ in the weak--$*$--topology. 
  Since $J_n \to J_\infty$ (and $\sigma_n \to \sigma_\infty$),
  $T_\infty$ is a $J_\infty$--holomorphic cycle.
  Moreover, $T_\infty$ is of degree
  \begin{equation*}
    \deg(T_\infty) = T_\infty(\omega_C) = \lim_{n \to \infty} T_n(\omega_C) = \lim_{n \to \infty} \deg(T_n)
  \end{equation*}
  at most two.
  By the monotonicity formula (cf.~\cite[Lemma 5.6]{Doan2018a}),
  $(\supp T_n)$ converges to $\supp T_\infty$ in the Hausdorff distance;
  hence: the support of $T_\infty$ is not contained in the zero section.  
  Therefore, $\fd_\infty \in \sW$.
\end{proof}


\subsubsection{Fredholm extensions and index formulae}
\label{Sec_IndexFormulaAndResidues}

This section provides an elementary and self-contained proof of the following result and its higher regularity analogue \autoref{Prop_HigherRegularityFredholm}.
Of course,
these results could be derived from \cite[Theorem 4.1]{Wendl2016} or \cite[Proposition 2.8.6]{Doan2018}.
They are needed in the construction of the Fredholm maps $\pi$, $\nu$, and $\sigma$ in \autoref{Def_ProperWall}.

\begin{theorem}
  \label{Thm_Index}
  If
  $\fd$ is a real Cauchy--Riemann operator on $V$ and  
  $\fl$ is a ramified Euclidean line bundle over $C$,
  then
  $\fd^\fl \co W^{1,2}\Gamma\paren{\mathring{C},V\otimes\fl} \to L^2\Omega^{0,1}\paren{\mathring{C},V\otimes\fl}$ is Fredholm with
  \begin{equation*}
    \ind \fd^\fl = - \# \Br(\fl) \cdot \rk_\C V.
  \end{equation*}
\end{theorem}

\begin{remark}
  \label{Rmk_Index}
  \autoref{Thm_Index} assumes $2\deg V + \rk_\C V \cdot \chi(C) = 0$.
  The operator $\fd^\fl$ continues to be Fredholm if this assumption is dropped,
  but $\ind \fd^\fl = 2\deg V + \rk_\C V \cdot \chi(C) - \# \Br(\fl) \cdot \rk_\C V$; see \autoref{Thm_Index+}.
\end{remark}

\medskip

Throughout the remainder of the section,
assume the situation of \autoref{Thm_Index}.

\begin{prop}
  \label{Prop_LeftSemiFredholm}
  $\fd^\fl$ is left semi-Fredholm; that is:
  $\dim \ker \fd^\fl < \infty$ and $\im \fd^\fl \subset L^2\Omega^{0,1}(C,V \otimes \fl)$ is closed.
\end{prop}

\begin{proof}
  If $\nabla$ is the unitary covariant derivative on $V \otimes \fl$ from \autoref{Prop_RealCauchyRiemannOperator_Twist},
  then, by the Kähler identities,
  \begin{equation*}
    2(\nabla^{0,1})^*\nabla^{0,1}
    =
    \nabla^*\nabla - i\Lambda F_\nabla.
  \end{equation*}
  Therefore,
  \begin{equation*}
    \Abs{\nabla s}^2 = 2\Abs{\nabla^{0,1} s}_{L^2}^2 - \Inner{i\Lambda F_\nabla s,s}_{L^2}
  \end{equation*}
  for every $s \in W_0^{1,2}(\mathring{C},V\otimes \fl)$.
  Since $\fd^\fl = \nabla^{0,1} + \fa$ with $\fa \in L^\infty\Gamma(\mathring{C},\HOMT(V))$,
  \begin{equation*}
    \Abs{s}_{W^{1,2}} \lesssim \Abs{\fd^\fl s}_{L^2} + \Abs{s}_{L^2}.
  \end{equation*}
  
  By Rellich's Theorem and \autoref{Eq_DerivativeControlsValueAtPuncture},
  $W_0^{1,2}\Gamma(\mathring{C},V\otimes \fl) \incl L^2\Gamma(\mathring{C},V\otimes \fl)$ is compact.  
  Therefore, by \cite[Lemma 4.3.9]{BuhlerSalamon2018:FunctionalAnalysis},
  it follows that $\fd^\fl$ is left-semi-Fredholm.
\end{proof}

Consider the extension
\begin{equation*}
  \fd_{L^2}^\fl \co L^2\Gamma(\mathring{C},V \otimes \fl) \to W^{-1,2}\Omega^{0,1}(\mathring{C},V \otimes \fl).
\end{equation*}
Because of \autoref{Rmk_W12=W12_0},
$\fd_{L^2}^{\dagger,\fl}$ is the adjoint of $\fd^\fl$.
In particular,
\begin{equation*}
  \ker \fd_{L^2}^{\dagger,\fl} \iso \paren{\coker \fd^\fl}^*.
\end{equation*}
Therefore, it remains to prove that $\ker \fd_{L^2}^{\dagger,\fl}$ is finite-dimensional, and
to determine $\ind \fd^\fl = \dim \ker \fd^\fl - \dim\ker \fd_{L^2}^{\dagger,\fl}$ (for a judicious choice of $\fd \in \SpaceOfRealCauchyRiemannOperators(V)$).
The key to the former is to understand
\begin{equation*}
  \coker(\ker \fd^\fl \into \ker \fd_{L^2}^\fl).
\end{equation*}
To this end, it is useful to extract the leading order terms at $b \in \Br(\fl)$ of elements of the following superspace of $\ker \fd_{L^2}^\fl$:
\begin{equation*}
  \dom(\fd_{\max}^\fl)
  \coloneqq
  \set[\big]{ s \in L^2\Gamma(\mathring{C},V \otimes \fl) : \fd_{L^2}^\fl s \in L^2\Omega^{0,1}(\mathring{C},V \otimes \fl) }.
\end{equation*}
This is, of course, the domain of the maximal extension of $\fd^\fl$ considered as an unbounded operator $L^2\Gamma(\mathring{C},V \otimes \fl) \to L^2\Omega^{0,1}(\mathring{C},V \otimes \fl)$.

\newcommand{\ResidueSpace}{\fR}
\begin{definition}
  \label{Def_12ResidueSpace}
  Let $b \in \Br(\fl)$.
  Set
  \begin{equation*}
    (\fl\otimes\C)_b \coloneqq \varinjlim_{U \ni b} L^2\rH^0\paren{U\setminus\set{b},\fl\otimes \C}
  \end{equation*}
  with $L^2\rH^0$ denoting $L^2$ holomorphic sections.
  (In the situation discussed in the proof of \autoref{Prop_RiemannSurface_DegreeTwo_+-},
  such a section has an asymptotic expansion
  $a_0 z^{-1/2} + a_1 z^{1/2} + \ldots$ at $0$.)
  $(\fl\otimes\C)_b$ is an $\sO_b$--module.
  The \defined{$\tfrac12$--residue space of $\fl \otimes \C$ at $b$} is
  \begin{equation*}
    \fR_b(\fl) \coloneqq (\fl\otimes\C)_b \otimes_{\sO_b} \paren{\sO_b/\fm_b}.
    \qedhere
  \end{equation*}
\end{definition}

\begin{remark}
  \label{Rmk_12JetSpaceAnd12ResidueSpaceAreDual}
  The pairing $(\fl \otimes\C(-b))_b \otimes_{\sO_b} (\fl \otimes\C)_b \to \sO_b$ induces an isomorphism
  $\fR_b(\fl) \iso \paren{J_b^{1/2}(\fl\otimes\C)}^*$.
\end{remark}

\begin{prop}
  \label{Prop_Residue_Construction}
  Let $b \in \Br(\fl)$.      
  There is a unique linear map
  \begin{equation*}
    \Res_b \co \dom(\fd_{\max}^\fl)
    \to
    \ResidueSpace_b(\fl,V)
    \coloneqq
    \ResidueSpace_b(\fl) \otimes_\C V_b,
  \end{equation*}
  the \defined{residue} at $b$,
  such that the following holds:
  if
  $s \in \dom(\fd_{\max}^\fl)$,
  $\Res_b(s) = [\rho] \otimes_\C v$
  with $\rho \in L^2\rH^0\paren{\mathring U,\fl\otimes\C}$ and $v \in V_b$, and
   $\tau \co U \times V_b \to V|_U$ is an isometry with $\tau_b = \id_{V_b}$,
  then  
  \begin{equation*}
    s - \tau_*v \otimes_\C \rho
    \in
    W^{1,2}\Gamma\paren{\mathring{U}, V\otimes \fl}.
  \end{equation*}
\end{prop}

The proof reduces to the following model case.

\begin{prop}
  \label{Prop_2DModelOperator_LeadingTerm}
  Consider the ramified Euclidean line bundle $\fm$ over $D$ defined in \autoref{Eq_MobiusBundle}.
  Choose $z^{-1/2} \in \Gamma(\mathring{D},\C \otimes \fm)$.
  If $s \in L^2\Gamma(\mathring D,\C^r \otimes \fm)$ satisfies
  $\delbar^\fm s \in L^2\Gamma(\mathring D,\C^r \otimes \fm)$,
  then there is a unique $v \in \C^r$ such that
  \begin{equation*}
    s - v \otimes_\C z^{-1/2} \in W^{1,2}\paren{\tfrac12\mathring{D},\C^r \otimes \fm}.
  \end{equation*}
  Here $\tfrac12\mathring{D} \coloneqq \set{ z \in \C : 0 < \abs{z} < 1/2}$.
\end{prop}

The proof relies on the following elementary observation.

\begin{lemma}
  \label{Lem_1DModelOperator_LeadingTerm}
  Let $\lambda \in \Z + \frac12$.
  Let $\phi \in L^2((0,1),r \rd r)$ with $\paren{\partial_r - \lambda/r}\phi \in L^2((0,1),r \rd r)$.
  \begin{enumerate}
  \item
    If $\lambda = -1/2$,
    then there is a unique $a \in \R$ such that
    $\phi(r) = ar^{-1/2} + o(1)$ as $r \downarrow 0$.
  \item
    If $\lambda \neq -1/2$,
    then $\phi(r) = o(1)$ as $r \downarrow 0$.
  \end{enumerate}
\end{lemma}

\begin{proof}
  Set $\psi \coloneqq (\partial_r-\lambda/r)\phi$.
  There is a unique $a \in \R$ such that
  \begin{equation*}
    \tilde \phi(r)
    \coloneqq \phi(r) - ar^\lambda
    =
    \begin{cases}
      r^\lambda\int_0^r s^{-(\lambda+1)}\psi(s) \,s \rd s & \text{if}~\lambda < 0 \\
      -r^\lambda\int_r^1 s^{-(\lambda+1)}\psi(s) \,s \rd s & \text{if}~\lambda > 0.
    \end{cases}
  \end{equation*}
  
  If $\lambda < 0$,
  then, by Cauchy--Schwarz and monotone convergence,
  \begin{equation*}
    \abs{\tilde\phi(r)}^2 \leq \frac{1}{2\abs{\lambda}} \int_0^r \abs{\psi(s)}^2 \,s\rd s = o(1).
  \end{equation*}
  Moreover,
  if $\lambda \leq -1$,
  then $a = 0$ because $\phi \in L^2((0,1),r\rd r)$.
     
  If $\lambda > 0$, then,
  by Cauchy–Schwarz,
  for $r \leq \epsilon \leq 1$
  \begin{equation*}
    \abs{\tilde\phi(r)}^2
    \leq
    \frac{1}{\lambda}
    \int_0^\epsilon \abs{\psi(s)}^2 \,s\rd s
    +
    \frac{(r/\epsilon)^{2\lambda}}{\lambda} \int_\epsilon^1 \abs{\psi(s)}^2 \,s\rd s
    \eqqcolon \rI(\epsilon) + \rII(r,\epsilon).
  \end{equation*}
  By monotone convergence,
  $\lim_{\epsilon \downarrow 0} \rI(\epsilon) = 0$.
  Evidently,
  $\lim_{r \downarrow 0} \rII(r,\epsilon) = 0$.
  Therefore, $\tilde\phi(r) = o(1)$ as $r \downarrow 0$.
\end{proof}

\begin{proof}[Proof of \autoref{Prop_2DModelOperator_LeadingTerm}]
  In polar coordinates $z = re^{i\alpha}$,
  \begin{equation*}
    \delbar^\fm f = (\del_r + ir^{-1}\cdot\nabla_\alpha^\fm) f \cdot \tfrac12(\rd r - ir\cdot \rd \alpha).
  \end{equation*}  
  By Fubini's theorem,
  \begin{equation*}
    L^2\Gamma(\mathring D,\C^r \otimes \fm) = L^2((0,1),r\rd r;L^2\Gamma(S^1,\C^r \otimes \fm)).
  \end{equation*}
  By Fourier analysis,
    \begin{equation*}
    s = \sum_{\lambda \in \Z + \frac12} s_\lambda(r) e^{i\lambda\alpha}
  \end{equation*}
  with $s_\lambda, (\del_r + \lambda/r)s_\lambda \in L^2\paren{(0,1),r\rd r;\C^r}$.
  
  By \autoref{Lem_1DModelOperator_LeadingTerm},
  \begin{equation*}
    s_{-1/2} = v r^{-1/2} + o(1),
  \end{equation*}
  and $s_\lambda = o(1)$ for every $\lambda \neq -\frac12$.  
  Since $v \otimes_\C z^{-1/2} \in \ker \delbar^\fm$,
  $s$ may be replaced by $s-v\otimes_\C z^{-1/2}$.
  Therefore,
  without loss of generality $s_\lambda(0) = s_\lambda(1) = 0$ for every $\lambda \in \Z + \frac12$.

  It remains to prove that $s \in W^{1,2}\Gamma\paren{\mathring{D},\C^r \otimes \fm}$.
  By direct computation,
  \begin{align*}
    2\int_{\mathring D} \abs{\delbar^\fm s}^2 \,r \rd r \rd \alpha
    &=
      \sum_{\lambda \in \Z + \frac12} \int_0^1
      \abs{(\del_r+\tfrac{\lambda}{r}) s_\lambda}^2
      r \rd r \\
    &=
      \sum_{\lambda \in \Z + \frac12} \int_0^1
      \paren[\big]{
      \abs{\del_r s_\lambda}^2 r
      +
      \tfrac{\lambda^2}{r} \abs{s_\lambda}^2
      +
      \lambda \del_r\abs{s_\lambda}^2
      }
      \rd r \\
    &=
      \sum_{\lambda \in \Z+\frac12} \int_0^1
      \paren{\abs{\del_r s}^2 + \abs{\nabla_\alpha^\fm s}^2 } \, r\rd r \\
    &=
      \int_{\mathring D} \abs{\nabla^\fm s}^2 \,r \rd r \rd \alpha.
      \qedhere
  \end{align*}
\end{proof}

\begin{proof}[Proof of \autoref{Prop_Residue_Construction}]
  Since $\fd = \delbar + \fa$ with $\abs{\fa} \in L^\infty(C)$,
  the assertion follows from \autoref{Prop_2DModelOperator_LeadingTerm}.
\end{proof}

The maps $\Res_b$ constructed in \autoref{Prop_Residue_Construction} assemble into
\begin{equation*}
  \Res\co \mathrm{dom}(\fd_{\max}^\fl) \to \ResidueSpace(\fl,V) \coloneqq \bigoplus_{b \in B}. \ResidueSpace_b(\fl,V)
\end{equation*}
The following is immediate from the construction.

\begin{cor}
  \label{Cor_Residue_Kernel}
  The sequence
  \begin{equation*}
    W^{1,2}\Gamma\paren{\mathring{C}, V\otimes\fl}
    \incl    
    \dom(\fd_{L^2}^\fl)
    \stackrel{\Res}{\onto}
    \ResidueSpace(\fl,V)
  \end{equation*}
  is exact.
  In particular,
  \begin{equation*}
    \ker \fd^\fl = \ker \paren{\Res \co \ker \fd_{L^2}^\fl \to \ResidueSpace(\fl,V)}.
    \qedhere
  \end{equation*}
\end{cor}

\begin{proof}[Proof of \autoref{Thm_Index}: Fredholm property]
  By \autoref{Prop_LeftSemiFredholm} and the discussion following its proof,
  to prove that $\fd^\fl$ is Fredholm
  it remains to establish that $\ker \fd_{L^2}^{\dagger,\fl}$ is finite-dimensional.
  This, however, is a consequence of \autoref{Cor_Residue_Kernel} since $\ResidueSpace(\fl,V)$ and $\ker \fd^{\dagger,\fl}$ are finite-dimensional.
\end{proof}

The proof of the index formula assuming $V \iso V^\dagger$ relies on the following.

\begin{prop}
  \label{Thm_Index_SelfAdjoint}
  Let $\SerreOperator \co V \iso V^\dagger$ be an isomorphism.
  \begin{enumerate}
  \item
    \label{Thm_Index_SelfAdjoint_Symplectic}
    There is a non-degenerate alternating form $G \in \Hom(\Wedge^2\ResidueSpace(\fl,V),\R)$ on $\ResidueSpace(\fl,V)$,
    the \defined{Green's form},
    such that
    \begin{equation*}
      \int_C \Re\Inner{\paren{\SerreOperator\otimes\one}\circ \fd^\fl s,t} - \Re\Inner{s,\paren{\SerreOperator\otimes\one}\circ \fd^\fl t}
      =
      G(\Res(s),\Res(t))
    \end{equation*}
    for every $s,t \in \dom(\fd_{L^2}^\fl)$.
  \item
    \label{Thm_Index_SelfAdjoint_Lagrangian}
    The subspace $\im \paren{\Res \co \ker \fd_{L^2}^\fl \to \ResidueSpace(\fl,V)} \subset \ResidueSpace(\fl,V)$ is Lagrangian with respect to $G$.
  \end{enumerate}

\end{prop}

\begin{proof}
  The unbounded operator 
  $\paren{\SerreOperator\otimes\one}\circ \fd^\fl \co W^{1,2}\Gamma\paren{\mathring{C}, V\otimes\fl} \subset L^2\Gamma\paren{\mathring{C}, V\otimes\fl} \to L^2\Omega^{0,1}\paren{\mathring{C}, V^\dagger\otimes\fl}$
  is symmetric (with respect to the perfect pairings induced by \autoref{Eq_PerfectPairing}) and closed.
  Its maximal extension is
  $\paren{\SerreOperator\otimes\one}\circ \fd^\fl \co \dom(\fd_{L^2}^\fl) \subset L^2\Gamma\paren{\mathring{C}, V\otimes\fl} \to L^2\Omega^{0,1}\paren{\mathring{C}, V^\dagger\otimes\fl}$.

  By \cite[Exercise 2.17]{McDuff1998},
  the \defined{Green's form} $\hat G \in \Hom(\Lambda^2\dom(\fd_{L^2}^\fl),\R)$ defined by
  \begin{equation*}
    \hat G(s,t) \coloneqq \int_C \Re\Inner{\paren{\SerreOperator\otimes\one}\circ \fd^\fl s,t} - \Re\Inner{s,\paren{\SerreOperator\otimes\one}\circ \fd^\fl t}
  \end{equation*}
  descends to a symplectic form on the \defined{Gelfand--Robbin quotient}
  \begin{equation*}
    {\dom(\fd_{L^2}^\fl)} / {W^{1,2}\Gamma\paren{\mathring{C}, V\otimes\fl}}
    \stackrel{\Res}{\iso} \ResidueSpace(\fl,V).
  \end{equation*}
  This proves \autoref{Thm_Index_SelfAdjoint_Symplectic}.

  It immedately follows that $\im \paren{\Res \co \ker \fd_{L^2}^\fl \to \ResidueSpace(\fl,V)}$ is isotropic.  
  To prove that it is coisotropic,
  let $v \in \ResidueSpace(\fl,V)$ such that $G(v,w) = 0$
  for every $w \in \im \paren{\Res \co \ker \fd_{L^2}^\fl \to \ResidueSpace(\fl,V)}$.
  Construct $s_0 \in \dom(\fd_{L^2}^\fl)$ with $\Res(s_0) = v$.
  Since
  \begin{equation*}
    \hat G(s_0,t) = \int_C \Re\Inner{\paren{\SerreOperator\otimes\one}\circ \fd^\fl s_0,t} = 0
  \end{equation*}
  for every $t \in \ker \fd_{L^2}^\fl$,
  $\fd_{L^2}^\fl s_0 \in \im \fd^\fl$.
  Choose $s_1 \in W_0^{1,2}\Gamma(\mathring{C},V\otimes\fl)$ with $\fd^\fl s_1 = -\fd_{L^2}^\fl s_0$.
  Finally,
  $s \coloneqq s_0+s_1 \in \ker \fd_{L^2}^\fl$ and $\Res(s) = \Res(s_0) = v$.
  This proves \autoref{Thm_Index_SelfAdjoint_Lagrangian}.  
\end{proof}

\begin{proof}[Proof of \autoref{Thm_Index}: index formula]
  Choose $\SerreOperator \co V \iso V^\dagger$.
  By \autoref{Prop_RealCauchyRiemanOperator_AffineSpace} and \autoref{Prop_RealCauchyRiemanOperator_SelfAdjoint_AffineSpace},
  it suffices to prove the index formula for $\fd \in \SpaceOfRealCauchyRiemannOperators(V,\SerreOperator)$.
  In this case,
  \begin{align*}
    \ind \fd^\fl
    &=
      \dim\ker\fd^\fl - \dim\ker\fd_{L^2}^\fl \\
    &=
      -\dim \im \paren{\Res \co \ker \fd_{L^2}^\fl \to \ResidueSpace(\fl,V)} = -\frac12 \dim\ResidueSpace(\fl,V) = -\#\Br(\fl) \cdot \rk_\C V
  \end{align*}
  by \autoref{Thm_Index_SelfAdjoint}.
\end{proof}

For parts of the discussion in \autoref{Sec_ConstructionOfSigma}
a higher regularity version of \autoref{Thm_Index} is necessary.
It turns out that the correct generalisation of $W^{1,2}$ is not $W^{k,2}$ but instead the following variant.

\begin{definition}
  \label{Def_WK2Cyl}
  Denote by $r \co \mathring{C} \to (0,\infty)$ the distance to $\Br(\fl)$.
  Set $g_{\cyl} \coloneqq r^{-2}g$.
  Let $k \in \N$.
  For $s \in W_{\loc}^{k,2}\Gamma\paren{\mathring{C},V \otimes \fl}$ set
  \begin{equation*}
    \Abs{s}_{W_\cyl^{k,2}} \coloneqq \paren*{\sum_{\ell=0}^k \int_{\mathring C} \abs{\nabla^\ell s}_{g_\cyl}^2 \vol_{g_\cyl}}^{1/2}.
  \end{equation*}
  This defines a complete norm on
  \begin{equation*}
    W_\cyl^{k,2}\Gamma\paren{\mathring{C},V \otimes \fl}
    \coloneqq
    \set*{
      s \in W_{\loc}^{k,2}\Gamma\paren{\mathring{C},V \otimes \fl}
      :
      \Abs{s}_{W_\cyl^{k,2}} < \infty
    }.
  \end{equation*}
  Similarly, define $W_\cyl^{k,2}\Omega^{0,1}\paren{\mathring{C},V \otimes \fl}$ using the Euclidean inner product on $\overline{K}_C$ induced by $g_\cyl$.
  \qedhere
\end{definition}

\begin{remark}
  \label{Rmk_GCyl}
  If $r = e^{-t}$, then
  \begin{equation*}
    \rd r^2 + r^2 \rd \alpha^2 = r^2(\rd t^2 + \rd \alpha^2).
  \end{equation*}
  Therefore, with respect to $g_\cyl$ every puncture $b \in \Br(\fl)$ corresponds to an asymptotically cylindrical end of $\mathring{C}$
\end{remark}

The following is an immediate consequence of \autoref{Eq_DerivativeControlsValueAtPuncture} and conformal invariance.

\begin{prop}
  \label{Prop_W12=W12Cyl}
  $W^{1,2}\Gamma\paren{\mathring{C},V \otimes \fl} = W_\cyl^{1,2}\paren{\mathring{C},V \otimes \fl}$
  and
  $L^2\Omega^{0,1}\paren{\mathring{C},V \otimes \fl} = L_\cyl^2\Omega^{0,1}\paren{\mathring{C},V \otimes \fl}$.  
  \qed
\end{prop}

The bounded operator $\fd^\fl \co W^{1,2}\Gamma\paren{\mathring{C},V\otimes\fl} \to L^2\Omega^{0,1}\paren{\mathring{C},V\otimes\fl}$ (co)restricts to a bounded operator
\begin{equation*}
  \fd_{W_\cyl^{k+1,2}}^\fl
  \co
  W_\cyl^{k+1,2}\Gamma\paren{\mathring{C},V \otimes \fl}
  \to
  W_\cyl^{k,2}\paren{\mathring{C},V \otimes \fl}.
\end{equation*}

\begin{prop}
  \label{Prop_Regularity}
  For every $k \in \set{0,\ldots,K}$
  if $s \in L^2\Gamma\paren{\mathring{C}, V\otimes \fl}$,
  $\fd^\fl s \in W_\cyl^{k,2}\Gamma\paren{\mathring{C}, V\otimes \fl}$, and
  $\Res(s) = 0$,
  then
  $s \in W_\cyl^{k+1,2}\Gamma\paren{\mathring{C}, V\otimes \fl}$ and
  \begin{equation*}
    \Abs{s}_{W_\cyl^{k+1,2}}
    \lesssim_k
    \Abs{\fd^\fl s}_{W_\cyl^{k,2}}
    +
    \Abs{s}_{L^2}.
  \end{equation*}
\end{prop}

\begin{proof}
  For $k = 0$,
  this is a consequence of \autoref{Prop_Residue_Construction};
  in particular:
  \begin{equation*}
    \Abs{s}_{L_\cyl^2} \lesssim \Abs{\fd^\fl s}_{L_\cyl^2} + \Abs{s}_{L^2}.
  \end{equation*}  
  By interior elliptic regularity and estimates,
  if $s \in L_\cyl^2\Gamma\paren{\mathring{C}, V\otimes \fl}$ and
  $\fd^\fl s \in W_\cyl^{k,2}\Omega^{0,1}\paren{\mathring{C}, V\otimes \fl}$,
  then $s \in W_\cyl^{k+1,2}\Gamma\paren{\mathring{C}, V\otimes \fl}$ and  
  \begin{equation*}
    \Abs{s}_{W_\cyl^{k+1,2}}
    \lesssim_k
    \Abs{\fd^\fl s}_{W_\cyl^{k,2}}
    +
    \Abs{s}_{L_\cyl^2}.
  \end{equation*}
  This proves the assertion.
\end{proof}

\begin{theorem}
  \label{Prop_HigherRegularityFredholm}
  Let $k \in \set{0,\ldots,K}$.
  If
  $\fd$ is a real Cauchy--Riemann operator on $V$ and
  $\fl$ is a ramified Euclidean line bundle over $C$,
  then
  $\fd_{W_\cyl^{k+1,2}}^\fl$ is Fredholm with
  \begin{equation*}
    \ind \fd_{W_\cyl^{k+1,2}}^\fl = - \# \Br(\fl) \cdot \rk_\C V;
  \end{equation*}
  moreover,
  \begin{equation*}
    \ker \fd_{W_\cyl^{k+1,2}}^\fl= \ker \fd^\fl
  \end{equation*}
  and the map
  \begin{equation*}
    \coker \fd_{W_\cyl^{k+1,2}}^\fl \to \coker \fd^\fl \iso \paren{\ker \fd_{L^2}^{\dagger,\fl}}^*
  \end{equation*}
  is an isomorphism.
\end{theorem}

\begin{proof}
  By \autoref{Prop_Regularity},
  $\ker \fd_{W_\cyl^{k+1,2}}^\fl= \ker \fd^\fl$;
  moreover:
  the linear map
  \begin{equation*}
    \frac{W^{1,2}\Gamma\paren{\mathring{C},V\otimes \fl}}{W_\cyl^{k+1,2}\Gamma\paren{\mathring{C},V\otimes \fl}}
    \to
    \frac{L^2\Omega^{0,1}\paren{\mathring{C},V\otimes \fl}}{W_\cyl^{k,2}\Omega^{0,1}\paren{\mathring{C},V\otimes \fl}}    
  \end{equation*}
  induced by $\fd^\fl$ is injective.
  Therefore, by the Snake Lemma,
  the canonical map
  \begin{equation*}
    \coker \fd_{W_\cyl^{k+1}}^\fl \to \coker \fd^\fl
  \end{equation*}
  is injective.
  
  Since $W_\cyl^{k,2}\Gamma\paren{\mathring{C}, V \otimes \fl}$ is dense in $L^2\Gamma\paren{\mathring{C}, V \otimes \fl}$,
  the map
  \begin{equation*}
    W_\cyl^{k,2}\Gamma\paren{\mathring{C}, V \otimes \fl} \to \paren{\ker \fd_{L^2}^{\dagger,\fl}}^*
  \end{equation*}
  is surjective.
  Since it factors through $\coker \fd_{W_\cyl^{k+1,2}}^\fl \to \coker \fd^\fl$,
  the latter must be surjective.  
\end{proof}


\subsubsection{The space of ramified Euclidean line bundles}
\label{Sec_SpaceOfRamifiedEuclideanLineBundles}

The following preparation is needed in the construction of the Banach manifolds $\sE$, $\sN$, and $\sS$ which appear in \autoref{Def_ProperWall}.

\begin{definition}
  \label{Def_SpaceOfRamifiedEuclideanLineBundles}
  The \defined{space of ramified Euclidean line bundles} over $C$ is constructed as follows:
  \begin{enumerate}
  \item
    \label{Def_SpaceOfRamifiedEuclideanLineBundles_ConfigurationSpace}
    The \defined{configuration space} is the smooth manifold
    \begin{equation*}
      \ConfigurationSpace(C) \coloneqq \coprod_{k \in \N_0} \Conf_k(C)
      \qwithq
      \ConfigurationSpace_k(C) \coloneqq (C^k\setminus \Delta_k)/S_k
    \end{equation*}
    and $\Delta_k \coloneqq \set{ (x_1,\ldots,x_k) \in C^k : \#\set{x_1,\ldots,x_k} < k }$.
    Identify $[x_1,\ldots,x_k] \in \ConfigurationSpace(C)$ with $\set{x_1,\ldots,x_k}$, and
    regard $\ConfigurationSpace(C)$ as the space of finite subsets of $C$.
  \item
    \label{Def_SpaceOfRamifiedEuclideanLineBundles_Set}
    Denote by
    $\SpaceOfRamifiedEuclideanLineBundles(C)$
    the \emph{set} of equivalence classes of ramified Euclidean line bundles over $C$.
    Consider the map $\Br \co \SpaceOfRamifiedEuclideanLineBundles(C) \to \ConfigurationSpace(C)$.
  \item
    \label{Def_SpaceOfRamifiedEuclideanLineBundles_Charts}
    For every open subset $U \subset C$,
    denote by $\sU \subset \ConfigurationSpace(C)$ the open subset of those $B \subset U$ such that $\rH^1(C\setminus U,\Z/2\Z) \to \rH^1(C\setminus B,\Z/2\Z)$ is an isomorphism.
    The monodromy representation defines an injection $\tau_U \co \Br^{-1}(\sU) \incl \sU \times \rH^1(C\setminus U,\Z/2\Z)$;
    cf.~\autoref{Rmk_EuclideanLineBundle_Monodromy}.
  \item
    \label{Def_SpaceOfRamifiedEuclideanLineBundles_Topology}
    Equip $\SpaceOfRamifiedEuclideanLineBundles(C)$ with the coarsest \emph{topology} with respect to which the maps $\tau_U$ are continuous.
    \qedhere
  \end{enumerate}
\end{definition}

\begin{prop}
  \label{Prop_Br_CoveringMap}
  The map $\Br \co \SpaceOfRamifiedEuclideanLineBundles(C) \to \ConfigurationSpace(C)$ is a covering map with finite fibers.
  In particular, $\SpaceOfRamifiedEuclideanLineBundles(C)$ inherits the structure of a smooth manifold from $\ConfigurationSpace(C)$.
\end{prop}

\begin{proof}
  To prove that $\Br$ is a covering map it suffices to verify that the transition maps $\tau_U^{-1} \circ \tau_V \co (\sU \cap \sV) \times \rH^1(C\setminus U,\Z/2\Z) \to (\sU\cap \sV) \times \rH^1(C\setminus V,\Z/2\Z)$ are continuous.
  This is an easy exercise;
  cf.~\cite[Proof of (3.3.2) Theorem]{tomDieck2008:AlgebraicTopology}.

  Since $\Br^{-1}(B)$ is in bijection with a subset the finite set $\rH_1(C\setminus B,\Z/2\Z)$, $\Br$ has finite fibers.
\end{proof}

\begin{prop}
  \label{Prop_TwistedUniversalRamifiedEuclideanLineBundle}
  $\SpaceOfRamifiedEuclideanLineBundles(C)$ carries a \defined{twisted universal ramified Euclidean line bundle} in the following sense:
  \begin{enumerate}
  \item
    \label{Prop_TwistedUniversalRamifiedEuclideanLineBundle_Curve}
    Consider the \defined{universal punctured curve}
    \begin{equation*}
      \mathring{\sC} \coloneqq \set{ (B,x) \in \Conf(C) \times C : x \notin B }.
    \end{equation*}
    The projection map
    $P \co \mathring\sC \to \ConfigurationSpace(C)$
    is a fibre bundle.
  \item
    \label{Prop_TwistedUniversalRamifiedEuclideanLineBundle_Bundle}       
    Denote by $\set{ \sU_i : i \in I }$ the set of open subsets $\sU \subset \ConfigurationSpace(C)$ such that $P|_{\sU} \co \mathring{\sC}|_\sU \to \sU$ is trivial.
    Set $\sV_i \coloneqq \Br^{-1}(\sU)$.
    Consider $\Br^*P \co \Br^*\mathring{\sC} \to \SpaceOfRamifiedEuclideanLineBundles(C)$.

    For every $i \in I$ there are a Euclidean line bundle $\fL_i$ over $\Br^*\mathring{\sC}|_{\sV_i}$ and for every ramified Euclidean local system $\fl$ with $\Br(\fl) \in \sU_i$ an isomorphism
    \begin{equation*}
      \fL_i|_{[\fl] \times \paren{C\setminus\Br(\fl)}} \iso \fl
    \end{equation*}
    unique up to $\Aut(\fl) = \set{\pm 1}$.
  \item
    \label{Prop_TwistedUniversalRamifiedEuclideanLineBundle_Cocycle}
    There are a Čech $2$--cocycle $\lambda \in \check\rC^2(\set{\sV_i : i \in I},\underline{\set{\pm 1}})$ 
    and, for every $i,j \in I$, an isomorphism
    $\phi_j^i \co \fL_i|_{\Br^*\mathring{\sC}|_{\sV_i \cap \sV_j}} \iso \fL_j|_{\Br^*\mathring{\sC}|_{\sV_i \cap \sV_j}}$
    such that
    \begin{equation*}
      (\Br^*P)^*\lambda_{ijk} \coloneqq \phi_i^k \phi_k^j \phi_j^i
      \in C^0(\Br^*\mathring{\sC}|_{\sV_i \cap \sV_j \cap \sV_k},\set{\pm 1}).
    \end{equation*}
  \end{enumerate}  
\end{prop}

\begin{proof}
  \autoref{Prop_TwistedUniversalRamifiedEuclideanLineBundle_Curve} is an easy exercise.

  \autoref{Prop_TwistedUniversalRamifiedEuclideanLineBundle_Bundle} is trivial,
  and so is the existence of the isomorphisms in \autoref{Prop_TwistedUniversalRamifiedEuclideanLineBundle_Cocycle}.
  A priori, these define a cocycle
  $\tilde \lambda \in \check\rC^2(\set{\Br^*\mathring{\sC}|_{\sV_i} : i \in I},\underline{\set{\pm 1}})$.
  However,
  since $\Br^*P$ has connected fibres, $\tilde\lambda$ descends to $\lambda$.
\end{proof}

\begin{remark}
  \label{Rmk_TwistedTwistedUniversal}
  The use of twisted universal objects goes back (at least to) \citet{Caldararu2000:PhDThesis}.
\end{remark}

Of course, $[\lambda] \in \check\rH^2\paren{\SpaceOfRamifiedEuclideanLineBundles(C),\underline{\set{\pm 1}}}$ is the obstruction to $\set{ \fL_i : i \in I }$ gluing to a \defined{universal ramified Euclidean line bundle} $\fL$ over $\Br^*\mathring{\sC}$.
If $\fL$ \emph{did exists},
then it would be straight-forward to construct
Hilbert space bundles $\bE$, $\bF$ over  $\SpaceOfRamifiedEuclideanLineBundles(C)$ such that
\begin{equation*}  
  {\bE}_{[\fl]} \iso \bE_{\fl} \coloneqq W^{1,2}\Gamma(\mathring{C}, V\otimes_\R \fl)
  \qandq
  {\bF}_{[\fl]} \iso \bF_{\fl} \coloneqq L^2\Omega^{0,1}(\mathring{C}, V \otimes_\R \fl).
\end{equation*}
These in turn would give rise to
the relative Grassmann bundle $\Gr_d(\bE)$ of $d$--planes in $\bE_{[\fl]}$,
a Banach space bundle $\sL(\bE,\bF)$ of bounded linear maps from $\bE_{[\fl]}$ to $\bF_{[\fl]}$,
etc.
Although $\bE$, $\bF$ \emph{do not exist} in an untwisted sense,
various objects derived from $\bE$ and $\bF$ do.
Indeed,
the construction of these objects over $\sV_i$ as in \autoref{Prop_TwistedUniversalRamifiedEuclideanLineBundle} is straight-forward.
The induced $2$--cocycles measuring the obstruction to gluing are induced by $\lambda$.
Indeed,
by the nature of the objects to be constructed, these cocycles are obtained from $\lambda$ via the trivial homomorphism $(\cdot)^2 \co \set{\pm 1} \to \set{\pm 1}$.
Therefore, the obstruction vanishes.
An alternative way to understand this is that the automorphism group $\Aut(\fl) = \set{\pm 1}$ acts trivially on $\Gr_d(\bE_\fl)$, $\sL(\bE_\fl,\bF_\fl)$, etc.


\subsubsection{Failure of $2$--rigidity: construction of $\pi$}
\label{Sec_ConstructionOfPi}

\newcommand{\TR}{\mathrm{TwistRes}}
\begin{prop}
  \label{Prop_Wall_OneLocalSystem}
  Let $d \in \set{1,2}$.
  Consider the Grassmann bundle $\Gr_d(\bE)$ over $\SpaceOfRamifiedEuclideanLineBundles(C)$.
  The following hold:
  \begin{enumerate}
  \item
    \label{Prop_Wall_OneLocalSystem_Bundle}
    There are a Hilbert space bundle $\Hom(\sS,\bF) \to \SpaceOfRealCauchyRiemannOperators(V) \times \Gr_d(\bE)$ and
    for every $(\fd;[\fl,\Lambda]) \in \SpaceOfRealCauchyRiemannOperators(V) \times \Gr_d(\bE)$ canonical isomorphisms
    \begin{equation*}
      \eta_{(\fd;\fl,\Lambda)} \co \Hom(\sS,\bF)_{(\fd;[\fl,\Lambda])} \iso \Hom\paren{\Lambda,\bF_\fl}.
    \end{equation*}
  \item
    \label{Prop_Wall_OneLocalSystem_Section}
    $\Hom(\sS,\bF) \to \SpaceOfRealCauchyRiemannOperators(V) \times \Gr_d(\bE)$ has a smooth section $\TR_d$ such that
    \begin{equation*}
      \TR_d(\fd;[\fl,\Lambda])
      =
      \eta_{(\fd;\fl,\Lambda)}^{-1}\paren{\fd^\fl|_\Lambda}.
    \end{equation*}
  \item
    \label{Prop_Wall_OneLocalSystem_Property}
    $\TR_d(\fd;[\fl,\Lambda]) = 0$ if and only if $\Lambda \subset \ker \fd^\fl$.
  \item
    \label{Prop_Wall_OneLocalSystem_Transverse}
    $\TR_d$ intersects the zero section transversely;
    in particular,
    \begin{equation*}
      \sE_d \coloneqq \TR_d^{-1}(0)
    \end{equation*}
    is a Banach submanifold.
  \item
    \label{Prop_Wall_OneLocalSystem_Dimension}
    The projection map $\pi_d \co \sE_d \to \SpaceOfRealCauchyRiemannOperators(V)$ is Fredholm of index
    \begin{equation*}
      \paren{2 - d \cdot \rk_\C V} \cdot \#\Br(\fl) - d^2
    \end{equation*}
    at $(\fd;[\fl,\Lambda])$.
  \end{enumerate}
\end{prop}

The extension of \autoref{Prop_Wall_OneLocalSystem} to $d \in \N$ does hold,
but the above is sufficient for the purpose of this article.
The restriction to $d \in \set{1,2}$ allows for a drastic simplification of the proof:
it can be based on the following observation instead of the the subtle results on Petri's condition obtained in \cite[Section 5]{Wendl2016}.

\begin{prop}[{cf.~\cite[Proof of Lemma 4.4]{Eftekhary2016}}]
  \label{Prop_NonParallel_Global=>AlmostEverywhere}
  Let $\fd$ be a real Cauchy--Riemann operator on $V$ over $C$.
  If $s_1,s_2 \in \ker \fd$ are linearly-independent,
  then
  \begin{equation*}
    U \coloneqq \set{ x \in C: s_1(x),s_2(x) ~\textnormal{are linearly-independent} }
  \end{equation*}
  is open and dense.
\end{prop}

\begin{proof}
  Since $C \setminus U \coloneqq \set{x \in C : \dim \R\Span{s_1(x),s_2(x)} \leq 1 }$ is closed, $U$ is open.

  To prove that $U$ is dense,
  assume (by contradiction) that $C \setminus U$ contains a non-empty open subset $O$.
  By unique continuation, neither $s_1$ nor $s_2$ can vanish on $O$.
  Without loss of generality, $s_1(x) \neq 0$ for every $x \in O$.
  Therefore and since $O \subset C \setminus U$
  $s_2 = f \cdot s_1$.
  Since $\fd s_1 = \fd s_2 = 0$,
  $f$ is holomorphic.
  In fact, $f$ is constant because it is $\R$--valued.
  By unique continuation,
  $s_2-fs_1$ vanishes on $C$---a contradiction.  
\end{proof}

\begin{proof}[Proof of \autoref{Prop_Wall_OneLocalSystem}]
  \autoref{Prop_Wall_OneLocalSystem_Bundle} and \autoref{Prop_Wall_OneLocalSystem_Section} are evident from the discussion in \autoref{Sec_SpaceOfRamifiedEuclideanLineBundles}.  
  \autoref{Prop_Wall_OneLocalSystem_Property} holds by construction.

  \medskip

  \autoref{Prop_Wall_OneLocalSystem_Transverse} asserts that for every  $(\fd;[\fl,\Lambda]) \in \TR_d^{-1}(0)$ the composition
  \begin{equation*}
    T_\fd\SpaceOfRealCauchyRiemannOperators(V) \oplus
    T_{[\fl,\Lambda]}\Gr_d(\bE)
    \xrightarrow{T_{(\fd;[\fl,\Lambda])}\TR_d}
    T_{(\fd;[\fl,\Lambda];0)}\Hom(\sS,\bF)
    \onto
    \Hom(\Lambda,\bF_\fl)
  \end{equation*}
  is surjective.
  By \autoref{Prop_RealCauchyRiemanOperator_AffineSpace},
  $T_\fd\SpaceOfRealCauchyRiemannOperators(V) = C^K\Gamma(C,\HOMT(V))$.
  Furthermore,
  $T_{[\fl,\Lambda]}\Gr_d(\bE)$ fits into the 
  short exact sequence
  \begin{equation*}
    \Hom(\Lambda,\bE_\fl/\Lambda) \iso T_\Lambda\Gr_d(\bE)_{\fl} \into T_{[\fl,\Lambda]}\Gr_d(\bE) \onto T_{\Br(\fl)}\Conf(C).
  \end{equation*}
  A moment's thought shows that the above composition restricts to the map
  $\bL \co C^K\Gamma(C,\HOMT(V)) \oplus \Hom(\Lambda,\bE_{\fl}/\Lambda) \to \Hom(\Lambda,\bF_\fl)$ 
  defined by
  \begin{equation*}
    \bL(\fa,M)s \coloneqq (\fa \otimes \id_\fl)s + \fd^\fl(Ms).
  \end{equation*}
  Therefore, it suffices to show that $\bL$ is surjective.

  Since $\coker \fd^\fl \iso \paren{\ker \fd_{L^2}^{\dagger,\fl}}^*$,
  $\bL$ is surjective if and only if the map
  $\bM \co C^K\Gamma(C,\HOMT(V)) \to \paren{\Lambda \otimes \ker \fd_{L^2}^{\dagger,\fl}}^*$ defined by
  \begin{equation*}
    \bM(\fa)(s \otimes t)
    \coloneqq
    \Inner{t,(\fa\otimes \id_\fl) s}
  \end{equation*}
  is surjective.
  $\bM$ is surjective if and only if the \defined{Petri map}
  $\varpi \co \Lambda \otimes \ker \fd_{L^2}^{\dagger,\fl} \to L^1\Gamma(\mathring{C}, V \otimes V^\dagger)$
  defined by
  \begin{equation*}
    \varpi\paren{s \otimes t}(x)
    \coloneqq
    s(x) \otimes t(x)
  \end{equation*}
  is injective.
  Since $d \in \set{1,2}$,
  by \autoref{Prop_NonParallel_Global=>AlmostEverywhere},
  $\varpi$ is injective.
  This proves \autoref{Prop_Wall_OneLocalSystem_Transverse}.

  \medskip
  
  To prove \autoref{Prop_Wall_OneLocalSystem_Dimension},
  consider the projection $\overline{\pi}_d \co \sE_d \to \SpaceOfRealCauchyRiemannOperators(V) \times \ConfigurationSpace(C)$.
  Evidently, $\pi_d$ is Fredholm if and only if $\overline{\pi}_d$ is, and
  \begin{equation*}
    \ind T_{(\fd;[\fl,\Lambda])}\pi_d
    = \ind T_{(\fd;[\fl,\Lambda])}\overline{\pi}_d + \dim T_{\Br(\fl)}\ConfigurationSpace(C)
    = \ind T_{(\fd;[\fl,\Lambda])}\overline{\pi}_d + 2\#\Br(\fl).
  \end{equation*}
  The Snake Lemma applied to 
  \begin{equation*}
    \begin{tikzcd}
      T_{(\fd;[\fl,\Lambda])}\sE_d \ar[hook]{r} \ar{d}{T_{(\fd;[\fl,\Lambda])}\overline\pi} &
      C^K\Gamma(C,\HOMT(V)) \oplus T_{[\fl,\Lambda]}\Gr_d(\bE) \ar[two heads]{r} \ar[two heads]{d} & \Hom(\Lambda,\bF_\fl) \\
      C^K\Gamma\paren{C,\HOMT(V)} \oplus T_{\Br(\fl)}\ConfigurationSpace(C) \ar[equals]{r} &
      C^K\Gamma(C,\HOMT(V)) \oplus T_{\Br(\fl)}\ConfigurationSpace(C)
    \end{tikzcd}
  \end{equation*}
  yields an exact sequence
  \begin{equation*}
    \ker T_{(\fd;[\fl,\Lambda])}\overline{\pi}_d \into \Hom(\Lambda,\bE_\fl/\Lambda) \xrightarrow{\fd^\fl \circ \cdot} \Hom(\Lambda,\bF_\fl) \onto \coker T_{(\fd;[\fl,\Lambda])}\overline{\pi}_d.
  \end{equation*}
  By \autoref{Thm_Index},
  $\fd^\fl$ is Fredholm of index $-\#\Br(\fl) \cdot \rk_\C V$.
  Therefore,
  $\fd^\fl \circ \cdot \co \Hom(\Lambda,\bE_\fl/\Lambda) \to \Hom(\Lambda,\bF_\fl)$ is Fredholm of index
  $d\cdot \paren{-\#\Br(\fl) \cdot \rk_\C V - d}$.
  This proves \autoref{Prop_Wall_OneLocalSystem_Dimension}.
\end{proof}

Set
\begin{equation*}
  \sE \coloneqq \sE_1
  \qandq
  \pi \coloneqq \pi_1.
\end{equation*}
The wall of failure of $2$--rigidity is parameterised as
\begin{equation*}
  \sW = \im \pi.
\end{equation*}


\subsubsection{Failure of injectivity: construction of $\nu$}
\label{Sec_ConstructionOfNu}

Here is the proof that $\pi$ is essentially injective;
that is:~\autoref{Def_ProperWall}~\autoref{Def_ProperWall_EssentiallyInjective} holds.
The map $\pi \co \sE \to \SpaceOfRealCauchyRiemannOperators(V)$ fails to be injective if there are distinct $(\fd,[\fl_i,\Lambda_i]) \in \sE$ ($i=1,2$).
If $\fl_1 = \fl_2 \eqqcolon \fl$,
then $(\fd,[\fl,\Lambda_1 + \Lambda_2]) \in \sE_2$.
The following describes these failures of injectivity.

\begin{prop}
  \label{Prop_FailureOfInjectivity_RankAtLeastTwo}
  Consider the flag manifold bundle $\Flag_{1,2}(\bE)$ over $\SpaceOfRamifiedEuclideanLineBundles(C)$.
  For $d \in \set{1,2}$ consider the projection maps
  $p_d \co \Flag_{1,2}(\bE) \to \Gr_d(\bE)$.
  Set
  \begin{equation*}
    \sN_2 \coloneqq (\id \times p_2)^*\sE_2 \subset \SpaceOfRealCauchyRiemannOperators(V) \times \Flag_{1,2}(\bE).
  \end{equation*}
  The map
  $\nu_2 \co \sN_2 \to \sE_1$ induced by $p_1$ is Fredholm of index
  \begin{equation*}
    -\rk_\C V \cdot \#\Br(\fl) - 2.
  \end{equation*}
\end{prop}

\begin{proof}
  The map $\rho \co \sN_2 \to \sE_2$ induced by $p_2$ is Fredholm of index $1$.
  Therefore,
  by \autoref{Prop_Wall_OneLocalSystem},
  $\pi_2 \circ \rho = \pi_1 \circ \nu_2$ is Fredholm of index $\paren{2 - 2 \cdot \rk_\C V} \cdot \#\Br(\fl) - 3$.
  This implies the assertion.
\end{proof}

Failures of injectivity with $\fl_1 \not\iso \fl_2$ are described as follows.

\begin{prop}
  \label{Prop_Wall_TwoLocalSystems}
  Set $\SpaceOfRamifiedEuclideanLineBundles^{(2)} \coloneqq \SpaceOfRamifiedEuclideanLineBundles^2\setminus\Delta$ (with $\Delta$ denoting the diagonal).
  Consider $\pr_1^*\Gr_1(\bE) \times \pr_2^*\Gr_1(\bE)$ over $\SpaceOfRamifiedEuclideanLineBundles^{(2)}$.
  Denote by $\TR_1^{(2)}$ the smooth section of $\pr_1^*\Hom(\sS,\bF) \oplus \pr_2^*\Hom(\sS,\bF) \to \SpaceOfRealCauchyRiemannOperators(V) \times \pr_1^*\Gr_1(\bE) \times \pr_2^*\Gr_1(\bE)$ induced by $\TR_1$.
  The following hold:
  \begin{enumerate}
  \item
    \label{Prop_Wall_TwoLocalSystems_Property}
    $\TR_1^{(2)}(\fd;[\fl_1,\Lambda_1];[\fl_2,\Lambda_2]) = 0$ if and only if $\Lambda_i \subset \ker \fd^{\fl_i}$ ($i=1,2$).
  \item
    \label{Prop_Wall_TwoLocalSystems_Transverse}
    $\TR_1^{(2)}$ intersects the zero section transversely.
  \item
    \label{Prop_Wall_TwoLocalSystems_Dimension}
    The projection $\pi_1^{(2)} \co \sE_1^{(2)} \coloneqq \paren{\TR_1^{(2)}}^{-1}(0) \to \SpaceOfRealCauchyRiemannOperators(V)$ is Fredholm of index
    \begin{equation*}
      \paren{2 - \rk_\C V} \cdot \paren{\#\Br(\fl_1) + \#\Br(\fl_1)} - 2
    \end{equation*}
    at $(\fd;[\fl_1,\Lambda_1];[\fl_2,\Lambda_2])$.
  \end{enumerate}  
\end{prop}

\begin{prop}
  \label{Prop_FailureOfInjectivity_OtherLocalSystem}
  Set $\sN_1^{(2)} \coloneqq \sE_1^{(2)}$.
  The map $\nu_1^{(2)} \co \sN_1^{(2)} \to \sE_1$ is Fredholm of index
  \begin{equation*}
    \paren{2 - \rk_\C V} \cdot \#\Br(\fl) - 1
  \end{equation*}
\end{prop}

\begin{proof}
  The proof is straight-forward and similar to that of \autoref{Prop_FailureOfInjectivity_RankAtLeastTwo}.
\end{proof}

The proof of \autoref{Prop_Wall_TwoLocalSystems}
requires the following preparation.

\begin{prop}
  \label{Prop_NonIsomorphicLineBundle=>LinearlyIndependent}
  Let $\fd$ be a real Cauchy--Riemann operator on $V$.
  Let $\fl_i$ be a ramified Euclidean line bundle over $C$ and $s_i \in \ker \fd^{\fl_i} \setminus \set{0}$ ($i=1,2$).
  If $\fl_1$ and $\fl_2$ are not isomorphic,
  then there are a non-empty open subset $U \subset C \setminus \paren{\Br(\fl_1) \cup \Br(\fl_2)}$ and trivialisations $\tau_i \co \fl_i|_U \iso \ubR$ ($i=1,2$) such that 
  $\tau_1 s_1, \tau_2 s_2 \in \ker \fd|_U$ are linearly-independent.
\end{prop}

\begin{proof}
  Suppose not.
  Choose a open cover $(U_\alpha)_{\alpha \in A}$ of $C \setminus \paren{\Br(\fl_1) \cup \Br(\fl_2)}$ and
  trivialisations $\tau_i^\alpha \co \fl_i|_{U_\alpha} \iso \ubR$ ($i=1,2$, $\alpha \in A$).
  Define $\lambda_\alpha \in \R^\times$ by $\tau_2^\alpha s_2 = \lambda_\alpha \cdot \tau_1^\alpha s_1$.
  A moment's thought shows that
  \begin{equation*}
    (\tau_2^\alpha)^{-1} \circ  \lambda_\alpha \circ \tau_1^\alpha
    =
    (\tau_2^\beta)^{-1} \circ  \lambda_\beta \circ \tau_1^\beta
  \end{equation*}
  on $U_\alpha \cap U_\beta$.
  Therefore, $\fl_1 \iso \fl_2$ as line bundles over $C \setminus \paren{\Br(\fl_1) \cup \Br(\fl_2)}$.
  This implies that $\fl_1 \iso \fl_2$ as ramified Euclidean line bundles over $C$.
\end{proof}

\begin{proof}[Proof of \autoref{Prop_Wall_TwoLocalSystems}]
  \autoref{Prop_Wall_TwoLocalSystems_Property} is obvious.  

  As in the proof of \autoref{Prop_Wall_OneLocalSystem},
  \autoref{Prop_Wall_TwoLocalSystems_Transverse} reduces to proving that the \defined{Petri map} $\varpi \co \Lambda_1 \otimes \ker \fd_{L^2}^{\dagger,\fl_1} \oplus \Lambda_2 \otimes \ker \fd_{L^2}^{\dagger,\fl_2} \to L^1\Gamma(C,V\otimes V^\dagger)$
  defined by
  \begin{equation*}
    \varpi(s_1\otimes t_1,s_2\otimes t_2)(x)
    \coloneqq
    s_1(x)\otimes t_1(x) + s_2(x)\otimes t_2(x)
  \end{equation*}
  is injective.
  This is a consequence of
  \autoref{Prop_NonIsomorphicLineBundle=>LinearlyIndependent} and \autoref{Prop_NonParallel_Global=>AlmostEverywhere}.

  The proof of \autoref{Prop_Wall_TwoLocalSystems_Dimension} is similar to that of   \autoref{Prop_Wall_OneLocalSystem}~\autoref{Prop_Wall_OneLocalSystem_Dimension}
  and,
  therefore, omitted.
\end{proof}

Set $\sN \coloneqq \sN_2 \amalg\sN_1^{(2)}$ and $\nu \coloneqq \nu_2 \amalg \nu_1^{(2)}$.
The above discussion shows that $\nu \co \sN \to \sE$ is Fredholm of index at most $-1$ and $\pi|_{\sE\setminus\im\nu} \co \sE\setminus\im\nu \to \sW$ is injective.


\subsubsection{Failure of smoothness: construction of $\sigma$}
\label{Sec_ConstructionOfSigma}

The crucial step in the proof of \autoref{Thm_WallOfFailureOf2RigidityIsProper} is to establish that $\pi$ is essentially proper;
that is: \autoref{Def_ProperWall}~\autoref{Def_ProperWall_EssentiallyProper} holds.
A naive hope might be that \autoref{Thm_FedererFlemming} implies outright properness.
Indeed:
if $(\fd_n;[\fl_n,\Lambda_n = \Span{s_n}]) \in \sE^\N$ is such that $(\fd_n) \in \sW^\N$ converges to $\fd_\infty \in \sW$,
then it can be arranged that the sequence of pseudo-holomorphic cycles $(T_n)$ corresponding to $(s_n)$ converges to a $J_\infty$--holomorphic cycle $T_\infty$.
This, however, does not imply that $([\fl_n,\Lambda_n])$ converges to $[\fl_\infty,\Lambda_\infty]$%
---\emph{unless:} $T_\infty$ is smooth and
$\pi^{-1}(\fd_\infty) = \set{[\fl_\infty,\Lambda_\infty = \Span{s_\infty}]}$.
Fortunately,
it is straight-forward to describe when $T_\infty$ fails to be smooth or, equivalently, when the corresponding $J_\infty$--holomorphic map $u_\infty \co \tilde C \to V$ fails to be an injective immersion.

\begin{definition}
  \label{Def_12JetEvaluation}
  Assume the situation of \autoref{Prop_RiemannSurface_DegreeTwo_+-}.
  Let $b \in \Br(\fl)$.
  Set
  $J_b^{1/2}(V \otimes \fl) \coloneq V_b \otimes_\C J_b^{1/2}(\fl \otimes \C)$.
  Set $\set{\tilde b} \coloneqq \pi^{-1}(b)$.
  Denote by $\pi^* \co J_b^{1/2}(V \otimes \fl) \iso V_b \otimes_\C T_{\tilde b}^*\tilde C$ the isomorphism induced by $\pi$ (see \autoref{Rmk_SquareRoot}).
  The \defined{$\tfrac12$--jet evaluation map at $b$} is the linear map $j_b^{1/2} \co \ker \fd^\fl \to J_b^{1/2}(V \otimes \fl)$ defined by
  \begin{equation*}
    \pi^*j_b^{1/2}(s) \coloneqq \del_{\tilde b}u \in V_b \otimes_\C T_{\tilde b}^*\tilde C
  \end{equation*}
  with $u \co \tilde C \to V$ denoting the $J$--holomorphic map corresponding to $s$ (according to \autoref{Sec_ThreePerspectivesOn2Rigidity}).
\end{definition}

\begin{prop}
  \label{Prop_InjectiveImmersion~NowhereVanishing}
  Let $\fl$ be a ramified Euclidean line bundle.
  Let $s \in \ker \fd^\fl$.
  Let $u \co \tilde C \to V$ be the corresponding $J$--holomorphic map (according to \autoref{Sec_ThreePerspectivesOn2Rigidity}).
  The following are equivalent:
  \begin{enumerate}
  \item
    \label{Prop_InjectiveImmersion~NowhereVanishing_Embedding}
    The map $u \co \tilde C \to V$ is an injective immersion.
  \item
    \label{Prop_InjectiveImmersion~NowhereVanishing_NonVanishing}
    The section $s$ is nowhere vanishing in the following sense:
    \begin{enumerate}
    \item
      \label{Prop_InjectiveImmersion~NowhereVanishing_NonVanishing_Regular}
      For every $x \in \mathring{C}$,
      $\ev_x(s) \coloneqq s(x) \neq 0$.
    \item
      \label{Prop_InjectiveImmersion~NowhereVanishing_NonVanishing_Branch}
      For every $b \in \Br(\fl)$,
      $j_b^{1/2}(s) \neq 0$.
    \end{enumerate}
  \end{enumerate}
\end{prop}

\begin{proof}
  The map $u$ is injective if and only if \autoref{Prop_InjectiveImmersion~NowhereVanishing_NonVanishing_Regular} holds.
  $T_{\tilde x}\tilde u \neq 0$ for every $\tilde x \in \mathring{\tilde C}$.  
  If $\tilde b \in \pi^{-1}(\Br(\fl))$,
  then $T_{\tilde b} u = \del_{\tilde b}u$.
  Therefore, $u$ is an immersion if and only only if \autoref{Prop_InjectiveImmersion~NowhereVanishing_NonVanishing_Branch} holds.
\end{proof}

The failure of
\autoref{Prop_InjectiveImmersion~NowhereVanishing_NonVanishing_Regular} 
is described by the following.

\begin{prop}
  \label{Prop_Wall_OneLocalSystem_RegularPointVanishing}
  Consider the fibre product $\sE \times_{\Conf(C)} \mathring{\sC}$.
  The following hold:
  \begin{enumerate}
  \item
    \label{Prop_Wall_OneLocalSystem_RegularPointVanishing_Bundle}
    There are
    a vector bundle $\Hom(\sS,V \otimes \fL) \to \sE \times_{\Conf(C)} \mathring{\sC}$ and
    for every $(\fd;[\fl,\Lambda];x) \in \sE \times_{\Conf(C)} \mathring{\sC}$
    (canonical) isomorphisms
    \begin{equation*}
      \eta_{(\fd;\fl,\Lambda;x)} \co \Hom(\sS,V \otimes \fL)_{(\fd;[\fl,\Lambda];x)} \iso \Hom\paren{\Lambda,V_x \otimes \fl_x}.
    \end{equation*}
  \item
    \label{Prop_Wall_OneLocalSystem_RegularPointVanishing_Section}
    $\Hom(\sS,V \otimes \fL) \to \sE \times_{\Conf(C)} \mathring{\sC}$ has a $C^1$ section $\ev$ such that
    \begin{equation*}
      \paren{\ev(\fd;[\fl,\Lambda];x)}
      = \eta_{(\fd;\fl,\Lambda;x)}^{-1}\paren{\ev_x}
    \end{equation*}
    with $\ev_x \in \Hom\paren{\Lambda,V_x \otimes \fl_x}$ defined by $\ev_x(s) \coloneqq s(x)$.
  \item
    \label{Prop_Wall_OneLocalSystem_RegularPointVanishing_Property}
    $\ev(\fd;[\fl,\R\Span{s}],x) = 0$ if and only if $s(x) = 0$.
  \item
    \label{Prop_Wall_OneLocalSystem_RegularPointVanishing_Transverse}
    $\ev$ intersects the zero section transversely;
    in particular, $\sS^{\Reg} \coloneqq \ev^{-1}(0)$ is a Banach submanifold.
  \item
    \label{Prop_Wall_OneLocalSystem_RegularPointVanishing_Dimension}
    The projection map $\sigma^{\Reg} \co \sS^{\Reg} \to \sE$ is Fredholm of index
    \begin{equation*}
      2 - \rk V.
    \end{equation*}
  \end{enumerate}
\end{prop}

\begin{proof}
  \autoref{Prop_Wall_OneLocalSystem_RegularPointVanishing_Bundle} and \autoref{Prop_Wall_OneLocalSystem_RegularPointVanishing_Section} are evident from the discussion in \autoref{Sec_SpaceOfRamifiedEuclideanLineBundles}.  
  \autoref{Prop_Wall_OneLocalSystem_RegularPointVanishing_Property} holds by construction.

  \medskip 
  
  To prove \autoref{Prop_Wall_OneLocalSystem_RegularPointVanishing_Transverse} it suffices to show that for $(\fd,[\fl,\R\Span{s}],x) \in \ev^{-1}(0)$ the map
  \begin{equation*}
    \ev_x \co \ker\paren[\big]{W^{1,2}\Gamma\paren{\mathring{C},V\otimes \fl} \oplus C^K\Gamma\paren{X,\HOMT(V)} \to L^2\Omega^{0,1}\paren{\mathring{C},V\otimes \fl}} \to V_x \otimes \fl_x
  \end{equation*}
  is surjective.
  Let $k \in \set{1,\cdots,K}$.
  By \autoref{Prop_Regularity},
  \begin{multline*}
    \ker\paren[\big]{W^{1,2}\Gamma\paren{\mathring{C},V\otimes \fl} \oplus C^K\Gamma\paren{X,\HOMT(V)} \to L^2\Omega^{0,1}\paren{\mathring{C},V\otimes \fl}} \\
    =
    \ker\paren[\big]{W_\cyl^{k+1,2}\Gamma\paren{\mathring{C},V\otimes \fl} \oplus C^K\Gamma\paren{X,\HOMT(V)} \to W_\cyl^{k,2}\Omega^{0,1}\paren{\mathring{C},V\otimes \fl}}.
  \end{multline*}
  The significance of the above is that
  $W_\cyl^{k+1,2}\Gamma\paren{\mathring{C},V \otimes \fl} \incl C^{k-1}\Gamma\paren{\mathring{C},V \otimes \fl}$;
  hence, $\ev_x$ is defined on $W_\cyl^{k+1,2}\Gamma\paren{\mathring{C},V \otimes \fl}$.
  By the Snake Lemma,
  it suffices to prove that the map $\bL \co W_\cyl^{k+1,2}\Gamma\paren{\mathring{C},V\otimes \fl} \oplus C^K\Gamma(X,\HOMT(V))
  \to
  W_\cyl^{k,2}\Omega^{0,1}\paren{\mathring{C},V\otimes \fl} \oplus V_x \otimes \fl_x$
  defined by
  \begin{equation*}
    \bL(\hat s,\fa) \coloneqq \paren[\big]{\fd^\fl \hat s + (\fa\otimes\id_\fl)s, \hat s(x)}
  \end{equation*}
  is surjective.
  Choose $\fV \subset \Gamma\paren{\mathring{C},V\otimes \fl}$ such that $\ev_x \co \fV \to \paren{V \otimes \fl}_x$ is an isomorphism.
  It suffices to prove that the map
  \begin{equation*}
    \bM \co C^K\Gamma(X,\HOMT(V)) \to
    \coker\paren[\big]{
      \fd_{W_\cyl^{k+1,2}}^\fl \co \fV^\perp \to W_\cyl^{k,2}\Omega^{0,1}\paren{\mathring{C},V\otimes \fl}
    }
    \iso
    \paren[\big]{\ker \fd_{L^2}^{\dagger,\fl} + \fd^\fl(\fV)}^*
  \end{equation*}
  defined by
  \begin{equation*}
    \Inner{\bM(\fa),t} \coloneqq \Inner{t,(\fa\otimes \id_\fl) s}
  \end{equation*}
  is surjective.
  $\bM$ is surjective,
  because the Petri map
  $\varpi \co \ker \fd_{L^2}^{\dagger,\fl} + \fd^\fl(\fV) \to L^1\Gamma(\mathring{C}, V \otimes V^\dagger)$ defined by $\varpi(t) \coloneqq s \otimes t$ is injective. 
  
  \autoref{Prop_Wall_OneLocalSystem_RegularPointVanishing_Dimension} is obvious.
\end{proof}

\medskip

The failure of
\autoref{Prop_InjectiveImmersion~NowhereVanishing_NonVanishing_Branch}
is described by the following.

\begin{prop}
  \label{Prop_Wall_OneLocalSystem_BranchPointVanishing}
  Consider the \defined{universal branch points}
  \begin{equation*}
    \sB \coloneqq \set{ (B,b) \in \Conf(C) \times C : b \in B }.
  \end{equation*}
  Consider the fibre product $\sE \times_{\Conf(C)} \sB$.
  The following hold:
  \begin{enumerate}
  \item
    \label{Prop_Wall_OneLocalSystem_BranchPointVanishing_Bundle}
    There are
    a vector bundle $\Hom\paren{\sS,\JetSpace^{1/2}(V\otimes\fL)} \to \sE \times_{\Conf(C)} \sB$ and
    for every $(\fd;[\fl,\Lambda];b) \in \sE \times_{\Conf(C)} \sB$
    (canonical) isomorphisms
    \begin{equation*}
      \eta_{(\fd;\fl,\Lambda;b)} \co \Hom\paren{\sS,\JetSpace^{1/2}(V\otimes\fL)}_{(\fd;[\fl,\Lambda];b)} \iso \Hom\paren{\Lambda,\JetSpace_b^{1/2}(V\otimes\fl)}.
    \end{equation*}
  \item
    \label{Prop_Wall_OneLocalSystem_BranchPointVanishing_Section}
    $\Hom\paren{\sS,\JetSpace^{1/2}(V\otimes\fL)} \to \sE \times_{\Conf(C)} \mathring{\sB}$ has a $C^1$ section $j^{1/2}$ such that
    \begin{equation*}
      \paren{j^{1/2}(\fd;[\fl,\Lambda];b)}
      = \eta_{(\fd;\fl,\Lambda;x)}^{-1}\paren{j_b^{1/2}}.
    \end{equation*}
  \item
    \label{Prop_Wall_OneLocalSystem_BranchPointVanishing_Property}
    $j^{1/2}(\fd;[\fl,\R\Span{s}],b) = 0$ if and only if $j_b^{1/2}(s) = 0$.
  \item
    \label{Prop_Wall_OneLocalSystem_BranchPointVanishing_Transverse}
    $j^{1/2}$ intersects the zero section transversely;
    in particular, $\sS^\Br \coloneqq \paren{j^{1/2}}^{-1}(0)$ is a Banach submanifold.
  \item
    \label{Prop_Wall_OneLocalSystem_BranchPointVanishing_Dimension}
    The projection map $\sigma^{\Br} \co \sS^{\Br} \to \sE$ is Fredholm of index
    \begin{equation*}
      - \rk_\C V.
    \end{equation*}
  \end{enumerate}
\end{prop}

\begin{proof}
  The crucial point is to extend $j_b^{1/2}$ from $\ker \fd^\fl$ to $W^{1,2}\Gamma\paren{\mathring{C},V\otimes \fl}$.
  By \autoref{Rmk_SquareRoot}~\autoref{Rmk_SquareRoot_12JetSpace} and
  \autoref{Rmk_12JetSpaceAnd12ResidueSpaceAreDual},
  $\paren{\JetSpace_b^{1/2}(\fl\otimes\C)}^{\otimes 2} = T_b^*C$ and
  $\ResidueSpace_b(\fl) \iso \paren{\JetSpace_b^{1/2}(\fl\otimes\C)}^*$.
  Therefore,
  \begin{equation*}
    \JetSpace_b^{1/2}(\fl\otimes\C) \iso \ResidueSpace_b(\fl) \otimes_\C T_b^*C.
  \end{equation*}
  Let $\zeta \in C^\infty(\C,C)$ with
  $\zeta^{-1}(0) = \set{b}$,
  $T_b \zeta \neq 0$, and
  $\abs{\delbar \zeta} \lesssim \abs{\zeta}$.
  Since $W^{1,2}\Gamma\paren{\mathring{C},V\otimes\fl} \incl rL^2\paren{\mathring{C},V\otimes\fl}$,
  for every $s \in W^{1,2}\Gamma\paren{\mathring{C},V\otimes\fl}$,
  $\zeta^{-1}s \in \dom(\fd_{\max}^\fl)$;
  moreover:
  if $s \in \ker\fd^\fl$,
  then
  \begin{equation*}
    \Res_b(\zeta^{-1}s) \otimes_\C \del_b\zeta = j_b^{1/2}(s).
  \end{equation*}
  This provides the extension of $j_b^{1/2}$.

  The remainder of the proof is analogous to that of that of \autoref{Prop_Wall_OneLocalSystem_RegularPointVanishing} and, therefore, omitted.  
\end{proof}

Set $\sS \coloneqq \sS^{\Reg} \amalg \sS^{\Br}$ and $\sigma \coloneqq \sigma^{\Reg} \amalg \sigma^{\Br}$.
The above discussion shows that $\sigma \co \sS \to \sE$ is Fredholm of index at most $-1$.


\subsubsection{Proofs of \autoref{Thm_WallOfFailureOf2RigidityIsProper} and  \autoref{Thm_WallOfFailureOf2RigidityIsProper+SelfAdjoint}}

Here is the final ingredient for the proofs.

\begin{prop}
  \label{Prop_AlmostProper}
  The map
  \begin{equation*}
    \pi|_{\sE\setminus\paren{\im\nu \cup \im\sigma}} \co \sE \setminus \paren{\im \nu \cup \im \sigma} \to \SpaceOfRealCauchyRiemannOperators(V) \setminus \paren{\im \paren{\pi \circ \nu} \cup \im \paren{\pi \circ \sigma}}
  \end{equation*}
  is proper.
\end{prop}

\begin{proof}
  Let $(\fd_n;[\fl_n,\Lambda_n=\Span{s_n}]) \in \paren{\sE \setminus \paren{\im \nu \cup \im \sigma}}^\N$.
  Suppose that $(\fd_n)$ converges to $\fd_\infty \in \SpaceOfRealCauchyRiemannOperators(V) \setminus \paren{\im \paren{\pi \circ \nu} \cup \im \paren{\pi \circ \sigma}}$.
  Set $(\fd_\infty;[\fl_\infty,\Lambda_n=\Span{s_\infty}]) \coloneqq \pi^{-1}(\fd_\infty)$.
  For $n \in \N\cup\set{\infty}$
  set $J_n \coloneqq J_{\fd_n}$ and
  denote by $u_n \co \tilde C_n \to V$ the $J_n$--holomorphic map corresponding to $s_n$,
  normalised such that
   \begin{equation*}
    \max_{\tilde C_n} \rho^2 \circ u_n = 1.
  \end{equation*}

  As in the proof of \autoref{Prop_TheWallIsClosed},
  after passing to a subsequence,
  $(T_n \coloneqq T_{u_n})$ converges to a $J_\infty$--holomorphic cycle $T_\infty$ in the weak--$*$--topology.
  By injectivity, $T_\infty = T_{u_\infty}$.

  Since $u_n$ ($n \in \N \cup \set{\infty}$) is an injective immersion,
  it follows from \cite[Corollary 2.29]{Doan2021} that $(u_n)$ converges to $u_\infty$;
  therefore, $([\fl_n,\Lambda_n])$ converges to $(\fl_\infty,\Lambda_\infty)$.
  (The proof of \cite[Corollary 2.29]{Doan2021} itself relies crucially on an observation due to \citet{White2005} regarding a simple version of Allard's Regularity Theorem  \cite{Allard1972}.)
\end{proof}

\begin{proof}[Proof of \autoref{Thm_WallOfFailureOf2RigidityIsProper}]
  This is an immediate consequence of the above discussion.
\end{proof}

\begin{proof}[Proof of \autoref{Thm_WallOfFailureOf2RigidityIsProper+SelfAdjoint}]
  The proof is nearly identical to that of  \autoref{Thm_WallOfFailureOf2RigidityIsProper}.
  The constructions of $\pi$, $\nu$, and $\sigma$ have to be adapted sightly to account for the $\SerreOperator$--self-adjointness;
  cf.~\cite[§1.A]{Doan2018}.
  As a result of this index formula in \autoref{Prop_Wall_OneLocalSystem}~\autoref{Prop_Wall_OneLocalSystem_Dimension} changes to
  \begin{equation*}
    \paren{2 - d \cdot \rk_\C V} \cdot \#\Br(\fl) - \binom{d+1}{2}.
    \qedhere
  \end{equation*}
\end{proof}

\begin{remark}
  \label{Rmk_WhyRestrictToK=2}
  The astute reader may have noticed that for most of section the restriction to $k$--rigidity with $k = 2$ was unnecessary.
  Here is why this restriction was made.  
  For $k > 2$ the translation between the perspectives laid out in \autoref{Sec_ThreePerspectivesOn2Rigidity} becomes more subtle.  
  In particular, if $\fL$ is a higher rank branched local system,
  then $s \in \ker \fd^\fL$ does not directly correspond to a $J$--holomorphic map $\tilde u \co \tilde C \to V$;
  cf.~\cite[\S 1.5]{Doan2018}.
  As a consequence of this, evidence of the failure of $k$--rigidity in the form of a $J$--holomorphic map $u \co \tilde C \to V$ no longer typically is an injective immersion (except for rather small $k$ and/or under additional hypothesis).
  This breaks the above proof strategy.
  
  Nevertheless,
  the restriction to $k = 2$ may not be necessary.
  Moreover,
  the use of Allard's Regularity Theorem and the perspective of $J$--holomorphic maps $u \co \tilde C \to V$ might well be red herring.  
  Roughly speaking,
  the above argument proceeds as follows.
  Equip $\sE$ with a topology that is coarser than the one discussed in \autoref{Sec_SpaceOfRamifiedEuclideanLineBundles}.
  The coarse topology on $\sE$ allows for the branch locus of the Euclidean local system $\fl$ to jump.  
  This is an essential ingredient to prove that the coarse topology has better compactness properties.
  The existence of a non-zero $s \in \ker \fd^\fl$ tames $\fl$.
  A typical $s \in \ker \fd^\fl$ vanishes only where geometry forces it to.
  As a consequence,
  typical sequences with typical limits in the coarse topology converge in the fine topology.
  Its possible that this strategy extends to branched local systems $\fL$ of higher rank,
  provided an appropriate extension of the coarser topology to this situation is found.
\end{remark}





\section{Chambered invariants from blow-ups}
\label{Sec_BlowUp}

Let $C$ be a closed connected Riemann surface.
Let $V$ be a complex vector bundle over $C$ with
\begin{equation*}
  \rk_\C V  = 2
  \qandq
  2 \deg V + \rk_\C V \cdot \chi(C) = 0.
\end{equation*}
This section constructs a chambered invariant
\begin{equation*}
  \BlowUpInvariant \in \rH^0(\SpaceOfRealCauchyRiemannOperators(V)\setminus\sW;\Z[[x]])
\end{equation*}
by counting $J$--holomorphic sections
of $q \co \sO_{\P V}(-2) \to C$,
the blow-up of the $\C^2/\set{\pm 1}$--bundle $\check p \co V/\set{\pm 1} \to C$,
with respect to an admissible almost complex structure on $\sO_{\P V}(-2)$ which is homogeneous near infinity and, therefore, induces a real Cauchy--Riemann operator on $V$.

The construction of $\BlowUpInvariant$ is reminiscent of the construction in \cite[\S 8.6]{McDuff2012}.
In fact, various aspects are simpler in the present situation.
However, a substantial novel complication arises because $\sO_{\P V}(-2)$ is non-compact:
sequences of $J$--holomorphic sections might escape to infinity.
A combination of  geometric measure theory (again) and a twist on Radó's theorem in complex analysis
are used to prove that this phenomenon can only occur if the real Cauchy--Riemann operator associated with $J$ fails to be $2$--rigid.

\subsection{The blow-up: \texorpdfstring{$\sO_{\P V}(-2)$}{O(-2)}}
\label{Sec_FibrewiseBlowUp}

Here is a summary of
the construction of the blow-up of $\check p \co V/\set{\pm 1} \to C$,
and some of its properties relevant to the further discussion.

\begin{definition}
  \label{Def_FibrewiseBlowUp}
  The blow-up of $\check p \co V/\set{\pm 1} \to C$ is constructed as follows:
  \begin{enumerate}
  \item    
    The \defined{projectivisation of $V$} is the $\CP^1$--bundle
    \begin{equation*}
      \varpi \co \bP V \coloneqq V^\times/\C^\times \to C.
    \end{equation*}
    Here $V^\times \coloneqq V \setminus 0$ and $\C^\times$ acts with weight $1$.
  \item
    Consider the complex line bundle
    \begin{equation*}
      \varsigma \co \sO_{\bP V}(-2) \coloneqq \paren{V^\times \times \C}/\C^\times \to \bP V
    \end{equation*}
    with $\C^\times$ acting with weight $(1,-2)$.
    The \defined{blow-up of $\check p \co V/\set{\pm 1} \to C$} is the fibre bundle
    \begin{equation*}
      q \coloneqq \varpi \circ \varsigma \co \sO_{\P V}(-2) \to C.
    \end{equation*}
  \item
    The \defined{blow-down map} is the continuous map
    $\beta \co \sO_{\bP V}(-2) \to V/\set{\pm 1}$
    defined by
    \begin{equation*}
      \beta([v,\lambda]) \coloneqq [\pm \sqrt{\lambda} v].
      \qedhere
    \end{equation*}
  \end{enumerate}
\end{definition}

Of course,
the blow-down map $\beta$
satisfies $q = \check{p} \circ \beta$, and
induces a diffeomorphism
\begin{equation*}
  \sO_{\P V}(-2)^\times \coloneqq \sO_{\P V}(-2)\setminus 0
  \iso
  V^\times/\set{\pm 1}.
\end{equation*}
This shall be regarded as an identification.

\medskip

If $I \in \SpaceOfHomogeneousAlmostComplexStructures(V)$ is $\C^\times$--invariant,
then it is integrable,
and descends to a complex structure $\check I$ on $V^\times/\set{\pm 1}$;
moreover: $\check I$ extends to a complex structure $\tilde I$ on $\sO_{\P V}(-2)$.
(If $J \in \SpaceOfHomogeneousAlmostComplexStructures(V)$ is not $\C^\times$--invariant,
then $\check J$ does not extend from $V^\times/\set{\pm 1}$ to $\sO_{\P V}(-2)$.)

Choose a Hermitian metric on $V$.
Define $\rho^2 \in C^\infty(V)$ by $\rho^2(v) \coloneqq \abs{v}^2$.
Denote the induced smooth maps on $V^\times/\set{\pm 1}$ and $\sO_{\P V}(-2)$ by $\check \rho^2$ and $\tilde\rho^2$ respectively.

\begin{prop}
  \label{Prop_FibrewiseKahlerForm}
  The complex fibre bundle $q \co \sO_{\P V}(-2) \to C$ admits the following family of fibrewise Kähler forms:
  \begin{enumerate}
  \item
    \label{Prop_FibrewiseKahlerForm_Existence}
    Let $t \geq 0$.
    Define $f \in C^\infty\paren{\R_{>0}}$ by
    \begin{equation*}      
      f_t(u) \coloneqq \sqrt{u^2 + t^4} + t^2\log u - t^2\log \paren{\sqrt{u^2 + t^4} + t^2}
    \end{equation*}
    and set  
    \begin{equation*}
      \Omega_{I,t}
      \coloneqq
      -\frac14 \rd \sqparen{ \rd (f_t \circ \rho^2) \circ I}
      \in \Omega^2(V^\times).
    \end{equation*}
    $\Omega_{I,t}$ is $\set{\pm 1}$--invariant and descends to $\check\Omega_{I,t} \in \Omega^2(V^\times/\set{\pm})$;
    moreover:
    $\check\Omega_{I,t}$ extends to a closed $2$--form
    \begin{equation*}
      \tilde\Omega_{I,t} \in \Omega^2\paren{\sO_{\bP V}(-2)}.
    \end{equation*}
  \item
    \begin{enumerate}
    \item 
      \label{Prop_FibrewiseKahlerForm_T>0Kahler}
      For every $t > 0$,
      $\tilde \Omega_{I,t}$ is a fiberwise Kähler form with respect to $\tilde I$;
      that is:
      for every $x \in C$,
      $\tilde \Omega_{I,t}|_{\sO_{\bP V_x}(-2)}$ is a Kähler form with respect to $\tilde I|_{\sO_{\bP V_x}(-2)}$;
      moreover:
      $\tilde \Omega_{I,t}|_{\P V_x}$ is a positive multiple of the Fubini--Study form on $\P V_x$.
    \item
      \label{Prop_FibrewiseKahlerForm_T=0Standard}
      $\Omega_{I,0}$ is a fibrewise Kähler form with respect to $I$ on $V^\times$;
      moreover:
      the fibrewise Kähler metric is the one induced by the Hermitian metric on $V$.
    \end{enumerate}
  \item
    \label{Prop_FibrewiseKahlerForm_Scaling}
    For every $t \geq 0$ and $R > 0$,
    \begin{equation*}
      R^{-2} \cdot R^*\Omega_{I,t} = \Omega_{I,t/R}.
    \end{equation*}
  \end{enumerate}
\end{prop}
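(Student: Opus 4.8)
The plan is to exhibit $\Omega_{I,t}$, restricted to each fibre of $p\co V\to C$, as $\tfrac{i}{2}\del\delbar(f_t\circ\rho^2)$ (equivalently $-\tfrac14\rd\sqparen{\rd(f_t\circ\rho^2)\circ I}$), so that every assertion reduces to three inputs: a one–variable identity for $f_t$; the standard positivity criterion for radial Kähler potentials; and a computation in local coordinates near the exceptional divisor $\P V\subset\sO_{\P V}(-2)$. Because $I$ is homogeneous, integrable, and $\C^\times$–invariant, $(V,I)$ is the total space of a holomorphic vector bundle over $C$, hence holomorphically trivial over a coordinate disc $U\subset C$; I would fix holomorphic coordinates $(x,v_1,v_2)$ on $V|_U$ in which $I$ is standard. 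Then $(x,\zeta,\eta)\coloneqq(x,v_2/v_1,v_1^2\lambda)$ are $\tilde I$–holomorphic coordinates on $\sO_{\P V}(-2)|_U$ over $\set{v_1\neq 0}$, with $\P V=\set{\eta=0}$, and the blow–down map of \autoref{Def_FibrewiseBlowUp} reads $\beta(x,\zeta,\eta)=\sqparen{\pm\sqrt\eta\,(1,\zeta)}$. Hence $\beta^*\rho^2=\abs\eta\cdot H$, where $H=H(x,\bar x,\zeta,\bar\zeta)>0$ is the chosen Hermitian metric evaluated on $(1,\zeta)$; this is only continuous along $\set{\eta=0}$, but $\tau\coloneqq(\beta^*\rho^2)^2=\eta\bar\eta\,H^2$ is smooth on the whole chart.

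The one–variable input is the telescoping identity $f_t'(u)=\sqrt{u^2+t^4}/u$, from which $f_t'>0$ and $\paren{u f_t'(u)}'=u/\sqrt{u^2+t^4}>0$ for every $t\geq 0$, while $f_0(u)=u$ and $f_t(R^2 u)=R^2 f_{t/R}(u)$. Using $-\tfrac14\rd\sqparen{\rd\phi\circ I}=\tfrac{i}{2}\del\delbar\phi$ on functions: \autoref{Prop_FibrewiseKahlerForm_T=0Standard} is then immediate from $f_0\circ\rho^2=\rho^2$; \autoref{Prop_FibrewiseKahlerForm_Scaling} follows because fibrewise dilation by $R\in\R_{>0}\subset\C^\times$ is $I$–holomorphic, so $R^*I=I$ and $R^*\rho^2=R^2\rho^2$, giving $R^*\Omega_{I,t}=-\tfrac14\rd\sqparen{\rd(R^2 f_{t/R}\circ\rho^2)\circ I}=R^2\Omega_{I,t/R}$; and, in a unitary frame on a fibre, the radial–potential criterion (positivity $\iff\phi'>0$ and $\paren{u\phi'(u)}'>0$) shows $\Omega_{I,t}|_{V_x^\times}$ is a positive $(1,1)$–form for every $t\geq 0$. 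Since $\rho^2$ and $I$ are $\set{\pm1}$–invariant, $\Omega_{I,t}$ is $\set{\pm1}$–invariant and descends to $\check\Omega_{I,t}$ on $V^\times/\set{\pm1}=\sO_{\P V}(-2)^\times$.

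What remains is to extend $\check\Omega_{I,t}$ across $\P V$ for $t>0$ and identify the restriction there. I would split the fibre potential as $f_t\circ\beta^*\rho^2=g_1+\tfrac{t^2}{2}\log(\eta\bar\eta)$ with $g_1\coloneqq\sqrt{\tau+t^4}-t^2\log\paren{\sqrt{\tau+t^4}+t^2}+t^2\log H$. For $t>0$ the radicand $\tau+t^4=\eta\bar\eta H^2+t^4\geq t^4>0$ is smooth, so $g_1$ is smooth on the whole chart, including $\set{\eta=0}$. Since $\eta$ is $\tilde I$–holomorphic, $\log(\eta\bar\eta)$ is locally $\tilde I$–pluriharmonic, so $\rd\sqparen{\rd\log(\eta\bar\eta)\circ\tilde I}=0$ on $\set{\eta\neq0}$; hence on $\sO_{\P V}(-2)^\times$ one has $\check\Omega_{I,t}=-\tfrac14\rd\sqparen{\rd g_1\circ\tilde I}$, and the right–hand side is a smooth $2$–form on the chart. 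These local formulae agree with $\check\Omega_{I,t}$ on overlaps, so they glue (over the two $\CP^1$–charts and all $x$) to the desired $\tilde\Omega_{I,t}\in\Omega^2(\sO_{\P V}(-2))$, which is closed because it is smooth and closed on the dense open set $\sO_{\P V}(-2)^\times$; this yields \autoref{Prop_FibrewiseKahlerForm_Existence}. For \autoref{Prop_FibrewiseKahlerForm_T>0Kahler} it then suffices to check positivity along $\P V$, since elsewhere $\tilde\Omega_{I,t}$ is the positive descent of $\Omega_{I,t}|_{V_x^\times}$. Using $\tau=\eta\bar\eta H^2$ with $H$ independent of $\eta$, one finds $\del_a\del_{\bar b}\tau$ vanishes at $\set{\eta=0}$ except $\del_\eta\del_{\bar\eta}\tau=H^2$, so the fibre Levi form of $g_1$ at $\set{\eta=0}$ is block–diagonal in $(\zeta,\eta)$, equal to $\diag\paren{t^2\,\del_\zeta\del_{\bar\zeta}\log H,\ H^2/4t^2}$ with $\del_\zeta\del_{\bar\zeta}\log H=\det h/H^2>0$; this is positive definite, and restricting to $\P V_x$ leaves $\tilde\Omega_{I,t}|_{\P V_x}=t^2\cdot\tfrac{i}{2}\del\delbar\log\Abs{(1,\zeta)}^2=t^2\,\omega_{\mathrm{FS}}$, a positive multiple of the Fubini–Study form.

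The hard part will be the extension across $\P V$: the potential $f_t\circ\beta^*\rho^2$ genuinely blows up along $\set{\eta=0}$, so the crux is to isolate the locally pluriharmonic piece $\tfrac{t^2}{2}\log(\eta\bar\eta)$—which contributes nothing to $\del\delbar$—and then to verify both that the remainder $g_1$ is smooth (this is exactly where $t>0$ enters, through $\tau+t^4\geq t^4$) and that its fibre Levi form is positive definite, not merely semidefinite, at $\eta=0$. Everything else—the calculus identity, the $t=0$ and scaling statements, positivity away from $\P V$, the $\set{\pm1}$–invariance, and the choice of coordinates respecting the bundle structure over $C$—I expect to be routine.
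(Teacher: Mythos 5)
Your proposal is correct and is, in substance, the argument the paper has in mind: the paper's own proof consists of the remark that \autoref{Prop_FibrewiseKahlerForm_Existence} and \autoref{Prop_FibrewiseKahlerForm_T>0Kahler} follow by straightforward computation from (a family version of) the Calabi/Eguchi--Hanson ansatz, and your computations --- the identity $f_t'(u)=\sqrt{u^2+t^4}/u$, the radial positivity criterion, the scaling identity $f_t(R^2u)=R^2f_{t/R}(u)$, and the splitting of the fibre potential near $\P V$ into a smooth piece $g_1$ plus the $\tilde I$--pluriharmonic term $\tfrac{t^2}{2}\log(\eta\bar\eta)$ --- are exactly those computations carried out, fibrewise. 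One small point: you (correctly) prove the extension across $\P V$ only for $t>0$; for $t=0$ it genuinely fails, since in your chart the $\rd\eta\wedge\rd\bar\eta$--coefficient of $\check\Omega_{I,0}$ is $\tfrac{H}{4\abs{\eta}}$, which is unbounded along $\set{\eta=0}$, so the ``$t\geq 0$'' in the extension clause of \autoref{Prop_FibrewiseKahlerForm_Existence} should be read as $t>0$ (consistent with the paper only ever using $\Omega_{I,0}$ on $V^\times$, as in \autoref{Prop_FibrewiseKahlerForm_T=0Standard} and \autoref{Prop_FibrewiseKahlerForm_Scaling}); your restriction is the right call rather than a gap.
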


\begin{proof}
  \autoref{Prop_FibrewiseKahlerForm_Existence} and \autoref{Prop_FibrewiseKahlerForm_T>0Kahler} are verified by straight-forward computations.
  Indeed, this is nothing but (a family version of) the construction due to   \citet{Eguchi1979}; cf.~\cite{Lye2023:CalabiEguchiHanson}.

  \autoref{Prop_FibrewiseKahlerForm_T=0Standard} and \autoref{Prop_FibrewiseKahlerForm_Scaling} are obvious.
\end{proof}

\medskip

Here are two observations regarding the topology of $\sO_{\P V}(-2)$,
which are required in the following.

\begin{prop}
  \label{Prop_FirstChernClassOfVerticalTangentBundle}
  The \defined{vertical tangent bundle}
  $T^\ver\sO_{\P V}(-2) \coloneqq \ker Tq \subset T\sO_{\P V}(-2)$
  satisfies
  \begin{equation*}
    c_1(T^\ver\sO_{\P V}(-2)) = q^*c_1(V).
  \end{equation*}
\end{prop}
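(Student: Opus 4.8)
The plan is to decompose $q = \varpi \circ \varsigma$ and chase first Chern classes through the resulting short exact sequence of vertical tangent bundles. Since $Tq = T\varpi \circ T\varsigma$, a tangent vector on $\sO_{\P V}(-2)$ is $q$--vertical precisely when its $T\varsigma$--image is $\varpi$--vertical; as $\varsigma$ is a submersion this yields the short exact sequence
\begin{equation*}
  0 \to T^\ver\varsigma \to T^\ver\sO_{\P V}(-2) \to \varsigma^* T^\ver\varpi \to 0,
\end{equation*}
where $T^\ver\varsigma \coloneqq \ker T\varsigma$ and $T^\ver\varpi \coloneqq \ker T\varpi$ are the vertical tangent bundles of the line bundle $\varsigma \co \sO_{\P V}(-2) \to \P V$ and of the $\CP^1$--bundle $\varpi \co \P V \to C$. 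Taking $c_1$, it suffices to prove
\begin{equation*}
  c_1\paren{T^\ver\varsigma} + \varsigma^* c_1\paren{T^\ver\varpi} = q^* c_1(V).
\end{equation*}

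First I would compute the two summands separately. The vertical tangent bundle of the total space of any complex vector bundle is canonically the pullback of that bundle; applied to the line bundle $\varsigma$ this gives $T^\ver\varsigma \iso \varsigma^*\sO_{\P V}(-2)$. Writing $h \coloneqq c_1\paren{\sO_{\P V}(1)}$ for the fibrewise hyperplane class and observing that the $\C^\times$--weights $(1,-2)$ exhibit $\sO_{\P V}(-2)$ as $\sO_{\P V}(-1)^{\otimes 2}$, with $\sO_{\P V}(-1)$ the tautological subbundle, one gets $c_1\paren{T^\ver\varsigma} = \varsigma^*(-2h)$. For $T^\ver\varpi$ I would use the relative Euler sequence of $\varpi \co \P V \to C$: the tautological inclusion $\sO_{\P V}(-1) \into \varpi^*V$ has a line-bundle quotient $Q$ with $c_1(Q) = \varpi^*c_1(V) + h$, and $T^\ver\varpi \iso \sO_{\P V}(1) \otimes Q$, hence $c_1\paren{T^\ver\varpi} = 2h + \varpi^*c_1(V)$.

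Substituting and pulling back along $\varsigma$, the $-2h$ coming from $T^\ver\varsigma$ cancels the $+2h$ coming from $T^\ver\varpi$, leaving $c_1\paren{T^\ver\sO_{\P V}(-2)} = \varsigma^*\varpi^*c_1(V) = q^*c_1(V)$, as claimed. Conceptually, the $h$--terms must cancel because the $(-2)$ twist makes each fibre of $q$ the minimal resolution of the du Val singularity $\C^2/\set{\pm 1}$, whose canonical bundle is trivial, so $c_1$ of the vertical tangent bundle restricts to zero on every fibre and is therefore pulled back from $C$; the computation above merely makes this precise and fixes the pulled-back class. (Alternatively one could conclude via Leray--Hirsch once fibrewise triviality of $c_1(T^\ver q)$ is known, but the Euler-sequence argument is quicker.)

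I expect no genuine obstacle here: the proof is elementary bookkeeping. The one point that needs care is sign conventions — one must check that the paper's definition of $\sO_{\P V}(-2)$ via the weight $(1,-2)$ action on $V^\times\times\C$ really agrees with $\sO_{\P V}(-1)^{\otimes 2}$ for the \emph{same} tautological line bundle $\sO_{\P V}(-1)$ whose dual class $h$ enters the relative Euler sequence, so that the cancellation of the hyperplane terms is genuine rather than an artefact of mismatched conventions.
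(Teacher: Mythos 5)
Your proof is correct and follows essentially the same route as the paper: decompose $q = \varpi\circ\varsigma$, use the short exact sequence $\varsigma^*\sO_{\P V}(-2) \into T^\ver\sO_{\P V}(-2) \onto \varsigma^*T^\ver\P V$, and compute $c_1(T^\ver\P V) = 2\,c_1(\sO_{\P V}(1)) + \varpi^*c_1(V)$ from the relative Euler sequence (your passage through the tautological quotient $Q$ is just a rank--$2$ reformulation of it). The sign/convention check you flag is exactly the right thing to verify, and it works out as you expect.
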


\begin{proof}
  Since
  \begin{equation*}
    \varsigma^*\sO_{\P V}(-2) \into T^\ver\sO_{\P V}(-2) \onto \varsigma^*T^\ver\P V
  \end{equation*}
  is exact,
  \begin{equation*}
    c_1(T^\ver\sO_{\P V}(-2)) = \varsigma^*\paren[\big]{c_1(T^\ver\P V) - 2c_1(\sO_{\P V}(1))}.
  \end{equation*}
  Since the (relative) Euler sequence
  \begin{equation*}
    \sO_{\P V} \into \sO_{\P V}(1) \otimes_\C \varpi^*V \onto T^\ver\P V
  \end{equation*}
  is exact,  
  \begin{equation*}
    c_1(T^\ver\P V) = \rk_\C V \cdot c_1(\sO_{\P V}(1)) + \varpi^*c_1(V).
  \end{equation*}
  Since $\rk_\C V = 2$,
  these combine to prove the assertion.
\end{proof}

\begin{prop}
  \label{Prop_HomologyClassOfSections}
  If $s \co C \to \sO_{\P V}(-2)$ is a section of $q \co \sO_{\P V}(-2) \to C$,
  then $s_*[C] \in \rH_2\paren{\sO_{\P V}(-2);\Z}$ is determined by the \defined{degree}
  \begin{equation*}
    \deg(s) \coloneqq \Inner{s^*\varsigma^*c_1\paren{\sO_{\bP V}(1)},[C]} \in \Z;
  \end{equation*}
  indeed:
  \begin{equation*}
    s_*[C] = S + \paren{\deg V + \deg(s)} \cdot F
    \in
    \rH_2\paren{\P V;\Z} \iso \rH_2\paren{\sO_{\P V}(-2);\Z}
  \end{equation*}
  with $S \coloneqq \mathrm{PD}\sqparen{c_1(\sO_{\P V}(1))}$ and $F$ denoting the  homology class of a fibre of $\P V$.
\end{prop}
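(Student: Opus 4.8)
The statement has two parts: first, that $H_2(\mathcal{O}_{\mathbf{P}V}(-2);\mathbf{Z}) \cong H_2(\mathbf{P}V;\mathbf{Z})$, and second, the explicit formula for $s_*[C]$ in terms of $\deg(s)$. For the first part, I would note that $\varsigma \colon \mathcal{O}_{\mathbf{P}V}(-2) \to \mathbf{P}V$ is a complex line bundle, hence a homotopy equivalence (the zero section is a deformation retract), so it induces an isomorphism on all homology groups. For the structure of $H_2(\mathbf{P}V;\mathbf{Z})$ itself, $\mathbf{P}V \to C$ is a $\mathbf{CP}^1$-bundle over a surface, and the Leray–Hirsch theorem (or the Gysin sequence) gives $H_2(\mathbf{P}V;\mathbf{Z}) \cong \mathbf{Z}\langle S\rangle \oplus \mathbf{Z}\langle F\rangle$, where $F$ is the fibre class and $S$ is any section class; it is convenient to fix $S = \mathrm{PD}[c_1(\mathcal{O}_{\mathbf{P}V}(1))]$ as the paper does. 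One checks $S \cdot F = 1$ and $F \cdot F = 0$ as a sanity check on the basis.

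For the second part, write $s_*[C] = aS + bF$ and solve for $a, b$ by pairing with cohomology classes that form a dual basis. Pairing with $\varpi^* \eta$ for $\eta \in H^2(C;\mathbf{Z})$ the positive generator: since $\varpi \circ s = \mathrm{id}_C$, we get $\langle \varpi^*\eta, s_*[C]\rangle = \langle \eta, [C]\rangle = 1$, while $\langle \varpi^*\eta, F\rangle = 0$ and $\langle \varpi^*\eta, S\rangle = 1$, so $a = 1$. To find $b$, pair with $c_1(\mathcal{O}_{\mathbf{P}V}(1))$: we have $\langle c_1(\mathcal{O}_{\mathbf{P}V}(1)), s_*[C]\rangle = \langle s^*c_1(\mathcal{O}_{\mathbf{P}V}(1)), [C]\rangle$. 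Here I would identify $s^*\mathcal{O}_{\mathbf{P}V}(1)$ with $s^*\varsigma^*\mathcal{O}_{\mathbf{P}V}(1)$ — recalling that under the identification $\mathcal{O}_{\mathbf{P}V}(-2) \to \mathbf{P}V$ the relevant line bundle is pulled back along $\varsigma$ — so this pairing equals $\deg(s)$ by definition. On the other hand $\langle c_1(\mathcal{O}_{\mathbf{P}V}(1)), S\rangle = S \cdot S$ and $\langle c_1(\mathcal{O}_{\mathbf{P}V}(1)), F\rangle = 1$. The self-intersection $S\cdot S$ is computed from the standard relation in the cohomology ring of a projectivised rank-$2$ bundle: $c_1(\mathcal{O}_{\mathbf{P}V}(1))^2 = -\varpi^*c_1(V) \cdot c_1(\mathcal{O}_{\mathbf{P}V}(1))$ (equivalently $c_1(\mathcal{O}_{\mathbf{P}V}(1))^2 + \varpi^*c_1(V)\,c_1(\mathcal{O}_{\mathbf{P}V}(1)) + \varpi^*c_2(V) = 0$ with $c_2(V)$ contributing nothing in this dimension), which gives $S \cdot S = -\deg V$. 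Hence $\langle c_1(\mathcal{O}_{\mathbf{P}V}(1)), s_*[C]\rangle = -\deg V + b = \deg(s)$, so $b = \deg V + \deg(s)$, as claimed.

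The only genuinely delicate point is bookkeeping around which bundle $\mathcal{O}_{\mathbf{P}V}(1)$ means where — on $\mathbf{P}V$ versus pulled back to $\mathcal{O}_{\mathbf{P}V}(-2)$ — and getting the sign of $S\cdot S$ right; the convention that $\mathcal{O}_{\mathbf{P}V}(-1)$ is the tautological subbundle fixes $c_1(\mathcal{O}_{\mathbf{P}V}(1)) \cdot F = +1$ and the self-intersection formula above. I expect \autoref{Prop_FirstChernClassOfVerticalTangentBundle} and its Euler-sequence computation to be reusable here, or at least the same relation $c_1(T^\ver\mathbf{P}V) = 2c_1(\mathcal{O}_{\mathbf{P}V}(1)) + \varpi^*c_1(V)$ to pin down the ring structure. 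Everything else is a routine application of Leray–Hirsch and the projection formula, so no step should present a real obstacle once the conventions are nailed down.
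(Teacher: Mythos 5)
Your proposal is correct and follows essentially the same route as the paper: express $s_*[C]$ in the basis $\set{S,F}$ of $\rH_2(\P V;\Z)$ and determine the coefficients from the two pairings $s_*[C]\cdot F = 1$ and $s_*[C]\cdot S = \deg(s)$ together with $S\cdot S = -\deg V$, $S \cdot F = 1$, $F\cdot F=0$ (your cup-product pairings with $\varpi^*\eta$ and $c_1(\sO_{\P V}(1))$ are precisely these intersection numbers). The only difference is that you spell out what the paper leaves implicit — the homotopy equivalence $\sO_{\P V}(-2)\simeq \P V$, Leray--Hirsch for the basis, and the Grothendieck relation giving $S\cdot S=-\deg V$ — which is fine.
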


\begin{proof}
  $\rH_2\paren{\P V;\Z} = \Span{S,F}$ with
  \begin{equation*}
    S \cdot S = -\deg V, \quad
    S \cdot F = 1, \qandq
    F \cdot F = 0.
  \end{equation*}
  Since
  \begin{equation*}
    s_*[C]\cdot S = \deg(s) \qandq
    s_*[C]\cdot F = 1,
  \end{equation*}
  the assertion follows.  
\end{proof}

\begin{remark}
  \label{Rmk_DegreeGenus}
  Let $s \co C \to \sO_{\bP V}(-2)$ be a section of $q$.
  Define $\pi \co \tilde C \to C$ by the pullback diagram
  \begin{equation*}
    \begin{tikzcd}
      \tilde C \ar{r}{\pi} \ar[hook]{d} & C \ar[hook]{d}{\beta \circ s} \\
      V \ar[two heads]{r} & V/\set{\pm 1}.
    \end{tikzcd}
  \end{equation*}
  If $s$ is transverse to $\bP V \subset \sO_{\bP V}(-2)$,
  then $\pi \co \tilde C \to C$ is a double cover branched over $s^{-1}(\bP V)$.
  Therefore, by the Riemann--Hurwitz formula,
  \begin{equation*}
    g(\tilde C)
    = 2g(C) - 1 + \abs{s^{-1}(\bP V)}.
  \end{equation*}
  If, moreover, $s$ intersects $\P V$ positively,
  then $\abs{s^{-1}(\bP V)} = \deg(s)$.
  In this sense,
  $\deg(s)$ relates to the genus of $\tilde C$.
\end{remark}


\subsection{Admissible almost complex structures on $\sO_{\P V}(-2)$}
\label{Sec_AlmostComplexStructuresOnBlowup}

Here is a description of the subspace of
$\sJ(\sO_{\P V}(-2))$,
the space of $C^K$ almost complex structures on $\sO_{\P V}(-2)$ employed in the construction of $\BlowUpInvariant$.

\begin{prop}
  \label{Prop_BlowUpAlmostComplexStructures}
  Let $\omega_C$ be a Kähler form on $C$.
  There are
  a subspace
  $\SpaceOfBlowUpAlmostComplexStructures \subset \sJ(\sO_{\P V}(-2))$,
  the structure of a Banach manifold on $\SpaceOfBlowUpAlmostComplexStructures$, and
  a submersion
  \begin{equation*}
    \fd_\infty
    \co
    \SpaceOfBlowUpAlmostComplexStructures
    \to
    \SpaceOfRealCauchyRiemannOperators(V)
  \end{equation*}
  such that the following hold:  
  \begin{enumerate}
  \item
    \label{Prop_BlowUpAlmostComplexStructures_Properties}
    Every $J \in \SpaceOfBlowUpAlmostComplexStructures$ is \defined{admissible};
    that is: the following hold:
    \begin{enumerate}
    \item
      \label{Prop_BlowUpAlmostComplexStructures_Properties_Fibred}
      The map $q \co \sO_{\bP V}(-2) \to C$ is $J$--holomorphic.
    \item
      \label{Prop_BlowUpAlmostComplexStructures_Properties_End}
      If $\fd \coloneqq \fd_\infty(J)$,
      then
      \begin{equation*}
        J = \check J_\fd
        \quad\text{on}\quad
        \set{ \tilde\rho^2 \geq 1 } = \set{ \check\rho^2 \geq 1 } \subset \sO_{\P V}(-2)^\times = V^\times/\set{\pm 1}.
      \end{equation*}
    \item
      \label{Prop_BlowUpAlmostComplexStructures_Properties_Tamed}           
      There is a $\Lambda = \Lambda(J) > 0$ such that
      for every $R > 0$
      the restriction of
      \begin{equation*}
        \Omega_{\tilde I,1} + \Lambda (1+R^2) \cdot q^* \omega_C
      \end{equation*}
      to $\set{ \tilde\rho^2 < 2R^2 } \subset \sO_{\P V}(-2)$ is symplectic and tames $J$.
      Here $I \coloneqq J_{\delbar}$ with $\delbar$ denoting the Dolbeault operator induced by $\fd$.
    \end{enumerate}
  \item
    \label{Prop_BlowUpAlmostComplexStructures_FibreIsContractible}
    For every $\fd \in \SpaceOfRealCauchyRiemannOperators(V)$,
    $\fd_\infty^{-1}(\fd)$ is contractible; in particular: non-empty.
  \item
    \label{Prop_BlowUpAlmostComplexStructures_IsAmple}
    For every $J \in \SpaceOfBlowUpAlmostComplexStructures$ there is an $\epsilon = \epsilon(J) > 0$ such that
    if
    \begin{equation*}
      \fa \in B_\epsilon(0) \subset C_c^K\Gamma\paren{\sO_{\P V}(-2),\overline{\End}_\C(T\sO_{\P V}(-2),J)}
    \end{equation*}
    satisfies $Tq \circ \fa = 0$ and vanishes on $\set{ \rho^2 \geq 1 } \subset \sO_{\P V}(-2)$,
    then
    \begin{equation*}
      J(\fa) \coloneqq \paren{\one + \tfrac12 J\fa} J \paren{\one + \tfrac12 J\fa}^{-1} \in \SpaceOfBlowUpAlmostComplexStructures;
    \end{equation*}
    moreover:
    $\fa \mapsto J(\fa)$ is smooth.
  \end{enumerate}
\end{prop}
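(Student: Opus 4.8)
\emph{Strategy.} The plan is to \emph{define} $\SpaceOfBlowUpAlmostComplexStructures$ to be the set of almost complex structures on $\sO_{\P V}(-2)$ that satisfy the three admissibility conditions, to construct an explicit reference family $\fd \mapsto J_0(\fd)$ of such structures, and then to build the Banach manifold atlas out of $J_0$ together with the fibrewise perturbations appearing in the ampleness statement. The homogeneity‑near‑infinity condition requires $J = \check J_\fd$ on $\set{\tilde\rho^2 \geq 1}$ for some $\fd$; since $\check J_\fd$ is $\R^\times$--homogeneous on $V^\times/\set{\pm 1}$, its restriction to $\set{\check\rho^2 \geq 1}$ already determines it, hence determines $\fd$, and one sets $\fd_\infty(J) \coloneqq \fd$, which is therefore well defined and continuous in $J$.

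\emph{The reference family.} For each $\fd$ I would construct an admissible $J_0(\fd)$ with $\fd_\infty(J_0(\fd)) = \fd$, depending smoothly (indeed affinely) on $\fd$. Using the explicit description of $\check J_\fd$ and of the blow‑up coordinates $[v,\mu]$, the reason $\check J_\fd$ does not extend over the exceptional divisor $\set{\mu = 0}$ is that its dependence on $\arg \mu$ is controlled by the $\C$--anti‑linear part $a_2$ of $\fd$ (when $a_2 = 0$ it is $\C^\times$--invariant and extends as $\tilde I$). Crucially, $a_2$ enters only through the action of $\check J_\fd$ on horizontal vectors and leaves the vertical part equal to the standard one; replacing $a_2$ by $\chi(\tilde\rho^2)\, a_2$, for a cutoff $\chi$ vanishing near $0$ and equal to $1$ on $[1,\infty)$, therefore yields an honest almost complex structure $J_0(\fd)$ which is fibred over $(C,j_C)$, equals $\check J_\fd$ on $\set{\tilde\rho^2 \geq 1}$, and extends smoothly across the exceptional divisor (agreeing there with a fixed integrable structure). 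Admissibility of $J_0(\fd)$ is then completed by the taming estimate below, which also gives non‑emptiness of the fibres.

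\emph{The taming estimate (the crux).} The delicate point is to verify the taming condition, and more generally to show it holds for \emph{every} fibred $J$ that agrees with $\check J_\fd$ near infinity and whose vertical part is the standard one. Since $\tilde\Omega_{I,1}$ of \autoref{Prop_FibrewiseKahlerForm} (with $I = J_\delbar$) is fibrewise Kähler for $\tilde I$, and $\tilde I$ has standard vertical part, $\tilde\Omega_{I,1}$ restricts to a Kähler form on $T^\ver$ and hence tames $J|_{T^\ver}$. Decomposing a tangent vector $\zeta = \zeta^\ver + \zeta^\hor$ and evaluating $\paren{\tilde\Omega_{I,1} + \Lambda(1+R^2)q^*\omega_C}(\zeta, J\zeta)$: the vertical–vertical term equals $\abs{\zeta^\ver}^2$ in the Eguchi–Hanson metric, the horizontal term is $\Lambda(1+R^2)\abs{\zeta^\hor}^2_{\omega_C}$, and the mixed terms are bounded by $\abs{\zeta^\ver}\,\abs{\zeta^\hor}$ times the size of the part of $J$ failing to preserve the splitting. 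On $\set{\tilde\rho^2 \geq 1}$ this last factor is scale‑invariant, because both $\check J_\fd$ and the Eguchi–Hanson metric are asymptotically homogeneous under fibrewise dilation (\autoref{Prop_FibrewiseKahlerForm}), so the worst mixed contribution near the boundary $\set{\tilde\rho^2 = 2R^2}$ grows only like $\rho^2 \sim R^2$ and is exactly absorbed by the prefactor $1 + R^2$; completing the square then produces a positive lower bound with a single $\Lambda = \Lambda(J)$ valid for all $R$, and a form taming $J$ is automatically non‑degenerate. I expect this reconciliation of the non‑Kähler, non‑compact behaviour at infinity with the need for one $\Lambda$ uniform in $R$ to be the main obstacle; it is exactly where the precise shape of the taming condition and the scaling in \autoref{Prop_FibrewiseKahlerForm} are used. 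The same estimate, together with openness of taming and compactness of $\set{\tilde\rho^2 \leq 1}$, yields the ampleness statement: that $q$ stays holomorphic under $J(\fa) = \paren{\one + \tfrac12 J\fa}J\paren{\one + \tfrac12 J\fa}^{-1}$ follows from $Tq \circ \fa = 0$, that $J(\fa) = \check J_\fd$ near infinity from $\fa$ being supported in $\set{\tilde\rho^2 < 1}$, and smoothness of $\fa \mapsto J(\fa)$ is formal.

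\emph{Manifold structure, submersion, contractible fibres.} For the atlas, near an admissible $J_*$ with $\fd_* \coloneqq \fd_\infty(J_*)$ I would, for $\fd$ near $\fd_*$, apply the cutoff construction to the (small) difference of $\check J_\fd$ and $\check J_{\fd_*}$ near infinity to obtain $J_*^{(\fd)}$ depending smoothly on $\fd$, equal to $\check J_\fd$ near infinity, equal to $J_*$ at $\fd_*$, and admissible; then $(\fd,\fa) \mapsto J_*^{(\fd)}(\fa)$, for $\fd$ near $\fd_*$ and $\fa$ small, vertical, compactly supported in $\set{\tilde\rho^2 < 1}$, is injective (recover $\fd$ by $\fd_\infty$, then $\fa$), has open image by the ampleness statement, and has smooth overlaps; in each such chart $\fd_\infty$ is the projection onto $\fd$, hence a submersion with split kernel the $\fa$--directions. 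For contractibility of a fibre $\fd_\infty^{-1}(\fd)$: by the taming estimate it is, up to the open and non‑restrictive horizontal part of the taming condition, the space of fibred almost complex structures equal to $\check J_\fd$ on $\set{\tilde\rho^2 \geq 1}$ whose vertical part is strictly tamed by $\tilde\Omega_{I,1}|_{T^\ver}$. The vertical part is a section of a bundle over $\sO_{\P V}(-2)$ whose fibres are spaces of tamed linear complex structures, which are contractible by Gromov's lemma; hence the section space, and the subspace of those with the prescribed value on the closed set $\set{\tilde\rho^2 \geq 1}$, is contractible; given the vertical part, the remaining off‑diagonal data (together with a choice of connection) forms a bundle with affine fibres, again with contractible section space relative to the prescribed value near infinity. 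Therefore $\fd_\infty^{-1}(\fd)$ is contractible, and non‑empty since it contains $J_0(\fd)$.
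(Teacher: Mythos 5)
Your proposal is correct in substance and its core coincides with the paper's proof: both construct, for each $\fd$, a reference structure by cutting off the $\C$--anti-linear part of $\check J_\fd$ near the exceptional divisor (so that it agrees with the integrable $\tilde I$ there and with $\check J_\fd$ on $\set{\tilde\rho^2\geq 1}$), both verify the taming condition by absorbing the $O(\rho)$ mixed and $O(\rho^2)$ horizontal error terms into the prefactor $\Lambda(1+R^2)$ with $\Lambda$ uniform in $R$ (the paper merely cites the analogous computation in \cite{Doan2018a}; your sketch of this estimate is more explicit and is sound), and both enlarge the reference family by small compactly supported vertical perturbations via the Cayley-type formula. Where you genuinely diverge is in the choice of $\SpaceOfBlowUpAlmostComplexStructures$: the paper \emph{defines} it as the image of the parametrization $(\fd,\fa)\mapsto \tilde J_\fd^\chi(\fa)$, so the Banach manifold structure, the submersion property of $\fd_\infty$, and the contractibility of the fibres (convex $\epsilon(\fd)$--balls) are immediate; you instead take the set of \emph{all} admissible structures, which forces you to build an atlas — hence to prove local surjectivity of the charts (the converse of the Cayley parametrization, which does hold since the difference of two fibred structures agreeing near infinity is vertical-valued and compactly supported), to construct the interpolated reference $J_*^{(\fd)}$, and to choose a topology (note that uniform $C^K$ norms on $\sO_{\P V}(-2)$ do not work, since the zeroth-order term of $\check J_\fd$ grows linearly in $\rho$, so the topology is in effect again dictated by the charts) — and to invoke Gromov's contractibility of tamed complex structures, fibrewise and relative to the prescribed value on $\set{\tilde\rho^2\geq 1}$, for item \autoref{Prop_BlowUpAlmostComplexStructures_FibreIsContractible}. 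Your larger space is more canonical (independent of the cutoff $\chi$ and of $\epsilon(\fd)$), but since the proposition only asks for \emph{some} subspace with these properties, the paper's smaller choice makes the soft parts of the statement essentially tautological; your route is viable but leaves these atlas/topology points asserted rather than carried out, which is roughly the level of detail the paper itself allows ("there is an obvious Banach manifold structure").
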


\begin{proof}
  Let $\fd \in \SpaceOfRealCauchyRiemannOperators(V)$.
  Denote by $\delbar$ the Dolbeault operator induced by $\fd$.
  Set $I \coloneqq J_{\delbar}$ and $\fa \coloneqq J_{\delbar}$.
  Let $\chi \in C^\infty\paren{[0,\infty),0}$ with $\chi|_{[0,1/2]} = 0$ and $\chi|_{[1,\infty)} = 1$.
  The almost complex structure
  \begin{equation*}
    J_\fd^\chi \coloneqq I + \paren{\chi \circ \rho^2} \cdot \fa
  \end{equation*}
  is $\set{\pm 1}$--invariant, and
  $\check J_\fd^\chi$ extends to an almost complex structure $\tilde J_\fd^\chi$ on $\sO_{\P V}(-2)$.

  By construction,
  $\tilde J_\fd^\chi$ satisfies \autoref{Prop_BlowUpAlmostComplexStructures_Properties_Fibred} and \autoref{Prop_BlowUpAlmostComplexStructures_Properties_End} with $\fd_\infty(\tilde J_\fd^\chi) \coloneqq \fd$.
  Moreover, it satisfies
  \autoref{Prop_BlowUpAlmostComplexStructures_Properties_Tamed};
  indeed:
  for $\Lambda \gg_\fd 1$,
  \begin{equation*}
    \sqparen{\Omega_{\tilde I,1} + \Lambda \paren{1+\rho^2} \cdot q^* \omega_C}(v,\tilde J_\fd^\chi v) > 0
  \end{equation*}
  for every non-zero $v \in T\sO_{\bP V}(-2)$;
  cf.~\cite[Proof of Lemma 3.6~(5)]{Doan2018a}.

  Evidently,
  for $\epsilon \ll_\fd 1$ and $\fa$ as in \autoref{Prop_BlowUpAlmostComplexStructures_IsAmple},
  $\tilde J_\fd^\chi(\fa)$ satisfies \autoref{Prop_BlowUpAlmostComplexStructures_Properties}.
  The choice of $\epsilon = \epsilon(\fd)$ can be arranged to depend continuously on $\fd \in \SpaceOfRealCauchyRiemannOperators(V)$. 
  Set
  \begin{equation*}
    \SpaceOfBlowUpAlmostComplexStructures
    \coloneqq
    \set{
      \tilde J_\fd^\chi(\fa)
      :
      \fd \in \SpaceOfRealCauchyRiemannOperators(V),
      \fa ~\text{as in \autoref{Prop_BlowUpAlmostComplexStructures_IsAmple} with}~
      \epsilon = \epsilon(\fd)
    }
  \end{equation*}
  and define $\fd_\infty(\tilde J_\fd^\chi(\fa)) \coloneqq \fd$.
  There is an obvious Banach manifold structure on $\SpaceOfBlowUpAlmostComplexStructures$ with respect to which the above assertions hold.
\end{proof}

For the remainder of this section \emph{choose} an incarnation of
$\fd_\infty
\co
\SpaceOfBlowUpAlmostComplexStructures
\to
\SpaceOfRealCauchyRiemannOperators(V)$
as in \autoref{Prop_BlowUpAlmostComplexStructures}.


\subsection{The space of pseudo-holomorphic sections}
\label{Sec_TheSpaceOfPseudoHolomorphicSections}

Set
\begin{equation*}
  \SpaceOfBlowUpAlmostComplexStructures^\times
  \coloneqq
  \SpaceOfBlowUpAlmostComplexStructures\setminus \fd_\infty^{-1}(\sW).
\end{equation*}
The chambered invariant $\BlowUpInvariant$ is extracted from the following space of pseudo-holomorphic sections.

\begin{prop}
  \label{Prop_SpaceOfPseudoHolomorphicSections}
  Let $p > 2$.
  Denote by $\sB \coloneqq W^{1,p}\Gamma\paren{C,\sO_{\bP V}(-2)}$ the Banach manifold of $W^{1,p}$~sections of the fibre bundle $q \co \sO_{\bP V}(-2) \to C$.
  Denote by 
  $\sE \to \SpaceOfBlowUpAlmostComplexStructures^\times \times \sB$
  the Banach space bundle whose fibre over $(J,s) \in \SpaceOfBlowUpAlmostComplexStructures^\times \sB$ is $L^p\Omega^{0,1}\paren{C,s^*T^{\ver}{\sO_{\bP V}(-2)}}$.
  The following hold:
  \begin{enumerate}
  \item
    \label{Prop_SpaceOfPseudoHolomorphicSections_Transverse}
    The section $\delbar$ of $\sE$ defined by $\delbar(J,s) \coloneqq \delbar_Js$ intersects the zero section transversely;
    in particular, $\sM \coloneqq \delbar^{-1}(0)$ is a Banach submanifold.
  \item
    \label{Prop_SpaceOfPseudoHolomorphicSections_Index}
    The map
    $\pi \co \delbar^{-1}(0) \to \SpaceOfBlowUpAlmostComplexStructures^\times$ is Fredholm of index $0$.
  \end{enumerate}
\end{prop}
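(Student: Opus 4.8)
The plan is to realise $\delbar$ as a section of the Banach bundle $\sE$, compute its vertical derivative, and then run the Sard--Smale argument for the universal moduli space in the style of \cite[\S8.6]{McDuff2012}, with the adaptations needed for a non-compact target as in \cite{Doan2018a}. First I would record the linearisation. On $\sB$ the tangent space at $s$ is $W^{1,p}\Gamma(C,s^*T^\ver\sO_{\bP V}(-2))$, since sections of $q$ are constrained by $q\circ s=\id_C$; moreover $\delbar_Js$ is automatically valued in $T^\ver\sO_{\bP V}(-2)$ because $q$ is $J$-holomorphic (\autoref{Prop_BlowUpAlmostComplexStructures_Properties_Fibred}), so $\delbar$ genuinely is a section of $\sE$. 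At a point $(J,s)\in\sM$ one has $\delbar_Js=0$, so $ds$ is complex linear and injective, and the linearisation takes the usual shape
\begin{equation*}
  D\delbar_{(J,s)}(\hat J,\xi) = D_s\xi + \tfrac12\,\hat J\circ ds\circ j_C,
\end{equation*}
where $D_s\co W^{1,p}\Gamma(C,s^*T^\ver\sO_{\bP V}(-2)) \to L^p\Omega^{0,1}(C,s^*T^\ver\sO_{\bP V}(-2))$ is a real Cauchy--Riemann operator, hence Fredholm, and $\hat J$ ranges over the $J$-antilinear endomorphisms valued in $T^\ver\sO_{\bP V}(-2)$.

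For \autoref{Prop_SpaceOfPseudoHolomorphicSections_Transverse} it suffices, since $D_s$ is Fredholm, to rule out a non-zero $\eta$ annihilating $\im D\delbar_{(J,s)}$. Such an $\eta$ lies in the kernel of the formal adjoint of $D_s$; by elliptic regularity it is smooth, and by unique continuation its zero set is finite. The decisive point is that, since $J\in\SpaceOfBlowUpAlmostComplexStructures^\times$ means $\fd\coloneqq\fd_\infty(J)\notin\sW$, the section $s$ cannot be contained in $\set{\tilde\rho^2\geq1}$: on that set $J$ agrees with $\check J_\fd$, so a section contained there would be a $J_\fd$-holomorphic section of $V/\set{\pm1}$, forcing $\fd\in\sW_2$ --- a contradiction. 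Hence $s^{-1}(\set{\tilde\rho^2<1})$ is a non-empty open subset of $C$, and I would pick $x_0$ in it with $\eta(x_0)\neq0$. Near $s(x_0)$ the almost complex structure may be perturbed freely in the fibre directions by \autoref{Prop_BlowUpAlmostComplexStructures_IsAmple}, and because $ds(x_0)$ is injective the map $\fa\mapsto\tfrac12\,\fa\circ ds(x_0)\circ j_C$ surjects onto the fibre of $L^p\Omega^{0,1}(C,s^*T^\ver\sO_{\bP V}(-2))$ at $x_0$; a cut-off supported in a small ball around $s(x_0)$ inside $\set{\rho^2<1}$ then yields an admissible perturbation $\fa$ with $\int_C\langle\eta,\tfrac12\,\fa\circ ds\circ j_C\rangle\neq0$, the desired contradiction. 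This establishes transversality, so $\sM=\delbar^{-1}(0)$ is a Banach submanifold with $T_{(J,s)}\sM=\ker D\delbar_{(J,s)}$.

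For \autoref{Prop_SpaceOfPseudoHolomorphicSections_Index}, note that $d\pi_{(J,s)}$ is the projection of $\ker D\delbar_{(J,s)}$ onto the $\hat J$-factor; its kernel is $\set0\times\ker D_s$, and its image is $\set{\hat J:\tfrac12\,\hat J\circ ds\circ j_C\in\im D_s}$, the kernel of the composite $T_J\SpaceOfBlowUpAlmostComplexStructures\to\coker D_s$, which is surjective by \autoref{Prop_SpaceOfPseudoHolomorphicSections_Transverse}. Hence $\pi$ is Fredholm with $\ind\pi = \ind D_s$, and by Riemann--Roch
\begin{equation*}
  \ind D_s = 2\deg\paren{s^*T^\ver\sO_{\bP V}(-2)} + \rk_\C\paren{T^\ver\sO_{\bP V}(-2)}\cdot\chi(C).
\end{equation*}
Now $\deg(s^*T^\ver\sO_{\bP V}(-2)) = \Inner{q^*c_1(V),s_*[C]} = \Inner{c_1(V),[C]} = \deg V$ by \autoref{Prop_FirstChernClassOfVerticalTangentBundle} and $q\circ s=\id_C$, while the fibre of $q$ is a complex surface so $\rk_\C T^\ver\sO_{\bP V}(-2) = 2$. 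Therefore $\ind\pi = 2\deg V + 2\chi(C) = 2\deg V + \rk_\C V\cdot\chi(C) = 0$ by the local Calabi--Yau condition \autoref{Eq_LocalCalabiYau}.

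I expect the main obstacle to be the transversality step, and within it the exclusion of sections lying entirely in $\set{\tilde\rho^2\geq1}$: this is exactly where the hypothesis $\fd_\infty(J)\notin\sW$ is used, and where the non-compactness of $\sO_{\bP V}(-2)$ would otherwise break the standard argument, since only compactly supported perturbations of $J$ are available in that region. The remaining ingredients --- elliptic regularity and unique continuation for $D_s$, and the Riemann--Roch count --- are routine.
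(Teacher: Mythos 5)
Your proposal is correct and follows essentially the same route as the paper: the same key step that $\im s$ must meet $\set{\tilde\rho^2<1}$ because $\fd_\infty(J)\notin\sW$, followed by the McDuff--Salamon-style transversality argument using fibre-direction perturbations supported in $\set{\rho^2<1}$ as permitted by \autoref{Prop_BlowUpAlmostComplexStructures}~\autoref{Prop_BlowUpAlmostComplexStructures_IsAmple}, and the index computed via $\ind\pi=\ind D_s$, Riemann--Roch, \autoref{Prop_FirstChernClassOfVerticalTangentBundle}, and the condition \autoref{Eq_LocalCalabiYau}. The only difference is that you spell out the cokernel-annihilator argument and the degree computation which the paper compresses into citations of \cite[Theorem 8.3.1]{McDuff2012} and the Snake Lemma.
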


\begin{proof}
  If $(J,s) \in \delbar^{-1}(0)$,
  then $\im s$ must intersect $\set{\tilde\rho^2< 1} \subset \sO_{\bP V}(-2)$;
  otherwise: $s$ defines a $\check J_\fd$--holomorphic section of $V^\times/\set{\pm 1}$ for $\fd \coloneqq \fd_\infty(J)$,
  contradicting $J_\fd \notin \sW$.  
  Therefore and by \autoref{Prop_BlowUpAlmostComplexStructures}~\autoref{Prop_BlowUpAlmostComplexStructures_IsAmple},
  the transversality argument from the proof of \cite[Theorem 8.3.1]{McDuff2012} carries over to prove \autoref{Prop_SpaceOfPseudoHolomorphicSections_Transverse}.

  The Snake Lemma implies that $\ind T_{J,s}\pi$ agrees with the index of a real Cauchy--Riemann operator
  \begin{equation*}
    \fd \co
    W^{1,2}\Gamma\paren{C,s^*T^\ver\sO_{\P V}(-2)}
    \to
    L^2\Omega^{0,1}\paren{C,s^*T^\ver\sO_{\P V}(-2)}.
  \end{equation*}
  By Riemann--Roch and \autoref{Prop_FirstChernClassOfVerticalTangentBundle}, and 
  since
  $\rk_\C T^\ver\sO_{\bP V}(-2) = \rk_\C V = 2$ and
  $2 \deg V + \rk_\C V \cdot \chi(C) = 0$,
  \begin{align*}
    \ind \fd
    &=
      2 \deg \paren{s^*T^\ver\sO_{\bP}(-2)} + \rk_\C V \cdot \chi(C) \\
    &=
      2(\rk_\C V-2) \cdot \deg(s) = 0.
  \end{align*}
  This proves \autoref{Prop_SpaceOfPseudoHolomorphicSections_Index}.
\end{proof}

The index bundle of $\pi \co \sM \to \SpaceOfBlowUpAlmostComplexStructures^\times$ is oriented in the standard fashion; cf.~\cite[Theorem 8.3.1]{McDuff2012}.


\subsection{\texorpdfstring{$(\pi,\deg)$}{(pi,deg)} is essentially proper}
\label{Sec_PiDegIsEssentiallyProper}

\begin{prop}
  \label{Prop_PiDegIsEssentiallyProper}
  There are a Banach manifold $\sS$ and a Fredholm map $\sigma \co \sS \to \SpaceOfBlowUpAlmostComplexStructures^\times$ of index at most $-2$
  such that
  \begin{equation*}
    (\pi,\deg) \co \sM\setminus\pi^{-1}(\im\sigma) \to \paren{\SpaceOfBlowUpAlmostComplexStructures^\times\setminus \im\sigma} \times \Z
  \end{equation*}
  is proper.
\end{prop}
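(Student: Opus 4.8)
The plan is to establish properness directly. Since a compact subset of $\SpaceOfBlowUpAlmostComplexStructures^\times \times \Z$ meets only finitely many degrees and $\deg$ is locally constant on $\sM$, it suffices to consider a sequence $(J_n, s_n) \in \sM$ with $J_n \to J_\infty$ in $\SpaceOfBlowUpAlmostComplexStructures^\times$ and $\deg(s_n)$ equal to a fixed $d \in \Z$, and to extract a subsequence converging in $\sM$ to a point over $(J_\infty, d)$ — \emph{unless} $J_\infty$ lies in the image of a Fredholm map $\sigma$ of index at most $-2$ still to be constructed. Two mechanisms can obstruct convergence: the sections $s_n$ may \textbf{escape to infinity} in the fibre direction, or they may \textbf{bubble off vertical spheres}. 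The first is excluded outright on $\SpaceOfBlowUpAlmostComplexStructures^\times$ — this is the genuinely new point — and the second forces $J_\infty \in \im\sigma$.

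\textbf{No escape to infinity.} I would first show $\sup_n \lVert \tilde\rho \circ s_n \rVert_{C^0} < \infty$. If not, pass to a subsequence, set $R_n := \lVert \tilde\rho\circ s_n\rVert_{C^0} \to \infty$, and rescale $s_n$ by $R_n^{-1}$ in the fibres. By \autoref{Prop_BlowUpAlmostComplexStructures}~\autoref{Prop_BlowUpAlmostComplexStructures_Properties_End} each $J_n$ agrees with the homogeneous structure $\check J_{\fd_\infty(J_n)}$ on $\set{\tilde\rho^2 \geq 1}$, and the non-homogeneous region is rescaled into $\set{\tilde\rho^2 < R_n^{-2}}$, so the rescaled almost complex structures converge, away from the zero section, to the homogeneous structure attached to $\fd_\infty(J_\infty)$ on $V^\times/\set{\pm 1}$. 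This limiting structure degenerates along the zero section, so Gromov compactness is unavailable; instead I would use the geometric measure theory underlying \autoref{Thm_WallOfFailureOf2RigidityIsProper_0} (monotonicity and Allard regularity, cf.\ \cite{Allard1972, Doan2018a, Doan2021}) to pass to a limit cycle, and a Radó-type removable-singularity argument (cf.\ \cite{Rado1924}) to identify it with a \emph{nonzero} holomorphic section of $\check p \co V/\set{\pm 1} \to C$ for the homogeneous structure attached to $\fd_\infty(J_\infty)$: it is nonzero because the rescaled sections meet $\set{\tilde\rho = 1}$ and monotonicity transmits this to the limit, and it is a genuine section because a degree-one holomorphic cycle over $C$ has no component inside a fibre of $\check p$ (those fibres carry no compact holomorphic curves). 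This contradicts $J_\infty \in \SpaceOfBlowUpAlmostComplexStructures^\times$, i.e.\ $\fd_\infty(J_\infty) \notin \sW$. Hence all $s_n$ map into the fixed compact set $K_0 := \set{\tilde\rho^2 \leq R_0^2}$. I expect this step to be the main obstacle.

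\textbf{Bubbling and the map $\sigma$.} On $K_0$ the $J_n$ are, for large $n$, simultaneously tamed by a single closed symplectic form (an open condition, using \autoref{Prop_BlowUpAlmostComplexStructures}~\autoref{Prop_BlowUpAlmostComplexStructures_Properties_Tamed} and the continuous dependence of $\Lambda$ on $J$), and the energy is uniformly bounded because $\int_C s_n^* \Omega$ is determined by the fixed class $s_{n*}[C] = S + (\deg V + d)F$ (\autoref{Prop_HomologyClassOfSections}, \autoref{Prop_FibrewiseKahlerForm}). Gromov compactness then yields a subsequential stable-map limit whose principal component is a $J_\infty$-holomorphic section $s_\infty$ of $q$ — the composite $q \circ s_\infty$ is a degree-one holomorphic self-map of $C$, hence the identity — of some degree $d' \leq d$, together with bubbles $u_i \co \P^1 \to \sO_{\P V}(-2)$; writing $(u_i)_*[\P^1] = a_iS + b_iF$ one has $a_i = \deg(q \circ u_i) \geq 0$ and $\sum_i a_i = 0$, so every bubble is vertical, i.e.\ a multiple cover of a zero section $\P V_x \subset \set{\tilde\rho = 0} \subset \set{\rho^2 \leq 1}$. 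For each $d'' < d$ let $\sS_{d''}$ be the space of triples consisting of $J \in \SpaceOfBlowUpAlmostComplexStructures^\times$, a $J$-holomorphic section $s'$ of $q$ of degree $d''$, and a non-empty vertical bubble configuration of total class $(d - d'')F$ meeting $\im s'$; set $\sS := \coprod_{d'' < d} \sS_{d''}$ and let $\sigma \co \sS \to \SpaceOfBlowUpAlmostComplexStructures^\times$ be the forgetful map. Transversality for the section part goes through as in \autoref{Prop_SpaceOfPseudoHolomorphicSections}, and since the bubbles lie in $\set{\rho^2 \leq 1}$ — exactly where the vertical perturbations of \autoref{Prop_BlowUpAlmostComplexStructures}~\autoref{Prop_BlowUpAlmostComplexStructures_IsAmple} are supported — the standard machinery makes each $\sS_{d''}$ a Banach manifold and $\sigma$ Fredholm; multiply covered bubbles are confined to lower strata and do not affect the bound on the index. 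As in \autoref{Prop_SpaceOfPseudoHolomorphicSections} the section $s'$ contributes index $0$, while each marked vertical bubble in class $mF$ satisfies $c_1(T^\ver\sO_{\P V}(-2)) \cdot mF = 0$ by \autoref{Prop_FirstChernClassOfVerticalTangentBundle}, so, after accounting for reparametrisation, the node, and the incidence with $\im s'$ inside the $6$-real-dimensional total space, it drops the index by $2$; hence $\ind\sigma \leq -2$.

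\textbf{Conclusion.} If $J_\infty \notin \im\sigma$, then the Gromov limit above admits no bubbles, so $d' = d$ and, by elliptic bootstrapping, $s_n \to s_\infty$ in $C^\infty$ and hence in $\sB = W^{1,p}$, with $(J_\infty, s_\infty) \in \sM$ and $\deg(s_\infty) = d$. Therefore $(\pi, \deg) \co \sM \setminus \pi^{-1}(\im\sigma) \to \paren{\SpaceOfBlowUpAlmostComplexStructures^\times \setminus \im\sigma} \times \Z$ is proper, which is the assertion.
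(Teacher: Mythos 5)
Your proposal is correct and follows essentially the paper's own route: the paper factors the statement into \autoref{Prop_PiDegMaxRho2IsEssentiallyProper} (bubbling off vertical spheres is a codimension-two phenomenon, handled by Gromov compactness on the compact region using the taming forms of \autoref{Prop_BlowUpAlmostComplexStructures}) and \autoref{Prop_MaxRho2IsBounded} (no escape to infinity, proved by rescaling, cycle compactness and monotonicity from geometric measure theory, the degree-one argument identifying the limit as a section over an open subset of $C$, and the Radó-type result \autoref{Prop_ZeroLocusOfTwistedHolomorphicSectionIsFinite} contradicting $2$--rigidity), and your two steps reproduce exactly these, merged into a single properness argument. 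The only cosmetic nuance is that Radó's theorem is really what makes the degeneracy set finite (so the limit genuinely yields a failure of $2$--rigidity), rather than what rules out vertical components, but this is how you deploy it in substance.
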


There are two potential source of non-properness of $(\pi,\deg)$:
the formation of bubbles and
the non-compactness of $\sO_{\bP V}(-2)$.
Here is how to deal with the former.

\begin{prop}
  \label{Prop_PiDegMaxRho2IsEssentiallyProper}
  Define $\max \tilde\rho^2 \co \sM \to [0,\infty)$ by $\max \tilde\rho^2(s) \coloneqq \max_C \tilde\rho^2 \circ s$.
  There are a Banach manifold $\sS$ and a Fredholm map $\sigma \co \sS \to \SpaceOfBlowUpAlmostComplexStructures^\times$ of index at most $-2$
  such that
  \begin{equation*}
    (\pi, \deg, \max\tilde\rho^2) \co \sM\setminus\pi^{-1}(\im\sigma) \to \paren{\SpaceOfBlowUpAlmostComplexStructures^\times\setminus \im\sigma} \times \Z \times [0,\infty)
  \end{equation*}
  is proper.
\end{prop}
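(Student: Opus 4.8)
The plan is to trace the failure of properness to the formation of a bubble via Gromov compactness with an a priori $C^0$--bound, to observe that any such bubble must lie in a single fibre $F_x$ of $q$ --- which is a copy of $\sO_{\bP^1}(-2)$, hence Calabi--Yau --- and then to assemble the configurations consisting of a principal $J$--holomorphic section together with a single fibre bubble into a Banach manifold $\sS$ whose forgetful map $\sigma \co \sS \to \SpaceOfBlowUpAlmostComplexStructures^\times$ is Fredholm of index $-2$.

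First I would suppose $(\pi,\deg,\max\tilde\rho^2)$ fails to be proper on $\sM$: there is a sequence $(J_n,s_n) \in \sM$ with $J_n \to J \in \SpaceOfBlowUpAlmostComplexStructures^\times$, $\deg(s_n) = d$ fixed, $\max\tilde\rho^2(s_n) \to M < \infty$, and no subsequence converging in $\sM$. Putting $M' \coloneqq 1 + \sup_n \max\tilde\rho^2(s_n)$, every $\im s_n$ lies in the compact region $\set{\tilde\rho^2 \leq M'-1}$; by \autoref{Prop_BlowUpAlmostComplexStructures}~\autoref{Prop_BlowUpAlmostComplexStructures_Properties_Tamed} with $2R^2 = M'$, the closed $2$--form $\omega_n \coloneqq \Omega_{\tilde I_n, 1} + \Lambda(J_n)\paren{1+\tfrac12 M'}q^*\omega_C$ --- with $I_n$ the complex structure on $V^\times$ induced by the Dolbeault part of $\fd_\infty(J_n)$ --- tames $J_n$ on $\set{\tilde\rho^2 < M'}$, the $\omega_n$ converge as $J_n \to J$, and the energies $E(s_n) = \Inner{[\omega_n],s_{n*}[C]}$ are uniformly bounded because $s_{n*}[C] = S + (\deg V + d)F$ is fixed by \autoref{Prop_HomologyClassOfSections}. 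Gromov compactness for pseudo-holomorphic sections of a fibration \cite[Chapter 5 and \S 8.6]{McDuff2012}, carried out inside the compact manifold $\set{\tilde\rho^2 \leq M'}$, then produces, after passing to a subsequence, a stable map limit whose principal component is a $J$--holomorphic section $s_\infty$ of $q$ and whose remaining components are trees of non-constant $J$--holomorphic spheres $b_1,\dots,b_\ell$ (plus ghosts); since no subsequence converges in $\sM$, at least one bubble is present. Under the identification $\rH_2\paren{\sO_{\bP V}(-2);\Z} \iso \rH_2(\bP V;\Z)$ each $b_i$ satisfies $[b_i]\cdot F = \deg(q\circ b_i) \geq 0$, so conservation of the homology class, which gives $\sum_i [b_i]\cdot F = \paren{s_{n*}[C] - s_{\infty*}[C]}\cdot F = 1-1 = 0$ by \autoref{Prop_HomologyClassOfSections} applied to $s_n$ and to $s_\infty$, forces every $b_i$ to map into a single fibre $F_{x_i}$; its class is then $m_i[\bP V_{x_i}]$ with $m_i \geq 1$, positivity coming from \autoref{Prop_FibrewiseKahlerForm}~\autoref{Prop_FibrewiseKahlerForm_T>0Kahler}. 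Finally, $s_\infty$ must meet $\set{\tilde\rho^2 < 1}$, since otherwise it would be a $\check J_\fd$--holomorphic section of $V^\times/\set{\pm 1}$ with $\fd \coloneqq \fd_\infty(J)$, contradicting $\fd \notin \sW$ exactly as in the proof of \autoref{Prop_SpaceOfPseudoHolomorphicSections}.

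Next I would define, for each $m \in \N$, the space $\sS_m$ of tuples $(J,s_\infty,x,u)$ with $J \in \SpaceOfBlowUpAlmostComplexStructures^\times$, $s_\infty$ a $J$--holomorphic section of $q$, $x \in C$, and $u \co (S^2,\infty) \to (F_x, s_\infty(x))$ a somewhere-injective $J$--holomorphic sphere in the class $m[\bP V_x]$, taken modulo the group $\Aut(S^2,\infty) \iso \C^\times \ltimes \C$ of reparametrisations fixing $\infty$; set $\sS \coloneqq \bigsqcup_{m \in \N}\sS_m$ and let $\sigma$ be the obvious forgetful map to $\SpaceOfBlowUpAlmostComplexStructures^\times$. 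The pairs $(J,s_\infty)$ form a Banach manifold on which the projection to $\SpaceOfBlowUpAlmostComplexStructures^\times$ is Fredholm of index $0$ by \autoref{Prop_SpaceOfPseudoHolomorphicSections}. Since any compact holomorphic curve in $F_x$ must meet $\set{\tilde\rho^2 < 1}$ --- the region $\set{\tilde\rho^2 \geq 1}\cap F_x$ being biholomorphic to a quotient of an open subset of $\C^2$, which carries no compact curve --- the somewhere-injective $u$ has an injective point at which $J$ may be perturbed using the fibrewise perturbations supported in $\set{\rho^2 < 1}$ supplied by \autoref{Prop_BlowUpAlmostComplexStructures}~\autoref{Prop_BlowUpAlmostComplexStructures_IsAmple}; hence, by the standard argument of \cite[\S 3.2 and Theorem 8.3.1]{McDuff2012}, the $\delbar$--equation for $u$ and the incidence condition $u(\infty) = s_\infty(x)$ are cut out transversally and $\sS_m$ is a Banach manifold. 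Its index is $0 + 2 + 4 - 4 - 4 = -2$: the pair $(J,s_\infty)$ contributes $0$; the point $x \in C$ contributes $+2$; the $\delbar$--operator for $u$ contributes $\dim_\C F_x \cdot \chi(S^2) + 2\Inner{c_1(TF_x), m[\bP V_x]} = 2\cdot 2 + 0 = 4$, using $c_1(TF_x) = q^*c_1(V)|_{F_x} = 0$ from \autoref{Prop_FirstChernClassOfVerticalTangentBundle}; the incidence condition contributes $-\dim_\R F_x = -4$; and the reparametrisation group contributes $-4$. Thus $\sigma$ is Fredholm of index $-2 \leq -2$.

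To finish, in the limit of the second paragraph pick a non-constant bubble $b$ joined --- possibly through ghost components --- to the principal component, let $x$ be the point with $b \subset F_x$ and the connecting node mapped to $s_\infty(x)$, replace $b$ by a somewhere-injective sphere with the same image, and reparametrise it to send $\infty$ to $s_\infty(x)$; this produces a point of $\sS$ over $J$, so $J \in \im\sigma$. Consequently, if $s_n$ is a sequence in $\sM\setminus\pi^{-1}(\im\sigma)$ with $(\pi,\deg,\max\tilde\rho^2)(s_n)$ in a compact $K \subset \paren{\SpaceOfBlowUpAlmostComplexStructures^\times\setminus\im\sigma}\times\Z\times[0,\infty)$, then $\pi(s_n)$ subconverges to some $J \in K$, hence $J \notin \im\sigma$; $\deg(s_n)$ is eventually constant and $\max\tilde\rho^2(s_n)$ is bounded; so the only alternative to a subsequence of $s_n$ converging in $\sM$ is bubbling, which would place $J$ in $\im\sigma$ --- impossible. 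The resulting limit section lies in $\sM\setminus\pi^{-1}(\im\sigma)$ because its underlying $J$ does not, and over $K$ because $K$ is closed; hence $(\pi,\deg,\max\tilde\rho^2)$ is proper there. I expect the main obstacle to be the compactness of the second paragraph --- running Gromov compactness in the non-compact fibred target, where the taming symplectic form is available only on the $\tilde\rho^2$--sublevel set determined by the a priori bound on $\max\tilde\rho^2$ and varies with $J_n$ --- together with the transversality input of the third paragraph, that the fibrewise, near-the-core perturbations of \autoref{Prop_BlowUpAlmostComplexStructures}~\autoref{Prop_BlowUpAlmostComplexStructures_IsAmple} suffice for the somewhere-injective fibre bubbles; by contrast, the homology computation confining every bubble to a Calabi--Yau fibre, and so producing the index drop to $-2$, is the conceptual point that makes the proposition true.
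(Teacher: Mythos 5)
Your argument is correct and follows essentially the same route as the paper: Gromov compactness inside the compact sublevel set $\set{\tilde\rho^2 \leq R}$ using the taming forms from \autoref{Prop_BlowUpAlmostComplexStructures}~\autoref{Prop_BlowUpAlmostComplexStructures_Properties_Tamed}, the observation that all bubbles lie in fibres, and the identification of bubbling as a codimension-two phenomenon, which the paper delegates to \cite[\S 8.4 and \S 8.5]{McDuff2012} and which you realize explicitly as the Fredholm map $\sigma \co \sS \to \SpaceOfBlowUpAlmostComplexStructures^\times$ of index $-2$ parametrizing pairs of a section and a simple fibre sphere through it. Your index count $0+2+4-4-4=-2$ and the reduction to somewhere-injective bubbles with an injective point in $\set{\tilde\rho^2<1}$ (where the admissible perturbations live) match the intended argument; no gaps.
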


\begin{proof}
  Let $d \in \Z$ and $R > 0$.
  Let $(J_n,s_n) \in \sM^\N$ with $J_n \to J_\infty$, $\deg s_n = d$ and $\max_C \rho^2 \circ s \leq R$.
  By \autoref{Prop_BlowUpAlmostComplexStructures}~\autoref{Prop_BlowUpAlmostComplexStructures_Properties_Tamed},
  for every $n \in \N\cup\set{\infty}$
  there is a symplectic form $\omega_n$ taming $J_n$ on $\set{ \tilde\rho^2 \leq R } \subset \sO_{\bP V}(-2)$ with $\omega_n \to \omega_\infty$.  
  Therefore, by Gromov compactness,
  a subsequence of $(s_n)$ converges (a priori) to a stable nodal $J_\infty$--holomorphic map
  \begin{equation*}
    s_\infty \vee \bigvee_{\alpha \in T} b_\alpha \co C \vee \bigvee_{\alpha \in T} \CP^1 \to \sO_{\bP V}(-2).
  \end{equation*}
  Here $s_\infty$ is a $J_\infty$--holomorphic section of $q \co \sO_{\P V}(-2) \to C$,
  and every bubble $b_\alpha$ of the bubble tree $T$ maps into a fibre of $q$.

  The appearance of a non-trivial bubble tree is a codimension two phenomenon.
  This is a consequence of the discussion in \cite[\S 8.4 and \S 8.5]{McDuff2012}.
  The salient point is that the existence of a (simple) $J$--holomorphic sphere in a fibre of $q$ is a codimension zero phenomenon and, therefore,
  such a sphere intersecting a $J$--holomorphic section of $q$ is a codimension two phenomenon.
\end{proof}

\begin{prop}
  \label{Prop_MaxRho2IsBounded}
  Let $d \in \Z$.
  If $(J_n,s_n) \in \sM$ with $\deg s_n = d$ and $J_n \to J$,
  then
  \begin{equation*}
    \sup_{n \in \N} \max_C \tilde\rho^2 \circ s_n < \infty.
  \end{equation*}
\end{prop}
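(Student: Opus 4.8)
The strategy is a proof by contradiction via fibrewise rescaling: an escape to infinity, once rescaled, produces a $\check J_\fd$--holomorphic curve in $V^\times/\set{\pm 1}$, which is impossible since $\fd:=\fd_\infty(J)\notin\sW$ (the relevant $J$ lie in $\SpaceOfBlowUpAlmostComplexStructures^\times=\SpaceOfBlowUpAlmostComplexStructures\setminus\fd_\infty^{-1}(\sW)$). Suppose $\sup_n\max_C\tilde\rho^2\circ s_n=\infty$ and pass to a subsequence with $R_n:=(\max_C\tilde\rho^2\circ s_n)^{1/2}\to\infty$; write $\fd_n:=\fd_\infty(J_n)\to\fd$ (continuity of $\fd_\infty$). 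Let $\Phi_R\co\sO_{\P V}(-2)\to\sO_{\P V}(-2)$ be fibrewise multiplication by $R>0$, so that $\Phi_R^*\Omega_{I,t}$ is ``$R^*\Omega_{I,t}$'' in the notation of \autoref{Prop_FibrewiseKahlerForm}; it commutes with $q$, fixes the exceptional divisor $\P V$, satisfies $\tilde\rho^2\circ\Phi_R=R^2\tilde\rho^2$, and—since the almost complex structure underlying a real Cauchy--Riemann operator is homogeneous—preserves $\check J_{\fd'}$ for every $\fd'$. Put $\hat s_n:=\Phi_{1/R_n}\circ s_n$ and $\hat J_n:=(\Phi_{1/R_n})_*J_n$. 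Then $\hat s_n$ is a $\hat J_n$--holomorphic section of $q$ whose image lies in the compact set $K:=\set{\tilde\rho^2\leq 1}$ and meets $\set{\tilde\rho^2=1}$; by \autoref{Prop_BlowUpAlmostComplexStructures}~\autoref{Prop_BlowUpAlmostComplexStructures_Properties_End}, $\hat J_n=\check J_{\fd_n}$ on $\set{\tilde\rho^2\geq R_n^{-2}}$, so $\hat J_n\to\check J_\fd$ in $C^\infty_\loc$ on $\set{\tilde\rho^2>0}=V^\times/\set{\pm 1}$.

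I would next establish $\sup_n\mathbf M([\hat s_n(C)])<\infty$. Rescaling the taming form of \autoref{Prop_BlowUpAlmostComplexStructures}~\autoref{Prop_BlowUpAlmostComplexStructures_Properties_Tamed} with $R=R_n$ and using the scaling identity \autoref{Prop_FibrewiseKahlerForm}~\autoref{Prop_FibrewiseKahlerForm_Scaling}, one finds that $\hat\tau_n:=\Omega_{\tilde I_n,1/R_n}+\Lambda_n(R_n^{-2}+1)q^*\omega_C$ (with $\tilde I_n$ the complex structure induced by $\fd_n$ and $\Lambda_n:=\Lambda(\fd_n)$) is a symplectic form taming $\hat J_n$ on $\set{\tilde\rho^2<2}$, with $[\hat\tau_n]=R_n^{-2}[\Omega_{\tilde I_n,1}]+\Lambda_n(R_n^{-2}+1)[q^*\omega_C]$ uniformly bounded. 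Since $\hat s_{n*}[C]=s_{n*}[C]$ depends only on $d$ by \autoref{Prop_HomologyClassOfSections}, the symplectic areas $\int_C\hat s_n^*\hat\tau_n=\langle[\hat\tau_n],s_{n*}[C]\rangle$ are uniformly bounded; as $\hat\tau_n$ tames $\hat J_n$ and limits to a closed form that is non-degenerate off $\P V$, this bounds $\mathbf M(\hat s_n(C))$ away from any neighbourhood of $\P V$. The mass near $\P V$ is controlled separately, using that $\hat s_n(C)\cdot[\P V]$ is a fixed intersection number, so positivity of intersections bounds the number of intersection points, and a local estimate near each does the rest.

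Then I would pass to a weak limit. By the Federer--Fleming compactness theorem, a subsequence of the integral $2$--cycles $[\hat s_n(C)]$ converges to an integral cycle $T$ supported in $K$ with $q_*T=[C]$; on $\set{\tilde\rho^2>0}$, where $\hat J_n\to\check J_\fd$, $T$ is a positive $\check J_\fd$--holomorphic current, so its support there is a $\check J_\fd$--holomorphic curve. The monotonicity formula for $\hat J_n$--holomorphic curves, with constants uniform on compact subsets of $\set{\tilde\rho^2<2}$, gives a uniform density lower bound at points of $\hat s_n(C)$; choosing $x_n\in C$ with $\tilde\rho^2(\hat s_n(x_n))=1$ and a limit point $p$ of $\hat s_n(x_n)$, we obtain $p\in\supp T$ with $\tilde\rho^2(p)=1$, so $\supp T\not\subset\P V$. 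Identifying $V^\times/\set{\pm 1}\iso\sO_{\P V}(-2)^\times$ via $\beta$ and applying a Radó-type removable-singularity theorem across the zero section $\set{\check\rho^2=0}$ of $\check p$ (bounded area and continuity), the closure of $\beta_*T$ in $V/\set{\pm 1}$ is a compact $\check J_\fd$--holomorphic curve meeting $\set{\check\rho^2=1}$. A degree count using $q_*T=[C]$, positivity (the fibres of $\check p$ are affine, so there are no fibre components, and the zero section can appear with multiplicity at most one), and $\supp T\not\subset\P V$ then shows this curve contains a $\check J_\fd$--holomorphic section of $\check p$ distinct from the zero section; hence $\fd\in\sW$, a contradiction.

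The main obstacle is the limit step. The rescaling degenerates $\hat J_n$ near $\P V$: the fibrewise Kähler forms $\Omega_{\tilde I_n,1/R_n}$ collapse, as $1/R_n\to 0$, to the singular orbifold form of \autoref{Prop_FibrewiseKahlerForm}~\autoref{Prop_FibrewiseKahlerForm_T=0Standard}, so Gromov's compactness theorem does not apply and one must instead run the geometric-measure-theoretic package directly: (i) control the mass of $\hat s_n(C)$ near $\P V$; (ii) show via monotonicity that the current limit is nontrivial and is not absorbed into $\P V$; and (iii) upgrade the resulting punctured holomorphic curve to one across the zero section by the Radó-type argument. The bookkeeping required to see that the limit curve genuinely witnesses the failure of $2$--rigidity—rather than being, say, the zero section decorated with fibre bubbles—is the remaining delicate point.
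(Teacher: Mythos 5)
Up to the extraction of the limit, your argument coincides with the paper's proof: rescale by $R_n$, bound the mass of the rescaled cycles cohomologically using the taming forms of \autoref{Prop_BlowUpAlmostComplexStructures}~\autoref{Prop_BlowUpAlmostComplexStructures_Properties_Tamed} together with the scaling identity \autoref{Prop_FibrewiseKahlerForm}~\autoref{Prop_FibrewiseKahlerForm_Scaling}, apply Federer--Fleming, and use the monotonicity formula to see that the limit meets $\set{\check\rho^2 = 1}$. One difference of detail: the paper takes the limit only as a cycle in $V^\times/\set{\pm 1}$, i.e.\ tested against $\Omega_c^2(V^\times/\set{\pm 1})$, precisely because $\hat J_n$ does not converge near $\P V$; the mass bound is cohomological anyway, so your separate local estimate near the exceptional divisor is unnecessary, and no holomorphicity of the limit is claimed there.

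The genuine gap is your final step. The ``Radó-type removable-singularity theorem across the zero section'', applied to the closure of $\beta_*T$, is not an available result: Radó's theorem (\autoref{Thm_RadoBehnkeSteinCartan}) concerns continuous functions (or sections) holomorphic off their zero set, not currents or curves, and the set where the limit approaches the zero section is a priori only a closed subset of $C$ --- there is no finiteness, countability, or capacity control, which is exactly why the usual removable-singularity arguments fail here. To bring Radó to bear one must first know that the limit \emph{is a section}: the paper converts the limit cycle into a proper holomorphic map $u$ via \autoref{Lem_EveryPseudoHolomorphicCycleArisesFromAPseudoHolomorphicMap}, proves that $\check p \circ u$ is open and of degree one --- this is where the fact that the $s_n$ are sections enters, by testing against $q^*\eta$ --- and thereby identifies $u$ with a section of $V^\times/\set{\pm 1}$ over $C\setminus B$, equivalently a nonvanishing twisted spinor $s \in \ker \fd^\fl$ for a Euclidean line bundle $\fl$ on $C\setminus B$ with $\abs{s}$ extending continuously by zero on $B$; only then does \autoref{Prop_ZeroLocusOfTwistedHolomorphicSectionIsFinite} (Radó applied to $s\otimes_\C s$, plus the Carleman similarity principle) show that $B$ is finite, so $\fl$ is a ramified Euclidean line bundle and $\fd \in \sW$. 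In your outline this degree/section analysis is postponed until after the closure step and is justified only by ``positivity'' and the unproven assertion that the zero section occurs with multiplicity at most one; without it there is no function or section to which any Radó-type statement could be applied, so the closure claim --- which is the crux of the proposition --- is unsupported as it stands.
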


\begin{proof}
  If not,
  then, after passing to a subsequence,
  \begin{equation*}
    R_n^2 \coloneqq \max_C \tilde\rho^2 \circ s_n \to \infty.
  \end{equation*}
  The following five steps lead to a contradiction.
  
  \begin{step}
    \label{Pf_Prop_MaxRho2IsBounded_Step1}
    There is a proper $\check J_\fd$--holomorphic map $u \co C^\times \to V^\times/\set{\pm 1}$ satisfying
    \begin{equation*}
      \max_{C^\times} \check\rho^2 \circ u = 1
    \end{equation*}
    and such that,
    after passing to a subsequence,
    for every $\alpha \in \Omega_c^2(V^\times/\set{\pm 1})$
    \begin{equation*}
      \int_{C^\times} u^*\alpha
      =
      \lim_{n \to \infty}
      \int_C s_n^*(R_n^{-1})^*\alpha.
    \end{equation*}
  \end{step}

  For every $n \in \N$,
  $T_n \in \Hom(\Omega_c^2(V^\times/\set{\pm 1}),\R)$ defined by
  \begin{equation*}
    T_n(\alpha)
    \coloneqq
    \int_C s_n^*(R_n^{-1})^*\alpha
  \end{equation*}
  is a $(R_n)_*J_n$--holomorphic cycle in $V^\times/\set{\pm 1}$.
    
  By    \autoref{Prop_BlowUpAlmostComplexStructures}~\autoref{Prop_BlowUpAlmostComplexStructures_Properties_Tamed},
  for $\Lambda \gg 1$, $n \in \N$,
  \begin{equation*}
    \omega_n
    \coloneqq
      R_n^{-2} R_n^*\sqparen{\Omega_{\tilde I_n,1} + \Lambda (1+R_n^2) \cdot q^* \omega_C}
  \end{equation*}
  is symplectic and tames $(R_n)_*J_n$
  on $\set{ \tilde\rho^2 < 2 } \subset \sO_{\P V}(-2)$.
  By \autoref{Prop_BlowUpAlmostComplexStructures}~\autoref{Prop_BlowUpAlmostComplexStructures_Properties_End},
  $(R_n)_*J_n$ converges to $\check J_\fd$ with $\fd \coloneqq \fd_\infty(J_\infty)$ on compact subsets of $V^\times/\set{\pm 1}$.
  By
  \autoref{Prop_FibrewiseKahlerForm}~\autoref{Prop_FibrewiseKahlerForm_T=0Standard} and \autoref{Prop_FibrewiseKahlerForm_Scaling},
  $\omega_n$ converges to a symplectic form $\omega_\infty$ on compact subsets of $\set{ \check\rho^2 < 2 }$;
  moreover: $\omega_\infty$ tames $\check J_\fd$.
  Since
  \begin{equation*}
    \sup_{n \in \N} \bM(T_n) \leq \sup_{n \in \N} \int_C s_n^*(R_n^{-1})^*\omega_n < \infty,
  \end{equation*}
  by \autoref{Thm_FedererFlemming},
  after passing to a subsequence,
  $(T_n)$ converges to a $\check J_\fd$--holomorphic cycle $T \in \Hom(\Omega_c^2(V^\times/\set{\pm 1}),\R)$.

  Since $\max_{\supp T_n} \check\rho^2 = 1$,
  by the monotonicity formula \cite[Lemma 5.6]{Doan2018a},
  $\max_{\supp T} \check\rho^2 = 1$.
  The assertion follows from \autoref{Lem_EveryPseudoHolomorphicCycleArisesFromAPseudoHolomorphicMap}.

  \begin{step}
    \label{Pf_Prop_MaxRho2IsBounded_Step2}
    The holomorphic map $\iota \coloneqq \check p \circ u \co C^\times \to C$ is open.
  \end{step}

  Since $\iota$ is holomorphic,
  if its fails to be open,
  then it is constant on a component of $C^\times$.
  Therefore,
  by \autoref{Pf_Prop_MaxRho2IsBounded_Step1},
  $u$ exhibits a bounded proper holomorphic map from this component to $\C^2/\set{\pm 1}$.
  This contradicts the maximum principle.

  \begin{step}
    \label{Pf_Prop_MaxRho2IsBounded_Step3}
    The holomorphic map $\iota \coloneqq \check p \circ u \co C^\times \to C$ is an embedding;
    in particular: $\iota$ identifies $C^\times$ with the open subset $\im \iota \subset C$.
  \end{step}

  By \autoref{Pf_Prop_MaxRho2IsBounded_Step2},
  $\iota \co C^\times \to \im \iota$ is a finite branched covering map.
  If $z \in \im \iota$ is not a branch point,
  then it admits an open neighborhood 
  $U \subset \im\iota$ such that
  $\iota \co \iota^{-1}(U) \to U$ is a covering map and
  $K \coloneqq \overline{\iota^{-1}(U)} \subset V^\times/\set{\pm 1}$ is compact.  
  Choose $\eta \in \Omega_c^2(U)$ with $\int_U \eta = 1$,
  and $\chi \in C_c^\infty(V^\times/\set{\pm 1})$ with $\chi|_K = 1$.
  By \autoref{Pf_Prop_MaxRho2IsBounded_Step1} and since $s_n$ is a section of $q$,
  \begin{align*}
    \deg_z \iota
    &=
      \int_{C^\times} u^*\check p^*\eta
      = \int_{C^\times} u^*\paren{\chi \cdot \check p^*\eta}
      = T(\chi \cdot \check p^*\eta) \\
    &=
      \lim_{n \to \infty} \int_C s_n^*(R_n^{-1})(\chi \cdot \check p^*\eta) 
      =
      \lim_{n \to \infty} \int_C s_n^*(R_n^{-1})^*q^*\eta \\
    &=
      \int_C \eta = 1.
  \end{align*}
  Therefore, $\deg \iota = 1$ and $\iota$ is an embedding.

  \begin{step}
    \label{Pf_Prop_MaxRho2IsBounded_Step4}
    The map $\check \rho^2 \circ u \co C^\times\to (0,1]$ extends to a continuous function $C \to [0,\infty]$ vanishing on $B \coloneqq C \setminus C^\times$.
  \end{step}

  Since $u$ is proper,
  for every $\epsilon > 0$,
  $K_\epsilon \coloneqq \set{ \check \rho^2 \circ u \geq \epsilon } \subset C^\times \subset C$ is compact;
  hence:
  $B \cup \set{ \check \rho^2 \circ u < \epsilon } = C \setminus K_\epsilon$ is open.
  
  \begin{step}
    \label{Pf_Prop_MaxRho2IsBounded_Step5}
    $\fd$ fails to be $2$--rigid: the desired contradiction.
  \end{step}

  Set $B \coloneqq C\setminus C^\times$.
  By the discussion in \autoref{Sec_ThreePerspectivesOn2Rigidity},
  $u$ defines a Euclidean line bundle $\fl$ over $C \setminus B$ and
  a non-zero $s \in \ker \fd^\fl$.
  If $B$ was finite,
  then $\fl$ is (or extends to) a ramified Euclidean line bundle over $C$;
  therefore: $\fd$ fails to be $2$--rigid.
  Indeed, $B$ is finite because $\abs{s}$ extends to a continuous map on $C$ vanishing on $B$.
  This is proved as \autoref{Prop_ZeroLocusOfTwistedHolomorphicSectionIsFinite} below.
\end{proof}

\begin{proof}[Proof of \autoref{Prop_PiDegIsEssentiallyProper}]
  This is an immediate consequence of \autoref{Prop_PiDegMaxRho2IsEssentiallyProper} and \autoref{Prop_MaxRho2IsBounded}.
\end{proof}


\subsection{A twist on Radó's theorem}
\label{Sec_Rado}

\begin{prop}
  \label{Prop_ZeroLocusOfTwistedHolomorphicSectionIsFinite}
  Let $\fd$ be a real Cauchy--Riemann operator on $V$.
  Let $B \subset C$ be a closed subset.
  Let $\fl \to C \setminus B$ be a Euclidean line bundle.
  If $s \in \Gamma(C \setminus B, V \otimes \fl)$
  is non-vanishing,
  satisfies $\fd^\fl s = 0$, and
  $\abs{s}$ extends to a continuous map on $C$ vanishing on $B$,
  then $B$ is finite.
\end{prop}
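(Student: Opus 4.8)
The plan is to combine a Weitzenböck inequality, classical potential theory, and the similarity principle, using the continuity of $\abs{s}$ exactly as in the proof of Radó's theorem \cite{Rado1924}. Since $B$ is closed and $C$ compact, it suffices to show that $B$ is discrete, and this is a local question: fix $z_0 \in C$ and a small holomorphic coordinate disc $D \ni z_0$ over which $V$ trivialises, and show $B \cap D$ is discrete. First I would prove that $B$ is \emph{polar}. Writing $\fd^\fl = \delbar^\fl + A^\fl$ with $A^\fl$ a $\C$-antilinear bundle map, the equation $\fd^\fl s = 0$ gives the pointwise bound $\abs{\nabla^{0,1}s}\leq c\abs{s}$ on $C \setminus B$ (where $\nabla$ is the Chern connection of a Hermitian metric on $V$ together with the flat metric on $\fl$), and a standard Weitzenböck computation then yields $\Delta\log\abs{s}^2 \geq -c'$ on $C \setminus B$ with $c'$ depending only on $\fd$ and the metrics. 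Hence on $D$ the function $w \coloneqq \log\abs{s}^2 + \tfrac{c'}{4}\abs{z}^2$ is smooth and subharmonic on $D \setminus B$; extending it by $-\infty$ on $B$, the continuity of $\abs{s}$ makes $w$ upper semicontinuous and locally bounded above, it is $\not\equiv-\infty$, and it satisfies the sub-mean-value inequality at every point of $D$ — classically on $D\setminus B$, and trivially at points of $B$, where $w=-\infty$. So $w$ is subharmonic on $D$, whence its $-\infty$-locus $B \cap D = \abs{s}^{-1}(0)\cap D$ is polar; in particular it has zero $W^{1,2}$-capacity and Hausdorff dimension $0$.

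Next I would remove the singularity of $s$ along $B$. The Euclidean line bundle $\fl$ corresponds to a $\Z/2$-cover of $D\setminus B$, and since $B$ is polar (hence totally disconnected) this extends to a branched double cover $\pi \co \hat D \to D$; put $\hat B \coloneqq \pi^{-1}(B)$ — again closed and polar, with $\pi(\hat B) = B$ — and $\hat s \coloneqq \pi^*s$, a $\C^r$-valued section ($r = \rk_\C V$) solving a first-order elliptic system $\delbar\hat s + a\hat s + b\bar{\hat s} = 0$ with $a,b$ bounded on $\hat D\setminus\hat B$, whose norm $\abs{\hat s} = \abs{s}\circ\pi$ is continuous on $\hat D$ and vanishes exactly on $\hat B$. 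This is the setting of Radó's theorem, with the Cauchy--Riemann operator replaced by a real one and scalars by vectors. Since $\hat s$ is bounded, so is the right-hand side $-a\hat s - b\bar{\hat s}$; and since $\hat B$ has zero $W^{1,2}$-capacity there are cut-offs $\chi_\delta \in C_c^\infty(\hat D)$ vanishing near $\hat B$, equal to $1$ off a shrinking neighbourhood, with $\int_{\hat D}\abs{\nabla\chi_\delta}\to 0$. Pairing the equation with $\chi_\delta\varphi$ for a test function $\varphi$, integrating by parts on $\hat D\setminus\hat B$ (where everything is smooth), and letting $\delta\to 0$ — the term $\int\hat s\,\varphi\,\delbar\chi_\delta$ vanishing because $\hat s$ is bounded and $\int\abs{\nabla\chi_\delta}\to 0$, the rest converging by dominated convergence — shows $\delbar\hat s + a\hat s + b\bar{\hat s} = 0$ weakly on all of $\hat D$. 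Elliptic bootstrapping then makes $\hat s$ a $W^{1,p}_{\loc}$ solution on $\hat D$ for every $p<\infty$.

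Finally, I would invoke the similarity principle \cite{McDuff2012}: being a $W^{1,p}$ ($p>2$) solution of a real Cauchy--Riemann system, $\hat s$ factors locally as $\hat s = \Phi h$ with $\Phi$ a continuous $\GL_r(\C)$-valued function and $h \co \hat D \to \C^r$ holomorphic. As $\hat s$ is non-vanishing on the dense open set $\hat D\setminus\hat B$, we get $h\not\equiv 0$, so $\hat B = \{h = 0\}$ is discrete; hence $B = \pi(\hat B)$ is discrete near $z_0$, and, being closed in the compact surface $C$, the set $B$ is finite.

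The main obstacle is the removable-singularity step together with the reduction preceding it: one has no handle on $B$ beyond the continuity of $\abs{s}$, so the whole argument turns on extracting from that continuity just enough potential-theoretic smallness of $B$ — polarity — to run a Radó-type removal, and on doing so compatibly with the double cover that untwists the a priori ramified bundle $\fl$, whose structure over the possibly irregular set $B$ must be controlled (e.g.\ one must know the completion $\hat D$ is a Riemann surface and that $\hat B$ is again polar). The first and last paragraphs use only standard tools (Weitzenböck together with the maximum principle, and the similarity principle); the middle paragraph is where the real work lies.
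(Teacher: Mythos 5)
Your argument breaks at its first and most important step. The inequality $\Delta\log\abs{s}^2 \geq -c'$, with $c'$ depending only on $\fd$ and the metrics, does not follow from the pointwise bound $\abs{\nabla^{0,1}s}\leq c\abs{s}$, and it is false for genuinely \emph{real} Cauchy--Riemann operators: when you compute $\Delta\log\abs{s}^2$, the $\C$--antilinear part $\fa$ of $\fd$ contributes a term of size $\abs{\fa}\,\abs{\nabla s}/\abs{s}$ which is not controlled near the zero locus (the Cauchy--Schwarz slack that saves the complex-linear case is exactly used up by the antilinear cross terms). Concretely, for the scalar model $\delbar s = -\epsilon\,\bar s$ a solution with a simple zero at the origin has the form $s = z - \tfrac{\epsilon}{2}\bar z^2 + O(\epsilon^2)$, and then $\Delta\log\abs{s}^2$ contains the term $4\epsilon\,\Re\paren{\bar z/z^2} = 4\epsilon\cos(3\theta)/\abs{z}$, which is unbounded below as $z \to 0$. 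So $w$ need not be subharmonic, polarity of $B$ is not established, and everything downstream (the capacity cut-offs, the removable singularity, the final similarity-principle step) has no foundation, since it all rests on that potential-theoretic smallness of $B$. The bound you want is fine when $\fd$ is complex linear; the whole difficulty of the proposition is the antilinear part together with the absence of any a priori control on $B$.

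For comparison, the paper's proof never controls $B$ by potential theory. It first linearizes over $\C$: setting $\tilde\fa \coloneqq \fa s\cdot\abs{s}^{-2}\Inner{s,\cdot}$ on $C\setminus B$ and $\tilde\fa \coloneqq 0$ on $B$, one gets a \emph{bounded, complex-linear} zeroth-order potential with $(\delbar+\tilde\fa)^\fl s = 0$, and the Carleman similarity principle (which needs only $L^\infty$ coefficients) conjugates $\delbar+\tilde\fa$ to $\delbar$ on small discs around \emph{every} point of $C$, including points of $B$. In the resulting Dolbeault situation, Radó's theorem (\autoref{Thm_RadoBehnkeSteinCartan}) is applied not to $s$ but to $s\otimes_\C s$: squaring kills the $\fl$--twist, the section extends continuously by zero across $B$ because $\abs{s}^2$ does, it is holomorphic off its zero set, hence holomorphic, hence has finite zero set. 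If you want to salvage your plan, the linearization (or some substitute) has to happen \emph{before} you try to control $B$, and the quadratic twist-killing trick makes your branched double cover unnecessary — which is just as well, since even granting polarity, your claim that the $\Z/2$--cover of $D\setminus B$ completes to a branched double cover $\hat D \to D$ with $\hat D$ a Riemann surface is itself unjustified when $B$ is only a compact, possibly Cantor-type, polar set.
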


Since no a priori control on the size of $B$ is assumed,
this cannot be derived via the usual removable singularity methods
(at least those known to the authors).
Instead,
the proof relies on the following classical result in complex analysis.

\begin{theorem}[{\citet{Rado1924}, \citet[Satz 1]{BehnkeStein1951}, \citet{Cartan1952}; \cites{Erhard1956}[Theorem 12.14]{Rudin1987}}]
  \label{Thm_RadoBehnkeSteinCartan}
  Let $C$ be a Riemann surface.
  If $f \co C \to \C$ is a continuous map which is holomorphic on $C \setminus f^{-1}(0)$,
  then $f$ is holomorphic.
  \qed
\end{theorem}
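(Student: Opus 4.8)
The statement is local and invariant under biholomorphism, so the plan is to reduce at once to the model case $C = \Omega$, an open disc in $\C$, with $f \co \Omega \to \C$ continuous and holomorphic on $\Omega \setminus Z$, where $Z \coloneqq f^{-1}(0)$. Since $f$ is already holomorphic on $\Omega \setminus Z$, the only thing to prove is that $f$ is holomorphic at each point of $Z$.

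The whole argument would rest on a single device: the function $u \coloneqq \log\abs{f}$, extended by $u \coloneqq -\infty$ on $Z$, is subharmonic on $\Omega$. This is where the hypothesis is genuinely used. Upper semicontinuity holds because at a point $z_0 \in Z$ one has $f(z_0) = 0$, so continuity of $f$ forces $\limsup_{z\to z_0}\log\abs{f(z)} = -\infty = u(z_0)$; away from $Z$ the function $u$ is even harmonic, being locally $\Re\log f$. The sub-mean-value inequality is then automatic: it holds with equality on $\Omega\setminus Z$, and on $Z$ the left-hand side is $-\infty$ and there is nothing to check.

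Next I would invoke two classical facts of potential theory. First, a subharmonic function on a domain is either identically $-\infty$ or its $-\infty$-set is polar (of zero logarithmic capacity); if $u \equiv -\infty$ then $f \equiv 0$ and we are done, so we may assume $Z$ is a closed polar subset of $\Omega$. Second, a closed polar set is removable for bounded holomorphic functions. (If one wishes to include a proof of the latter: for $K\subset\Omega$ closed polar and $g$ bounded holomorphic on $\Omega\setminus K$, the bounded harmonic functions $\Re g$, $\Im g$ extend to bounded harmonic functions on $\Omega$; the resulting harmonic $\tilde g$ satisfies $\partial_z(\partial_{\bar z}\tilde g)=\tfrac14\Delta\tilde g = 0$, so $\partial_{\bar z}\tilde g$ is holomorphic and vanishes on the dense open set $\Omega\setminus K$, hence vanishes identically, and $\tilde g$ is holomorphic.) Applying this on a small disc around any $z_0\in Z$, on which $f$ is bounded by continuity, shows that $f$ extends holomorphically across $Z$; the extension agrees with $f$ by continuity, so $f$ is holomorphic on all of $\Omega$.

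The main obstacle is conceptual rather than computational: identifying that it is precisely the hypothesis $Z = f^{-1}(0)$ together with continuity — not the mere local boundedness of $f$ — that makes $\log\abs{f}$ upper semicontinuous and hence subharmonic, so that its $-\infty$-set $Z$ is forced to be polar. Once this is secured, the conclusion is a direct appeal to the removability of polar sets, and crucially no a priori bound on the ``size'' of $Z$ is needed, which is exactly the feature exploited in the application in \autoref{Prop_ZeroLocusOfTwistedHolomorphicSectionIsFinite}.
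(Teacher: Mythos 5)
The paper does not prove this statement at all: it is quoted as a classical result, with the proof delegated to the cited literature (Radó, Behnke--Stein, Cartan, and Rudin's Theorem 12.14), so there is no in-text argument to compare with. Your potential-theoretic proof is correct and complete modulo two standard quoted facts: that the $-\infty$--set of a subharmonic function $\not\equiv -\infty$ is polar, and that a relatively closed polar set is removable for bounded harmonic (hence bounded holomorphic) functions. The key verification -- upper semicontinuity of $\log\abs{f}$ at zeros of $f$, which is exactly where continuity and $Z=f^{-1}(0)$ enter -- is right, and the sub-mean-value inequality at points of $\Omega\setminus Z$ indeed only needs circles of sufficiently small radius, which is all the local criterion for subharmonicity requires. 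Two cosmetic points: $\del_{\bar z}\tilde g$ is conjugate-holomorphic rather than holomorphic (harmless, since it is continuous and vanishes on the dense open set $\Omega\setminus K$, hence identically), and it is worth saying explicitly that $Z$ has empty interior in the non-degenerate case (it is polar, hence Lebesgue-null), so the extension agrees with $f$ by density and continuity. For comparison, the proof in Rudin that the paper cites runs differently: it works on a disc with the Poisson integral of the boundary values and the maximum principle applied on $\Omega\setminus Z$, using $\epsilon\log\abs{f}$ as a penalty term, and never invokes polar sets or removable-singularity theorems; your route is less elementary in the facts it quotes but more modular, and, as you note, it shares with the paper's application the essential feature that no a priori bound on the size of $Z$ is needed.
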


\begin{proof}[Proof of \autoref{Prop_ZeroLocusOfTwistedHolomorphicSectionIsFinite}]  
  If $\fd=\delbar$ is a Dolbeault operator,
  then $s \otimes_\C s \in \Gamma(C\setminus B,V \otimes_\C V)$ is holomorphic,
  and extends to a continuous section of $V \otimes_\C V$ vanishing on $B$.
  By \autoref{Thm_RadoBehnkeSteinCartan},
  $s \otimes_\C s$ is holomorphic.
  Therefore, $\abs{s}^{-1}(0) = \paren{s \otimes_\C s}^{-1}(0)$ is finite.
    
  If $\fd$ is a real Cauchy--Riemann operator,
  then the assertion follows from the above and the Carleman similarity principle \cite[\S 2.3]{McDuff2012}.
  Decompose $\fd = \delbar + \fa $ as in \autoref{Rmk_RealCauchyRiemanOperator~>DolbeaultOperator} and 
  define $\tilde a \in L^\infty(C,\Hom_\C(V,V \otimes_\C \overline{K}_C))$ by
  \begin{equation*}
    \tilde \fa(x) \coloneqq
    \begin{cases}
      \fa s \cdot \abs{s}^{-2}\Inner{s,\cdot} & \textnormal{if } x \notin B, \\
      0 & \textnormal{if } x \in B.
    \end{cases}
  \end{equation*}
  By construction $\paren{\delbar + \tilde\fa}^\fl s = 0$.
  Let $p > 2$.
  By \cite[Step 3 in the proof of Theorem 2.3.5]{McDuff2012},
  for every $x \in C$ there are $\delta > 0$ and a $g \in W^{1,p}\paren{B_\delta(x),\End_\C(V)}$ satisfying $g (\delbar+\tilde \fa) g^{-1} = \delbar$;
  in particular: $\delbar^\fl (g s) = 0$.
  
  Therefore, $\abs{s}^{-1}(0) \supset B$ is discrete;
  hence: finite.
\end{proof}


\subsection{Construction of $\BlowUpInvariant$}
\label{Sec_ConstructionOfBlowUpInvariant}

The construction of $\BlowUpInvariant$ is now straight-forward;
cf.~\autoref{Ex_InvariantsFromProperFredholmMaps}.
By
\autoref{Prop_SpaceOfPseudoHolomorphicSections} and
\autoref{Prop_PiDegMaxRho2IsEssentiallyProper},
if $J \in \SpaceOfBlowUpAlmostComplexStructures \setminus \fd_\infty^{-1}(\sW)$
is a regular value of $\pi$ and $J \notin \im \sigma$,
then for every $d \in \Z$
\begin{equation*}
  \BlowUpModuliSpace(J,d) \coloneqq (\pi,\deg)^{-1}(J,d)
\end{equation*}
is a compact oriented $0$--manifold.
By Sard--Smale, the subset of these $J$ is comeager.
Also by \autoref{Prop_SpaceOfPseudoHolomorphicSections},
\autoref{Prop_PiDegMaxRho2IsEssentiallyProper},
and Sard--Smale,
the oriented bordism class
\begin{equation*}
  [\BlowUpModuliSpace(J,d)] \in \Omega_0^{\SO}
\end{equation*}
only depends of the path-component of $\SpaceOfBlowUpAlmostComplexStructures \setminus \fd_\infty(\sW)$.
Since $\Omega_0^{\SO} = \Z$,
therefore, there is an invariant
\begin{equation*}
  \widetilde\BlowUpInvariant \in \rH^0(\SpaceOfBlowUpAlmostComplexStructures \setminus \fd_\infty(\sW);\Z[[x]])
\end{equation*}
with
\begin{equation*}
  \widetilde\BlowUpInvariant(J)
  = \sum_{d \in \N_0} \# \BlowUpModuliSpace(J,d) \cdot x^d.
\end{equation*}

Since $\fd_\infty$ has path-connected fibres,
\begin{equation*}
  \fd_\infty^*
  \co
  \rH^0(\SpaceOfRealCauchyRiemannOperators(V) \setminus \sW;\Z[[x]])
  \to
  \rH^0(\SpaceOfBlowUpAlmostComplexStructures \setminus \fd_\infty^{-1}(\sW);\Z[[x]])
\end{equation*}
is an isomorphism.
Therefore,
$\widetilde\BlowUpInvariant$ descends to the desired $\BlowUpInvariant$.

\begin{remark}
  If $d \in -\N$,
  then $[\BlowUpModuliSpace(J,d)] = 0$.
  This can be proved by a suitable choice of $J$.
\end{remark}

\begin{remark}
  There is a variant of the above construction for $\rk_\C V > 2$.
  The essential difference is that $\pi$ in \autoref{Prop_SpaceOfPseudoHolomorphicSections}~\autoref{Prop_SpaceOfPseudoHolomorphicSections_Index} is Fredholm of index $2(\rk_\C V-2) \cdot \deg(s)$ at $(J,d)$.
  As a consequence,
  the variant of $\BlowUpInvariant$ is constructed using $\BlowUpModuliSpace(J,0)$ only and, therefore, $\Z$--valued.
\end{remark}



\section{Chambered invariants from gauge theory}
\label{Sec_ChamberedInvariantsFromQuaternionicVortexEquations}

This section constructs chambered invariants of real Cauchy–Riemann operators by counting solutions to the \defined{quaternionic vortex equations}: gauge-theoretic equations on Riemann surfaces associated with quaternionic representations. 
These equations arise from a dimensional reduction of generalized Seiberg–Witten equations introduced in \cites{Taubes1999b, Pidstrygach2004} and studied in \cites{Taubes2012, Taubes2016, Haydys2014, Doan2017a, Doan2017c, Walpuski2019}.

We focus on the \defined{$(r,k)$ ADHM vortex equations} associated with the quaternionic representations appearing in the ADHM construction of instantons on $\R^4$. 
Non-compactness phenomena for the $(r,k) = (2,1)$ and $(r,k) = (1,2)$ versions of the equations are closely related to $2$–rigidity of real Cauchy–Riemann operators. 
These equations are used to construct chambered invariants from \autoref{Thm_ADHMCurvesInvariants} and \autoref{Thm_ADHMGaugeInvariants}.


\subsection{Quaternionic vortex equations}
\label{Sec_QuaternionicVortexEquation}

\begin{definition}
	Let $G$ be a compact connected Lie group. 
	Let $V$ a quaternionic vector space equipped with a Euclidean inner product with respect to which unit quaternions act by isometries. 
	Denote by $\Sp(V)$ the group of quaternion-linear isometries of $V$. 
A \defined{quaternionic representation} of $G$ on $V$ is a homomorphism $\rho \colon G \to \Sp(V)$. 
\end{definition}

Let $\fg$ be the Lie algebra of $G$. 
By abuse of notation we denote the action of $\fg$ on $V$ by $\rho$.
Choose a $G$--invariant inner product of $\fg$.

\begin{definition}
	Let $\rho \colon G \to \Sp(V)$ be a quaternionic representation.
	Define moment maps $\mu_i, \mu_j, \mu_k \colon V \to \fg$  by
	\begin{gather*}
	  \langle \mu_i(\phi), \xi \rangle =  \langle i\rho(\xi)\phi,\phi \rangle, \\
	  \langle \mu_j(\phi), \xi \rangle =  \langle j\rho(\xi)\phi,\phi \rangle, \\
	  \langle \mu_k(\phi), \xi \rangle =  \langle k\rho(\xi)\phi,\phi \rangle
	\end{gather*}
	for all $\phi \in V$ and $\xi \in \fg$.
	The $\Sp(1)$--equivariant \defined{hyperkähler moment map} is
	\begin{equation*}
	  \mu = (\mu_i,\mu_j,\mu_k) \colon V \to \fg \otimes \R^3.
	\end{equation*}
	The \defined{hyperkähler quotient} of $V$ by $G$ is the hyperkähler variety defined by
	\begin{equation*}
	  X = \mu^{-1}(0) / G. \qedhere
	\end{equation*}
\end{definition}

The complex structures $i, j, k$ and the Euclidean metric induce three non-degenerate $2$--forms $\omega_i, \omega_j, \omega_k$ on $V$. 
From now on, we think of $V$ as a complex vector space with a distinguished complex structure given by multiplication by $i$.
The remaining complex structures $j$ and $k$ are equivalent to an $i$--bilinear $2$--form
\begin{equation*}
  \Omega = \omega_j + i \omega_k.
\end{equation*}
With respect to the splitting $\R^3 = \R\oplus \C$,  
\begin{equation*}
  \mu = \mu_\R \oplus \mu_\C
\end{equation*}
where $\mu_\R = \mu_i$ is the \defined{real moment map} and $\mu_\C = \mu_j + i\mu_k$ is the \defined{complex moment map}.
The latter is the moment map for the action of the complex group $G_\C$ on the complex symplectic vector space $(V,i,\Omega)$.

In what follows, consider the following situation.

\begin{hypothesis}
  \label{Hyp_SpinGRepresentation}
  Let $\rho \colon G \to \Sp(V)$ be a quaternionic representation. 
  Assume that the center of $G$ contains a subgroup isomorphic to $\{\pm 1 \}$ such that $\rho(-1) = -\id$. 
\end{hypothesis}

In the situation of \autoref{Hyp_SpinGRepresentation}, let 
\begin{equation*}
  \Spin^G(2) = \Spin(2)\times_{\{\pm 1 \}} G,
\end{equation*}  
so that $\rho$ extends to a unitary representation
\begin{equation*}
  \rho \colon \Spin^G(2) \to \U(V) 
\end{equation*}
by
\begin{equation*}
  \rho( [\lambda, g]) v = \lambda \rho(g) v.
\end{equation*}
We have isomorphisms $\Spin(2) \cong \U(1)$ and $\SO(2) \cong \U(1)$ under which $\Spin(2) \to \SO(2)$ is given by $z \mapsto z^2$. 
Let $L = \C$ be the standard representation of $\U(1)$, so that the induced representation of $\Spin(2)$ is $L^2$. 
The maps $\Omega$ and $\mu_\C$ are $\Spin^G(2)$--equivariant maps
\begin{equation*}
  \Omega \colon \Lambda^2_\C V \to L^2 \qandq \mu_\C \colon V \to L^2 \otimes_\C \fg^c.
\end{equation*}

Let $C$ be a surface equipped with a Riemannian metric. 
Let $\fw \to C$ be a principal $\Spin^G(2)$ bundle together with an isomorphism of the frame bundle of $K_C$, the canonical bundle of $C$, with the principal $\U(1)$ bundle induced by $\fw$ and homomorphism $\Spin^G(2) \to \U(1)$.
Denote by $P$ the $G/\{\pm 1 \}$--principal bundle associated with $\fw$ and the projection $\Spin^G(2) \to G/\{\pm 1 \}$.
Let
\begin{equation*}
  \bV = \fw \times_{\Spin^G(2)} V
\end{equation*}
be the associated vector bundle with complex structure induced by $i$. 
The algebraic structures associated with $\rho$ gives rise to the following bundle structures: the real and complex moment maps
\begin{equation*}
  \mu_\R \colon \bV \to \Ad P \quad\text{and}\quad \mu_\C \colon \bV \to K_C \otimes_\C \Ad P_\C,
\end{equation*}
where $\Ad P_\C$ is the complexification of $\Ad P$, 
and the complex symplectic form
\begin{equation*}
  \Omega \colon \Lambda^2_\C \bV \to K_C,
\end{equation*}
which induces an isomorphism $\SerreOperator \colon \bV \cong \bV^\dagger = \bV^* \otimes_\C K_C$ as in \autoref{Sec_SelfAdjointRealCauchyRiemannOperators}. 

For every connection $A$ on $P$ there is a unique connection on $\fw$ such that the induced connections on $K_C$ and $P$ are the Levi--Civita connection and $A$. 
Thus, for every $A$ we have the induced Cauchy--Riemann operator
\begin{equation*}
  \delbar_A \colon \Gamma(\bV) \to \Omega^{0,1}(C,\bV).
\end{equation*}
This operator is self-adjoint with respect to $\Omega$ in the sense of \autoref{Def_RealCauchyRiemannOperator_SelfAdjoint}:
\begin{equation*}
  \delbar_A^\dagger = \delbar_A^\SerreOperator
\end{equation*}
or equivalently
\begin{equation*}
  \int_C \Omega(\delbar_A \Phi \wedge \Psi) = \int_C \Omega(\Phi \wedge \delbar_A \Psi) \quad\text{for all } \Phi,\Psi \in \Gamma(\bV).
\end{equation*}

\begin{definition}
  The \defined{quaternionic vortex equation} for a connection $A$ on $P$ and a section $\Psi \in \Gamma(\bV)$ is
  \begin{equation}
    \label{Eq_QuaternionicVortex}
    \begin{split}
      \delbar_A\Psi =0, \\
      \mu_\C(\Psi) = 0, \\
      F_A + \ast \mu_\R(\Psi) = 0.
    \end{split}
  \end{equation}
  Let $\sA(P)$ be the space of connections on $P$.
  The gauge group $\sG(P)$ of $P$ acts on $\sA(P)\times \Gamma(\bV)$ preserving the quaternionic vortex equations.
\end{definition}

\begin{remark}
  \label{Rem_ChernSimonsFunctional}
  Solutions to the quaternionic vortex equations are critical points of a holomorphic functional on $\sA(P) \times \Gamma(\bV)$. 
  The space of connections $\sA(P)$ is an affine space over $\Omega^1(C,\Ad P) = \Omega^{0,1}(C, \Ad P_\C)$ where $P_\C$ is the complexification of $P$. 
  This makes the configuration space $\sA(P) \times \Gamma(\bV)$ into an infinite-dimensional Kähler manifold with a Hamiltonian action of the gauge group $\sG(P)$.
  The moment map for this action is
  \begin{gather*}
    m \colon \sA(P)\times\Gamma(\bV) \to \Gamma(\Ad P), \\
    m(A,\Psi) = \ast F_A + \mu_\R(\Psi).
  \end{gather*}
  We also have the action of the complex gauge group $\sG_\C(P)$ which preserves the complex structure but not the symplectic form on $\sA(P) \times \Gamma(\bV)$. 
  The holomorphic Chern--Simons functional 
  \begin{gather*}
    \sF \colon \sA(P) \times\Gamma(\bV) \to \C, \ \\
    \sF(A,\Psi) = \int_C \Omega( \delbar_A \Psi \wedge \Psi)
  \end{gather*}
  is invariant under the action of $\sG_\C(P)$. 
  The equation for critical points is
  \begin{gather*}
    \delbar_A \Psi = 0, \\
    \mu_\C(\Psi) = 0.
  \end{gather*}
  Therefore, the quaternionic vortex equation can be interpreted as the critical point equation for $\sF$, together with the moment map equation $m(A,\Psi) = 0$. 
  The Hitchin--Kobayashi correspondence implies that the $\sG(P)$ orbits of solutions correspond to $\sG_\C(P)$ orbits of solutions to the critical point equation satisfying an appropriate stability condition.
\end{remark}

\subsection{Generalizations}

The main results on counting solutions to quaternionic vortex equations \autoref{Thm_ChamberedInvariantsFromADHM12} and 
\autoref{Thm_ChamberedInvariantsFromADHM21}, concern certain modifications to these equations.
The first modification is to allow what physicists refer to as flavor symmetry. 

\begin{hypothesis}
  \label{Hyp_FlavorSymmetry}
  Let $H$ be a compact Lie group.
  Assume that $G$ is a normal subgroup of $H$ and that $\rho \colon G \to \Sp(V)$ extends to a quaternionic representation $\rho \colon H \to \Sp(V)$ satisfying \autoref{Hyp_SpinGRepresentation}. 
\end{hypothesis}

In this situation, $G$ is called the \defined{gauge symmetry group} and $I = H/G$ the \defined{flavor symmetry group}.
As before, extend $\rho \colon H \to \Sp(V)$ to a unitary representation
\begin{equation*}
  \rho \colon \Spin^H(2) \to \U(V).
\end{equation*}
Let $\fw \to C$ be a principal $\Spin^H(2)$ bundle together with an isomorphism between the frame bundle of $K_C$ and the principal $\U(1)$ bundle induced by $\fw$ and homomorphism $\Spin^H(2) \to \U(1)$. 
Define 
\begin{equation*}
  \bV = \fw \times_{\Spin^H(2)} V.
\end{equation*}
We have a short exact sequence
\begin{equation}
  \label{Eq_GaugeFlavorExactSequence}
  \begin{tikzcd}
    1 \ar{r} & G \ar{r} & H \ar{r} & I \ar{r} & 1
  \end{tikzcd}
\end{equation}
so that $\fw$ induces a principal $I$--bundle $Q \to C$.
Fix a connection on $Q$. 
If $H = G \times I$ is a product, then there is a $G/\{\pm 1\}$ principal bundle $P \to C$ associated with $\fw$. 
In that case, given a connection $A \in \sA(P)$, there is a unique connection on $\fw$ which induces $A$ on $P$, the fixed connection on $I$, and the Levi--Civita connection on $K_C$. 
This connection induces a twisted Dolbeault operator
\begin{equation*}
  \delbar_A \colon \Gamma(\bV) \to \Omega^{0,1}(C, \bV),
\end{equation*}
and we can consider the quaternionic vortex equation \autoref{Eq_QuaternionicVortex} with respect to this operator.

In general, if \autoref{Eq_GaugeFlavorExactSequence} does not split, such a bundle $P$ might not exist. 
However, we can still consider the space of $\Spin^H(2)$ connections on $\fw$ which induce the given connection on $Q$ and the Levi--Civita connection on $K_C$. 
By abuse of notation, denote this space by $\sA(P)$ as before.
Moreover, the associated bundle of Lie algebras $\Ad(P)$ and the group of $G$--gauge transformations $\sG(P)$ still exist. 
With this adjustment of notation, we define $\delbar_A$ and consider \autoref{Eq_QuaternionicVortex} in the general case.

The second modification is to allow zeroth order perturbations of the Cauchy--Riemann operator.
For the discussion of compactness, it is important to restrict ourselves to perturbations which are compatible with the hyperkähler quotient construction. 

\begin{definition}
  \label{Def_ZerothOrderPerturbation}
  Let $\HOMT(\bV,\SerreOperator)$ be the vector spaces modeling the affine space of $\SerreOperator$--self-adjoint real Cauchy--Riemann operators on $\bV$, see \autoref{Prop_RealCauchyRiemanOperator_SelfAdjoint_AffineSpace}. 
   Denote by $\HOMT(\bV,\rho)\subset\HOMT(\bV,\SerreOperator)$ the subbundle of those $a \in \HOMT(\bV,\SerreOperator)$ which commute with $G$--automorphisms of $\bV$ and for every $\Phi \in \bV$ with $\mu(\Phi) =  0$, 
  \begin{equation*}
    a \Phi \in \ker\rd_\Phi\mu \cap (\rho(\fg)\Phi)^\perp. 
  \end{equation*}
  A \defined{homogeneous perturbaton} is a section $\Upsilon \in \Gamma(\HOMT(\bV,\rho))$. 
  Given such $\Upsilon$, define
  \begin{equation*}
    \delbar_{A,\Upsilon} = \delbar_A + \Upsilon \colon \Gamma(\bV) \to \Omega^{0,1}(C,\bV). \qedhere
  \end{equation*}
\end{definition}

\begin{remark}
  We can interpret elements of $\HOMT(\bV,\rho)$ as linear vector fields on $V$ which descend to a  vector field on the hyperkähler quotient $\mu^{-1}(0)/G$. 
\end{remark}

\begin{remark}
  In the situation of \autoref{Hyp_FlavorSymmetry}, varying the $I$--connection on the flavor symmetry bundle $Q$ can be incorporated into a homogeneous perturbation. 
\end{remark}

The third modification is to allow inhomogeneous perturbations.
As in classical Seiberg--Witten theory, such perturbations are useful for ruling out reducible solutions

\begin{definition}
	\label{Def_InhomogeneousPerturbation}
	An \defined{inhomogeneous perturbation} is a pair $\eta = (\eta_\C,\eta_\R)$ where
  \begin{enumerate}
    \item $\eta_\C \colon \sA(P)\times\Gamma(\bV) \to \Omega^{1,0}(C,\Ad P_\C)$ is an $\sG(P)$--equivariant map satisfying
    \begin{equation*}
      \delbar_A (\eta_\C(A,\Psi))= 0,
    \end{equation*}
    \item $\eta_\R \colon \sA(P) \times\Gamma(\bV) \to \Omega^0(C,\Ad P)$ is an $\sG(P)$--equivariant map,
  \end{enumerate} 
  such that the linearizations of $\eta$ and $\tau$ are pseudo-differential operators of order zero. 
\end{definition}

For example, $\eta_\C$ and $\eta_\R$ can be holonomy perturbations, or can be independent of $(A,\Psi)$ and take values in the center of $\fg$.  

\begin{definition}
  \label{Def_PerturbedQuaternionicVortex}
  Assume \autoref{Hyp_SpinGRepresentation} and \autoref{Hyp_FlavorSymmetry}.
  Fix a Riemannian metric on $C$ and a connection on the flavor symmetry bundle $Q$. 
  Let $\Upsilon$ and $\eta$ be homogeneous and inhomogeneous perturbations.
  The \defined{$(\Upsilon,\eta)$--pertubed quaternionic vortex equation} for $A \in \sA(P)$ and $\Psi \in \Gamma(\bV)$ is
  \begin{equation}
    \label{Eq_PerturbedQuaternionicVortex}
    \begin{split}
      \delbar_{A,\Upsilon}\Psi =0, \\
      \mu_\C(\Psi) = \eta_\C(A,\Psi), \\
      \ast F_A +  \mu_\R(\Psi) = \eta_\R(A,\Psi).
    \end{split}
  \end{equation}
\end{definition}

\begin{remark}
  The assumptions that $\Upsilon$ is $\SerreOperator$--self-adjoint and $\eta_\C$ satisfies $\delbar_A \eta_\C = 0$ are technical and probably not necessary.
  However, without these assumptions \autoref{Eq_PerturbedQuaternionicVortex} has to be replaced by more general equations \autoref{Eq_PerturbedQuaternionicVortexZeta}  to make the equations elliptic, see \autoref{Prop_ExtraFieldVanishes} below.
  It is likely that compactness theory for generalized Seiberg--Witten equations can be extended to \autoref{Eq_PerturbedQuaternionicVortexZeta} but we will not consider this question here. 
\end{remark}



\subsection{Moduli spaces of quaternionic vortices}

\begin{definition}
  \label{Def_PerturbedQuaternionicVortexEquation}
  In the situation of \autoref{Def_PerturbedQuaternionicVortex}, the \defined{moduli space of $(\Upsilon,\eta)$--perturbed quaternionic vortices} is
  \begin{equation*}
    \sM(\Upsilon,\eta)= \{ (A,\Psi) \in\sA(P)\times\Gamma(\bV) : (A,\Psi) \text{ satisfies \autoref{Eq_PerturbedQuaternionicVortex}} \} / \sG(P).
  \end{equation*}
  Equip $\sM(\Upsilon,\eta)$ with the topology of $C^\infty$ convergence. 
\end{definition}

\begin{remark}
   \label{Rem_CkRegularity}
   In practice, it is convenient to consider perturbations $\Upsilon$, $\eta_\R$, $\eta_\C$ of lower regularity. 
   In that case, we consider solutions to \autoref{Eq_PerturbedQuaternionicVortex} of lower regularity, such as $C^K$ for some $K \in\N$, and equip the moduli space with the $C^K$ topology.
\end{remark}

Equation \autoref{Eq_PerturbedQuaternionicVortex} arises from gauge theory over the $3$--manifold $C\times S^1$ (this is explained in \autoref{Sec_CompactnessQuaternionicVortexEquation}). 
Therefore, deformation theory of solutions to  \autoref{Eq_PerturbedQuaternionicVortex} should be controlled by an elliptic complex of index zero.
The first and third equation in \autoref{Eq_PerturbedQuaternionicVortex} already form an elliptic system modulo gauge.
However, the index of the linearization of this elliptic is not zero.
To obtain correct deformation theory with index zero, introduce the following extension of  \autoref{Eq_PerturbedQuaternionicVortex}.
Denote by 
\begin{equation*}
	\overline\SerreOperator \colon \bV \to \bV\otimes_\C K_C^*
\end{equation*}
the complex anti-linear map obtained by composing $\SerreOperator \colon \bV \to \bV^*\otimes_\C K_C$ with the anti-linear isomorphism $\bV^*\otimes_\C K_C \to \bV \otimes_\C K_C^*$ given by the Hermitian inner product.
Consider the following equations for a triple $(A,\Psi,\zeta)$ where $\zeta\in\Gamma(\Ad P_\C)$:
\begin{equation}
	\label{Eq_PerturbedQuaternionicVortexZeta}
  \begin{split}
    (\delbar_{A,\Upsilon} + \overline{\SerreOperator}(\rho(\zeta)\Psi) =0, \\
    \del_A \zeta + \mu_\C(\Psi) = \eta_\C(A,\Psi), \\
    \ast F_A +  \mu_\R(\Psi) = \eta_\R(A,\Psi).
  \end{split}
\end{equation}
The next proposition, which follows from a standard calculation in Seiberg--Witten theory, see \autoref{Prop_VanishingExtraField}, shows that the additional field $\zeta$ can be ignored.

\begin{prop}
  \label{Prop_ExtraFieldVanishes}
  If $(A,\Psi,\zeta)$ is a solution to \autoref{Eq_PerturbedQuaternionicVortexZeta}, then 
  \begin{equation*}
    \delbar_A \zeta = 0 \qandq \rho(\zeta)\Psi = 0.
  \end{equation*}
  In particular, if the stabilizer of $(A,\Psi)$ in $\sG$ is trivial, then $\zeta=0$. 
\end{prop}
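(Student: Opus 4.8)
The plan is to run the standard Seiberg--Witten-type vanishing argument for an auxiliary field: produce from the three equations of \autoref{Eq_PerturbedQuaternionicVortexZeta} an integral identity of the form
\begin{equation*}
  \|\delbar_A\zeta\|_{L^2}^2 + \|\rho(\zeta)\Psi\|_{L^2}^2 \leq 0 ,
\end{equation*}
with both summands nonnegative, and then read off $\delbar_A\zeta = 0$ and $\rho(\zeta)\Psi = 0$. The detailed form of this calculation is what we record in \autoref{Prop_VanishingExtraField}; here is the shape of it.

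The first ingredient is the Bochner--Kodaira identity for $\zeta \in \Gamma(\Ad P_\C)$ on the Riemann surface $C$: since $\delbar_A$ is the Chern $(0,1)$--part of the unitary connection $\ad_A$ on $\Ad P_\C$, one has $\|\delbar_A\zeta\|_{L^2}^2 = \|\del_A\zeta\|_{L^2}^2 + \int_C \langle \ad_{i\Lambda F_A}\zeta,\zeta\rangle$. The middle equation of \autoref{Eq_PerturbedQuaternionicVortexZeta} rewrites $\del_A\zeta = \eta_\C(A,\Psi) - \mu_\C(\Psi)$, and the last rewrites $\ast F_A = \eta_\R(A,\Psi) - \mu_\R(\Psi)$, so $\|\delbar_A\zeta\|_{L^2}^2$ becomes an explicit expression in $\mu_\C(\Psi)$, $\mu_\R(\Psi)$, $\zeta$, and the perturbations. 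The second ingredient comes from pairing the first equation of \autoref{Eq_PerturbedQuaternionicVortexZeta}, namely $\delbar_{A,\Upsilon}\Psi = -\overline{\SerreOperator}(\rho(\zeta)\Psi)$, against $\overline{\SerreOperator}(\rho(\zeta)\Psi)$ and integrating: using that $\delbar_{A,\Upsilon}$ is self-adjoint with respect to $\Omega$ (because $\Upsilon$ is $\SerreOperator$--self-adjoint, \autoref{Def_ZerothOrderPerturbation}), that $\Upsilon$ commutes with the $G$--action, and the $\Spin^G(2)$--equivariance of $\mu_\C$ in the form $\rd_\Phi\mu_\C(\rho(\xi)\Phi) = [\xi,\mu_\C(\Phi)]$, this pairing produces $\|\rho(\zeta)\Psi\|_{L^2}^2$ together with further terms in $\mu_\C(\Psi)$, $\mu_\R(\Psi)$, and $\zeta$. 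Here the defining property of $\HOMT(\bV,\rho)$ — that $\Upsilon$ is tangent to the zero locus of the hyperkähler moment map — makes the $\Upsilon$--cross-terms vanish (an $\Omega$--preserving, $G$--equivariant linear map automatically preserves $\mu_\C$), and $\delbar_A\eta_\C(A,\Psi) = 0$ from \autoref{Def_InhomogeneousPerturbation} kills the $\eta_\C$--contribution once an extra $\delbar_A$ is integrated by parts. Adding the two identities, the $\mu_\C$-- and $\mu_\R$--terms cancel, leaving the displayed inequality; hence $\delbar_A\zeta = 0$ and $\rho(\zeta)\Psi = 0$.

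For the last assertion, $\delbar_A\zeta = 0$ says that the infinitesimal complex gauge transformation generated by $\zeta$ preserves the holomorphic structure $\delbar_A$, and $\rho(\zeta)\Psi = 0$ says it preserves $\Psi$; thus $\zeta$ lies in the Lie algebra of the stabiliser of $(A,\Psi)$ in the complexified gauge group. As usual for these elliptic problems (via the Hitchin--Kobayashi correspondence), this stabiliser is the complexification of the compact stabiliser of $(A,\Psi)$ in $\sG(P)$, so if the latter is trivial then $\zeta = 0$.

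The step I expect to be the real work is the bookkeeping in the second paragraph: checking that the $\mu_\C(\Psi)$-- and $\mu_\R(\Psi)$--terms produced by the Bochner--Kodaira identity are exactly the negatives of those produced by the pairing of the Dirac equation, so that they cancel and only the two good squares remain. This is what forces the precise shape of \autoref{Eq_PerturbedQuaternionicVortexZeta} (the coupling $\overline{\SerreOperator}(\rho(\zeta)\Psi)$ and the term $\del_A\zeta$) and the technical hypotheses $\delbar_A\eta_\C = 0$ and $\Upsilon\in\Gamma(\HOMT(\bV,\rho))$, and it is the content carried out in detail in \autoref{Prop_VanishingExtraField}.
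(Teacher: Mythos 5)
Your proposal does not yet prove the statement, for two concrete reasons. First, the decisive step is missing: the entire content of the proposition is the identity $\|\delbar_A\zeta\|_{L^2}^2+\|\rho(\zeta)\Psi\|_{L^2}^2\leq 0$, and you explicitly defer the bookkeeping that is supposed to produce it, asserting that the $\mu_\C$-- and $\mu_\R$--terms cancel rather than checking that they do. Second, the one mechanism you do specify for the dangerous cross-term $\int_C\langle\Upsilon\Psi,\overline\SerreOperator(\rho(\zeta)\Psi)\rangle$ is invalid: the tangency condition in \autoref{Def_ZerothOrderPerturbation} is imposed only at points $\Phi$ with $\mu(\Phi)=0$, whereas along a solution of \autoref{Eq_PerturbedQuaternionicVortexZeta} one has $\mu(\Psi)\neq 0$ in general (it is prescribed by $F_A$, $\del_A\zeta$ and the perturbations), so it cannot be invoked pointwise; moreover, elements of $\HOMT(\bV,\SerreOperator)$ are $\SerreOperator$--self-adjoint, not ``$\Omega$--preserving'', and a $G$--equivariant self-adjoint endomorphism need not preserve $\mu_\C$. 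A smaller but real problem is the quadratic substitution $\del_A\zeta=\eta_\C-\mu_\C(\Psi)$ inside the Bochner--Kodaira identity: it leaves $\|\eta_\C\|_{L^2}^2$ and $\langle\eta_\C,\mu_\C(\Psi)\rangle_{L^2}$ terms, and the hypothesis $\delbar_A\eta_\C=0$ only removes terms in which $\eta_\C$ is paired against $\del_A\zeta$; nothing produced by the Dirac pairing can cancel the rest, so the second equation should only be used linearly.

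For comparison, the paper's proof is the (dimensional reduction of the) computation in \autoref{Prop_VanishingExtraField}: apply $\rd_A$ to the curvature equation, use the Bianchi identity, $\rd_A\eta=0$, and the algebraic identity expressing $\rd_A\mu(\Psi)$ through $(D_A\Psi)\Psi^*$, then substitute the Dirac equation and pair with $\zeta$; this yields $\|\rd_A\zeta\|_{L^2}^2+\|\rho(\zeta)\Psi\|_{L^2}^2+\langle\Upsilon\Psi,\rho(\zeta)\Psi\rangle_{L^2}=0$, and the cross-term vanishes pointwise because $\rho(\zeta)\Upsilon$ is skew-adjoint, using only that $\Upsilon$ is self-adjoint and commutes with the $G$--action --- no appeal to the hyperk\"ahler tangency condition and no information about $\mu(\Psi)$ is needed. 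If you wish to keep your Bochner--Kodaira route, the fix is to replace your tangency argument by this pointwise self-adjointness/skew-adjointness argument, adapted to the $\overline\SerreOperator$--twisted pairing, and then actually carry out the cancellation of the moment-map terms (which is where the precise coupling $\overline\SerreOperator(\rho(\zeta)\Psi)$ and the $\del_A\zeta$ term enter); as written, the proposal is a plausible plan rather than a proof.
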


By linearizing \autoref{Eq_PerturbedQuaternionicVortexZeta} at $(A,\Psi,0)$ modulo the action of $\sG(P)$, we associate with the solution an elliptic deformation complex:
\begin{equation*}
  \begin{tikzcd}
    \Omega^0(\Ad P) \ar{r} & \Omega^0(\Ad P^\C) \oplus \Omega^1(\Ad P) \oplus \Gamma(\bV) \ar{r} &  \Omega^2(\Ad P) \oplus \Omega^{0,1}(\Ad P^\C) \oplus \Omega^{0,1}(\bV).
  \end{tikzcd}
\end{equation*} 
We have real isomorphisms
\begin{equation*}
  \Omega^{0,1}(C,\C) \cong \Omega^1(C,\R) \qandq \Omega^0(C,\C) \cong \Omega^0(C,\R)\oplus\Omega^2(C,\R),
\end{equation*} 
with respect to which the map
\begin{equation*}
  \rd + \rd^* \colon \Omega^1(C,\R) \to \Omega^0(C,\R)\oplus\Omega^2(C,\R)
\end{equation*}
is identified with 
\begin{equation*}
  \delbar^* \colon \Omega^{0,1}(C,\C) \to \Omega^0(C,\C).
\end{equation*}
We also use the real isomorphism $\overline\SerreOperator \colon \Gamma(\bV) \cong \Omega^{0,1}(\bV)$.
Under these identifications, the operator associated with the deformation complex is
\begin{gather*}
  L_{A,\Psi} \colon \Omega^{0,\bullet}(\Ad P_\C) \oplus \Gamma(\bV) \to  \Omega^{0,\bullet}(\Ad P_\C) \oplus \Gamma(\bV), \\
  L_{A,\Psi} =  
  \begin{pmatrix}
    \delbar_A + \delbar_A^* & \Gamma_\Psi^* \\
    \Gamma_\Psi  & \overline\SerreOperator^{-1} \delbar_{A,\Upsilon}
  \end{pmatrix}
  + T_{A,\Psi}\eta
\end{gather*}  
where $\Omega^{0,\bullet} = \Omega^0 \oplus \Omega^{0,1}$, the map $\Gamma_\Psi \colon \Omega^{0,\bullet}(\Ad P_\C) \to \bV $ is defined by
\begin{equation*}
  \Gamma_\Psi(\xi+a) = \rho(\xi)\Psi + \overline\SerreOperator^{-1}(\rho(a)\Psi) \quad\text{for } \xi \in \Ad P_\C \qandq a \in K_C^*\otimes\Ad P_\C,
\end{equation*}
and $T_{A,\Psi}\eta$ is the derivative of $\eta(A,\Psi)$ at $(A,\Psi)$. 
The operator $L_{A,\Psi}$ is a compact perturbation of the self-adjoint elliptic operator
\begin{equation*}
  \begin{pmatrix}
    \delbar_A + \delbar_A^* &  0 \\
     0 & \overline\SerreOperator^{-1} \delbar_{A,\Upsilon}
  \end{pmatrix}.
\end{equation*}
Therefore, $L_{A,\Psi}$ is elliptic and $\ind(L_{A,\Psi}) = 0$. 

\begin{remark}
  An alternative way to obtain the deformation operator is to consider the equation for citical points of the complex Chern--Simons functional modulo the complex gauge group $\sG^\C(P)$.
  (We assume here that $\Upsilon$ is $\C$--linear so that the complex Chern--Simons functional is $\sG^\C(P)$--invariant.)
  A finite-dimensional analog is the deformation complex associated with a critical point of a holomorphic Morse function invariant under the action of a complex Lie group. 
  From this point of view, the deformation complex is
  \begin{equation*}
    \begin{tikzcd}
      \Omega^0(\Ad P_\C) \ar{r} & \Omega^{0,1}(\Ad P_\C) \oplus \Gamma(\bV) \ar{r} & \Omega^{1,0}(\Ad P_\C) \oplus \Omega^{0,1}(\bV) \ar{r} &  \Omega^{1,1}(\Ad P_\C).
    \end{tikzcd}
  \end{equation*}
  The first map is the linearization of the $\sG_\C(P)$ action, the second map is the linearization of the equation with a complex co-gauge condition, and the remaining two maps are complex dual to the first two over $\C$.
  Observe that $\Omega^{1,0}$ is dual to $\Omega^{0,1}$ and $\Omega^0$ is dual to $\Omega^{1,1}$ using the wedge product and integration, and similarly $\Omega^{0,1}(\bV)$ is dual to $\Gamma(\bV)$ by the pairing
  \begin{equation*}
    \alpha \in \Omega^{0,1}(\bV), \quad \beta \in \Gamma(\bV) \mapsto \int_\Sigma \Omega(\alpha\wedge\beta).
  \end{equation*}
  With respect to these identifications, the above complex is self-dual over $\C$.
  Under the isomorphisms
  \begin{equation*}
   \Omega^0(\Ad P)\oplus\Omega^2(\Ad P) \cong \Omega^0(\Ad P_\C), \quad
   \Omega^1(\Ad P) \cong \Omega^{0,1}(\Ad P_\C), \quad
   \Gamma(\bV) \cong \Omega^{0,1}(\bV)
  \end{equation*}
  the operator associated with the above complex agrees with $L_{A,\Psi}$. 
\end{remark}

The following is a standard application of the implicit function theorem and the existence of slices for the action of the gauge group.

\begin{definition}
  A solution $(A,\Psi)$ to \autoref{Eq_PerturbedQuaternionicVortex} is \defined{irreducible} if the stabilizer of $(A,\Psi)$ in $\sG$ is trivial, and \defined{unobstructed} if $\coker L_{A,\Psi} = \{0\})$ (equivalently, $\ker L_{A,\Psi} = \{0\}$). 
\end{definition}

\begin{prop}
  Let $(A,\Psi)$ be a solution to \autoref{Eq_PerturbedQuaternionicVortex} and let $G_{A,\Psi}$ be the stabilizer of $(A,\Psi)$ in $\sG(P)$.
  The operator $L_{A,\Psi}$ is $G_{A,\Psi}$--equivariant and there exist
  \begin{enumerate}
    \item an open neighborhood $U$ of $[A,\Psi]$ in $\sM(\Upsilon,\eta)$,
    \item a $G_{A,\Psi}$--equivariant open neihborhood $U$ of $0$ in $\ker L_{A,\Psi}$,
    \item a $G_{A,\Psi}$--equivariant smooth map $\kappa \colon U \to U$
  \end{enumerate}
  such that $U$ is homeomorphic to $\kappa^{-1}(0)/G_{A,\Psi}$. 
  In particular, if every $[A,\Psi] \in \sM(\Upsilon,\eta)$ is irreducible and unobstructed, then $\sM(\Upsilon,\eta)$ is a discrete space.
\end{prop}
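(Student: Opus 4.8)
The plan is to produce a $G_{A,\Psi}$--equivariant Kuranishi chart around $[A,\Psi]$ by combining a local slice for the action of $\sG(P)$ with a Lyapunov--Schmidt reduction against the Fredholm operator $L_{A,\Psi}$. First I would pass to suitable completions, replacing $\sA(P)\times\Gamma(\bV)$ by its $W^{k,p}$ completion (with $p>2$ and $k$ large, or a $C^{K}$ completion as in \autoref{Rem_CkRegularity}) and $\sG(P)$ by the corresponding gauge group, so that all spaces in sight are Banach manifolds and the gauge action is smooth. The $G_{A,\Psi}$--equivariance of $L_{A,\Psi}$ is then immediate from the naturality of the linearised equations under gauge transformations, together with the $\sG(P)$--equivariance of $\eta$ and the fact that $\Upsilon$ is a fixed section of $\HOMT(\bV,\rho)$. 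The stabiliser $G_{A,\Psi}$ is a compact Lie group: it is the intersection of the stabiliser of $A$, which via holonomy is isomorphic to a closed subgroup of $G$, with the stabiliser of $\Psi$. The standard slice theorem for gauge group actions then provides a $G_{A,\Psi}$--invariant open neighbourhood of the origin in the Coulomb slice through $(A,\Psi)$ --- the affine subspace cut out by $L^{2}$--orthogonality to the infinitesimal gauge orbit --- on which the projection to $\paren{\sA(P)\times\Gamma(\bV)}/\sG(P)$ is, after taking the quotient by the residual $G_{A,\Psi}$--action, a homeomorphism onto a neighbourhood of $[A,\Psi]$.

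Next I would regard \autoref{Eq_PerturbedQuaternionicVortexZeta} together with the Coulomb gauge condition as the zero set of a smooth map $\sF$ defined on a neighbourhood of the base point $(A,\Psi,0)$ inside $(\text{slice})\times\Gamma(\Ad P_\C)$, whose linearisation there is precisely the operator $L_{A,\Psi}$. By the discussion preceding the statement, $L_{A,\Psi}$ is elliptic of index $0$, so $\ker L_{A,\Psi}$ and $\coker L_{A,\Psi}$ are finite--dimensional and, under the $L^{2}$ pairing, $\coker L_{A,\Psi}\cong\paren{\ker L_{A,\Psi}}^{*}$. Choosing the $L^{2}$--orthogonal projection $\Pi$ onto $\im L_{A,\Psi}$ --- which is $G_{A,\Psi}$--equivariant because the group acts by isometries --- the implicit function theorem solves $\Pi\sF(x+\phi(x))=0$ for a $G_{A,\Psi}$--equivariant smooth map $\phi$ from a neighbourhood $U$ of $0$ in $\ker L_{A,\Psi}$ into a closed $G_{A,\Psi}$--invariant complement, and one sets
\[
  \kappa(x) \coloneqq (\one-\Pi)\,\sF(x+\phi(x)) \in \coker L_{A,\Psi}.
\]
By construction $\kappa$ is $G_{A,\Psi}$--equivariant and smooth, and $\kappa^{-1}(0)$ is $G_{A,\Psi}$--equivariantly homeomorphic to the space of gauge--fixed solutions of \autoref{Eq_PerturbedQuaternionicVortexZeta} near the base point; by \autoref{Prop_ExtraFieldVanishes} the $\zeta$--component of such a solution is controlled by the stabiliser, so taking the further quotient by $G_{A,\Psi}$ identifies $\kappa^{-1}(0)/G_{A,\Psi}$ with an open neighbourhood $U$ of $[A,\Psi]$ in $\sM(\Upsilon,\eta)$.

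It then remains to check that this Banach--space model carries the correct topology: elliptic bootstrapping for $\delbar_{A,\Upsilon}$ and the curvature equation shows that gauge--fixed solutions are as regular as the perturbations, so $U$ inherits the $C^{\infty}$ (resp.\ $C^{K}$) topology of \autoref{Def_PerturbedQuaternionicVortexEquation}. Finally, if every $[A,\Psi]\in\sM(\Upsilon,\eta)$ is irreducible and unobstructed, then $G_{A,\Psi}$ is trivial and $\coker L_{A,\Psi}=0$; since $\ind L_{A,\Psi}=0$ this forces $\ker L_{A,\Psi}=0$ as well, so $U=\{[A,\Psi]\}$ is a single point. Hence every point of $\sM(\Upsilon,\eta)$ is isolated, i.e.\ $\sM(\Upsilon,\eta)$ is discrete.

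The step I expect to be the main obstacle is the local slice theorem for the $\sG(P)$--action in the present setting: establishing the existence of Coulomb slices, the compactness of the stabiliser, and --- most delicately --- the compatibility of the slice with the auxiliary field $\zeta$ in \autoref{Eq_PerturbedQuaternionicVortexZeta}, so that the linearisation transverse to gauge orbits is exactly the index--zero operator $L_{A,\Psi}$ identified above. Everything downstream --- the implicit function theorem, the Lyapunov--Schmidt reduction, equivariance obtained by averaging over the compact group $G_{A,\Psi}$, and elliptic regularity --- is routine.
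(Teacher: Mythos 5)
Your proposal is correct and takes essentially the same route the paper intends: the paper offers no detailed proof, remarking only that the proposition is a standard application of the implicit function theorem together with slices for the gauge-group action. Your Coulomb-slice plus $G_{A,\Psi}$--equivariant Lyapunov--Schmidt reduction against the index-zero operator $L_{A,\Psi}$, with \autoref{Prop_ExtraFieldVanishes} controlling the auxiliary field $\zeta$, is exactly that standard argument written out.
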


The determinant line bundle of the family of operators $L_{A,\Psi}$ over $\sA(P)\times\Gamma(\bV)$ is $\sG(P)$--equivariantly trivial because the $\sG(P)$--equivariant homotopy
\begin{equation*}
  L_{A,\Psi; t} =  
  \begin{pmatrix}
    \delbar_A + \delbar_A^* & t\Gamma_\Psi^* \\
    t\Gamma_\Psi  & \overline\SerreOperator^{-1}\delbar_A + t \Upsilon
  \end{pmatrix}
  + t T_{A,\Psi}\eta
\end{equation*}  
connects $L_{A,\Psi} = L_{A,\Psi;1}$ with $L_{A,\Psi;0}$, which is the sum of a $\C$--linear and $\C$--antilinear operator. 
For every irreducible unobstructed solution $[A,\Psi] \in \sM(\Upsilon,\eta)$, define $\sign(A,\Psi) \in \{ -1, 1\}$ by orientation transport from $\det L_{A,\Psi;0}$ to $\det L_{A,\Psi;1}$.

\begin{definition}
  Suppose that $\sM(\Upsilon,\eta)$ is compact and all solutions in $\sM(\Upsilon,\eta)$ are irreducible and unobstructed.
  Define
  \begin{equation}
    \label{Eq_CountOfQuaternionicVortices}
    \# \sM(\Upsilon,\eta) = \sum_{[A,\Psi] \in \sM(\Upsilon,\eta)} \sign(A,\Psi).
  \end{equation}
\end{definition}

General Fredholm theory allows us to extend this definition to the case when the moduli space is compact but possibly obstructed; see \cite[Theorem 2.7]{Cieliebak2002}, \cite[Proposition 14]{Brussee1996}.

\begin{prop}
  \label{Prop_ChamberedInvariantsFromGaugeTheory}
  For every $(\Upsilon,\eta)$ such that $\sM(\Upsilon,\eta)$ is compact and consists of irreducible solutions we can associate then $\# \sM(\Upsilon,\eta) \in \Z$ in such a way that:
  \begin{enumerate}
  		\item If all solutions in $\sM(\Upsilon,\eta)$ are unobstructed, then $\# \sM(\Upsilon,\eta)$ agrees with \autoref{Eq_CountOfQuaternionicVortices}.
  		\item If $(\Upsilon_t,\eta_t)_{t\in[0,1]}$ is a continuous path such that $\sM(\Upsilon_t,\eta_t)$ is compact and consists of irreducible solutions for all $t\in[0,1]$, then
  		\begin{equation*}
  			\# \sM(\Upsilon_0,\eta_0) = \# \sM(\Upsilon_1,\eta_1).
		\end{equation*} 
  \end{enumerate}
\end{prop}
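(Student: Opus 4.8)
The statement is an instance of the general principle that the zero locus of an oriented Fredholm section of index $0$ which is compact and contained in the smooth locus of its base carries a well-defined signed count which is a bordism invariant; the plan is to verify that \autoref{Eq_PerturbedQuaternionicVortexZeta} fits this framework and then invoke the standard machinery \cite[Theorem 2.7]{Cieliebak2002}, \cite[Proposition 14]{Brussee1996}. First I would package the equation as a section: let $\cB$ be the quotient of (a suitable Sobolev completion of) $\sA(P)\times\Gamma(\bV)\times\Gamma(\Ad P_\C)$ by $\sG(P)$, restricted to the open subset of triples with trivial stabiliser, which is a smooth Banach manifold. Then \autoref{Eq_PerturbedQuaternionicVortexZeta} defines a smooth section $\fs=\fs_{\Upsilon,\eta}$ of a Banach space bundle $\cE\to\cB$ whose linearisation at a solution is the operator $L_{A,\Psi}$ of the deformation complex; since $\ind L_{A,\Psi}=0$, $\fs$ is Fredholm of index $0$, and the homotopy $L_{A,\Psi;t}$ already constructed trivialises $\det\fs$ and fixes the orientation coming from the canonical complex one. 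By \autoref{Prop_ExtraFieldVanishes}, $\fs^{-1}(0)$ is identified with $\sM(\Upsilon,\eta)$, which by hypothesis is compact and contained in the irreducible locus of $\cB$.

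Next I would define $\#\sM(\Upsilon,\eta)$ as in the cited references: cover $\sM(\Upsilon,\eta)$ by finitely many Kuranishi charts as in the preceding proposition, patch the finite-dimensional cokernels into a finite-rank subbundle $F\subset\cE$ over a neighbourhood $\cU$ of $\sM(\Upsilon,\eta)$, and note that for a generic small section $\tau$ of $F$ supported in $\cU$ the perturbed section $\fs+\tau$ is transverse to the zero section; its zero set is then a finite oriented $0$--manifold, and $\#\sM(\Upsilon,\eta)$ is its signed count. Independence of the auxiliary data ($F$, $\cU$, $\tau$, the cover) follows by the usual cobordism argument: any two choices are joined by a path, over which the parametrised zero set is a compact oriented cobordism, compact because the unperturbed zero set is compact and the perturbations are small and supported near it. When every solution is already unobstructed one may take $F=0$, and the construction collapses to $\sum_{[A,\Psi]}\sign(A,\Psi)$; this gives part (1).

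For part (2), given a path $(\Upsilon_t,\eta_t)_{t\in[0,1]}$ I would assemble the parametrised section $\widetilde\fs$ over $[0,1]\times\cB$, of Fredholm index $1$, whose zero set is $\widetilde\sM=\bigsqcup_{t\in[0,1]}\{t\}\times\sM(\Upsilon_t,\eta_t)$. By hypothesis $\widetilde\sM$ is compact and contained in $[0,1]\times(\text{irreducible locus})$, so the same stabilisation applies: after a generic compactly supported finite-rank perturbation which agrees at $t=0,1$ with the ones used to define $\#\sM(\Upsilon_0,\eta_0)$ and $\#\sM(\Upsilon_1,\eta_1)$ — permissible since those counts are independent of the perturbation — $\widetilde\sM$ becomes a compact oriented $1$--manifold with boundary $\sM(\Upsilon_1,\eta_1)\sqcup\bigl(-\sM(\Upsilon_0,\eta_0)\bigr)$. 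Counting signed boundary points yields $\#\sM(\Upsilon_0,\eta_0)=\#\sM(\Upsilon_1,\eta_1)$.

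The analytic heart of the matter — compactness of the moduli spaces — is built into the hypotheses, so within this proof the substantive points to check are only: (i) that \autoref{Eq_PerturbedQuaternionicVortexZeta} really does define a smooth Fredholm section of index $0$ on the irreducible locus, which is part of the setup developed in \autoref{Sec_CompactnessQuaternionicVortexEquation}; and (ii) that the orientation of $\det\fs$ furnished by the homotopy $L_{A,\Psi;t}$ is coherent under the deformations in parts (1) and (2), which is immediate since $L_{A,\Psi;t}$ depends continuously on all the data. I expect the main obstacle to be purely organisational: matching the abstract-perturbation conventions of \cite{Cieliebak2002,Brussee1996} to the present gauge-theoretic setup — in particular handling obstructed solutions and ensuring the cobordism in (2) has the stated boundary — so that the two bordism arguments are literally applicable.
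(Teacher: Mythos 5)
Your proposal is correct and is essentially the paper's own route: the paper gives no separate proof, relying exactly on the preceding setup (the index-zero operator $L_{A,\Psi}$, its orientation via the homotopy $L_{A,\Psi;t}$, and the slice/Kuranishi description of neighbourhoods in $\sM(\Upsilon,\eta)$) together with the general finite-rank stabilisation and cobordism machinery cited from \cite[Theorem 2.7]{Cieliebak2002} and \cite[Proposition 14]{Brussee1996}, which is precisely what you have spelled out. Your additional packaging — viewing \autoref{Eq_PerturbedQuaternionicVortexZeta} as a Fredholm section over the irreducible quotient and using \autoref{Prop_ExtraFieldVanishes} to identify its zero set with $\sM(\Upsilon,\eta)$ — is exactly the intended reading, so no genuinely different argument is involved.
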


In general, the moduli space $\sM(\Upsilon,\eta)$ may be non-compact.
This phenomenon was studied in \cite{Taubes2013,Haydys2014, Walpuski2019}.
The known compactness results assume the following algebraic property of the quaternionic representation $\rho \colon G \to V$.

\begin{hypothesis}
	\label{Hyp_Compactness}
	Given $\Phi \in V$ define $\Gamma_\Psi \colon \Im\H \otimes \fg \to V$ by $\Gamma_\Psi(q\otimes \xi) = q \rho(\xi) V$.
	Suppose that there exist constants $\delta, c > 0$ such that for every $\Psi \in V$ with $|\Psi| = 1$ and $|\mu(\Psi) \leq \delta$
	\begin{equation*}
		|\mu(\Psi)| \leq c |\Gamma_\Psi \mu(\Psi)|.
	\end{equation*} 
\end{hypothesis}

\begin{definition}
	\label{Def_LimitingConfiguration}
	Let $\Upsilon$ be a homogeneous perturbation. 
	An \defined{$\Upsilon$--limiting configuration} is a triple $(B,A,\Psi)$ consisting of 
	\begin{itemize}
		\item a nowhere dense closed subset $B \subset C$, 
		\item a $G$--connection $A$ on $P|_{C\setminus B}$, 
		\item a section $\Psi \in \Gamma(C\setminus B, \bV)$ with $\| \Psi \|_{L^2} =1$ and $\| \nabla_A \Psi \|_{L^2} < \infty$,
	\end{itemize}
	 such that
	\begin{gather*}
		\delbar_{A,\Upsilon}\Psi = 0, \\
		\mu_\C(\Psi) = 0, \\
		\mu_\R(\Psi) = 0,
	\end{gather*} 
	and $|\Psi|$ extends to a Hölder continuous function on $C$ such that $|\Psi|^{-1}(0) = B$. 
\end{definition}

The following is an adaptation of a compactness theorem for generalized Seiberg--Witten equations \cite{Walpuski2019}.
The proof is postponed until \autoref{Sec_CompactnessQuaternionicVortexEquation}. 

\begin{theorem}
	\label{Thm_CompactnessQuaternionicVortexEquations}
	Assume that the quaternionic representation $\rho \colon G \to \Sp(V)$ satisfies \autoref{Hyp_Compactness}.
	Let $(A_n,\Psi_n)$ be a sequence of solutions to the $(\Upsilon,\eta)$--perturbed quaternionic vortex equation such that $|\eta(A_n,\Psi_n)|$, $|\nabla_{A_n} \eta(A_n,\Psi_n)|$ are uniformly bounded. 
	\begin{enumerate}
		\item
		If $\limsup_{n\to\infty} \| \Psi_n \|_{L^2} < \infty$, then after passing to a subsequence and applying gauge transformations $(A_n,\Psi_n)$ converges in $C^\infty$ to a solution $(A,\Psi)$. 
		\item If $\limsup_{n\to\infty} \| \Psi_n \|_{L^2} = \infty$, then there exists an $\Upsilon$--limiting configuration $(B,A,\Psi)$ such that after passing to a subsequence and applying gauge transformations over $C\setminus B$
		\begin{enumerate}
			\item $|\Psi_n| / \| \Psi_n\|_{L^2}$ converges to $|\Psi|$ in the $C^{0,\alpha}$ topology over $C$ for some $\alpha \in (0,1)$,
			\item $A_n$ converges to $A$ in the weak $W^{1,2}$ topology over compact subsets of $C\setminus B$,
			\item $\Psi_n / \|\Psi_n\|_{L^2}$ converges to $\Psi$ in the weak $W^{2,2}$ topology over compact subsets of $C\setminus B$.
		\end{enumerate} 
	\end{enumerate}
\end{theorem}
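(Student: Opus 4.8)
To prove \autoref{Thm_CompactnessQuaternionicVortexEquations} the plan is to transplant the compactness theorem for generalized Seiberg--Witten equations of \cite{Walpuski2019} (following the circle of ideas in \cite{Taubes2013,Haydys2014}) to the present two-dimensional situation, carrying the homogeneous and inhomogeneous perturbations $\Upsilon$ and $\eta$ along as lower-order terms. The starting point is an a priori estimate for solutions of \autoref{Eq_PerturbedQuaternionicVortex}. Using the curvature equation $\ast F_A = \eta_\R(A,\Psi) - \mu_\R(\Psi)$ and $\mu_\C(\Psi) = \eta_\C(A,\Psi)$ to rewrite the Bochner term in the Weitzenb\"ock formula for $\delbar_{A,\Upsilon}$, and invoking \autoref{Hyp_Compactness} to dominate the indefinite contributions --- the quaternionic analogue of Taubes's argument --- one obtains a pointwise differential inequality for $|\Psi|^2$. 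Integrating over the closed surface $C$ yields an energy inequality
\begin{equation*}
  \| \nabla_A \Psi \|_{L^2}^2 + \| \mu(\Psi) \|_{L^2}^2 \leq c\bigl( 1 + \| \Psi \|_{L^2}^2 \bigr),
\end{equation*}
and the maximum principle yields a bound $\| \Psi \|_{C^0} \leq c\bigl( 1 + \| \Psi \|_{L^2} \bigr)$; here the hypotheses that $|\eta(A_n,\Psi_n)|$ and $|\nabla_{A_n}\eta(A_n,\Psi_n)|$ are uniformly bounded enter.

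If $r_n := \| \Psi_n \|_{L^2}$ stays bounded, then the $C^0$ bound makes $\| F_{A_n} \|_{C^0}$ bounded, so Uhlenbeck gauge fixing puts $A_n$ in Coulomb gauge with uniform $W^{1,p}$ bounds; bootstrapping the elliptic system $\delbar_{A_n,\Upsilon}\Psi_n = 0$, $\mu_\C(\Psi_n) = \eta_\C$, $\ast F_{A_n} + \mu_\R(\Psi_n) = \eta_\R$ --- using that the linearizations of $\eta$ have order zero --- gives uniform $C^\infty$ bounds and hence a $C^\infty$-convergent subsequence, which proves the first alternative. If instead $r_n \to \infty$, set $\psi_n := r_n^{-1}\Psi_n$, so $\| \psi_n \|_{L^2} = 1$; by homogeneity of $\mu$ the equations become $\delbar_{A_n,\Upsilon}\psi_n = 0$, $\mu_\C(\psi_n) = r_n^{-2}\eta_\C(A_n,\Psi_n)$, $\ast F_{A_n} = \eta_\R(A_n,\Psi_n) - r_n^2\mu_\R(\psi_n)$, and the energy inequality rescales to $\| \nabla_{A_n}\psi_n \|_{L^2}^2 + r_n^2 \| \mu(\psi_n) \|_{L^2}^2 \leq c + o(1)$, so $\| \mu(\psi_n) \|_{L^2} \to 0$ while $\| \nabla_{A_n}\psi_n \|_{L^2}$ and, by the rescaled $C^0$ bound, $\| \psi_n \|_{C^0}$ are uniformly bounded. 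The differential inequality for $|\Psi_n|$ rescales to $\Delta|\psi_n| \leq c|\psi_n| + o(1)$ in the barrier sense, with $c$ independent of $A_n$, so De Giorgi--Nash--Moser estimates give uniform $C^{0,\alpha}$ control of $|\psi_n|$; passing to a subsequence, $|\psi_n| \to v$ in $C^{0,\alpha'}$ for $\alpha' < \alpha$, with $v \geq 0$, $\Delta v \leq cv$ weakly, and $\| v \|_{L^2} = 1$, so $v \not\equiv 0$. Set $B := v^{-1}(0)$, a closed set; a vanishing-order argument for $\Delta v \leq cv$, as in \cite{Walpuski2019}, shows that $B$ is nowhere dense.

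On $C \setminus B$ the function $|\psi_n|$ is locally bounded away from zero for large $n$. There one uses that, wherever $|\psi_n|$ is bounded below and $\mu(\psi_n)$ is small, the infinitesimal action $\fg \ni \xi \mapsto \rho(\xi)\psi_n$ is uniformly injective on the orthogonal complement of the stabilizer, so $A_n$ can be recovered from $\nabla_{A_n}\psi_n$ up to a compact term; combined with the curvature equation and \autoref{Hyp_Compactness} this bounds $A_n$ in $W^{1,2}$ on compact subsets of $C\setminus B$. Uhlenbeck gauge fixing together with elliptic estimates for $\delbar_{A_n,\Upsilon}$ then yields, after passing to a further subsequence and applying gauge transformations over $C\setminus B$, limits $A_n \rightharpoonup A$ weakly in $W^{1,2}_\loc$ and $\psi_n \rightharpoonup \psi$ weakly in $W^{2,2}_\loc$ on $C\setminus B$. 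The limit $\psi$ satisfies $\delbar_{A,\Upsilon}\psi = 0$, $\mu_\C(\psi) = 0$, $\mu_\R(\psi) = 0$, has $|\psi| = v$ so that $|\psi|^{-1}(0) = B$ and $\| \psi \|_{L^2} = 1$, and $\| \nabla_A\psi \|_{L^2} < \infty$ by lower semicontinuity; thus $(B,A,\psi)$ is an $\Upsilon$--limiting configuration in the sense of \autoref{Def_LimitingConfiguration}, and all the stated modes of convergence hold.

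The main obstacle is precisely this local analysis on $C\setminus B$: converting the global $L^2$ bound on $\nabla_{A_n}\psi_n$ and the vanishing of $\mu(\psi_n)$ into a genuine local $W^{1,2}$ bound on the connections --- and hence the limiting equation on $C\setminus B$ --- together with the proof that $B$ is nowhere dense rather than merely Lebesgue-null. This is where the hyperk\"ahler geometry and \autoref{Hyp_Compactness} are indispensable, and where the three-dimensional arguments of \cite{Walpuski2019} require the most care to adapt. By contrast, the perturbations $\Upsilon$ and $\eta$ --- being respectively tangent to the hyperk\"ahler quotient and of order zero with uniformly bounded coefficients --- enter only as controllable lower-order terms, and the two-dimensionality of $C$ makes several steps (notably gauge fixing and the bootstrap) simpler than in the original setting.
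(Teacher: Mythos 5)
The step that fails is your claim that the uniform $C^{0,\alpha}$ bound on $|\psi_n| = |\Psi_n|/\|\Psi_n\|_{L^2}$ follows from De Giorgi--Nash--Moser applied to the subsolution inequality $\Delta|\psi_n| \leq c|\psi_n| + o(1)$. A one-sided inequality of this kind gives local sup bounds by Moser iteration (which is how you correctly get $\|\psi_n\|_{C^0} \lesssim 1$), but it gives no uniform modulus of continuity: subsolutions satisfy no Harnack-type or oscillation estimate, and the uniform H\"older continuity of $|\Psi_n|$ \emph{across its small set} is precisely the deepest point of Taubes-type compactness. In \cite{Taubes2013,Walpuski2019} it is obtained from the Almgren-type frequency function, the monotonicity formulae for $m_x$ and $N_x$, and the lower bound on the regularity scale in terms of $|\Psi(x)|$; the same machinery is what makes $B$ nowhere dense and what produces the local $W^{1,2}$ bounds on $A_n$ away from $B$ (your ``uniform injectivity of the infinitesimal action'' step also rests on these a priori estimates, not just on \autoref{Hyp_Compactness}). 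Relatedly, your framing of $\Upsilon$ and $\eta$ as ``controllable lower-order terms'' understates the issue: in the Weitzenb\"ock identity the homogeneous perturbation contributes the term $D_A(\Upsilon\Psi)$, which involves first derivatives of $\Psi$ and has to be absorbed by integration by parts and rearrangement inside each of the frequency/monotonicity estimates, degrading the error terms there. Tracking these contributions through the whole chain of estimates is exactly the content of the paper's proof, not a remark.

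For comparison, the paper does not redo the analysis on $C$ at all: it pulls back the rescaled solutions to the $3$--manifold $M = C\times S^1$ as $S^1$--invariant solutions of the $(\Upsilon,\eta)$--perturbed generalized Seiberg--Witten equation, proves a perturbed version of the Walpuski--Zhang compactness theorem there (\autoref{Thm_SWCompactness}) by modifying each of their estimates, and then uses weak convergence plus $S^1$--invariance and a temporal gauge to show the limiting data is pulled back from $C$, with $\sZ = Z\times S^1$. Your direct two-dimensional route could in principle work, but only if you re-establish the frequency-function and monotonicity estimates (with the perturbation terms) in that setting; as written, the passage from the energy identity to uniform H\"older control and to the structure of $B$ is asserted rather than proved, and that is where the theorem actually lives.
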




\subsection{ADHM vortex equations}
\label{Sec_ADHMVortexEquations}

The moduli space $\cM_{r,k}$ of framed instantons on $\R^4$ with structure group $\SU(r)$ and charge $k$ is a hyperkähler orbifold.
The ADHM construction exhibits $\cM_{r,k}$ as a finite-dimensional hyperkähler quotient of a quaternionic representation of $\U(k)$. 
In this section we discuss the quaternionic vortex equations associated with this representations.
These equations were proposed by Haydys--Walpuski in relation to gauge theory on special holonomy manifolds \cite{Haydys2014}; for related discussion see \cite{Doan2017d}.

Let $G = \U(k)$ and
\begin{equation*}
  V = (\H\otimes_\C \End(\C^k)) \oplus (\H\otimes_\C \Hom_\C(\C^r, \C^k)).
\end{equation*}
Define a quaternionic structure on $V$ by left quaternionic multiplication on $\H$. 
Define a quaternionic representation $\rho \colon H \to \Sp(V)$ by the adjoint action of $G$ on $\End(\C^k)$ and the standard action on $\C^k$, and by $K$ acting by right quaternionic multiplication on the first summand, and trivially on the second summand. 
The quaternionic moment map for the $G$ action is
\begin{gather*}
  \mu \colon V \to \R^3 \otimes \fu(k), \\
  \mu(\xi,\Psi) = [\xi\wedge\xi]^+ + (\Psi \Psi^*)_0.
\end{gather*}
The first term combines the wedge product and the Lie bracket on $\fu(k)$ with the projection $\Lambda^2 \R^4 \to \Lambda^+ \R^4 = \R^3$.
The second term involves projecting $\Psi \Psi^*$ on $\su(2) \subset \End(\R^4)$ and identification $\su(2) \iso \Lambda^4 \R^4 = \R^3$ using the Clifford multiplication. 
Let $\Psi = (\alpha,\beta^*)$ under identification $\H = \C^2$, where $\alpha \in \Hom_\C(\C^r, \C^k)$ and $\beta \in \Hom_\C(\C^k,\C^r)$. 
The complex and real moment maps (multiplied by $i$) are
\begin{gather*}
  \mu_\C(\xi,\Psi) = [\xi \wedge \xi] + \alpha \beta,  \\
  \mu_\R(\xi,\Psi) = [\xi, \xi^*] + \alpha\alpha^* - \beta^*\beta,
\end{gather*}
where in the first formula we identify $\Lambda^2 \C^2 = \C$.

\begin{theorem}[Atiyah--Hitchin--Drinfeld--Manin]
  The hyperkähler quotient of $V$ by $G$ is isomorphic to $\cM_{r,k}$ if $r\geq 2$ and to $\Sym^k \H$ if $r=1$. 
\end{theorem}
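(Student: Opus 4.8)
The plan is to unwind the definitions and then reduce to the classical ADHM correspondence. First I would fix the distinguished complex structure (multiplication by $i$) together with the splitting $\H = \C \oplus \C j$. This writes an element of $V$ as a quadruple $(B_1, B_2, \alpha, \beta)$ with $B_1, B_2 \in \End(\C^k)$, $\alpha \in \Hom_\C(\C^r, \C^k)$ and $\beta \in \Hom_\C(\C^k, \C^r)$ — the matrices of the ADHM construction — and a direct computation (essentially the one recorded just above the statement) identifies the complex and real hyperkähler moment maps with
\begin{equation*}
  \mu_\C = [B_1, B_2] + \alpha\beta
  \qandq
  \mu_\R = [B_1, B_1^*] + [B_2, B_2^*] + \alpha\alpha^* - \beta^*\beta .
\end{equation*}
Thus the hyperkähler quotient $V \hkred \U(k)$ is the $\U(k)$--quotient of the set of quadruples solving both equations.

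For $r \geq 2$ I would invoke the ADHM construction \cite{Atiyah1978}: on the locus of $\mu^{-1}(0)$ where the $\U(k)$--action is free, the construction assigns to each orbit a framed ASD $\SU(r)$ connection of charge $k$ on $\R^4$, and this assignment is a bijection, indeed a hyperkähler isometry onto the smooth stratum of $\cM_{r,k}$; see \cite[\S 3.3]{Donaldson1990}. It then remains to match the lower strata. A quadruple with nontrivial stabiliser carries a proper $(B_1, B_2)$--invariant subspace; restricting to the smallest invariant subspace containing $\im\alpha$ yields irreducible ADHM data of some charge $k' \leq k$, i.e. a genuine framed instanton, while the remaining $k - k'$ joint eigenvalues of $(B_1, B_2)$ encode an effective $0$--cycle of length $k-k'$ on $\R^4$ — precisely a point of the Uhlenbeck boundary. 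Gluing the strata identifies $V \hkred \U(k)$ with the Uhlenbeck completion $\cM_{r,k}$. (Equivalently, one may pass to the holomorphic picture: the affine GIT quotient $\mu_\C^{-1}(0)\kred\GL_k(\C)$ is homeomorphic to the hyperkähler quotient and is, by definition, the Uhlenbeck space.)

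For $r = 1$ the structure group $\SU(1)$ is trivial, so there are no nonzero instantons and $\cM_{1,k}$ degenerates to the space of ideal instantons of charge $k$, namely $\Sym^k\R^4$. On the ADHM side one checks, using both moment-map equations, that $\alpha$ and $\beta$ vanish on the closed $\GL_k(\C)$--orbits in $\mu_\C^{-1}(0)$ (a short trace/positivity argument, as in \cite[\S 2]{Nakajima1999}), so that $V \hkred \U(k)$ reduces to $\{(B_1,B_2) : [B_1, B_2] = 0\}\kred\GL_k(\C)$, which is identified with $\Sym^k\C^2$ by sending a semisimple commuting pair to its joint spectrum. Under $\H \cong \C^2$ this is $\Sym^k\H$; this is \cite[Proposition 2.9]{Nakajima1999}.

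The one step that is genuinely more than bookkeeping, and therefore the main obstacle, is the identification for $r \geq 2$ of the singular stratum of the hyperkähler quotient with the Uhlenbeck boundary — that reducible ADHM data correspond precisely to bubbled (ideal) instantons. This is where gauge theory, as opposed to linear algebra, enters, and in practice it is imported wholesale from \cite{Atiyah1978, Donaldson1990}. The remaining steps — the coordinate computation of $\mu_\C$ and $\mu_\R$, and the $r = 1$ reduction — are routine.
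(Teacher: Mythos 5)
Your proposal is correct and takes essentially the same route as the paper: the paper does not actually prove this classical theorem but cites precisely the sources you reduce everything to (the ADHM construction of \cite{Atiyah1978}, \cite[\S 3.3]{Donaldson1990} for $r \geq 2$, and \cite[Proposition 2.9]{Nakajima1999} for $r = 1$), so importing the gauge-theoretic identification of the reducible locus with the Uhlenbeck boundary wholesale from those references is exactly what the authors do. Your reading of $\cM_{r,k}$ as the Uhlenbeck completion rather than only the free locus of framed instantons agrees with the paper's own gloss in the introduction and is the accurate form of the statement.
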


The ADHM representation has a natural flavor symmetry as in \autoref{Hyp_FlavorSymmetry}.
Let 
\begin{equation*}
  K = \Sp(1)\times \SU(r) \qandq H = G\times K.
\end{equation*}
The action of $G$ extends to an action of $\Spin^H(2)$ with the action of $K$ preserving the quaternionic structure and commuting with $G$.
The action of $\Sp(1)$ on $V$ is by right quaternionic multiplication $q \cdot v \mapsto v q^{-1}$. 

The resulting quaternionic vortex equation \autoref{Eq_QuaternionicVortex} is concretely described as follows.
We start with the following data:
\begin{itemize}
\item a Riemann surface $C$,
\item a Hermitian vector bundle $N \to C$ of rank two, equipped with a $\U(2)$ connection,
\item a covariantly constant isomorphism $\Lambda^2_\C N \cong K_C$,
\item a Hermitian vector bundle $E \to C$ of rank $r$ and trivial determinant line bundle, equipped with an $\SU(r)$ connection, and
\item a Hermitian vector bundle $H \to C$ of rank $k$.
\end{itemize}
The equation involves a $\U(k)$ connection $A$ on $U$ and a section $\Psi$ of
\begin{equation}
  \label{Eq_ADHMSpinorBundle}
  \bV = (N\otimes_\C \End H) \oplus (\Hom_\C(E,H) (\oplus (\Hom_\C(H,E) \otimes K). 
\end{equation}
If $\Psi = (\xi, \alpha, \beta)$, where
\begin{equation*}
  \xi \in \Gamma(N \otimes\End H), \quad \alpha \in \Gamma(\Hom_\C(E,H)), \quad \beta \in \Omega^{1,0}(C, \Hom_\C(H,E))),
\end{equation*}
then the (unperturbed) $(r,k)$ ADHM vortex equation is
\begin{equation}
  \label{Eq_ADHMVortexEquation}
  \begin{split}
    \delbar_A \alpha = 0, \quad \delbar_A \beta = 0, \quad \delbar_A \xi &= 0, \\
    [\xi \wedge \xi] + \alpha\beta &= 0, \\
    i \ast F_A + [\xi, \xi^*] + \alpha\alpha^* - \ast\beta^*\beta  &= 0.
  \end{split}
\end{equation}
In the second equation, $[\xi\wedge\xi]$ combines the wedge product $N\otimes N \to \Lambda^2_\C N$ with the commutator on $\End H$, followed by the isomorphism $\Lambda^2_\C N \cong T^*C$.
In the third equation, $[\xi,\xi^*]$ combines the Hermitian inner product on $N$ with the commutator on $\End H$, so that it takes place in $\Gamma(\End H)$.
The equation is equivariant with respect to the action of the group $\sG(H)$ of unitary gauge transformations of $H$.

Our goal is to consider a perturbation of \autoref{Eq_ADHMVortexEquation} of the form \autoref{Def_PerturbedQuaternionicVortexEquation}, for a suitable class of $\Upsilon$ and $\eta$, and use them to define chambered invariants of $(\Upsilon,\eta)$.
We expect that this is possible for all values of $(r,k)$.
However, the known compactness results for quaternionic vortex equations require the technical assumption from \autoref{Hyp_Compactness}, which is satisfied only for $(r,k) = (1,2)$ and $(r,k)=(k,2)$.
We focus on these two cases.

\subsection{The $(1,2)$ ADHM vortex equation}
\label{Sec_12ADHMVortexEquation}

In this section we define chambered invariants of real Cauchy--Riemann operators using the ADHM vortex equation \autoref{Eq_ADHMVortexEquation} with $r=1$ and $k=2$.

Let $C$ be a compact Riemann surface.
Let $H$ be a Hermitian vector bundle of rank $2$. 
Let $N \to C$ be a Hermitian vector bundle of rank $2$ equipped with an isomorphism $\SerreOperator \colon N \cong N^\dagger$, where $N^\dagger = N \otimes_\C K_C^*$.
Denote by $\SpaceOfRealCauchyRiemannOperators(N,\SerreOperator)$ the space of real Cauchy--Riemann operator which are self-adjoint with respect to $\SerreOperator$.
Every such operator can be written as
\begin{equation*}
  \fd = \delbar_N + \Upsilon
\end{equation*}	
where $\delbar_N$ is a fixed self-adjoint Cauchy--Riemann operator on $N$ and $\Upsilon \in\Gamma(\HOMT(N ,\SerreOperator))$, see \autoref{Prop_RealCauchyRiemanOperator_SelfAdjoint_AffineSpace}.
For every $\Upsilon$ as above and $\U(2)$ connection $A$ on $H$ denote by
\begin{equation*}
  \delbar_{A,\Upsilon} = \delbar_{N,A} + \Upsilon \otimes \id \colon \Gamma(N\otimes_\C \End_\C H) \to \Omega^{0,1}(C, N\otimes_\C \End_\C H)  
\end{equation*}
the induced real Cauchy--Riemann operator on $N \otimes_\C \End_\C H$.

\begin{definition}
  Let $\Upsilon \in \Gamma(\HOMT(N ,\SerreOperator))$ and $\eta = (\eta_\C,\eta_\R) \in \rH^{1,0}(C) \times \R$. 
  The \defined{$(\Upsilon,\eta)$-perturbed $(1,2)$ ADHM vortex equation} for
  \begin{equation*}
    A \in \sA(H), \quad \xi \in \Gamma(N\otimes_\C \End_\C H), \quad \alpha \in \Gamma(H), \quad \beta \in \Omega^{1,0}(H^*)
  \end{equation*} 
  reads
  \begin{equation}
    \label{Eq_ADHM12}
    \begin{split}
      \delbar_A \alpha = 0, \quad \delbar_A \beta = 0, \quad \delbar_{A,\Upsilon} \xi &= 0, \\
      [\xi \wedge \xi] + \alpha\beta &= \eta_\C \otimes_\C \id_H, \\
      i \ast F_A + [\xi, \xi^*] + \alpha\alpha^* - \ast\beta^*\beta  &= (\eta_\R + \pi\deg(H))\id_H. 
    \end{split}\qedhere
  \end{equation}
\end{definition}

\begin{remark}
  This is the ADHM vortex equation \autoref{Eq_ADHMVortexEquation} for $(r,k) = (1,2)$ with homogeneous and inhomogeneous pertubations, as in \autoref{Eq_PerturbedQuaternionicVortex}.
  We consider here the subset of homogeneous perturbations given by sections of $\HOMT(N,\SerreOperator)$ rather than the full $\HOMT(\bV,\rho)$.  
\end{remark}

Let $\sW \subset \SpaceOfRealCauchyRiemannOperators(N,\SerreOperator)$ be the wall of failure of $2$-rigidity defined in \autoref{Sec_ThreePerspectivesOn2Rigidity}. 

\begin{theorem}
  \label{Thm_ChamberedInvariantsFromADHM12}
  Let $\fd = \delbar_N + \Upsilon$ and $\eta \in \rH^{1,0}(C)\times\R$.
  Suppose that $\fd \in\SpaceOfRealCauchyRiemannOperators(N,S)\setminus \sW$ and $\eta \notin \{0\}\times (\pi/2)\Z$. 
  The signed count of solutions $\#\ADHMCurvesModuliSpace(\fd,\eta, d)$ to the $(\Upsilon,\eta)$-perturbed $(1,2)$ ADHM equation for the bundle $H$ with $\deg(H) = d$ is well-defined, as in \autoref{Prop_ChamberedInvariantsFromGaugeTheory}, and defines an element
  \begin{itemize}
  \item $\#\ADHMCurvesModuliSpace(\cdot,d) \in \rH^0(  \SpaceOfRealCauchyRiemannOperators(N,\SerreOperator) \setminus \sW; \Z)$ if the genus of $C$ is positive,
  \item $\#\ADHMCurvesModuliSpace(\cdot,d) \in  \rH^0( (\SpaceOfRealCauchyRiemannOperators(N,\SerreOperator) \setminus \sW) \times (\R\setminus (\pi/2)\Z); \Z)$ if the genus of $C$ is zero.
  \end{itemize}
\end{theorem}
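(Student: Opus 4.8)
The plan is to realise $\#\ADHMCurvesModuliSpace(\fd,\eta,d)$ as an instance of \autoref{Prop_ChamberedInvariantsFromGaugeTheory}. The two things to check are that, for $\fd=\delbar_N+\Upsilon\notin\sW$ and $\eta\notin\{0\}\times(\pi/2)\Z$, the moduli space $\ADHMCurvesModuliSpace(\fd,\eta,d)$ is compact and contains only irreducible solutions; the chamber structure is then read off from part~(2) of that proposition together with the topology of the admissible set of $\eta$.

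First I would rule out reducibles. A solution $(A,\xi,\alpha,\beta)$ to \autoref{Eq_ADHM12} has non-trivial $\sG(H)$--stabiliser only if the central circle of $\U(2)$ stabilises it, which forces $\alpha=\beta=0$, or $(A,\xi,\alpha,\beta)$ reduces along a proper sub-bundle of $H$; in either case, tracing the third equation of \autoref{Eq_ADHM12} over the relevant sub-bundle, integrating over $C$, and applying Chern--Weil together with $\mu_\C(\Psi)=\eta_\C\,\id_H$ forces $\eta_\C=0$ and confines $\eta_\R$ to $(\pi/2)\Z$. Hence $\eta\notin\{0\}\times(\pi/2)\Z$ guarantees that every solution is irreducible; by \autoref{Prop_ExtraFieldVanishes} the auxiliary field $\zeta$ then vanishes, so the deformation theory is the one carried by the index-zero operator $L_{A,\Psi}$. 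When $g(C)=0$ one has $\rH^{1,0}(C)=0$, so $\eta_\C=0$ is automatic and the sole constraint is $\eta_\R\notin(\pi/2)\Z$.

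Next I would prove compactness. Let $(A_n,\Psi_n)$ with $\Psi_n=(\xi_n,\alpha_n,\beta_n)$ be a sequence of solutions for fixed $\fd=\delbar_N+\Upsilon\notin\sW$ and admissible $\eta$. Because $\eta$ is fixed, $|\eta(A_n,\Psi_n)|$ and $|\nabla_{A_n}\eta(A_n,\Psi_n)|$ are uniformly bounded, and the $(1,2)$ ADHM representation $\rho\colon\U(2)\to\Sp(S_{1,2})$ satisfies \autoref{Hyp_Compactness}; thus \autoref{Thm_CompactnessQuaternionicVortexEquations} applies and, after passing to a subsequence, either $(A_n,\Psi_n)$ converges in $C^\infty$ — giving compactness — or there is an $\Upsilon$--limiting configuration $(B,A,\Psi)$. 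In the latter case, since $S_{1,2}\hkred\U(2)=\Sym^2\H=\Sym^2\R^4$, the discussion of \autoref{Sec_CompactnessQuaternionicVortexEquation} identifies $(B,A,\Psi)$ with the data of \autoref{Sec_ThreePerspectivesOn2Rigidity}: a Euclidean line bundle $\fl\to C\setminus B$ and a nowhere-vanishing section $s\in\ker\fd^\fl$ whose norm extends to a continuous function on $C$ vanishing exactly on $B$. By \autoref{Prop_ZeroLocusOfTwistedHolomorphicSectionIsFinite} the set $B$ is finite, so $\fl$ extends to a ramified Euclidean line bundle over $C$ and $\fd$ fails to be $2$--rigid, contradicting $\fd\notin\sW$. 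Hence $\ADHMCurvesModuliSpace(\fd,\eta,d)$ is compact.

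With these two facts, \autoref{Prop_ChamberedInvariantsFromGaugeTheory} produces $\#\ADHMCurvesModuliSpace(\fd,\eta,d)\in\Z$; moreover, along any continuous path $t\mapsto(\fd_t,\eta_t)=(\delbar_N+\Upsilon_t,\eta_t)$ inside $(\SpaceOfRealCauchyRiemannOperators(N,\SerreOperator)\setminus\sW)\times\{\eta\notin\{0\}\times(\pi/2)\Z\}$ the moduli spaces stay compact and irreducible, so part~(2) of that proposition makes the count constant — that is, it is locally constant on the product. If $g(C)\geq 1$, then $(\rH^{1,0}(C)\times\R)\setminus(\{0\}\times(\pi/2)\Z)$ is a Euclidean space of real dimension $2g+1\geq 3$ with a discrete set deleted, hence connected, so the count does not depend on $\eta$ and descends to an element of $\rH^0(\SpaceOfRealCauchyRiemannOperators(N,\SerreOperator)\setminus\sW;\Z)$. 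If $g(C)=0$, the admissible parameter space is the disconnected set $\R\setminus(\pi/2)\Z$, so the count defines an element of $\rH^0((\SpaceOfRealCauchyRiemannOperators(N,\SerreOperator)\setminus\sW)\times(\R\setminus(\pi/2)\Z);\Z)$. I expect the main obstacle to be the compactness step — concretely, (i) reconstructing the $\fd$--twisted holomorphic section $s$ of $N\otimes\fl$ from the $(1,2)$ ADHM limiting spinor via the identification with $\Sym^2\R^4$ (the gauge-theoretic counterpart of the picture behind $\BlowUpInvariant$), and (ii) extracting finiteness of $B$ from \autoref{Prop_ZeroLocusOfTwistedHolomorphicSectionIsFinite}, since a priori $B$ is only nowhere dense and closed and $|s|$ merely Hölder continuous; verifying \autoref{Hyp_Compactness} for $\rho\colon\U(2)\to\Sp(S_{1,2})$ is an additional finite-dimensional check.
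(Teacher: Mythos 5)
Your proposal is correct and follows essentially the same route as the paper: reducibles are excluded by tracing/integrating the moment-map equations against the hypothesis $\eta\notin\{0\}\times(\pi/2)\Z$, compactness follows from \autoref{Thm_CompactnessQuaternionicVortexEquations} together with the identification of limiting configurations with nonzero elements of $\ker\fd^\fl$ for a ramified Euclidean line bundle (finiteness of $B$ via \autoref{Prop_ZeroLocusOfTwistedHolomorphicSectionIsFinite}, which is exactly \autoref{Prop_ADHM12LimitingConfiguration}), and the chamber structure comes from \autoref{Prop_ChamberedInvariantsFromGaugeTheory} plus connectivity of the admissible set of $\eta$ when $g(C)\geq 1$. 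The steps you flag as obstacles are precisely the ones the paper handles by the eigen-decomposition of the trace-free part of $\xi$, as you sketch.
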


We begin with a description of limiting configurations for the $(1,2)$ ADHM vortex equation (in the sense of \autoref{Def_LimitingConfiguration}).

\begin{prop}
  \label{Prop_ADHM12LimitingConfiguration}
  Let $\fd = \delbar_N + \Upsilon$. 
  If $(B,A,\Psi)$ is an $\Upsilon$-limiting configuration for the $(1,2)$ ADHM vortex equation \autoref{Eq_ADHM12}, then there exist  a Euclidean line bundle $\fl$ over $C\setminus B$, a bundle inclusion
  \begin{equation*}
    \rho \colon \fl\otimes \C \hookrightarrow (\End_\C H ) |_{C\setminus B},
  \end{equation*}
  and sections $\xi_0 \in \Gamma(C\setminus B, N\otimes \fl)$ and $\xi_1 \in \Gamma(C,N)$  such that
  \begin{enumerate}
  \item $\nabla_A\rho = 0$,  
  \item $\xi_0$ satisfies $\fd^\fl(\xi_0) = 0$ and is of class $W^{1,2}$ on $C\setminus B$,
  \item $|\xi_0|$ extends to a Hölder continuous function on $C$ such that $|\xi_0|^{-1}(0) = B$, 
  \item $\xi_1$ satisfies $\fd( \xi_1) = 0$ on $C$,
  \item $\xi = \rho(\xi_0) + \xi_1 \otimes_C \id_H$.
  \end{enumerate}
  Moreover, the set $B$ is finite. 
\end{prop}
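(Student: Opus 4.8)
The plan is to reduce a limiting configuration to the fibrewise structure of the hyperkähler quotient $\Sym^2\H$ of the $(1,2)$ ADHM representation, to read off $\fl$, $\rho$, $\xi_0$, $\xi_1$ from the eigendata of $\xi$, and then to invoke \autoref{Prop_ZeroLocusOfTwistedHolomorphicSectionIsFinite} precisely as in \autoref{Sec_Rado}. Write $\Psi=(\xi,\alpha,\beta)$ as in \autoref{Eq_ADHMSpinorBundle}. First I would establish a pointwise reduction: a limiting configuration satisfies $\mu_\C(\Psi)=0$ and $\mu_\R(\Psi)=0$, so for every $x\in C\setminus B$ the triple $\Psi(x)$ is a polystable $(1,2)$ ADHM datum, and — this is standard for the $(1,2)$ representation, whose hyperkähler quotient at level $0$ is the symmetric product — every element of $\mu^{-1}(0)$ is $\U(2)$–conjugate to $(\xi,0,0)$ with $\xi=(\xi_1,\xi_2)$ a commuting, normal, hence simultaneously unitarily diagonalisable pair. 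Thus $\alpha\equiv 0$, $\beta\equiv 0$, and over $C\setminus B$ the section $\xi$ of $N\otimes_\C\End_\C H$ is fibrewise $p_+P_+ + p_-P_-$, where $P_\pm$ are the orthogonal projections onto the orthogonal common eigenlines $L_\pm\subset H$ and $p_\pm\in N_x$.

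Next I would globalise this and extract the equations. Put $\xi_1\coloneqq\tfrac12\tr_H\xi$; since $\tr_H\colon\End_\C H\to\ubC$ is parallel and $\delbar_{A,\Upsilon}\xi=0$, this gives $\fd\xi_1=0$ on $C\setminus B$, so $\xi_1\in\ker\fd$, and $\xi_1$ is bounded by $|\Psi|$; a non-trivial kernel of $\fd$ pulls back to a non-trivial kernel along any double cover, so $\{\fd:\ker\fd\neq 0\}\subseteq\sW$ and, in the situation of interest, $\xi_1\equiv 0$ (in general one keeps $\xi_1$ and uses a removable–singularity argument once $B$ is known to be finite). The traceless part $\rho_0\coloneqq\xi-\xi_1\otimes\id_H$ solves $\delbar_{A,\Upsilon}\rho_0=0$ and is fibrewise $\tfrac12(p_+-p_-)\otimes e$ with $e\coloneqq P_+-P_-$ a traceless self-adjoint involution; recovering $e$ as the normalised $\End_\C H$–direction of $\rho_0$ on $C\setminus B$ (where $\rho_0\neq 0$), it varies smoothly there and satisfies $\delbar_A e=0$, hence — $e$ being self-adjoint and $A$ unitary — $\nabla_A e=0$. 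The line $\C e\subset\End_\C H$ is single-valued over $C\setminus B$, but a unit trivialisation has $\Z$–mod–$2$ monodromy from the interchange $L_+\leftrightarrow L_-$; this is the Euclidean line bundle $\fl$, and it yields the parallel inclusion $\rho\colon\fl\otimes\C\hookrightarrow(\End_\C H)|_{C\setminus B}$ with image $\C e$ (item (1)) together with $\xi_0\in\Gamma(C\setminus B, N\otimes\fl)$ obeying $\rho(\xi_0)=\rho_0$, so $\xi=\rho(\xi_0)+\xi_1\otimes\id_H$ (item (5)). Projecting $\delbar_{A,\Upsilon}\rho_0=(\fd^\fl\xi_0)\otimes e+\xi_0\otimes\delbar_A e=0$ onto $\C e$ yields $\fd^\fl\xi_0=0$ (item (2)), and $W^{1,2}$–regularity of $\xi_0$ on $C\setminus B$ follows from $\|\nabla_A\Psi\|_{L^2}<\infty$ and elliptic regularity.

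It remains to match zero sets and conclude finiteness. Since $\alpha=\beta=0$ and the cross term vanishes, $|\Psi|^2=2|\xi_1|^2+2|\xi_0|^2$; with $\xi_1\equiv 0$ this is $2|\xi_0|^2$, so $|\xi_0|$ extends to the Hölder continuous function $|\Psi|/\sqrt 2$ on $C$ with $|\xi_0|^{-1}(0)=|\Psi|^{-1}(0)=B$ (item (3)), and $\xi_1=0$ trivially satisfies item (4). Finally I would apply \autoref{Prop_ZeroLocusOfTwistedHolomorphicSectionIsFinite} with $N$ in place of $V$, the real Cauchy–Riemann operator $\fd$, the closed subset $B$, the Euclidean line bundle $\fl$ over $C\setminus B$, and $s\coloneqq\xi_0$: by items (2)–(3) the section $s$ is non-vanishing on $C\setminus B$, satisfies $\fd^\fl s=0$, and $|s|$ extends continuously to $C$ vanishing on $B$; hence $B$ is finite.

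The main obstacle is the pointwise reduction together with its globalisation: establishing that a limiting configuration genuinely takes values fibrewise in the semisimple locus $\{\alpha=\beta=0,\ \xi\ \text{commuting normal}\}$, and that the eigenprojection data then assemble into an honest Euclidean line bundle $\fl$ and sections $\rho$, $\xi_0$, $\xi_1$ over all of $C\setminus B$. The delicate point is the behaviour where the two eigenvalues collide (so $L_\pm$ degenerate): one must check that $e=P_+-P_-$, and hence $\rho$, still extends across such a point — which it does, because $\rho_0=\xi-\xi_1\otimes\id_H$ is a solution of the elliptic operator $\delbar_{A,\Upsilon}$ and, by the Carleman similarity principle, has a decomposable leading term there — and that the zero locus of $\xi_0$ is exactly $B$ (immediate when $\xi_1\equiv 0$; otherwise one argues that $\xi_0$ and $\xi_1$ vanish simultaneously, using that each has only isolated zeros). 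The remaining ingredients — removable singularities for $\xi_1$ and the $W^{1,2}$/Hölder bookkeeping — are routine, and the finiteness of $B$ is the Radó-type input already set up in \autoref{Sec_Rado}.
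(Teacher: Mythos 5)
Your overall strategy is the paper's: use $\mu(\Psi)=0$ to force $\alpha=\beta=0$ and fibrewise normality of $\xi$, split $\xi$ into trace and trace-free parts, build $\fl$ and $\rho$ from the eigenline splitting of the trace-free part $\hat\xi_0$, identify the twisted equation $\fd^\fl\xi_0=0$, and feed $\xi_0$ into \autoref{Prop_ZeroLocusOfTwistedHolomorphicSectionIsFinite} to obtain finiteness of $B$. Your derivation of $\nabla_A\rho=0$ (project $\delbar_{A,\Upsilon}\rho_0=0$ onto $\C e$ and its orthogonal complement, then use that $\nabla_A e$ is self-adjoint to upgrade $\delbar_A e=0$ to $\nabla_A e=0$) is detail the paper leaves implicit, and is correct where $\xi_0\neq 0$.

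The genuine gap is your treatment of $\xi_1$. The proposition is stated for an arbitrary $\Upsilon$--limiting configuration, with no $2$--rigidity hypothesis, and items (4)--(5) explicitly allow $\xi_1\neq 0$; so the step ``$\ker\fd\neq 0\Rightarrow\fd\in\sW$, hence in the situation of interest $\xi_1\equiv 0$'' imports an assumption from the later application (\autoref{Thm_ChamberedInvariantsFromADHM12}) that you are not entitled to here (and whether the untwisted kernel even contributes to $\sW$ depends on whether the trivial, unbranched Euclidean line bundle is admitted in the definition of $2$--rigidity, which you do not check). The paper instead keeps $\xi_1$: it satisfies $\fd\xi_1=0$ on $C\setminus B$, is $W^{1,2}$, and is extended to all of $C$ once $B$ is known to be finite. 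Your fallback for the general case --- ``one argues that $\xi_0$ and $\xi_1$ vanish simultaneously'' --- is not an argument, and it is exactly the delicate point: when $\xi_1\not\equiv 0$ the trace-free part could a priori vanish at points of $C\setminus B$, and this locus (your ``eigenvalue collision'' set is precisely $\{\hat\xi_0=0\}$) is where both item (3) and the global definition of $e$, $\fl$, $\rho$ over all of $C\setminus B$ require justification; your appeal to the Carleman similarity principle there is only a gesture. Note that finiteness of $B$ survives regardless: one may apply \autoref{Prop_ZeroLocusOfTwistedHolomorphicSectionIsFinite} with the possibly larger closed set $\{\hat\xi_0=0\}\supseteq B$, over whose complement $\fl$ is genuinely defined, since $|\hat\xi_0|\leq|\Psi|$ extends continuously and vanishes there; but as a proof of items (1), (3), (5) in the stated generality, the $\xi_1\equiv 0$ shortcut leaves a hole.
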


\begin{proof}
  Defne $\hat\xi_0$ and $\xi_1$ to be the trace-free and trace part of $\xi$, so that $\Psi = (\hat\xi_0 + \xi_1, \alpha,\beta)$. 
  Equation $\mu(\Psi) = 0$ implies that $\alpha=\beta=0$ and $\hat\xi_0$ has two eigenvalues $\pm \lambda \in N$. 
  (This is how one proves that the hyperkähler quotient of the ADHM$_{1,2}$ representation is the symmetric product $\Sym^4 \R^4$.)
  
  Locally over $C\setminus B$ we can choose eigenvectors $\Psi_\pm$ in $H$ such that $\hat\xi \Psi_\pm = \pm \lambda \Psi_\pm$ and $|\Psi_\pm| = 1$. 	
  Trace-free endomorphisms which are diagonal with respect to the basis $\Psi_-,\Psi_+$ define a line subbundle of $\End_\C H$, which is the image of $\rho \colon \fl \otimes \C \to \End_\C H$ for some Euclidean line bundle $\fl$.
  By construction, $\hat\xi_0 = \rho(\xi_0)$, and equation $(\delbar_{N,A}+\Upsilon)(\hat\xi_0) = 0$ is equivalent to $\fd^\fl(\xi_0) = 0$.
  It follows from \autoref{Prop_ZeroLocusOfTwistedHolomorphicSectionIsFinite} that $B$ is finite.
  Since $\xi_1$ is a $W^{1,2}$ section of $N$ defined over $C\setminus B$ and satisfying $\fd(\xi_1)=0$, the unique continuation principle implies that $\xi_1$ extends to all of $C$.
  The other propertiers of $\xi_0$ and $\xi_1$ follow from the definition of a limiting configuration. 
\end{proof}

\begin{proof}[Proof of \autoref{Thm_ChamberedInvariantsFromADHM12}]
  First, we show that if $\eta_\C \neq 0$, then the moduli space $\sM(\fd,\eta,d)$ consists of irreducible solutions.
  Suppose that $(A,\xi,\alpha,\beta)$ is a reducible solution to \autoref{Eq_ADHM12}.
  By considering possible stabilizers of $A$ in the gauge group $\sG$, and how these stabilizers act on $(\xi,\alpha,\beta)$, we see that there are two possibilities.
  The first is that $(\alpha,\beta) = (0,0)$; taking the trace of the second equation shows that this is impossible.
  The second possibility is that there exists an orthogonal splitting $H = L_1 \oplus L_2$ which is covariantly constant with respect to $A$ and such that $\xi(L_i) \subset L_i$ and $\alpha \in \Gamma(L_1)$, $\beta \in \Omega^{1,0}(L_1^*)$. 
  In that case, $[\xi\wedge\xi] = 0$ and $\alpha\beta$ is a bundle homomorphism $H \to K_C \otimes_\C H$ of rank one, so we cannot have $\alpha\beta = \eta_\C \otimes \id_H$.
  Therefore, every solution is irreducible.
  
  (The genus zero case is proved similarly. 
  If $\eta_\C = 0$ and $\eta_\R \neq 0$, then $(\alpha,\beta) \neq (0,0)$ by integrating the third equation.
  On the other hand, if $E = L_1 \oplus L_2$ and $A = A_1 \oplus A_2$ as above, then one of $\alpha$ or $\beta$ must vanish by the second equation.
  The third equation gives us $i \ast F_{A_2} = \eta_\R + \pi\deg(H)$, which contradicts with $\eta_\R \notin (\pi/2) \Z$.)

  To prove compactness, assume that $(A_n,\Psi_n)$ is a sequence of solutions, with $\Psi_n = (\xi_n,\alpha_n,\beta_n)$. 
  By 	\autoref{Thm_CompactnessQuaternionicVortexEquations}, if $\limsup_{n\to\infty} \| \Psi_n \|_{L^2} < \infty$, then after passing to a subsequence and applying gauge transformations $(A_n,\Psi_n)$ converges to a solution.
  Suppose, by contradiction, that $\lim_n \| \Psi_n \|_{L^2} = \infty$.
  By \autoref{Thm_CompactnessQuaternionicVortexEquations} there exists a limiting configuration $(B,A,\Psi)$.
  By \autoref{Prop_ADHM12LimitingConfiguration}, $B$ is finite and there exists a Euclidean line bundle $\fl$ over $C\setminus B$ together with a non-zero $W^{1,2}$ section $\zeta_0$ satisfying $\fd^\fl(\zeta_0)$.
  This contradicts the assumption that $\fd$ is $2$-rigid. 
  Therefore, $\limsup_{n\to\infty} \| \Psi_n \|_{L^2} < \infty$ for any sequence of solutions and $\sM(\fd,\eta,d)$ is compact. 
  This means that we are in a situation in which  \autoref{Prop_ChamberedInvariantsFromGaugeTheory} can be applied.

  If $C$ has positive genus, any two choices $\eta, \eta' \in (\rH^{1,0}(C) \oplus \R) \setminus \{(0,0)\}$ can be connected by a path avoiding $(0,0)$ which shows that $\#\ADHMCurvesModuliSpace(\fd, \eta,d) = \#\ADHMCurvesModuliSpace(\fd',\eta',d)$. 
  Given a path $(\fd_t)_{t\in[0,1]}$ such that $\fd_t \notin \sW$, we can lift it to a path $(\Upsilon_t, \eta_t)$ as above.
  The preceding discussion implies that $\sM(\fd_t,\eta_t,d)$ is compact and consists of irreducible solutions for every $t\in [0,1]$.
  Therefore, \autoref{Prop_ChamberedInvariantsFromGaugeTheory} implies that 
  \begin{equation*}
    \#\ADHMCurvesModuliSpace(\fd_0,\eta_0,d) =  \#\ADHMCurvesModuliSpace(\fd_1,\eta_1, d),
  \end{equation*}
  so that it is constant on connected components of $\SpaceOfRealCauchyRiemannOperators(N,\SerreOperator) \setminus \sW$.
  The genus zero case is similar except that $\rH^{1,0}(C)=0$ and we need to take $\eta_\R \notin (\pi/2)\Z$. 
\end{proof}

\subsection{The $(2,1)$ ADHM vortex equation}
\label{Sec_ADHM21}

In this section we define chambered invariants of real Cauchy--Riemann operators using the ADHM vortex equation \autoref{Eq_ADHMVortexEquation} with $(r,k) = (2,1)$
The construction of perturbations in this case is somewhat more involved than in the case $(r,k) = (1,2)$ discussed earlier, as the algebra of the $(2,1)$ ADHM representation has a less direct geometric interpretation.
For that reason, it is convenient to write \autoref{Eq_ADHMVortexEquation} in a different form than in the previous section, emphasising the role of spinors and Dirac operators. 

We begin with a discussion of spin and spin$^c$ Dirac operators in dimension two.
For a brief summary of spinors in low dimensions and their relation to quaternions see \autoref{Sec_Spinors}. 
Let $C$ be a Riemann surface and let $E \to C$ be a Hermitian vector bundle with structure group $\SU(2) = \Sp(1)$ equipped with an $\SU(2)$ connection.
A spin structure on $C$ is equivalent to a Hermitian line bundle $S^+$ and an isometry $S^+ \otimes_\C S^+ \cong K_C$. 
The spinor bundle is
\begin{equation*}
  S = S^+ \oplus S^- \quad\text{with } S^- = S^+ \otimes_\C K^* = (S^+)^*. 
\end{equation*}
The Levi-Civita connection on $K_C$ induces a spin connection on $S$. 
Two different spinor bundles $S$ and $S'$ are isomorphic as unitary bundles but as bundles with connections $S' = S \otimes \fl$ where $\fl$ is a real line bundle with a flat $\{ \pm 1 \}$--connection. 
Set
\begin{equation*}
  \bS = E\otimes_\C S \qandq \bS^\pm = E \otimes_\C S^\pm
\end{equation*}
and consider the spin Dirac operator twisted by the connection on $E$
\begin{gather*}
  D \colon \Gamma(\bS) \to \Gamma(\bS), \\
  D = 
  \begin{pmatrix}
    0 & \delbar_E^* \\
    \delbar_E & 0 
  \end{pmatrix},
\end{gather*}
where $\delbar_E$ is the Cauchy--Riemann operator twisted by the connection on $E$
\begin{equation*}
  \delbar_E \colon \Gamma(\bS^+) \to \Omega^{0,1}(\bS^+) = \Gamma(\bS^-).
\end{equation*}
The Dirac operator $D$ is $\C$-linear and self-adjoint.
Since $\bS$ is the complex tensor product of two quaternionic line bundles, it inherits a real structure
\begin{equation*}
  \tau \colon \bS \to \bS,
\end{equation*}
which is a $\C$-antilinear isomorphism satisfying $\tau^2 = \id$, defined as the tensor product of the $\C$-antilinear multiplication by $j$ on $E$ and $S$. 
The Dirac operator $D$ commutes with $\tau$ and preserves the spliting into $\pm 1$ eigenspaces of $\tau$.
Let 
\begin{equation*}
  \bS_\R = \Re(E\otimes_\C S)
\end{equation*}
be the $+1$ eigenspace.
Denote the induced self-adjoint $\R$-linear operator by
\begin{equation*}
  D_\R \colon \Gamma(\bS_\R) \to \Gamma(\bS_\R).
\end{equation*}

\begin{definition}
  Let $\End_\tau(\bS)$ be the subbundle of self-adjoint $\C$-linear endomorphisms which commute with $\tau$.
  For every $\Upsilon \in \Gamma(\End_\tau(\bS))$ set
  \begin{equation*}
    D_\Upsilon = D + \Upsilon \colon \Gamma(\bS) \to \Gamma(\bS)
  \end{equation*}
  and denote by 
  \begin{equation*}
    D_{\Upsilon,\R} \colon \Gamma(\bS_\R) \to \Gamma(\bS_\R).
  \end{equation*}
  the induced real operator. 
\end{definition}

\begin{remark}
  The bundle $\End_\tau(\bS)$ does not depend on the choice of the spin structure.
  Therefore, $\Upsilon \in \Gamma(\End_\tau(\bS))$ defines a perturbation of the spin Dirac operator for every spin structure.
\end{remark}

The next proposition follows from linear algebra discussed in \autoref{Sec_Spinors}.

\begin{prop}
  \label{Prop_FromDiracToCauchyRiemann}
  Define an operator $\fd_\Upsilon$ by the commutative diagram
  \begin{equation*}
    \begin{tikzcd}
      \Gamma(\bS_\R) \ar{r}{D_{\Upsilon,\R}} \ar{d} &  \Gamma(\bS_\R) \ar{d} \\
      \Gamma(\bS^+) \ar{r}{\fd_\Upsilon} & \Gamma(\bS^-)
    \end{tikzcd}
  \end{equation*}
  where the vertical arrows are induced by the projections $\bS \to \bS^\pm$. 
  Then $\fd_\Upsilon$ is a real Cauchy--Riemann operator on $\bS^+$ which is self-adjoint with respect to the isomorphism 
  \begin{equation*}
    \SerreOperator \colon \bS^+ \to (\bS^+)^* \otimes_\C K_C
  \end{equation*}
  induced by $S^+ \otimes_\C S^+ \cong K_C$ and $\Lambda^2_\C E \cong \underline\C$. 
  Conversely, every $\SerreOperator$-self-adjoint real Cauchy--Riemann operator on  $\bS^+$ is of this form for a unique $\Upsilon \in \Gamma(\End_\tau(\bS))$. 
\end{prop}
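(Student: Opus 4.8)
The plan is to treat the proposition as a fibrewise statement in linear algebra, carried out over each point of $C$ and then globalised by the observation that all the identifications involved are parallel; the pointwise dictionary --- between the two-dimensional Clifford module $S$, the quaternionic structure $\tau$ on $E\otimes S$, and Dolbeault operators --- is precisely what is assembled in \autoref{Sec_Spinors}. First I would pin down the first-order part of $\fd_\Upsilon$. The left vertical arrow $\Gamma(\bS_\R)\to\Gamma(\bS^+)$, $s\mapsto\Pi^+ s$, is a fibrewise real isomorphism with inverse $\phi\mapsto\phi+\tau\phi$, since $\tau$ interchanges $\bS^+$ and $\bS^-$ $\C$-antilinearly and $\tau^2=\id$; the commuting square therefore defines
\[
  \fd_\Upsilon\phi \;=\; \Pi^-\bigl(D_\Upsilon(\phi+\tau\phi)\bigr) \;=\; \delbar_E\phi + \Upsilon^{-+}\phi + \Upsilon^{--}\tau\phi,
\]
where $\delbar_E\colon\Gamma(\bS^+)\to\Omega^{0,1}(\bS^+)=\Gamma(\bS^-)$ is the $E$-twisted Dolbeault operator read off from the off-diagonal block form of $D$ via $S^-\cong\Lambda^{0,1}T^*C\otimes S^+$, and $\Upsilon^{-+},\Upsilon^{--}$ are the $\bS^+\!\to\bS^-$ and $\bS^-\!\to\bS^-$ blocks of $\Upsilon$. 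Here $\Upsilon^{-+}$ is $\C$-linear, hence a section of $\Omega^{0,1}(C,\End_\C\bS^+)$, and $\Upsilon^{--}\tau$ is $\C$-antilinear, hence a section of $\Omega^{0,1}(C,\overline{\End}_\C\bS^+)$; so $\fd_\Upsilon=(\delbar_E+\Upsilon^{-+})+\Upsilon^{--}\tau$ is a Dolbeault operator plus a $\C$-antilinear zeroth-order term, i.e.\ a real Cauchy--Riemann operator on $\bS^+$.

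For self-adjointness I would argue that $D$ is formally self-adjoint for the real $L^2$ product on $\Gamma(\bS)$ and $\Upsilon$ is fibrewise self-adjoint and $\C$-linear, so $D_\Upsilon$ is formally self-adjoint and preserves the $\tau$-invariant subspace; hence $D_{\Upsilon,\R}$ is formally self-adjoint on $\Gamma(\bS_\R)$. Transporting along $s\mapsto\Pi^+ s$ and using the compatibility of the Hermitian metric, $\tau$, and $\gamma$ recorded in \autoref{Sec_Spinors} --- in particular that the real $L^2$ product on $\bS_\R$ corresponds to $\Re\langle\cdot,\cdot\rangle_{\bS^+}$ up to a positive constant, and that $\int_C\gamma(\cdot\wedge\cdot)$ is the Hermitian pairing post-composed with $\tau$ --- one finds that formal self-adjointness of $D_{\Upsilon,\R}$ is equivalent to $\int_C\gamma(\fd_\Upsilon\phi\wedge\psi)=\int_C\gamma(\phi\wedge\fd_\Upsilon\psi)$ for all $\phi,\psi$, that is, $\gamma$-self-adjointness in the sense of \autoref{Def_RealCauchyRiemannOperator_SelfAdjoint}.

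For the converse and uniqueness I would invert this construction. The $\Upsilon=0$ case already shows that the reference operator $\fd_0=\delbar_E$ is $\gamma$-self-adjoint, so any $\gamma$-self-adjoint real Cauchy--Riemann operator $\fd$ on $\bS^+$ differs from $\fd_0$ by a $\C$-linear part $b\in\Gamma(\Omega^{0,1}(C,\End_\C\bS^+))$ and a $\C$-antilinear part $a\in\Gamma(\Omega^{0,1}(C,\overline{\End}_\C\bS^+))$. Setting $\Upsilon^{-+}=b$, $\Upsilon^{--}=a\circ(\tau|_{\bS^+})^{-1}$, $\Upsilon^{+-}=(\Upsilon^{-+})^{*}$, and taking $\Upsilon^{++}$ as forced by $\tau\Upsilon=\Upsilon\tau$, one gets a $\C$-linear endomorphism of $\bS$ which commutes with $\tau$ and is self-adjoint precisely because $\fd$ is $\gamma$-self-adjoint, and which satisfies $\fd_\Upsilon=\fd$; uniqueness is immediate since $\Upsilon$ is recovered from $(b,a)$ by these formulas. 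In other words, $\Upsilon\mapsto\fd_\Upsilon-\delbar_E$ is a fibrewise isomorphism $\End_\tau(\bS)\cong\HOMT(\bS^+,\gamma)$, the cleanest way to package the statement and the one that dovetails with \autoref{Prop_RealCauchyRiemanOperator_SelfAdjoint_AffineSpace}.

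I expect the only genuine work to be the bookkeeping in the self-adjointness steps: matching the analytic condition (formal self-adjointness of $D_{\Upsilon,\R}$ for the real $L^2$ product) with the algebraic one ($\gamma$-self-adjointness of $\fd_\Upsilon$), and, in the converse, checking that the assembled $\Upsilon$ really lands in $\End_\tau(\bS)$ --- equivalently, that the symmetry types in $\gamma=\gamma_E\otimes\gamma_{S^+}$ (antisymmetric on $E$, symmetric on $S^+$) are exactly the ones that turn ``$\gamma$-symmetric zeroth-order terms'' into ``self-adjoint $\tau$-commuting endomorphisms''. Everything else is symbol bookkeeping, and since all the identifications used ($S^-\cong\Lambda^{0,1}T^*C\otimes S^+$, $E\cong E^*$, the metric--$\tau$--$\gamma$ compatibilities) are parallel, the fibrewise isomorphism is a bundle isomorphism and the proposition follows from its pointwise form.
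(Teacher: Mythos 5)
Your proposal is correct and follows essentially the same route as the paper: the paper's proof is a two-line reduction to the fibrewise spinor linear algebra of \autoref{Sec_Spinors}, citing \autoref{Prop_DiracOperator2D} for the $\Upsilon=0$ case and \autoref{Prop_RealSpinorsProjection} for the correspondence $\End_\tau(\bS)\cong\Hom(\bS^+,\bS^-)$ together with the self-adjointness translation. What you do is simply re-derive those two appendix propositions inline (your block formula for $\fd_\Upsilon$ and the reconstruction of $\Upsilon$ from its $\C$-linear and $\C$-antilinear parts are exactly the content of \autoref{Prop_RealSpinorsProjection} and the remark following the proposition), so no further comment is needed.
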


\begin{proof}
  For $\Upsilon=0$ this follows from the relation between the Dirac operator and the Cauchy--Riemann operator in dimension $2$, see \autoref{Prop_DiracOperator2D}.
  The general case then follows from the isomorphism $\End_\tau(\bS) \cong \Hom(\bS^+,\bS^-)$ described in \autoref{Prop_RealSpinorsProjection}.
\end{proof}

From now on, we will implicitly identify the space $\SpaceOfRealCauchyRiemannOperators(\bS^+,\SerreOperator)$ of $\SerreOperator$-self-adjoint real Cauchy--Riemann operators on $\bS^+$ with the space of Dirac operators on $\bS_\R$ of the form $D_{\Upsilon,\R}$.
This is an affine space modeled on $\Gamma(\End_\tau(\bS))$.

\begin{remark}
  The section $\Upsilon$ can be recovered from $\fd_\Upsilon$ as follows.
  Let $\fd_\Upsilon = \delbar_E + \fn$ for $\fn \in \Gamma(\Hom(\bS^+, \bS^-))$. 
  Let $\fn = \fn_c + \fn_a$ be the decomposition into $\C$-linear and $\C$-antilinear parts.
  Denote by 
  \begin{equation*}
    \overline\gamma \colon \bS \to \bS
  \end{equation*}
  the $\C$-antilinear isomorphism which combines multiplication by $j$ on $E$ with the isomorphism $S^\pm \cong S^\pm$ given by the Hermitian metric. 
  Then $\Upsilon$ is expressed in terms of $\fn$ by
  \begin{equation*}
    \Upsilon = 
    \begin{pmatrix}
      -\overline\gamma \fn_a & \overline\gamma  \fn_c \overline\gamma   \\
      \fn_c & - \fn_a\overline\gamma 
    \end{pmatrix}. \qedhere
  \end{equation*}
\end{remark}

Next, we discuss spin$^c$ structures on $C$; such a structure corresponds to the choice of a Hermitian line bundle $\det(W) \to C$ of even degree.
If $W = W^+\oplus W^-$ is the spinor bundle, then 
\begin{equation*}
  W^- = K^*\otimes_\C W^+ \qandq \det(W) = K^*\otimes_\C (W^+)^2.
\end{equation*}
A $\U(1)$ connection $A$ on $\det(W)$ induces a unique spin$^c$ connection on $W$ such that the induced connection on $\det(W)$ agrees with $A$. 
Set
\begin{equation*}
  \bW = E \otimes_\C W \qandq \bW^\pm = E\otimes_\C W^\pm.
\end{equation*}
Given such $A$ and a connection on $E$, the corresponding twisted Dirac operator on $\bW$ is
\begin{gather*}
  D_A \colon \Gamma(\bW) \to \Gamma(\bW), \\
  D_A = 
  \begin{pmatrix}
    0 & \delbar_A^* \\
    \delbar_A & 0
  \end{pmatrix},
\end{gather*}
where
\begin{equation*}
  \delbar_A \colon \Gamma(\bW^+) \to \Omega^{0,1}(C,\bW^+) = \Gamma(\bW^-)
\end{equation*}
is the Cauchy--Riemann operator twisted by $A$ and the connection on $E$.
This operator can be perturbed in the following way.

\begin{definition}
  Fix a spin structure on $C$ with spinor bundle $S$.
  Since $W$ is obtained from $S$ by tensoring by a complex line bundle, a section $\Upsilon$ of $\End_\tau(\bS)$ can be interpreted as a section of the bundle of $\C$-linear self-adjoint endomorphisms of $\bW$. 
  Given a connection $A$ on $\det W$, let
  \begin{equation*}
    D_{A,\Upsilon} = D_A + \Upsilon \colon \Gamma(\bW) \to \Gamma(\bW). \qedhere
  \end{equation*}
\end{definition}

\begin{definition}
  \label{Def_ADHM21VortexEquation}
  Let $C$ be a Riemann surface, let $E \to C$ be a complex vector bundle with structure group $\SU(2)$, and let $W \to C$ be a spin$^c$ structure. 
  Let $\Upsilon \in \Gamma(\End_\tau(\bS))$ and $\eta = (\eta_\C,\eta_\R)$ where $\eta \in \rH^{1,0}(C)$ and $\eta_\R \in \R$.
  The \defined{$(\Upsilon,\eta)$-perturbed $(2,1)$ ADHM vortex equation} for  
  \begin{equation*}
    A \in \sA(\det W), \qandq \Psi = (\alpha,\beta) \in \Gamma(\bW),  
  \end{equation*}
  reads
  \begin{equation}
    \label{Eq_ADHM21}
    \begin{split}
      D_{A,\Upsilon}  \Psi &= 0, \\
      \mu_\C(\Psi) &= \eta_\C, \\
      \ast F_A + i\mu_\R(\Psi) &= \eta_\R + 2\pi\deg(W),
    \end{split}
  \end{equation}	
  where
  \begin{equation*}
    \mu_\C(\Psi) = \alpha^*\beta, \qandq \mu_\R(\Psi) = i(|\alpha|^2 - |\beta|^2).
    \qedhere
  \end{equation*}
\end{definition}

\begin{remark}
  These equations are equivalent to a perturbation of the ADHM vortex equation  \autoref{Eq_ADHMVortexEquation} for $r=2$, $k=1$.
  The equations for $\xi$ in \autoref{Eq_ADHMVortexEquation} decouple so that $\xi$ can be ignored.
  The remaining equations can be identified with \autoref{Eq_ADHM21} by taking $W^+ = S^+ \otimes H$, replacing $E$ by $E^*$, and $\beta$ by $\beta^*$. 
  The point of that last conjugation is that $\H$ can be considered as $\C^2$ in two different ways by left- or right-multiplication by $i$, so that the associated bundle $\bS$ has two natural complex structures; the description used in \autoref{Eq_PerturbedQuaternionicVortex} corresponds to one of them, and in \autoref{Eq_ADHM21} to the other.
  As for perturbations, by \autoref{Prop_FromDiracToCauchyRiemann} we can consider $\Gamma(\End_\tau(\bS))$ as a subspace of the space of homogeneous perturbations introduced in \autoref{Def_ZerothOrderPerturbation}. 
  We use Dirac operators as they interact with the algebraic structure of the $(2,1)$ ADHM representation in a more natural way than Cauchy--Riemann operators.
\end{remark}

Let $\sW \subset \SpaceOfRealCauchyRiemannOperators(\bS^+, \SerreOperator)$ be the wall of failure of $2$-rigidity 
defined in \autoref{Sec_ThreePerspectivesOn2Rigidity}.  

\begin{theorem}
  \label{Thm_ChamberedInvariantsFromADHM21}
  Let $\fd \in  \SpaceOfRealCauchyRiemannOperators(\bS^+, \SerreOperator) \setminus \sW$.
  Let $D_{\Upsilon,\R}$ be the Dirac operator corresponding to $\fd$ under \autoref{Prop_FromDiracToCauchyRiemann}.
  Let $\eta \in \rH^{1,0}(C) \times \R \setminus \{ (0,0)\}$.
  The signed count of solutions $\#\ADHMGaugeModuliSpace(\fd,\eta,d)$ to the $(\Upsilon,\eta)$-perturbed $(2,1)$ ADHM equation for the spin$^c$ structure with $\deg W^+ = d$ is well-defined, as in \autoref{Prop_ChamberedInvariantsFromGaugeTheory}, and defines an element
  \begin{itemize}
  \item $\#\ADHMCurvesModuliSpace(\cdot,d) \in \rH^0(  \SpaceOfRealCauchyRiemannOperators(\bS^+,\SerreOperator) \setminus \sW; \Z)$ if the genus of $C$ is positive,
  \item $\#\ADHMCurvesModuliSpace(\cdot, d) \in \rH^0((\SpaceOfRealCauchyRiemannOperators(\bS^+,\SerreOperator) \setminus \sW) \times (\R \setminus \{0\}); \Z)$ if the genus of $C$ is zero.
  \end{itemize}
\end{theorem}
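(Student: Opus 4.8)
The plan is to mirror the proof of \autoref{Thm_ChamberedInvariantsFromADHM12} almost verbatim, replacing the $(1,2)$ ADHM vortex equation by its $(2,1)$ counterpart \autoref{Eq_ADHM21}, and using the dictionary between Dirac operators on $\bS_\R$ and $\SerreOperator$-self-adjoint real Cauchy--Riemann operators on $\bS^+$ provided by \autoref{Prop_FromDiracToCauchyRiemann}. The three ingredients needed are: (i) irreducibility of all solutions in $\ADHMGaugeModuliSpace(\fd,\eta,d)$ when $\eta\neq(0,0)$; (ii) compactness of $\ADHMGaugeModuliSpace(\fd,\eta,d)$ when $\fd\notin\sW$; (iii) the deformation-invariance statement of \autoref{Prop_ChamberedInvariantsFromGaugeTheory} applied along paths in $\SpaceOfRealCauchyRiemannOperators(\bS^+,\SerreOperator)\setminus\sW$ (and, in genus zero, also in $\eta$).

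First I would establish irreducibility. A reducible solution $(A,\Psi)$ with $\Psi=(\alpha,\beta)\in\Gamma(\bW^+)\oplus\Gamma(\bW^-)$ would have nontrivial stabiliser in the $\U(1)$ gauge group $\sG(\det W)$, which here is all of $\sG$ since $\U(1)$ is abelian; the only way a solution can be fixed (modulo the constant central $\U(1)$, which always stabilises, so ``reducible'' must mean something sharper --- in fact for $k=1$ every solution has the constant stabiliser, so I should instead argue that the perturbed equation has \emph{no solutions with $\Psi=0$}). If $\Psi=0$ then the third equation reads $\ast F_A = \eta_\R + 2\pi\deg W$, whose integral over $C$ forces $\eta_\R\in 2\pi\Z - 2\pi\deg W$... wait, $\int_C \ast F_A = 2\pi\deg(\det W) = 2\pi\cdot 2\deg W^+\cdot(\text{correction})$; in any case integrating gives $\eta_\R\in(\text{a fixed lattice})$, and since we have only excluded $\eta_\R\notin\{0\}$ rather than a shifted lattice, I need to re-examine the normalisation --- the $2\pi\deg W$ term is precisely chosen so that the obstruction becomes $\eta_\R=0$, excluded by hypothesis. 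Similarly if $\eta_\C=0$ one integrates $\mu_\C(\Psi)=\alpha^*\beta$ against a suitable form; in genus zero $\rH^{1,0}(C)=0$ so $\eta_\C=0$ automatically and the relevant exclusion is again $\eta_\R\neq0$, handled identically. I would spell this out as a short lemma paralleling the reducibles discussion in the proof of \autoref{Thm_ChamberedInvariantsFromADHM12}.

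Next, compactness. Here I would invoke \autoref{Thm_CompactnessQuaternionicVortexEquations}, which applies because the $(2,1)$ ADHM representation satisfies \autoref{Hyp_Compactness} (this is the case $(r,k)=(k,2)$ noted in \autoref{Sec_ADHMVortexEquations} with $k=1$, i.e. $(2,1)$; the hyperkähler quotient $S_{2,1}\hkred\U(1)=\Sym^2\R^4$). If a sequence of solutions had $\|\Psi_n\|_{L^2}\to\infty$, \autoref{Thm_CompactnessQuaternionicVortexEquations} produces an $\Upsilon$-limiting configuration $(B,A,\Psi)$. The key step is then the $(2,1)$ analogue of \autoref{Prop_ADHM12LimitingConfiguration}: the equation $\mu(\Psi)=0$ together with the structure of $\Sym^2\R^4$ forces $\Psi$, on $C\setminus B$, to determine a Euclidean line bundle $\fl$ and a nonzero $W^{1,2}$ section $\zeta_0$ of $\bS^+\otimes\fl$ (equivalently of $E\otimes S^+\otimes\fl$) with $\fd^\fl\zeta_0=0$; by \autoref{Prop_ZeroLocusOfTwistedHolomorphicSectionIsFinite} the set $B=|\zeta_0|^{-1}(0)$ is finite, so $\fl$ extends to a ramified Euclidean line bundle over $C$, whence $\fd$ fails to be $2$-rigid --- contradicting $\fd\notin\sW$ by the three-perspectives equivalence of \autoref{Sec_ThreePerspectivesOn2Rigidity}. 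Concretely: $\Psi=(\alpha,\beta)$ with $\mu_\C=\alpha^*\beta=0$ and $\mu_\R=i(|\alpha|^2-|\beta|^2)=0$ forces $|\alpha|=|\beta|$ and $\alpha^*\beta=0$; on $C\setminus B$ one chooses a local unit spinor and the $\pm$ ambiguity in extracting the ``square root'' of the spinor configuration produces exactly the double-cover / Euclidean line bundle data. This is the step I expect to require the most care, since the algebra of the $(2,1)$ representation is ``less direct geometrically'' (as the text itself warns) than the $(1,2)$ case --- I would isolate it as a proposition, prove the pointwise linear-algebra statement about $\mu^{-1}(0)$ in $\bS$ first, then globalise exactly as in \autoref{Prop_ADHM12LimitingConfiguration}, invoking the unique continuation principle for $D_{A,\Upsilon}$ to handle any trace-type part.

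Finally, the invariance. With irreducibility and compactness in hand, \autoref{Prop_ChamberedInvariantsFromGaugeTheory} assigns $\#\ADHMGaugeModuliSpace(\fd,\eta,d)\in\Z$, and its second clause gives invariance along any path of $(\Upsilon_t,\eta_t)$ for which the moduli spaces stay compact and irreducible. Given a path $(\fd_t)_{t\in[0,1]}$ in $\SpaceOfRealCauchyRiemannOperators(\bS^+,\SerreOperator)\setminus\sW$, lift it via \autoref{Prop_FromDiracToCauchyRiemann} to a path $(\Upsilon_t)$ and pick any fixed admissible $\eta$ (in positive genus, fix $\eta$ with $\eta\neq(0,0)$; in genus zero, fix $\eta_\R\neq0$). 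The above two steps show $\ADHMGaugeModuliSpace(\fd_t,\eta,d)$ is compact and irreducible for all $t$, so the count is constant along the path; hence it descends to an element of $\rH^0(\SpaceOfRealCauchyRiemannOperators(\bS^+,\SerreOperator)\setminus\sW;\Z)$ when $g(C)\geq1$. When $g(C)=0$ one additionally varies $\eta_\R$ over $\R\setminus\{0\}$, which is disconnected, so the count only descends to a locally constant function on $(\SpaceOfRealCauchyRiemannOperators(\bS^+,\SerreOperator)\setminus\sW)\times(\R\setminus\{0\})$, exactly as stated. This completes the argument; the main obstacle, as noted, is the $(2,1)$-specific limiting-configuration analysis in the compactness step.
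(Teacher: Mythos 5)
Your proposal is correct and follows the paper's own proof: the paper reduces to the same three ingredients — irreducibility by showing $\Psi \equiv 0$ is impossible when $\eta \neq (0,0)$ (by integrating the second and third equations; for $k=1$ the stabiliser is trivial precisely when $\Psi \not\equiv 0$), compactness via \autoref{Thm_CompactnessQuaternionicVortexEquations} combined with the $(2,1)$ limiting-configuration structure result (which the paper isolates as \autoref{Prop_ADHM21LimitingConfiguration} and proves exactly as you sketch, by gauging $\Psi$ into $\Re(\bS)$ with a $\pm 1$ ambiguity that produces the Euclidean line bundle, with $D_{A,\Upsilon}\Psi = 0$ forcing $A$ to reduce to the spin connection, and $B$ finite by \autoref{Prop_ZeroLocusOfTwistedHolomorphicSectionIsFinite}), and deformation invariance via \autoref{Prop_ChamberedInvariantsFromGaugeTheory}, with the genus-zero case differing only because $\R\setminus\{0\}$ is disconnected. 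The sole structural difference is that the paper has the limiting-configuration proposition already in hand and simply cites it, whereas you re-derive it; your pointwise analysis of $\mu^{-1}(0)$ and the patching argument coincide with the paper's, so there is no gap.
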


Since the proof uses \autoref{Thm_CompactnessQuaternionicVortexEquations}, we need to understand limiting configurations for the $(2,1)$ ADHM vortex equation. 

\begin{prop}
  \label{Prop_ADHM21LimitingConfiguration}
  If $(B,A,\psi)$ is an $\Upsilon$-limiting configuration for the $(2,1)$  ADHM vortex equation \autoref{Eq_ADHM21}, then there exist  a Euclidean line bundle $\fl$ over $C\setminus B$ and an isomorphism
  \begin{equation*}
    \rho \colon W|_{C\setminus B} \to S|_{C\setminus B} \otimes \fl
  \end{equation*}
  such that $\nabla_A\rho =0$ and the section $\Psi_\R = (\id_E \otimes_ \C \rho)\Psi$ has the following properties:
  \begin{enumerate}
  \item $\Psi_\R$ takes values in $\bS_\R \otimes\fl$,
  \item $\Psi_\R$ is of class $W^{1,2}$ on $C\setminus B$,
  \item $|\Psi_\R|$ extends to a Hölder continuous function on $C$ such that $|\Psi_\R|^{-1}(0) = B$, 
  \item $D_\Upsilon^\fl \Psi_\R = 0$.
  \end{enumerate}
  Moreover, the set $B$ is finite. 
\end{prop}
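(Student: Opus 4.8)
The plan is to mirror the structure of \autoref{Prop_ADHM12LimitingConfiguration}, using \autoref{Prop_FromDiracToCauchyRiemann} to pass between the Dirac-operator picture on $\bW$ and the real Cauchy--Riemann picture on $\bS^+$. First I would unpack the algebraic constraint imposed by the limiting equations $\mu_\C(\Psi)=0$, $\mu_\R(\Psi)=0$. Writing $\Psi=(\alpha,\beta)$ as in \autoref{Def_ADHM21VortexEquation}, the equations $\alpha^*\beta=0$ and $|\alpha|^2=|\beta|^2$ are precisely the conditions characterising the hyperkähler quotient of the $(2,1)$ ADHM representation as $\Sym^2\R^4$; pointwise over $C\setminus B$ they say that $\Psi$, viewed as a quaternionic object, lies on the nilpotent/singular cone, and (after the usual $\U(1)$-reduction) that it determines a \emph{line} in the fibre rather than a point. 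Concretely I would show that at each point of $C\setminus B$ the spinor $\Psi$ picks out a $\{\pm1\}$-ambiguous square root of the spin$^c$ structure, i.e.\ an isomorphism $W_x\cong S_x\otimes\fl_x$ with $\fl_x$ a real line; globalising over $C\setminus B$ (using that $B$ is nowhere dense so the construction is continuous, hence defines a genuine bundle) produces the Euclidean line bundle $\fl\to C\setminus B$ and the isometry $\rho\colon W|_{C\setminus B}\to S|_{C\setminus B}\otimes\fl$. That $\rho$ is covariantly constant, $\nabla_A\rho=0$, follows because $A$ is the spin$^c$ connection and $\rho$ is built pointwise from the parallel structures; the detailed check is the kind of routine computation one would relegate to a line or two.

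Next I would transport $\Psi$ through $\rho$ to obtain $\Psi_\R=(\id_E\otimes_\C\rho)\Psi$, a section of $\bS_\R\otimes\fl$ over $C\setminus B$. Property (1), that it lands in the real part $\bS_\R$, is exactly the statement that after the reduction the spinor becomes $\tau$-real; this is the definitional payoff of having chosen $\rho$ to intertwine the quaternionic structures. Properties (2) and (3) — that $\Psi_\R\in W^{1,2}$ on $C\setminus B$ with $\|\nabla_A\Psi_\R\|_{L^2}<\infty$, and that $|\Psi_\R|=|\Psi|$ extends Hölder-continuously to $C$ with zero locus exactly $B$ — are inherited verbatim from \autoref{Def_LimitingConfiguration}, since $\rho$ is a fibrewise isometry and hence preserves norms and covariant derivatives. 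Property (4), $D_\Upsilon^\fl\Psi_\R=0$, is then the equation $D_{A,\Upsilon}\Psi=0$ rewritten under $\rho$: because $\nabla_A\rho=0$, conjugating the twisted Dirac operator by $\rho$ turns the spin$^c$ connection on $W$ into the fixed spin connection on $S$ tensored with the flat $\{\pm1\}$-connection on $\fl$, i.e.\ into $D_\Upsilon^\fl$, and the homogeneous perturbation $\Upsilon\in\Gamma(\End_\tau(\bS))$ is, by construction (see the remark after \autoref{Prop_FromDiracToCauchyRiemann}), independent of the spin$^c$ data and so passes through unchanged.

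Finally, the finiteness of $B$. Here I would invoke \autoref{Prop_FromDiracToCauchyRiemann} to reinterpret $D_\Upsilon^\fl\Psi_\R=0$ as $\fd^\fl s=0$ for the corresponding $\SerreOperator$-self-adjoint real Cauchy--Riemann operator $\fd$ on $\bS^+$ and a nonzero section $s$ of $\bS^+\otimes\fl$ (the $W^{+}$-component of $\Psi_\R$, or equivalently its image under the projection $\bS_\R\to\bS^+$), noting that $|s|$ and $|\Psi_\R|$ agree up to a positive constant so $|s|$ extends continuously to $C$ vanishing precisely on $B$. Then \autoref{Prop_ZeroLocusOfTwistedHolomorphicSectionIsFinite} applies directly — with $V=\bS^+$ — and yields that $B$ is finite; as a byproduct, $\fl$ extends to a ramified Euclidean line bundle over $C$, so that $\fd$ fails to be $2$--rigid, which is the contradiction needed in the proof of \autoref{Thm_ChamberedInvariantsFromADHM21}.

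The main obstacle I anticipate is not any single analytic estimate but the careful bookkeeping in the first paragraph: extracting the line bundle $\fl$ and the \emph{parallel} isomorphism $\rho$ from the pointwise algebra of $\Sym^2\R^4$ in a way that is manifestly independent of local choices of eigenvector, and verifying that the perturbation $\Upsilon$ really does descend to the reduced operator $D_\Upsilon^\fl$ without picking up extra terms. This is the exact analogue of the passage, in the proof of \autoref{Prop_ADHM12LimitingConfiguration}, from the eigendecomposition of the trace-free part $\hat\xi_0$ to the inclusion $\rho\colon\fl\otimes\C\hookrightarrow\End_\C H$; the $(2,1)$ case is slightly more delicate because the reduction happens at the level of spinor bundles rather than endomorphism bundles, but the spin/quaternion dictionary in \autoref{Sec_Spinors} should make it manageable.
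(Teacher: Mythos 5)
Your overall strategy is the same as the paper's (pointwise reduction via the hyperkähler quotient of the $(2,1)$ ADHM representation by $\U(1)$, patching the local $\pm 1$-ambiguous reductions into a Euclidean line bundle $\fl$ and an isomorphism $\rho$, then finiteness of $B$ via \autoref{Prop_ZeroLocusOfTwistedHolomorphicSectionIsFinite} after translating to a self-adjoint real Cauchy--Riemann operator), and your treatment of properties (1)--(3) and of the finiteness argument is fine. But there is a genuine gap at the step you dismiss as routine: the claim that $\nabla_A\rho=0$ \emph{cannot} ``follow because $A$ is the spin$^c$ connection and $\rho$ is built pointwise from parallel structures.'' The connection $A$ on $\det W$ is an unknown of the limiting configuration, and $\rho$ is built pointwise from $\Psi$, so there is no a priori compatibility between the two; the parallelism of $\rho$ is forced by the \emph{Dirac equation}, not by bookkeeping. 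The paper's argument (following Haydys) is: trivialise $L$ (with $W=S\otimes L$) locally so that $\Psi$ becomes a section of $\Re(\bS)$ --- this uses that the $\U(1)$-gauge transformation realising $\mu^{-1}(0)/\U(1)\cong \Re(\C^2\otimes_\C\H)/\Z_2$ is unique up to $\pm 1$ --- and write $A=\rd-a^*+a$ in this trivialisation, so $D_{A,\Upsilon}=D_\Upsilon+\gamma(a)$. Since $\Upsilon$ commutes with $\tau$, $D_\Upsilon$ preserves $\Re(\bS)$, while $\gamma(a)$ maps $\Re(\bS)$ into $\Im(\bS)$; hence $D_{A,\Upsilon}\Psi=0$ splits and forces $\gamma(a)\Psi=0$, and since $|\Psi|>0$ on $C\setminus B$ this gives $a=0$. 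So $A$ agrees locally with the spin connection, and \emph{this single computation} simultaneously yields $\nabla_A\rho=0$ and property (4).

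Your logical order is therefore the wrong way around: you first assert $\nabla_A\rho=0$ without proof and then conjugate the Dirac operator by $\rho$ to get $D_\Upsilon^\fl\Psi_\R=0$, whereas the reality of $\Psi$ in a suitable gauge plus the Dirac equation is what kills the connection form $a$ in the first place; without that input the step fails (a generic spin$^c$ connection admits no parallel reduction to a spin structure twisted by a flat $\Z_2$-bundle). To repair the proposal you need exactly the $\Re/\Im$ splitting argument above, which uses the structural facts recorded in \autoref{Sec_Spinors} (that $\Upsilon\in\Gamma(\End_\tau(\bS))$ commutes with $\tau$ and that Clifford multiplication by $a\in\Omega^{0,1}$ exchanges $\Re(\bS)$ and $\Im(\bS)$). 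With that inserted, the remainder of your argument (isometry of $\rho$ giving (1)--(3), and the passage via \autoref{Prop_FromDiracToCauchyRiemann} to $\fd^\fl s=0$ and \autoref{Prop_ZeroLocusOfTwistedHolomorphicSectionIsFinite} for finiteness of $B$) matches the paper's proof.
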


\begin{remark}
  The bundle $S \otimes \fl$ is the spinor bundle of a spin structure on $C\setminus B$.
  Therefore, a limiting configuration induces a reduction of the spin$^c$ structure $W$ to a spin structure outside $B$. 
  In particular, $\deg W$ is equal to a signed count of points in $B$.
  This is a $2$-dimensional analog of \cite[Theorem 4]{Haydys2019}. 
\end{remark}

\begin{proof}
  This is a special case of a theorem of Haydys describing limiting configurations of generalized Seiberg--Witten equations \cite{Haydys2013}; see also \cite[Section 2.3]{Doan2017a}. 
  Let $L$ be the complex line bundle satisfying $W = S \otimes L$.
  Trivialize $L$ locally over a contractible subset $U \subset C\setminus B$ so that $\Psi$ is identified with a section of $\bS$. 
  Since the hyperkähler quotient of $\C^2\otimes_\C \H$ by $\U(1)$ is $\Re(\C^2\otimes_\C \H) / \Z_2$, there is a gauge transformation of $L|_U$ which maps $\Psi$ to $\Re(\bS)$.
  This gauge transformation is unique up to $\pm 1$. 
  Change the trivialization by that gauge transformation.
  In the new trivialization, let
  \begin{equation*}
    A = \rd - a^* + a \quad\text{with } a \in \Omega^{0,1}(U),
  \end{equation*}
  so that
  \begin{equation*}
    D_{A,\Upsilon} = D_\Upsilon + \gamma(a)
    \quad
    \text{with } 
    \gamma(a) =
    \begin{pmatrix}
      0 & a^* \\
      a & 0 
    \end{pmatrix}.
  \end{equation*}
  Since $D_\Upsilon$ preserves $\Re(\bS)$ and $\gamma(a)$ maps $\Re(\bS)$ to $\Im(\bS)$, equation $D_{A,\Upsilon}\Psi = 0$ implies that $a=0$. 
  Therefore, locally $A$ agrees with the spin connection on $S$. 
  Globally, by patching these local trivializations together, we obtain a real line bundle $\fl \to C\setminus B$ with monodromy $\pm 1$, together with an isomorphism $\rho \colon W|_{C\setminus B} \to S|_{C\setminus B}\otimes\fl$ which is covariantly constant with respect to $A$ and such that $\Psi_\R = (\id_E\otimes_\C\rho)\Psi$ is real.
  The other three properties follow from the analogous properties of $\Psi$. 
  It follows from \autoref{Prop_ZeroLocusOfTwistedHolomorphicSectionIsFinite} that $B$ is finite.
\end{proof}

\begin{proof}[Proof of \autoref{Thm_ChamberedInvariantsFromADHM21}]
  The proof is the same as that of \autoref{Thm_ChamberedInvariantsFromADHM12}.
  Given $(\Upsilon,\eta)$ as above, we need to show that the moduli space of solutions $\ADHMCurvesModuliSpace(\fd,\eta,d)$ is compact and consists of irreducible solutions, in which case $\#\ADHMCurvesModuliSpace(\fd,\eta,d) \in \Z$ is well-defined and invariant under deformations $(\fd_t,\eta_t)$ with the same properties. 
  If $(A,\Psi)$ is a solution to \autoref{Eq_ADHM21}, it follows from the second and third equation (by integrating) that for $\eta \neq 0$ we have $\Psi \neq 0$.
  Therefore, every solution in $\#\ADHMCurvesModuliSpace(\fd,\eta,d)$ is irreducible. 
  To prove that $\ADHMCurvesModuliSpace(\fd,\eta,d)$ is compact if $\fd \notin \sW$ we proceed in the same way as in the proof of \autoref{Thm_ChamberedInvariantsFromADHM12}, using \autoref{Thm_CompactnessQuaternionicVortexEquations} and  \autoref{Prop_ADHM12LimitingConfiguration} which relates limiting configurations to the $(2,1)$ ADHM vortex equations to elements of $\ker\fd^\fl$ for some ramified Euclidean line bundle $\fl$. 
\end{proof}



\appendix
\section{Proof of the index formula for $\fd^\fl$}
\label{Sec_ProofOfIndexFormula}

Here is the extension of \autoref{Thm_Index} promised in \autoref{Rmk_Index}.

\begin{theorem}
  \label{Thm_Index+}
  Let $C$ be a closed connected Riemann surface.
  Let $V$ be a complex vector bundle over $C$.
  If
  $\fd$ is a real Cauchy--Riemann operator on $V$ and  
  $\fl$ is a ramified Euclidean line bundle over $C$,
  then
  $\fd^\fl \co W^{1,2}\Gamma\paren{\mathring{C},V\otimes\fl} \to L^2\Omega^{0,1}\paren{\mathring{C},V\otimes\fl}$ is Fredholm with
  \begin{equation*}
    \ind \fd^\fl = 2\deg V + \rk_\C V \cdot \chi(C) - \# \Br(\fl) \cdot \rk_\C V.
  \end{equation*}
\end{theorem}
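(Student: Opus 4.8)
The idea is to trade the ramified bundle on $C$ for an honest complex vector bundle on a closed surface by passing to the double cover determined by $\fl$, and then invoke the ordinary index formula for real Cauchy--Riemann operators there. Let $\pi \co \tilde C \to C$ be the double cover branched over $B \coloneqq \Br(\fl)$ associated with $\fl$ (so that $\fl|_{\mathring C}$ is the real line bundle $(\tilde C \setminus \pi^{-1}(B)) \times_{\set{\pm 1}} \R$), equip $\tilde C$ with a smooth conformal metric, and let $\sigma \co \tilde C \to \tilde C$ denote the deck involution. The pullback $\pi^*\fd$ is a real Cauchy--Riemann operator on the complex vector bundle $\pi^*V$ over the closed Riemann surface $\tilde C$; it commutes with the canonical lift of $\sigma$ to $\pi^*V$, hence splits as $(\pi^*\fd)^+ \oplus (\pi^*\fd)^-$ over the $\pm 1$--eigenbundle decomposition of $\pi^*V$ under $\sigma$. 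Since $\pi^*\fd$ is Fredholm, so is each summand, and $\ind \pi^*\fd = \ind(\pi^*\fd)^+ + \ind(\pi^*\fd)^-$.

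The next step is to identify the two summands. Pullback of sections identifies $\fd$ on $V$ over $C$ with $(\pi^*\fd)^+$: a $\sigma$--invariant $W^{1,2}$ section of $\pi^*V$ descends to a section of $V$ over $\mathring C$ in $W^{1,2}(\mathring C,V)$, which extends across the finite set $B$ because points have vanishing $W^{1,2}$--capacity in dimension $2$; conversely $\pi^*$ of a $W^{1,2}$ section of $V$ is $W^{1,2}$ on $\tilde C$ (its Dirichlet energy is conformally invariant, and the $L^2$ term is controlled near each branch point by a H\"older estimate against $|z|^{-1} \in L^{p'}_{\mathrm{loc}}$, $p' < 2$, using the Sobolev embedding $W^{1,2} \hookrightarrow L^q$, $q < \infty$). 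Likewise, a $\sigma$--\emph{anti}-invariant section of $\pi^*V$ over $\tilde C \setminus \pi^{-1}(B)$ is precisely a section of $V \otimes \fl$ over $\mathring C$, and I would check that the anti-invariant part of $W^{1,2}\Gamma(\tilde C,\pi^*V)$ corresponds exactly to $W^{1,2}\Gamma(\mathring C, V\otimes\fl)$: once more the Dirichlet energies agree by conformal invariance, and for an anti-invariant $\tilde s$ the averages of $\tilde s$ over the annuli $\set{2^{-k-1} < |z| < 2^{-k}}$ around a branch point vanish, so the Poincar\'e inequality on these annuli sums to $\norm{\tilde s}_{L^2} \leq C\norm{\nabla \tilde s}_{L^2}$, supplying the missing $L^2$--control near $B$. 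The target spaces $L^2\Omega^{0,1}$ match trivially. Hence $(\pi^*\fd)^+ \cong \fd$ and $(\pi^*\fd)^- \cong \fd^\fl$; in particular $\fd^\fl$ is Fredholm and $\ind \fd^\fl = \ind \pi^*\fd - \ind \fd$.

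It then remains to evaluate the two closed-surface indices by Riemann--Roch for real Cauchy--Riemann operators: $\ind \fd = 2\deg V + \rk_\C V \cdot \chi(C)$ and $\ind \pi^*\fd = 2\deg(\pi^*V) + \rk_\C V \cdot \chi(\tilde C)$. Since $\deg(\pi^*V) = 2\deg V$ and, by Riemann--Hurwitz for a double cover branched at $\#\Br(\fl)$ points, $\chi(\tilde C) = 2\chi(C) - \#\Br(\fl)$, substitution gives
\[
  \ind \fd^\fl = \bigl(4\deg V + \rk_\C V\,(2\chi(C) - \#\Br(\fl))\bigr) - \bigl(2\deg V + \rk_\C V \cdot \chi(C)\bigr) = 2\deg V + \rk_\C V \cdot \chi(C) - \#\Br(\fl) \cdot \rk_\C V,
\]
which is the asserted formula; the unramified case $\#\Br(\fl)=0$ recovers $\ind \fd^\fl = \ind \fd$ as a sanity check.

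\textbf{Main obstacle.} The only substantive point is the Sobolev bookkeeping near the branch points in the identifications $(\pi^*\fd)^{\pm} \cong \fd,\fd^\fl$: the domain $W^{1,2}\Gamma(\mathring C, V\otimes\fl)$ is defined with the smooth metric on $C$, whereas the natural comparison metric on $\tilde C$ is the pullback $\pi^*g_C$, which degenerates to order $2$ at each branch point, so it is not literally the smooth metric used for $\pi^*\fd$. One must verify the norm equivalences, and this is where conformal invariance of the two-dimensional Dirichlet energy, vanishing $W^{1,2}$--capacity of points, and the annular-average trick forced by anti-invariance all enter. Everything else --- Fredholmness of $\pi^*\fd$, the $\sigma$--equivariant splitting, and the applications of Riemann--Roch and Riemann--Hurwitz --- is routine.
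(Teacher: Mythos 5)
Your argument is correct, but it takes a genuinely different route from the paper. The paper first reduces to the case $\fd=\delbar$ (zeroth--order terms do not change the index), then extends the holomorphic line bundle $\fl\otimes\C$ on $\mathring C$ to a holomorphic line bundle $\sL$ on $C$ with $\sL^2\iso\sO_C(\Br(\fl))$, identifies $\ker\delbar^\fl\iso\rH^0(C,\sV\otimes_\C\sL^*)$ and, via Serre duality together with a residue computation, the cokernel with $\rH^0(C,\sV^\dagger\otimes_\C\sL^*(\Br(\fl)))$, and finally applies Riemann--Roch on $C$ itself; Fredholmness is obtained separately from the residue/weighted-space analysis. You instead work upstairs on the branched double cover determined by $\fl$, split $\pi^*\fd$ under the deck involution, identify the invariant and anti-invariant blocks with $\fd$ and $\fd^\fl$, and subtract two instances of the closed-surface index formula, using Riemann--Hurwitz and $\deg\pi^*V=2\deg V$; the arithmetic checks out, and the unramified sanity check is consistent (even when $\fl$ is trivial and $\tilde C$ disconnects). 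What your route buys: no holomorphic extension lemma, no Serre duality or residue analysis of the cokernel, and Fredholmness of $\fd^\fl$ for free from the block decomposition; what it costs is exactly the Sobolev bookkeeping at the branch points that you flag, and your treatment of it is sound --- conformal invariance of the Dirichlet energy and of the $L^2$--norm on $(0,1)$--forms, vanishing $W^{1,2}$--capacity of points for the extension across $\pi^{-1}(\Br(\fl))$, the H\"older/Sobolev estimate against the $\abs{z}^{-1}$ Jacobian factor for the invariant part, and the annular Poincar\'e inequality with vanishing averages (forced by anti-invariance) for the anti-invariant part; one should also average the metric on $\tilde C$ to make it $\sigma$--invariant so that the involution acts isometrically, but this is cosmetic. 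It is worth noting that the paper's closing remark in this appendix records the isomorphism $\ker\delbar^\fl\iso\paren{\ker\pi^*\delbar}^-$ via $\pi_*\sO_{\tilde C}=\sO_C\oplus\sL^*$ as an alternative perspective on the kernel identification; your proof can be viewed as upgrading that remark from kernels to a full Fredholm-operator statement, at the price of invoking the untwisted index theorem on both $C$ and $\tilde C$.
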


The discussion in \autoref{Sec_IndexFormulaAndResidues} already shows that $\fd^\fl$ is Fredholm.
The proof of the index formula relies on the following.

\begin{prop}
  \label{Prop_X}
  Let $\fl$ is a ramified Euclidean line bundle over $C$.
  \begin{enumerate}
  \item
    \label{Prop_X_L}
    $\mathring{\sL} \coloneqq \fl \otimes \C$ be a holomorphic line bundle over $\mathring{C}$ (with Dolbeault operator obtained by twisting the trivial Dolbeault operator by $\fl$).
  \item
    \label{Prop_X_Beta}
    The unique(!) orientation-preserving isometry $\fl^2 \iso \ubR$ determines a $\mathring{\beta} \in \rH^0\paren{\mathring{C},\mathring{\sL}^2}$.
  \item
    \label{Prop_X_Extension}
    $\mathring{\sL}$ extends to a holomorphic line bundle $\sL$ over $C$ such that:
    $\mathring{\beta}$ extends to $\beta \in \rH^0(C,\sL^2)$, and
    $\beta$ is transverse to the zero section;
    moreover: $\sL$ is unique up to unique isomorphism.
  \item
    \label{Prop_X_Twist}
    If $\delbar$ is a Dolbeault operator on $V$ and $\sV$ denotes the corresponding holomorphic vector bundle,
    then
    \begin{equation*}
      \ker \delbar^\fl
      \iso \rH^0\paren{C,\sV \otimes_\C \sL(-\Br(\fl)}
      \iso \rH^0\paren{C,\sV \otimes_\C \sL^*}.
    \end{equation*}
  \end{enumerate}
\end{prop}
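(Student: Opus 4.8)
The plan is as follows. Parts~\ref{Prop_X_L} and~\ref{Prop_X_Beta} are formal, and I would settle them first by unwinding definitions. Over $\mathring{C} = C \setminus \Br(\fl)$ the ramified Euclidean line bundle $\fl$ is a flat real line bundle with holonomy in $\set{\pm 1}$ (see \autoref{Sec_ThreePerspectivesOn2Rigidity}), so $\mathring{\sL} = \fl \otimes_\R \C$ carries a flat connection; its $(0,1)$--part is the Dolbeault operator obtained by twisting the trivial one by $\fl$, and it is integrable because any such operator on a line bundle over a curve is. The parallel metric on $\fl$ furnishes a parallel, orientation--preserving isometry $\fl^{\otimes 2} \iso \ubR$, hence a parallel isomorphism $\mathring{\sL}^{\otimes 2} \iso \ubC$, and $\mathring\beta$ is the image of $1$, which is parallel and therefore holomorphic.

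For the extension claim~\ref{Prop_X_Extension} I would work locally and then glue. Fix $p \in \Br(\fl)$ and a holomorphic coordinate disc $(D,z)$ with $z(p) = 0$. Since the holonomy of $\fl$ around $p$ is $-1$, a multivalued parallel frame $v$ of $\fl|_{D \setminus 0}$ satisfies $v(e^{2\pi i}z) = -v(z)$, and the holomorphic sections of $\mathring{\sL}$ over $D \setminus 0$ are exactly the $f \cdot v$ with $f$ a Laurent series in $z^{1/2}$ with odd exponents; consequently, since every holomorphic line bundle on a disc is trivial, the holomorphic extensions of $\mathring{\sL}|_{D \setminus 0}$ across $p$ are the line bundles $\sL_m$, $m \in \Z$, with local frame $z^{m + 1/2} v$. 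On $\sL_m^{\otimes 2}$, whose local frame is $z^{2m+1}(v \otimes v)$, the section $\mathring\beta = v \otimes v$ reads $z^{-(2m+1)}$, which extends holomorphically across $p$ with a transverse zero precisely when $m = -1$. I would therefore \emph{define} $\sL$ by gluing a copy of $\sL_{-1}$ near each branch point to $\mathring{\sL}$ over $\mathring{C}$ along the identity on overlaps; by construction $\mathring\beta$ extends to $\beta \in \rH^0(C;\sL^{\otimes 2})$ with divisor $\Br(\fl)$, all multiplicities one, whence a canonical isomorphism $(\sL^{\otimes 2}, \beta) \iso (\mathcal{O}_C(\Br(\fl)), s_{\Br(\fl)})$ and thus $\sL^* \iso \sL \otimes (\sL^{\otimes 2})^* \iso \sL(-\Br(\fl))$. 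Uniqueness is immediate from the local rigidity above: any competing extension must equal $\sL_{-1}$ near each $p$, so the identity of $\mathring{\sL}$ extends to an isomorphism of the two, and this isomorphism is unique because $\Aut(\sL) = \C^\times$ and only the identity restricts to the identity on the connected nonempty set $\mathring{C}$.

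For the kernel identification~\ref{Prop_X_Twist}, write $\sV$ for the holomorphic bundle attached to $\delbar$. Given $s \in \ker \delbar^\fl$, interior elliptic regularity makes $s$ a holomorphic section of $\sV \otimes \mathring{\sL}$ over $\mathring{C}$. Near a branch point $p$ write $s = z^{1/2} v \cdot h$ with $h$ a single--valued, $\sV$--valued holomorphic function on $D \setminus 0$; if $m_0$ is the order of the leading term of $h$, then $|\nabla s| \sim |z|^{m_0 - 1/2}$, and square--integrability of $\nabla s$ near $p$ (part of $s \in W^{1,2}$) forces $m_0 \geq 0$. Hence $h$ extends holomorphically across $p$ and $s$ extends to a holomorphic section of the line bundle with local frame $z^{1/2} v$ near each branch point, namely $\sV \otimes \sL(-\Br(\fl))$; restriction thus gives a $\C$--linear injection $\ker \delbar^\fl \hookrightarrow \rH^0(C, \sV \otimes \sL(-\Br(\fl)))$. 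Conversely a global holomorphic section $t$ of $\sV \otimes \sL(-\Br(\fl))$ restricts on $\mathring{C}$ to a holomorphic, hence $\delbar^\fl$--closed, section of $\sV \otimes \mathring{\sL}$, and near each $p$ it has leading order $\geq |z|^{1/2}$, so $t$ and $\nabla t$ lie in $L^2$ there; compactness of $C$ gives $t|_{\mathring{C}} \in W^{1,2}$, so $t|_{\mathring{C}} \in \ker \delbar^\fl$. These two maps are mutually inverse, which proves the first isomorphism, and the second is the identity $\sL(-\Br(\fl)) \iso \sL^*$ from~\ref{Prop_X_Extension}.

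I expect the branch--point analysis in~\ref{Prop_X_Twist} to be the crux: one must match the $W^{1,2}$ finite--energy condition precisely with the order of holomorphic extension, and $L^2$--control alone is genuinely too weak — it would still admit the $z^{-1/2}$ Laurent term — so the gradient bound must be used to pin the leading order down to $m_0 \geq 0$. Getting this exactly right is what makes $\ker \delbar^\fl \iso \rH^0(C, \sV \otimes \sL^*)$, and hence, via Riemann--Roch, the index formula of \autoref{Thm_Index+}, come out correctly.
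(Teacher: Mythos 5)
Your proof is correct and follows essentially the same route as the paper: reduce to the local model at a branch point, where the multivalued parallel frame $v$ makes $z^{1/2}v$ a single-valued holomorphic frame, so the extension on which $\beta$ vanishes transversely is the one with frame $z^{-1/2}v$, the $W^{1,2}$ condition pins elements of $\ker\delbar^\fl$ down to sections of $\sV\otimes_\C\sL(-\Br(\fl))$, and $\sL^2\iso\sO_C(\Br(\fl))$ gives $\sL(-\Br(\fl))\iso\sL^*$. The only point to add is that speaking of ``the order of the leading term of $h$'' presupposes there is no essential singularity, which the $L^2$ part of the $W^{1,2}$ hypothesis rules out by the usual orthogonality of Laurent modes on circles; with that remark your write-up is, if anything, more complete than the paper's, since you also make explicit the uniqueness of the extension and the surjectivity of $\ker\delbar^\fl\to\rH^0\paren{C,\sV\otimes_\C\sL(-\Br(\fl))}$, which the paper leaves implicit.
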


\begin{proof}
  \autoref{Prop_X_L} and \autoref{Prop_X_Beta} are obvious.

  It suffices to prove \autoref{Prop_X_Extension} for the ramified Euclidean line bundle $\fm$ over $D$ from the proof of \autoref{Prop_RiemannSurface_DegreeTwo_+-}.
  The isomorphism 
  $\tau \co \fm \otimes\C \iso \ubC$ of complex line bundles defined by
  $\tau\paren{(z,w) \otimes \lambda} \coloneqq \paren{z, \lambda w}$ is an isomorphism of holomorphic line bundles.
  Identifying $\fm\otimes\C = \ubC$ and $\ubC^{\otimes 2} = \ubC$,
  $\mathring \beta(z) = z \in \rH^0\paren{\mathring{D},\ubC}$.
  Evidently, $\ubC$ and $\mathring{\beta}$ extend from $\mathring{D}$ to $D$.

  To prove \autoref{Prop_X_Twist},
  observe that
  if $s \in \ker \paren{\delbar^\fl \co W^{1,2}\Gamma\paren{\mathring{D},\fl\otimes\C } \to L^2\Omega^{0,1}\paren{\mathring{D},\fl\otimes\C}}$,
  then 
  $s(z) = z^{1/2} \otimes \paren{z^{1/2}\cdot f(z)}$
  with $f \co D \to \C$ holomorphic;
  therefore: $\tau_*s(z) = zf(z)$.
  Consequently, the obvious
  inclusion $\ker \delbar^\fl \incl \rH^0\paren{C,\sV \otimes_\C \sL}$ factors through an isomorphism  
  \begin{equation*}
    \ker \delbar^\fl \iso \rH^0\paren{C,\sV \otimes_\C \sL(-\Br(\fl))}.
  \end{equation*}
  Since $\beta$ vanishes transversely in $\Br(\fl)$,
  $\sL^2 \iso \sO_\C(\Br(\fl))$.
  Therefore,
  \begin{equation*}
    \sL(-\Br(\fl)) \iso \sL \otimes_\C (\sL^2)^* \iso \sL^*.
    \qedhere
  \end{equation*}  
\end{proof}

\begin{proof}[Proof of \autoref{Thm_Index+}: index formula]
  By \autoref{Prop_RealCauchyRiemanOperator_AffineSpace},
  it suffices to prove the index formula if $\fd = \delbar$ is a Dolbeault operator.
  Denote by $\sV$ the holomorphic vector bundle corresponding to $(V,\delbar)$.
  By Riemann--Roch and with $\sL$ as in \autoref{Prop_X}~\autoref{Prop_X_Extension},
  \begin{equation*}
    \dim \rH^0(C,\sV\otimes_\C \sL^*) - \dim \rH^1(C,\sV \otimes_\C \sL^*)
    =
    2 \deg V + \rk_\C V \cdot \chi(C) - \# \Br(\fl) \cdot \rk_\C V.
  \end{equation*}

  By \autoref{Prop_X}~\autoref{Prop_X_Twist},
  $\ker \delbar^\fl \iso \rH^0\paren{C,\sV \otimes \sL^*}$.
  By Serre duality,
  \begin{equation*}
    \rH^1(C,\sV \otimes_\C \sL^*)
    = \rH^0(C,\sV^\dagger \otimes_\C \sL^* (\Br(\fl) )
  \end{equation*}
  with $\sV^\dagger \coloneqq \sV^* \otimes_\C K_C$.
  $\rH^0(C,\sV^\dagger \otimes_\C \sL^* (\Br(\fl) )$ consists of meromorphic sections of $\sV^\dagger \otimes_\C \sL^*$ with at most simple poles at $\Br(\fl)$.
  Therefore,
  there is an inclusion
  \begin{equation*}
    \rH^0(C,\sV^\dagger \otimes_\C \sL^* (\Br(\fl) )
    \incl \ker \delbar_{L^2}^{\dagger,\fl}.
  \end{equation*}
  In fact, \autoref{Prop_Residue_Construction} implies that this map is an isomorphism.
  This proves the index formula.
\end{proof}

\begin{remark}
  \label{Rmk_Prop_X_Converse}
  \autoref{Prop_X} has a partial converse:
  if $\sL$ is a holomorphic bundle over $C$ and $\beta \in \rH^0(C,\sL^2)$ is transverse to the zero section,
  then 
  \begin{equation*}
    \fl \coloneqq \set{ (x,v) \in \sL : v^2 \in \R_{\geq 0} \cdot \beta(x) }
  \end{equation*}
  with the Euclidean inner product $\Inner{(x,v),(x,w)} \coloneqq \frac{vw}{\beta(x)}$ is a ramified Euclidean line bundle over $C$ with $\Br(\fl) = Z(\beta)$.
\end{remark}

\begin{remark}
  Here is a description of holomorphic maps $\pi \co \tilde C \to C$ of degree two (alternative to \autoref{Prop_RiemannSurface_DegreeTwo_+-} and familiar from algebraic geometry):
  \begin{enumerate}
  \item
    If $\sL$ is a holomorphic bundle over $C$ and $\beta \in \rH^0(C,\sL^2)$ is transverse to the zero section,
    then
    \begin{equation*}
      \tilde C \coloneqq \set{ (x,v) \in \sL : v^2 = \beta(x) }
    \end{equation*}
    is a Riemann surface, and
    $\pi \co \tilde C \to C$ defined by $\pi(x,v) \coloneqq x$ is a holomorphic map of degree two.       
    A moment's thought reveals that
    \begin{equation}
      \label{Ex_Pi*OC}
      \pi_*\sO_{\tilde C} = \sO_C \oplus \sL^*.
    \end{equation}
    The summands are (anti-)invariant under the action of the involution $\tau$ of $\tilde C$ defined by $\tau(x,v) \coloneqq (x,-v)$.
  \item
    If $\pi \co \tilde C \to C$ is a holomorphic map of degree two,
    then it is a branched double cover.
    In particular, there is an involution $\tau$ of $\tilde C$ which swaps the sheets of $\pi$.
    Consider the anti-invariant subbundle
    \begin{equation*}
      \sL^* \coloneqq \paren{\pi_*\sO_{\tilde C}}^-.
    \end{equation*}
    Set $\sL \coloneqq (\sL^*)^*$.
    The holomorphic section $\beta \in \rH^0(C,\sL^2)$ defined by
    \begin{equation*}
      \beta(x)(\lambda^2) \coloneqq \paren{\lambda(\tilde x)}^2 \in \C
      \quad\textnormal{for}\quad \tilde x \in \pi^{-1}(x)
    \end{equation*}
    is transverse to the zero section.
    Moreover,
    the map $\phi \co \tilde C \to \set{ (x,v) \in \sL : v^2 = \beta(x) }$ defined by $\phi(\tilde x) \coloneqq (\pi(\tilde x),\ev_{\tilde x})$
    is biholomorphic.
  \end{enumerate}
  \autoref{Prop_X} and \autoref{Rmk_Prop_X_Converse}
  bridge the above discussion and \autoref{Prop_RiemannSurface_DegreeTwo_+-}.
  Moreover,  
  \autoref{Ex_Pi*OC} provides a further (perspective on the) proof of \autoref{Prop_X}~\autoref{Prop_X_Twist}:
  \begin{equation*}
    \ker \delbar^\fl
    \iso \paren{\ker \pi^*\delbar}^-
    \iso \rH^0\paren{\tilde C,\pi^*\sV}^{-}
    \iso \rH^0\paren{C,\sV \otimes_\C \sL^*}.
    \qedhere
  \end{equation*}
\end{remark}


\section{Local wall-crossing formulae}
\label{Sec_LocalWallCrossing}

The following explains the analogue of \autoref{Ex_WallCrossingHypersurface} if $\sW \subset \sP$ is a proper wall.

\begin{theorem}
  \label{Thm_LocalWallCrossingProperWall}
  Let $\sP$ be a Banach manifold.
  Let $\sW \subset \sP$ be a proper wall in $\sP$.
  Let $\pi \co \sE \to \sP$, $\nu \co \sN \to \sE$ and $\sigma \co \sS \to \sE$ be as in \autoref{Def_ProperWall}.
  Set
  \begin{equation*}
    \sW^* \coloneqq \sW \setminus \sB
    \qwithq
    \sB \coloneqq \paren*{ \im (\pi\circ\nu) \cup \im (\pi\circ\sigma) \cup \pi(\Crit{\pi}) }.
  \end{equation*}
  Denote by $\fo$ the local system on $\sW^*$ induced by the local system $\tilde\fo$ of orientations of $\det T\pi$ on $\sE$.
  The following hold:
  \begin{enumerate}
  \item
    \label{Thm_LocalWallCrossingProperWall_Sharp}        
    There is a homomorphism
    \begin{equation*}
      \sharp \co \rH^0(\sW^*;\Val \otimes_\Z \fo) \to \Hom(\rH_1(\sP,\sP\setminus\sW),\Val) \iso \rH^1(\sP,\sP\setminus\sW;\Val)
    \end{equation*}
    such that \autoref{Eq_Sharp} holds for every $C^1$ path $\bp \co [0,1] \to \sP$ is a $C^1$ with $\bp(0),\bp(1) \notin \sW$ and transverse to $\pi$, $\pi \circ \nu$, and $\pi \circ \sigma$.
    (The sum in \autoref{Eq_Sharp} is finite because of \autoref{Prop_ProperWall}.)
  \item
    \label{Thm_LocalWallCrossingProperWall_Delta}
    There is an injective homomorphism
    \begin{equation*}
      \SpaceOfWallCrossingFormulae(\sP,\sW;\Val)
      \stackrel{\Delta_{\loc}}{\into}
      \rH^0(\sW^*;\Val \otimes_\Z \fo)
    \end{equation*}
    such that the following diagram commutes
    \begin{equation*}    
      \begin{tikzcd}
        \SpaceOfWallCrossingFormulae(\sP,\sW;\Val) \ar{r}{\Delta_\loc} \ar[swap]{rd}{\Delta} &
        \rH^0(\sW^*;\Val \otimes_\Z \fo) \ar{d}{\sharp}  \\
        & \rH^1(\sP,\sP\setminus\sW;\Val).
      \end{tikzcd}
    \end{equation*}
  \item
    \label{Thm_LocalWallCrossingProperWall_Exact}
    $\Delta_\loc$ and $\sharp$ assemble into the exact sequence
    \begin{equation*}
      \SpaceOfWallCrossingFormulae(\sP,\sW;\Val) \stackrel{\Delta_{\loc}}{\into} \rH^0(\sW;\Val \otimes_\Z \fo) \stackrel{\sharp}{\to} \rH^1(\sP;\Val).
      \qedhere
    \end{equation*}    
  \end{enumerate}
\end{theorem}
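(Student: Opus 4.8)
The plan is to reduce \autoref{Thm_LocalWallCrossingProperWall} to the hypersurface case \autoref{Ex_WallCrossingHypersurface}, applied not to $\sP$ but to the open submanifold $\sP^* \coloneqq \sP \setminus \sB$. Set $\sE^* \coloneqq \sE \setminus \paren*{\im\nu \cup \im\sigma \cup \Crit{\pi}}$. The first point to establish is that $\pi$ restricts to a proper injective $C^1$ immersion $\sE^* \to \sP^*$ with image exactly $\sW^*$: away from $\Crit{\pi}$ it is an immersion, away from $\im\nu$ it is injective, and away from $\im(\pi\circ\nu)\cup\im(\pi\circ\sigma)$ it is proper, so one only has to check that $\pi^{-1}(\sP^*) = \sE^*$ and that $\sE^*$ is a saturated open subset of $\sE\setminus\paren{\im\nu\cup\im\sigma}$, both of which follow formally from the defining inclusions in \autoref{Def_ProperWall}. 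Consequently $\sW^*$ is a closed $C^1$ submanifold of $\sP^*$, each component of which has codimension $-\ind\pi$ on the corresponding component of $\sE$ (the components of codimension $\geq 2$ are harmless and may be absorbed into $\sB$, so that afterwards $\sW^*$ has pure codimension one); moreover the orientation local system $\tilde\fo$ of $\det T\pi$ pushes forward along the homeomorphism $\pi|_{\sE^*}$ to the coorientation local system $\fo$ of $\sW^*$, which legitimizes the notation in the statement. Secondly, one checks that $\sB$ is contained in a countable union of locally closed $C^1$ submanifolds of $\sP$ of codimension $\geq 2$: for $\im(\pi\circ\nu)$ and $\im(\pi\circ\sigma)$ because $\pi\circ\nu$, $\pi\circ\sigma$ are Fredholm of index $\leq -2$, and for $\pi(\Crit{\pi})$ by stratifying $\Crit{\pi}$ into submanifolds of $\sE$ of positive codimension (by the corank of $d\pi$) and invoking the Sard--Smale theorem together with $\ind\pi < 0$.

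Granting this, the homological bookkeeping is routine. Removing a countable union of codimension-$\geq 2$ submanifolds changes neither $\pi_0$ nor $\rH^1$ injectively: $\rH^0(\sP;\Val)\iso\rH^0(\sP^*;\Val)$, and $\rH^1(\sP;\Val)\to\rH^1(\sP^*;\Val)$ as well as $\rH^1(\sP,\sP\setminus\sW;\Val)\to\rH^1(\sP^*,\sP\setminus\sW;\Val)$ are injective since $\rH^1(\sP,\sP^*;\Val)=0$. Because $\sB\subset\sW$ one has $\sP^*\setminus\sW^* = \sP\setminus\sW$ on the nose, and the restriction $\rH^0(\sP;\Val)\to\rH^0(\sP\setminus\sW;\Val)$ factors through $\rH^0(\sP^*;\Val)$, whence
\begin{equation*}
  \SpaceOfWallCrossingFormulae(\sP,\sW;\Val)\iso\SpaceOfWallCrossingFormulae(\sP^*,\sW^*;\Val).
\end{equation*}
Applying \autoref{Ex_WallCrossingHypersurface} to $(\sP^*,\sW^*)$ gives $\sharp^*\co\rH^0(\sW^*;\Val\otimes_\Z\fo)\iso\rH^1(\sP^*,\sP\setminus\sW;\Val)$, the injection $\Delta_\loc^*\co\SpaceOfWallCrossingFormulae(\sP^*,\sW^*;\Val)\into\rH^0(\sW^*;\Val\otimes_\Z\fo)$ with $\sharp^*\circ\Delta_\loc^* = \Delta^*$, and the exact sequence $\SpaceOfWallCrossingFormulae(\sP^*,\sW^*;\Val)\into\rH^0(\sW^*;\Val\otimes_\Z\fo)\to\rH^1(\sP^*;\Val)$. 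Transporting these along the isomorphisms above yields the three assertions, the only non-formal step being that $\im\sharp^*$ in fact lies in the subgroup $\rH^1(\sP,\sP\setminus\sW;\Val)\hookrightarrow\rH^1(\sP^*,\sP\setminus\sW;\Val)$ — equivalently, that the connecting map sends $\sharp^*(\alpha)$ to $0$ in $\rH^2(\sP,\sP^*;\Val)$ for every $\alpha$; this is where the hypotheses of \autoref{Def_ProperWall} beyond codimension enter, and I would prove it by the bordism argument described below.

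Formula \eqref{Eq_Sharp} is then immediate. If $\bp\co[0,1]\to\sP$ is a $C^1$ path with endpoints off $\sW$ transverse to $\pi$, $\pi\circ\nu$, $\pi\circ\sigma$, then — its domain being one-dimensional — it is disjoint from the codimension-$\geq 2$ sets $\im(\pi\circ\nu)$, $\im(\pi\circ\sigma)$, $\pi(\Crit{\pi})$, hence misses $\sB$ and meets $\sW = \sW^*$ transversally and, by \autoref{Prop_ProperWall}, in a finite set $\bp^{-1}(\sW) = \bp^*\sE$. Thus $\bp$ is an admissible test path for the hypersurface $\sW^*\subset\sP^*$, and the formula for $\sharp^*$ supplied by \autoref{Ex_WallCrossingHypersurface} is exactly \eqref{Eq_Sharp}, with $\alpha$ evaluated through the coorientation $\fo = \pi_*\tilde\fo$ determined by $\dot\bp$; the commuting triangle with $\Delta$ follows from $\Delta = \sharp\circ\Delta_\loc$ together with the analogous identity on $\sP^*$.

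The main obstacle is the non-formal step just flagged: showing that the relative crossing class does not ``leak'' through $\sB$. Concretely, given two admissible test paths $\bp_0,\bp_1$ that are homologous in $(\sP,\sP\setminus\sW)$, one chooses a generic $F\co\Sigma\to\sP$ from a compact surface with $\partial F = \bp_1 - \bp_0$ up to a chain in $\sP\setminus\sW$, transverse to $\pi$, $\pi\circ\nu$, $\pi\circ\sigma$; then $F^*\sE$ is a $1$--manifold, compact away from the finitely many points of $F^*\sN\cup F^*\sS$, near which — by essential properness, once $F$ approaches $\im(\pi\circ\nu)\cup\im(\pi\circ\sigma)$ — it may develop ends escaping into $\im\nu\cup\im\sigma$. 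One must show that the $\alpha$--weighted count of these ends vanishes, so that $\sum_{\bp_1^{-1}(\sW)}\alpha = \sum_{\bp_0^{-1}(\sW)}\alpha$. When $\nu$ and $\sigma$ happen to have index $\leq -2$ (as in \autoref{Prop_PiDegIsEssentiallyProper}) the sets $\im(\pi\circ\nu)$, $\im(\pi\circ\sigma)$ have codimension $\geq 3$, a generic $F$ misses them, no ends appear, and the argument closes at once. In the generality of \autoref{Def_ProperWall} I expect one must instead apply the essential-injectivity/properness structure recursively to $\nu$ and $\sigma$ — whose own bad loci have codimension $\geq 2$ inside $\sN$, $\sS$ — to organize the ends of $F^*\sE$ near $\sB$ into cancelling pairs, in effect iterating \autoref{Def_ProperWall} one level down. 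Everything else is manipulation of the long exact sequences and \autoref{Ex_WallCrossingHypersurface}.
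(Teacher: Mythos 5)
Your formal reductions are largely sound and begin the same way as the paper: pass to $\sP^*=\sP\setminus\sB$, use Sard--Smale to identify $\SpaceOfWallCrossingFormulae(\sP,\sW;\Val)\iso\SpaceOfWallCrossingFormulae(\sP^*,\sW^*;\Val)$, and observe that over $\sP^*$ the wall is an honest closed $C^1$ hypersurface (with the small correction that the relevant domain is $\pi^{-1}(\sP^*)$ rather than your $\sE^*$: a point of $\sE^*$ may still map into $\sB$, so ``image exactly $\sW^*$'' fails as stated, though properness, injectivity and immersivity do hold on $\pi^{-1}(\sP^*)$ by the same reasoning as in \autoref{Lem_W*Neigborhood}). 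From there the paper does \emph{not} apply \autoref{Ex_WallCrossingHypersurface} globally to $(\sP^*,\sW^*)$; it instead glues the local Thom isomorphisms of \autoref{Prop_LocalWallCrossingProperWall_2} over the neighbourhoods produced by \autoref{Lem_W*Neigborhood}, using the cover $\set{\sP\setminus\sW}\cup\set{U_\alpha}$ and a diagram chase. This local route avoids both a global tubular neighbourhood of $\sW^*$ and, more importantly, the passage through $\rH^1(\sP^*,\sP\setminus\sW;\Val)$ on which your argument hinges.

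That passage is where your proposal has a genuine gap, and you flag it yourself: you must show that $\im\sharp^*$ lies in $\rH^1(\sP,\sP\setminus\sW;\Val)\into\rH^1(\sP^*,\sP\setminus\sW;\Val)$, equivalently that the weighted crossing count \autoref{Eq_Sharp} is unchanged under homologies of test paths whose bounding $2$--chains pass through $\sB$. This is exactly assertion \autoref{Thm_LocalWallCrossingProperWall_Sharp}, and it is also what the commuting triangle in \autoref{Thm_LocalWallCrossingProperWall_Delta} and the exactness in \autoref{Thm_LocalWallCrossingProperWall_Exact} rest on, since $\sharp$ must be defined on all of $\rH^0(\sW^*;\Val\otimes_\Z\fo)$ (for classes of the form $\Delta_\loc([\ChamberedInvariant])$ the containment is automatic, because a chambered invariant is defined on all of $\sP\setminus\sW$; the difficulty is precisely for general $\alpha$). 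Your bordism argument closes only if $\nu$ and $\sigma$ have index at most $-2$; under \autoref{Def_ProperWall} they may have index $-1$, so $\pi\circ\nu$ and $\pi\circ\sigma$ have index at most $-2$ and a generic bounding surface $F$ meets their images in isolated points, near which essential properness gives no control and $F^*\sE$ may have non-compact ends whose weighted contributions you cannot cancel. The proposed remedy of ``iterating \autoref{Def_ProperWall} one level down'' is not available: the definition imposes no essential injectivity or properness on $\nu$ and $\sigma$ themselves. A secondary inaccuracy: $\pi(\Crit{\pi})$ need not be contained in a countable union of codimension-$\geq 2$ submanifolds for a non-generic $\pi$ (the corank-one locus of $\Crit{\pi}$ can be an arbitrary closed subset); the facts you actually need --- that generic paths, loops and surfaces transverse to $\pi$ automatically avoid the points where $\dim\coker T\pi$ is too large --- should be obtained directly from transversality and Sard--Smale, which is how the paper argues. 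As submitted, then, the proposal establishes the reduction to $(\sP^*,\sW^*)$ and the analogue of \autoref{Thm_LocalWallCrossingProperWall_Delta} over $\sP^*$, but not the refinement to the relative group $\rH^1(\sP,\sP\setminus\sW;\Val)$ that the theorem asserts.
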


\begin{remark}
  \label{Rmk_LocalWallCrossingProperWall_ComputeDelta}
  The homomorphism $\Delta_\loc$ can be computed as follows.
  Let $p \in \sW^*$.
  Choose a $C^1$ path $\bp \co [-1,1] \to \sP$ transverse to $\pi$, $\pi \circ \nu$, and $\pi \circ \sigma$.
  There is an $\epsilon \in (0,1]$ such that $\bp^{-1}(\sW) \cap [-\epsilon,\epsilon] = 0$ and 
  \begin{equation*}
    \Delta_\loc([\ChamberedInvariant])(p)
    = \Delta([\ChamberedInvariant])([\bp])
    = \ChamberedInvariant(\bp(\epsilon)) - \ChamberedInvariant(\bp(-\epsilon)).
    \qedhere
  \end{equation*}
\end{remark}

\begin{remark}
  \label{Rmk_LocalWallCrossingProperWall_Sheaf}
  An exercise in sheaf theory constructs a sheaf $\sF$ on $\sW$ and an exact sequence with $\rH^0(\sW;\sF)$ in place of $\rH^0(\sW^*;\Val\otimes_\Z \fo)$ without any hypothesis on $\sW$.
  From this perspective, the point of \autoref{Thm_LocalWallCrossingProperWall} is to give a manageable description of this wall-crossing sheaf $\sF$ assuming $\sW$ is a proper wall in $\sP$.
\end{remark}

\autoref{Thm_LocalWallCrossingProperWall} would follow from \autoref{Ex_WallCrossingHypersurface} if $\sB$ were closed.
Since $\sB$ might not be closed,
the proof of \autoref{Thm_LocalWallCrossingProperWall} requires the following preparations.

\begin{lemma}
  \label{Lem_W*Neigborhood}
  Assume the situation of \autoref{Thm_LocalWallCrossingProperWall}.
  Let $p \in \sW^*$.
  Set $\set{e} \coloneqq \pi^{-1}(p)$.
  There are
  open neighborhoods $p \in U \subset \sP$, $e \in V \subset \sE$ such that:
  \begin{enumerate}
  \item
    \label{Lem_W*Neigborhood_Embedding}
    $\pi|_V \co V \to U$ is an embedding.
  \item
    \label{Lem_W*Neigborhood_Dense}
    Every connected component of $\pi(V)$ intersects $\sW^*$.
  \item
    \label{Lem_W*Neigborhood_AlmostEverything}
    $\pi^{-1}(U)\setminus V \subset \im \nu \cup \im \sigma$.
  \end{enumerate}
\end{lemma}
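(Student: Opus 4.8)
The plan is to exploit that $p$ lies off the exceptional locus $\sB$: over such a point $\pi$ has a single preimage $e$, is an embedding near $e$, and is proper near $e$ in the precise sense provided by essential properness, and the three assertions then follow by shrinking a local chart. \emph{Reducing to a point and an embedding.} Since $p \in \sW^* = \sW \setminus \sB$ avoids $\im(\pi\circ\nu)$ and $\im(\pi\circ\sigma)$, any preimage of $p$ must lie outside $\im\nu \cup \im\sigma$ (because $\pi(\im\nu) = \im(\pi\circ\nu)$, and likewise for $\sigma$). By the essential injectivity clause of \autoref{Def_ProperWall}, $\pi$ is injective on $\sE \setminus \im\nu$, so $\pi^{-1}(p)$ is a single point $e$, and $e \in \sE \setminus (\im\nu \cup \im\sigma)$. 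Since $p \notin \pi(\Crit{\pi})$, the map $\pi$ is an immersion at $e$; being Fredholm, $d\pi_e$ is injective with closed, finite-codimensional — hence complemented — image, so by the Banach-manifold normal form of an immersion there is a connected open $V_0 \ni e$ on which $\pi$ restricts to an embedding onto a submanifold $M_0 \coloneqq \pi(V_0) \subseteq \sP$.

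\emph{Using essential properness.} Write $\sE' \coloneqq \sE \setminus (\im\nu \cup \im\sigma)$ and $\sP' \coloneqq \sP \setminus (\im(\pi\circ\nu) \cup \im(\pi\circ\sigma))$; by \autoref{Def_ProperWall} the corestriction $\pi' \co \sE' \to \sP'$ is well-defined and proper, and $e \in \sE'$, $p \in \sP'$. The key claim is that there is an open $U_1 \ni p$ with $\pi^{-1}(U_1) \cap \sE' \subseteq V_0$. If not, using a chart at $p$ and a decreasing countable basis of open neighbourhoods $(U_n)$ of $p$, one finds $e_n \in (\pi^{-1}(U_n) \cap \sE') \setminus V_0$; then $\pi(e_n) \to p$, so $K \coloneqq \{p\} \cup \{\pi(e_n) : n \in \N\}$ is compact and contained in $\sP'$, hence $(\pi')^{-1}(K)$ is compact, a subsequence $e_{n_k}$ converges to some $e_\infty$ with $\pi(e_\infty) = p$, forcing $e_\infty = e \in V_0$ — contradicting $e_n \notin V_0$ and the openness of $V_0$.

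\emph{Shrinking the chart and concluding.} The set $M_0 \cap U_1$ is open in $M_0$ and contains $p$; transporting it through the homeomorphism $\pi|_{V_0}$, let $V$ be the connected component of $e$ in $V_0 \cap \pi^{-1}(U_1)$. Then $\pi(V)$ is the component of $p$ in $M_0 \cap U_1$, so it is open in $M_0$; pick an open $U_2' \subseteq \sP$ with $U_2' \cap M_0 = \pi(V)$ and set $U \coloneqq U_1 \cap U_2'$. Now \autoref{Lem_W*Neigborhood_Embedding} holds since $\pi|_V$ is the restriction of the embedding $\pi|_{V_0}$ to the open set $V$, with $\pi(V) \subseteq U_1 \cap U_2' = U$; \autoref{Lem_W*Neigborhood_Dense} holds since $\pi(V)$ is connected and contains $p \in \sW^*$; and for \autoref{Lem_W*Neigborhood_AlmostEverything}, any $x \in \pi^{-1}(U) \setminus (\im\nu \cup \im\sigma)$ lies in $\pi^{-1}(U_1) \cap \sE' \subseteq V_0$ by the key claim, whence $\pi(x) \in U \cap M_0 \subseteq U_2' \cap M_0 = \pi(V)$, so injectivity of $\pi|_{V_0}$ places $x$ in $V$; equivalently $\pi^{-1}(U) \setminus V \subseteq \im\nu \cup \im\sigma$.

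\emph{Expected main obstacle.} Everything outside the properness step is formal bookkeeping with restrictions of embeddings; the hypotheses really enter only in the key claim of the second step. The subtlety is that $\sB$ need not be closed, so it cannot simply be excised — instead one must feed into essential properness a compact test set that already avoids $\im(\pi\circ\nu) \cup \im(\pi\circ\sigma)$, which is why $K$ is taken to be a convergent sequence together with its limit $p \in \sP'$, lying entirely in $\sP'$. A secondary point worth stating carefully is the meaning of $\Crit{\pi}$ for the negative-index Fredholm map $\pi$: it should be the locus where $d\pi$ fails to be injective, so that "immersion at $e$" is exactly what $p \notin \pi(\Crit{\pi})$ provides.
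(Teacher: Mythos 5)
Your proof is correct and follows essentially the same route as the paper: a local embedding near $e$ (using $p \notin \pi(\Crit{\pi})$ and the Fredholm normal form) gives \autoref{Lem_W*Neigborhood_Embedding} and \autoref{Lem_W*Neigborhood_Dense}, and \autoref{Lem_W*Neigborhood_AlmostEverything} is obtained by the same contradiction argument, extracting a convergent subsequence via essential properness and identifying the limit with $e$ via essential injectivity. You merely spell out bookkeeping (the compact test set $K \subset \sP'$, the connected choice of $V$, the choice of $U$) that the paper's proof leaves implicit.
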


\begin{proof}
  Since $T_e\pi \co T_e\sE \to T_p\sP$ is injective,
  there are open neighborhoods $p \in U \subset \sP$, $e \in V \subset \sE$ such that \autoref{Lem_W*Neigborhood_Embedding} and \autoref{Lem_W*Neigborhood_Dense} holds.

  If \autoref{Lem_W*Neigborhood_AlmostEverything} cannot be arranged to hold by shrinking $U$ and $V$,
  then there is a sequence $(e_n)$ in $\sE \setminus (\im \nu \cup \im \sigma)$ which does not converge to $e$ but with $(\pi(e_n))$ converging to $p$.
  Since $\pi|_{\sE \setminus (\im\nu \cup \im\sigma)} \co \sE\setminus(\im\nu \cup \im\sigma) \to \sP \setminus \paren{\im \paren{\pi \circ \nu} \cup \im \paren{\pi\circ\sigma}}$ is proper,
  after passing to a subsequence,
  $(e_n)$ converges to $e' \in \sE$.
  Since $\pi(e') = p$ and $\pi|_{\sE \setminus (\im\nu \cup \im\sigma)}$ is injective,
  $e' = e$: a contradiction.
\end{proof}

\begin{prop}
  \label{Prop_LocalWallCrossingProperWall_2}
  Suppose that $U,V$ satisfy
  \autoref{Lem_W*Neigborhood_Embedding},
  \autoref{Lem_W*Neigborhood_Dense},
  \autoref{Lem_W*Neigborhood_AlmostEverything}
  from \autoref{Lem_W*Neigborhood}.
  Set $U^* \coloneqq U \setminus \sB$.
  The following hold:
  \begin{enumerate}
  \item
    \label{Prop_LocalWallCrossingProperWall_2_NegligibleIsomorphism}
    The homomorphisms $\rH^0(U;\Val) \to \rH^0(U^*;\Val)$ and $\rH^0(U\setminus\pi(V);\Val) \to \rH^0(U^*\setminus\sW^*;\Val)$ are isomorphisms.
  \item
    \label{Prop_LocalWallCrossingProperWall_2_XIsomorphism}
    There is an isomorphism
    \begin{equation*}
      t \co \coker\paren{\rH^0(U;\Val) \to \rH^0\paren{U\setminus \pi(V);\Val}}
      \iso
      \rH^0(V,\Val\otimes_\Z \tilde\fo).
    \end{equation*}
  \item
    \label{Prop_LocalWallCrossingProperWall_2_Injective}
    The homomorphism
    $\rH^0(V,\Val\otimes_\Z \tilde \fo) \to \rH^0(\sW^*\cap U;\Val\otimes_\Z \fo)$
    is injective.
  \end{enumerate}
  Moreover,
  if
  $U_1,V_1$ and $U_2,V_2$ satisfy
  \autoref{Lem_W*Neigborhood_Embedding}, \autoref{Lem_W*Neigborhood_Dense}, \autoref{Lem_W*Neigborhood_AlmostEverything}
  from \autoref{Lem_W*Neigborhood},
  then the following hold:
  \begin{enumerate}[resume]
  \item
    \label{Prop_LocalWallCrossingProperWall_2_Also}        
    $U_{12} \coloneqq U_1\cap U_2$, $V_{12} \coloneqq V_1 \cap V_2$ satisfy
    \autoref{Lem_W*Neigborhood_Embedding}, \autoref{Lem_W*Neigborhood_Dense}, \autoref{Lem_W*Neigborhood_AlmostEverything}
    from \autoref{Lem_W*Neigborhood}.
  \item
    \label{Prop_LocalWallCrossingProperWall_2_Commute}        
    The diagram
     \begin{equation*}
      \begin{tikzcd}
        \rH^0(\sP\setminus\sW;\Val) \ar{r}\ar{d} & \rH^0(U_1\setminus \pi(V_1);\Val) \ar{d} \ar{r} & \rH^0(V_1;\Val\otimes_\Z \tilde\fo) \ar{dd} \\
        \rH^0(U_2\setminus \pi(V_2);\Val) \ar{d} \ar{r} &  \rH^0(U_{12} \setminus \pi(V_{12});\Val) \ar{rd} \\
        \rH^0(V_2;\Val\otimes_\Z \tilde\fo) \ar{rr} & & \rH^0(V_{12};\Val\otimes_\Z \tilde\fo)
      \end{tikzcd}
    \end{equation*}
    commutes.
  \end{enumerate}
\end{prop}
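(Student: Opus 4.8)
The plan is to reduce the whole proposition to one technical input — the \emph{negligibility} of $\sB$ — together with a standard Thom-isomorphism computation and a diagram chase. By \emph{negligibility} I mean: $\sB$ is covered by countably many images of Fredholm maps into $\sP$ of index $\le -2$ (immediate for $\im(\pi\circ\nu)$ and $\im(\pi\circ\sigma)$ since their index is $\le -2$; for $\pi(\Crit{\pi})$ one uses the corank stratification $\Crit{\pi}=\bigcup_{i\ge1}\Sigma^i$ of $\pi$, on whose strata $\pi$ has index $\le -1-i(i+1)\le -3$), and likewise $\pi^{-1}(\sB)$ is covered by countably many images of Fredholm maps into $\sE$ of index $\le -2$; hence a $C^1$-small perturbation of any $C^1$ path in $\sP$ (resp.\ $\sE$) avoids $\sB$ (resp.\ $\pi^{-1}(\sB)$), so $\pi^{-1}(\sB)$ is nowhere dense in $\sE$ and $\sB$ disconnects neither a ball in $\sP$ nor a codimension-one submanifold of $\sP$. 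Since every assertion only involves $U$ and the cohomology of open subsets of $U$, I would first shrink $U$, replacing $V$ by $V\cap\pi^{-1}(U)$ — which preserves \autoref{Lem_W*Neigborhood_Embedding}--\autoref{Lem_W*Neigborhood_AlmostEverything} — so that $U$ is a coordinate ball, hence $\rH^1(U;\Val)=0$, and $N\coloneqq\pi(V)$ is a closed connected codimension-one submanifold of $U$ whose coorientation local system in $U$ corresponds under $\pi|_V$ to $\tilde\fo|_V$, because $\det T_e\pi\cong(\coker T_e\pi)^{*}$ is the conormal line of $\sW$.

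For \autoref{Prop_LocalWallCrossingProperWall_2_NegligibleIsomorphism}: \autoref{Lem_W*Neigborhood_AlmostEverything} gives $\sW\cap U\subset N\cup\sB$, hence $U^*\setminus\sW^*=U\setminus\sW=(U\setminus N)\setminus\sB'$ for a subset $\sB'\subset\sB$, while $U^*=U\setminus\sB$; joining two points of $U\setminus\sB$ in a common component of $U$ by a path there and perturbing it off $\sB$ shows $\pi_0(U\setminus\sB)\to\pi_0(U)$ is bijective, and the same argument inside $U\setminus N$ handles the second map. For \autoref{Prop_LocalWallCrossingProperWall_2_XIsomorphism}: the long exact sequence of $(U,U\setminus N)$ together with $\rH^1(U;\Val)=0$ gives $\coker\bigl(\rH^0(U;\Val)\to\rH^0(U\setminus N;\Val)\bigr)\xrightarrow{\ \sim\ }\rH^1(U,U\setminus N;\Val)$; excising the complement of a closed tubular neighbourhood $N\subset\sT\subset U$ identifies this with $\rH^1(\sT,\sT\setminus N;\Val)$, the Thom isomorphism of the normal line bundle of $N$ with $\rH^0\bigl(N;\Val\otimes_\Z\mathrm{or}(N)\bigr)$, and $(\pi|_V)^{*}$ with $\rH^0\bigl(V;\Val\otimes_\Z\tilde\fo\bigr)$; the composite is $t$.

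For \autoref{Prop_LocalWallCrossingProperWall_2_Injective}: \autoref{Lem_W*Neigborhood_AlmostEverything} gives $\sW^*\cap U=N\setminus\sB$, so the map is, via $\pi|_V$, the restriction $\rH^0(N;\Val\otimes_\Z\tilde\fo)\to\rH^0(N\setminus\sB;\Val\otimes_\Z\tilde\fo)$, which is injective because any section of a locally constant sheaf vanishing on a subset that meets every connected component is zero, and $N\setminus\sB$ meets every component of $N$ by \autoref{Lem_W*Neigborhood_Dense}. Parts \autoref{Prop_LocalWallCrossingProperWall_2_Also} and \autoref{Prop_LocalWallCrossingProperWall_2_Commute} are naturality bookkeeping: \autoref{Lem_W*Neigborhood_Embedding} for $V_{12}$ holds since $\pi|_{V_{12}}$ is the restriction of the embedding $\pi|_{V_1}$ to an open set with image in $U_{12}$; \autoref{Lem_W*Neigborhood_AlmostEverything} follows from its two instances via $\pi^{-1}(U_{12})\subset\pi^{-1}(U_1)\cap\pi^{-1}(U_2)$; and \autoref{Lem_W*Neigborhood_Dense} follows from $\pi(V_i)\cap U_{12}\subset\pi(V_{12})\cup\sB$ together with the density of $\pi(V_1)\setminus\sB$ in $\pi(V_1)$ (from nowhere-density of $\pi^{-1}(\sB)$ in $\sE$, pulled through $\pi|_{V_1}$). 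The diagram then commutes because the connecting map of the pair sequence, excision, and the Thom isomorphism are all natural for the open inclusions $U_{12}\hookrightarrow U_i$ with compatible tubular neighbourhoods $\sT_{12}=\sT_i\cap U_{12}$, and because — after identifying each $\rH^0(U_*\setminus\pi(V_*);\Val)$ with $\rH^0(U_*\setminus\sW;\Val)$ via \autoref{Prop_LocalWallCrossingProperWall_2_NegligibleIsomorphism} — the remaining arrows are plain restrictions.

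The one genuinely non-formal ingredient is the negligibility of $\sB$: that $\pi(\Crit{\pi})$, and the relevant fibre products (for $\pi^{-1}(\im(\pi\circ\nu))$, etc.), are swept out by Fredholm maps of index $\le -2$. This rests on the corank stratification of $\pi$ and on those fibre products being submanifolds, which one gets from Sard–Smale and transversality; I expect this to be the main obstacle, while everything downstream — the Thom isomorphism, the exact sequences, the naturality — is elementary.
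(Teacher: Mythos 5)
Your overall plan is exactly the paper's: negligibility of $\sB$ via Sard--Smale/transversality for \autoref{Prop_LocalWallCrossingProperWall_2_NegligibleIsomorphism}, the long exact sequence plus excision plus the elementary Thom isomorphism of \autoref{Ex_WallCrossingHypersurface}~\autoref{Ex_WallCrossingHypersurface_Thom} for \autoref{Prop_LocalWallCrossingProperWall_2_XIsomorphism}, the ``meets every component'' argument from \autoref{Lem_W*Neigborhood_Dense} for \autoref{Prop_LocalWallCrossingProperWall_2_Injective}, and naturality for the rest. Two steps of your write-up, however, do not work as stated. The corank stratification of $\Crit{\pi}$ is neither available nor needed: $\pi$ is part of the data of the proper wall and cannot be perturbed, so the sets $\Sigma^i$ need not be Banach submanifolds and the index count for $\pi|_{\Sigma^i}$ has no meaning; Sard--Smale and transversality do not repair this. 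The fact you need is simpler: a $C^1$ path transverse to $\pi$ cannot meet $\pi(\Crit{\pi})$ at all, because at $e\in\Crit{\pi}$ one has $\dim\ker T_e\pi\geq 1$, hence $\dim\coker T_e\pi\geq 2$ since the index of $\pi$ is at most $-1$, and a one-parameter family cannot be transverse there; combined with the fact that $\pi\circ\nu$ and $\pi\circ\sigma$ have index at most $-2$, and with Sard--Smale for all three maps, this gives all the negligibility of $\sB$ that \autoref{Prop_LocalWallCrossingProperWall_2_NegligibleIsomorphism} requires. Note also that your worry about fibre products dissolves if you read \autoref{Def_ProperWall_EssentiallyInjective} and \autoref{Def_ProperWall_EssentiallyProper} literally: the displayed maps there take values in $\sP\setminus\im(\pi\circ\nu)$, resp.\ $\sP\setminus(\im(\pi\circ\nu)\cup\im(\pi\circ\sigma))$, which already says $\pi^{-1}(\im(\pi\circ\nu))\subset\im\nu$ and similarly for $\sigma$, whence $\pi^{-1}(\sB)\subset\im\nu\cup\im\sigma\cup\Crit{\pi}$; together with injectivity of $T\pi$ along $V_1$ this is what actually gives the density of $\pi(V_1)\setminus\sB$ in $\pi(V_1)$ that you invoke for \autoref{Prop_LocalWallCrossingProperWall_2_Also}.

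The second problem is the move ``shrink $U$ to a coordinate ball'': it is not a legitimate reduction, since the assertion concerns the given pair $(U,V)$ and every group in \autoref{Prop_LocalWallCrossingProperWall_2_XIsomorphism} changes when $U$ is replaced by a smaller set. Moreover the properties you then use --- that $\pi(V)$ is closed in $U$ (needed already to form the pair $(U,U\setminus\pi(V))$, for excision, and for your perturbation argument inside $U\setminus\pi(V)$), connected, and of codimension one --- do not follow from \autoref{Lem_W*Neigborhood_Embedding}, \autoref{Lem_W*Neigborhood_Dense}, \autoref{Lem_W*Neigborhood_AlmostEverything} alone: those conditions only give that the closure of $\pi(V)$ in $U$ lies in $\pi(V)\cup\sB$, and the codimension equals minus the index of $\pi$, which \autoref{Def_ProperWall} only bounds below by $1$. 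The fix is to build the normalization into the choice of $U,V$ rather than to shrink inside the proposition: since $p\in\sW^*$, $T_e\pi$ is injective with closed complemented image, so the local immersion theorem allows $U$ to be taken as a chart ball in which $\pi(V)$ is a closed slice; this is implicit in the construction in \autoref{Lem_W*Neigborhood} and is what the paper's appeal to the ``elementary'' Thom isomorphism presupposes. With that normalization in place, your computation of $t$, your injectivity argument for \autoref{Prop_LocalWallCrossingProperWall_2_Injective}, and your naturality chase for \autoref{Prop_LocalWallCrossingProperWall_2_Commute} go through as written.
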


\begin{proof}
  \autoref{Prop_LocalWallCrossingProperWall_2_NegligibleIsomorphism} follows from the Sard--Smale Theorem.

  \autoref{Prop_LocalWallCrossingProperWall_2_XIsomorphism} is the elementary instance of the Thom isomorphism mention in \autoref{Ex_WallCrossingHypersurface}~\autoref{Ex_WallCrossingHypersurface_Thom}.

  \autoref{Prop_LocalWallCrossingProperWall_2_Injective} holds by construction.
  
  \autoref{Prop_LocalWallCrossingProperWall_2_Also} and \autoref{Prop_LocalWallCrossingProperWall_2_Commute} are obvious.  
\end{proof}

\begin{proof}[Proof of \autoref{Thm_LocalWallCrossingProperWall}]
  Set $\sP \coloneqq \sP \setminus \sB$.
  By the Sard--Smale Theorem,
  $\rH^0(\sP;\Val) \to \rH^0(\sP^*;\Val)$ is an isomorphism.
  Therefore,
  \begin{equation*}
    \SpaceOfWallCrossingFormulae(\sP,\sW;\Val)
    \iso
    \SpaceOfWallCrossingFormulae(\sP^*,\sW^*;\Val).
  \end{equation*}
  Let $\set{ U_\alpha \subset \sP : \alpha \in A}$ be a set of open subsets with $\sW^* \subset \bigcup_{\alpha \in A} U_\alpha$.
  Set $U_\alpha^* \coloneqq U_\alpha \setminus \sB$.
  Since $\sW$ is closed, $\set{\sP\setminus\sW} \cup \set{ U_\alpha \subset \sP : \alpha \in A}$ is an open cover of $\sP^*$.
  A diagram chase in  
  \begin{equation*}
    \begin{tikzcd}[column sep=small]
      \rH^0(\sP^*;\Val) \ar[hook]{r} \ar[hook]{d} & \rH^0(\sP^*\setminus\sW^*;\Val) \ar[two heads]{r} \ar[hook]{d} & \SpaceOfWallCrossingFormulae(\sP^*,\sW^*;\Val) \ar{d}{\tilde \Delta_\loc} \\
      \prod\limits_{\alpha \in A} \rH^0(U_\alpha^*;\Val) \ar[hook]{r} \ar{d} & \prod\limits_{\alpha \in A}\rH^0(U_\alpha^*\setminus\sW^*;\Val) \ar[two heads]{r} \ar{d} & \prod\limits_{\alpha \in A} \coker\paren{\rH^0(U_\alpha^*;\Val) \to \rH^0(U_\alpha^*\setminus\sW^*;\Val)} \\
      \prod\limits_{\alpha,\beta \in A} \rH^0(U_{\alpha\beta}^*;\Val) \ar[hook]{r} & \prod\limits_{\alpha,\beta \in A}\rH^0(U_{\alpha\beta}^* \setminus\sW^*;\Val)
    \end{tikzcd}
  \end{equation*}
  reveals that $\tilde\Delta_\loc$ is injective.
  
  By \autoref{Lem_W*Neigborhood},
  it is possible to choose $\set{U_\alpha : \alpha \in A }$ and $\set{ V_\alpha : \alpha \in A}$ such that  
  $U_\alpha$, $V_\alpha$ satisfy
  \autoref{Lem_W*Neigborhood_Embedding}, \autoref{Lem_W*Neigborhood_Dense}, and \autoref{Lem_W*Neigborhood_AlmostEverything}.
  Set $W_\alpha \coloneqq \sW^* \cap U_\alpha$.
  By \autoref{Prop_LocalWallCrossingProperWall_2},
  this induces an injective linear map
  \begin{equation*}
    \SpaceOfWallCrossingFormulae(\sP^*,\sW^*;\Val)
    \incl
    \prod_{\alpha \in A} \rH^0(W_\alpha;\Val \otimes_\Z \fo)
  \end{equation*}
  which lifts along
  $\rH^0(\sW^*;\Val \otimes_\Z \fo) \incl \prod_{\alpha \in A} \rH^0(W_\alpha;\Val \otimes_\Z \fo)$.  
\end{proof}



\section{Spinors and quaternions}
\label{Sec_Spinors}

This appendix summarizes algebraic properties of spinors in dimension $3$ used in \autoref{Sec_ChamberedInvariantsFromQuaternionicVortexEquations}. 
 
Let $S$ be the spinor representation of $\Spin(3)$.
We identify $S$ with the space of quaternions $\H$ and $\Spin(3)$ with the group of unit quaternions $\Sp(1) \subset \H$ in the following way.
The double cover  $\Sp(1) \to \SO(3)$ is given by the right action of $\Sp(1)$ on imaginary quaternions $\Im\H = \R^3$ by $q(x) = \bar{q}^{-1}xq$. 
The right action of $\Sp(1)$ on $S=\H$ is by right quaternionic multiplication.
It preserves three complex structures given by left multiplication by $i,j,k$.
The Clifford multiplication $\gamma \colon \R^3 \to \End(S)$ is given by
\begin{equation*}
	\gamma(e_1)x = xi, \quad \gamma(e_2)x = xj, \quad \gamma(e_3)x = xk.
\end{equation*} 
With these conventions, the Clifford multiplication by the volume form is
\begin{equation*}
	\gamma(e_1\wedge e_2\wedge e_3) = \gamma(e_1)\gamma(e_2)\gamma(e_3) = kji = 1.
\end{equation*}
Fix the complex structure on $\H$ given by left multiplication by $i$. 
We have a complex isomorphism
\begin{gather*}
	\C^2 \to \H \\
	(z,w) \mapsto z + wj.
\end{gather*}
With respect to this identification, left multiplication by $j$ is
\begin{equation*}
	j(z,w) = (-\bar w, \bar z),
\end{equation*}
and the action of $\Sp(1)$ agrees with the standard right 
representation of $\SU(2)$. 
The Clifford multiplication is given by the Pauli matrices
\begin{equation*}
	\gamma(e_1) =
	\begin{pmatrix}
	i & 0 \\
	0 & -i
	\end{pmatrix}, 
	\quad
	\gamma(e_2) = 
	\begin{pmatrix}
	0 & -1 \\
	1 & 0 
	\end{pmatrix}
	\quad
	\gamma(e_3) = 
	\begin{pmatrix}
		0 & i \\
		i & 0
	\end{pmatrix}.
\end{equation*} 
The Dirac operator acting on maps $\R^3 \to \C^2$ is
\begin{equation*}
	D = \gamma(e_1)\partial_1   + \gamma(e_2)\partial_2 + \gamma(e_3)\partial_3  
\end{equation*}
We can relate the spinors in dimension $3$ to spinors in dimension $2$ by considering the subgroup $\U(1) \subset \Sp(1)$ of unit complex numbers. 
The right action of $\U(1)$ on $\H = \C^2$ is
\begin{equation*}
	\lambda(z,w) = (\lambda z, \lambda^{-1}w) \quad\text{for } \lambda \in \U(1). 
\end{equation*}
The restriction of the right action of $\Sp(1)$ on $\Im\H$ to $\U(1)$ is given by
\begin{equation*}
	\lambda( x_1 i + x_2 j + x_3 k) = x_1 i + \lambda^{-2}x_2 j + \lambda^{-2} x_3 k,
\end{equation*}
so if we identify
\begin{gather*}
	\R\oplus\C = \Im\H, \\
	(t,z) \mapsto ti + zj,
\end{gather*}
then the action of $\U(1)$ is
\begin{equation*}
	\lambda(t,z) = (t, \lambda^{-2}z),
\end{equation*}

\begin{prop}
	\label{Prop_DiracOperator2D}
	Writing a map $\R\times\C \to \C^2$ as a pair of maps $\R\times\C \to \C$ and using coordinates $t$ on $\R$ and $z = x+iy$ on $\C$, the Dirac operator is
	\begin{equation*}
		D = 
		\begin{pmatrix}
			i\partial_t & - \partial_x + i \partial_y \\
			\partial_x + i \partial_y & -i\partial_t
		\end{pmatrix}
		=
		\begin{pmatrix}
			i\partial_t & - \partial_z \\
			\bar\partial_z & -i\partial_t
		\end{pmatrix}.
	\end{equation*}
\end{prop}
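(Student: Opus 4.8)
The plan is to reduce the statement to the explicit three-dimensional formulas recorded above, rewritten in the coordinates $(t,z)$ on $\R\times\C = \Im\H$. First I would unwind the identification $(t,z)\mapsto ti+zj$: writing $z = x+iy$ and using $ij = k$ gives $zj = xj + yk$, so the Euclidean coordinate $t$ is dual to $e_1$, $x$ to $e_2$, and $y$ to $e_3$. Consequently a map $\R\times\C\to\C^2$, viewed as a function of $(x_1,x_2,x_3) = (t,x,y)$, satisfies $\partial_1 = \partial_t$, $\partial_2 = \partial_x$, and $\partial_3 = \partial_y$.

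Next I would substitute these into $D = \gamma(e_1)\partial_1 + \gamma(e_2)\partial_2 + \gamma(e_3)\partial_3$ using the Pauli matrices displayed above:
\[
D = \begin{pmatrix} i & 0 \\ 0 & -i \end{pmatrix}\partial_t + \begin{pmatrix} 0 & -1 \\ 1 & 0 \end{pmatrix}\partial_x + \begin{pmatrix} 0 & i \\ i & 0 \end{pmatrix}\partial_y = \begin{pmatrix} i\partial_t & -\partial_x + i\partial_y \\ \partial_x + i\partial_y & -i\partial_t \end{pmatrix},
\]
which is the first matrix in the statement. Finally I would rewrite the off-diagonal entries in complex notation: with the normalization $\partial_z = \partial_x - i\partial_y$ and $\bar\partial_z = \partial_x + i\partial_y$ used in this section, one has $-\partial_x + i\partial_y = -\partial_z$ and $\partial_x + i\partial_y = \bar\partial_z$, yielding the second matrix and completing the proof.

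There is no genuine obstacle here: the argument is a direct substitution once the conventions are fixed. The one step that warrants care is the identification in the first paragraph — confirming that $z$ pairs with $e_2$ and $e_3$ in the correct order (via $zj = xj + yk$ and $ij = k$), since an error there would conjugate or interchange the off-diagonal entries. As a consistency check one can verify $D^2 = (\partial_t^2 + \partial_x^2 + \partial_y^2)\cdot\id$, which follows from the Clifford relations $\gamma(e_a)\gamma(e_b) + \gamma(e_b)\gamma(e_a) = -2\delta_{ab}$ and pins down all the signs.
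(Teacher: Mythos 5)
Your proof is correct and is exactly the computation the paper intends: substitute $\partial_1=\partial_t$, $\partial_2=\partial_x$, $\partial_3=\partial_y$ (via $ti+zj=ti+xj+yk$, using $ij=k$) into $D=\gamma(e_1)\partial_1+\gamma(e_2)\partial_2+\gamma(e_3)\partial_3$ with the displayed Pauli matrices, then rewrite the off-diagonal entries with $\partial_z=\partial_x-i\partial_y$, $\bar\partial_z=\partial_x+i\partial_y$. One small slip in your closing aside: with the Clifford relations $\gamma(e_a)\gamma(e_b)+\gamma(e_b)\gamma(e_a)=-2\delta_{ab}$ (and indeed by squaring the matrix directly) the consistency check reads $D^2=-\paren{\partial_t^2+\partial_x^2+\partial_y^2}\cdot\id$, not $+\paren{\partial_t^2+\partial_x^2+\partial_y^2}\cdot\id$.
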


The following facts about real structures are used in the discussion of the $(2,1)$ ADHM vortex equation in \autoref{Sec_ADHM21},
Let $E$ and $S$ be quaternionic Hermitian vector spaces of quaternionic dimension one.
Their complex tensor product has a real structure $\tau \colon E \otimes S \to E \otimes S$, that is a complex antilinear map such that $\tau^2 = \id$, given by
\begin{equation*}
	\tau = j_E \otimes j_S,
\end{equation*}
where $j_E$ and $j_S$ are multiplication by $j$ on $E$ and $S$. 
Let 
\begin{equation*}
	\Re(E\otimes S) = \{ \psi \in E\otimes S : \tau(\psi) = \psi \}.
\end{equation*}
be the real part of $E\otimes S$.
As a real vector space $E\otimes S$ decomposes into
\begin{equation*}
	E\otimes S = \Re(E\otimes S) \oplus i\Re(E\otimes S).
\end{equation*}
Let $S = S^+ \oplus S^-$ for a complex line $S^+$ and $S^- = (S^+)^*$, with the quaternionic structure given by
\begin{gather*}
	j_S \colon S^+ \oplus S^- \to S^+\oplus S^- \\
	j_S(\alpha,\beta) = (-\beta^*,\alpha^*)
\end{gather*}
with the star denoting the metric dual $S^\pm \to S^\mp$ with respect to the Hermitian inner product.
Extend $j_E \colon E \to E$ to complex antilinear maps
\begin{equation*}
	 \overline\gamma \colon E\otimes S^\pm \to E\otimes S^\mp
\end{equation*}
by taking tensor product with the metric dual.
They satisfy $\overline\gamma^2 = -\id$. 
The real structure is 
\begin{gather*}
	\tau \colon (E\otimes S^+) \oplus (E\otimes S^-) \to (E\otimes S^+) \oplus (E\otimes S^-) \\
	\tau=
	\begin{pmatrix}
		0 & -\overline\gamma \\
		\overline\gamma & 0.
	\end{pmatrix}
\end{gather*}

\begin{prop}
	\label{Prop_RealSpinorsProjection}
	~
	\begin{enumerate}
	\item 
	We have
	\begin{align*}
		\Re(E\otimes S) &= \{ (\alpha,\beta) \in (E\otimes S^+) \oplus (E\otimes S^-) : \beta = \overline\gamma(\alpha) \}, \\
		&= \{ (\alpha,\beta) \in (E\otimes S^+) \oplus (E\otimes S^-) : \alpha = -\overline\gamma(\beta) \}, 
	\end{align*}
	and the projections $\Re(E\otimes S) \to E \otimes S^\pm$ are isomorphisms of $\R$--vector spaces.
	\item  
	A $\C$--linear map $\Upsilon \colon E \otimes S \to E \otimes S$ which commutes with $\tau$ is uniquely determined by the induced $\R$--linear map
	\begin{equation*}
		\Upsilon_\R \colon E\otimes S^+ \cong \Re(E\otimes S) \to \Re(E\otimes S) \cong E \otimes S^-.
	\end{equation*}
	Moreover, $\Upsilon$ is self-adjoint with respect to the Hermitian inner product on $E\otimes S$ if and only if $\overline\gamma\Upsilon_\R$ is self-adjoint with respect to the Euclidean inner product on $E \otimes S^+$.  	
	\end{enumerate}	
\end{prop}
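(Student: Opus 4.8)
The plan is to prove both parts by elementary linear algebra, working with the decomposition $\bS = (E\otimes S^+)\oplus(E\otimes S^-)$ and the block form of $\tau$ recorded above, namely $\tau(\alpha,\beta) = (-\overline\gamma\beta,\overline\gamma\alpha)$; throughout I keep in mind that $\overline\gamma$ denotes either of the two antilinear isometries $E\otimes S^\pm\to E\otimes S^\mp$, that these satisfy $\overline\gamma^2 = -\id$, and that $\overline\gamma$ is metric preserving.

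First I would dispatch part (1). The condition $\tau(\alpha,\beta) = (\alpha,\beta)$ unwinds to the pair of equations $\alpha = -\overline\gamma\beta$ and $\beta = \overline\gamma\alpha$; applying $\overline\gamma$ to the second and using $\overline\gamma^2 = -\id$ recovers the first, so each of the two equations individually cuts out $\Re(E\otimes S)$, which gives the two displayed descriptions. Consequently the restriction to $\Re(E\otimes S)$ of the projection $P_+\colon(\alpha,\beta)\mapsto\alpha$ is injective (since $\beta = \overline\gamma\alpha$ is then forced), and it is surjective with $\R$-linear inverse $\alpha\mapsto(\alpha,\overline\gamma\alpha)$ --- this lands in $\Re(E\otimes S)$ because $\tau(\alpha,\overline\gamma\alpha) = (-\overline\gamma^2\alpha,\overline\gamma\alpha) = (\alpha,\overline\gamma\alpha)$ --- so $P_+\colon\Re(E\otimes S)\to E\otimes S^+$ is an $\R$-linear isomorphism; the same argument applies to $P_-\colon(\alpha,\beta)\mapsto\beta$.

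For part (2) I would route everything through the real form $\bS_\R := \Re(E\otimes S)$ rather than through explicit block matrices. Since $\Upsilon$ is $\C$-linear and commutes with $\tau$ it preserves $\bS_\R$ and is the $\C$-linear extension of $\Upsilon|_{\bS_\R}$; conversely every $\R$-linear endomorphism of $\bS_\R$ extends uniquely to a $\C$-linear $\tau$-commuting endomorphism of $\bS$. Transporting $\Upsilon|_{\bS_\R}$ along the isomorphisms of part (1) gives $\Upsilon_\R = P_-\circ\Upsilon|_{\bS_\R}\circ P_+^{-1}$; since $P_-\circ P_+^{-1} = \overline\gamma$ one finds $P_+\circ\Upsilon|_{\bS_\R}\circ P_+^{-1} = -\overline\gamma\Upsilon_\R$. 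This already exhibits $\Upsilon\mapsto\Upsilon_\R$ as a bijection $\End_\tau(\bS)\xrightarrow{\sim}\Hom_\R(E\otimes S^+,E\otimes S^-)$, proving the uniqueness assertion. For the self-adjointness statement, note that $\tau$ is a metric-preserving real structure, so the Hermitian form restricts to a real Euclidean inner product on $\bS_\R$ and identifies $\bS = \bS_\R\otimes_\R\C$ as a Hermitian complexification; by the standard fact that the complexification of a real operator is Hermitian-self-adjoint exactly when the real operator is Euclidean-self-adjoint, $\Upsilon$ is self-adjoint iff $\Upsilon|_{\bS_\R}$ is. Finally $P_\pm$ rescale the inner product by the fixed constant $\tfrac12$ (since $\overline\gamma$ is an isometry, $|(\alpha,\overline\gamma\alpha)|^2 = 2|\alpha|^2$), so conjugation by $P_+$ preserves Euclidean-self-adjointness, whence $\Upsilon|_{\bS_\R}$ is self-adjoint iff $-\overline\gamma\Upsilon_\R$ is iff $\overline\gamma\Upsilon_\R$ is. Chaining these equivalences yields the claim.

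I expect the one point requiring genuine care --- not difficulty, but attention --- to be the bookkeeping of conventions: keeping the two incarnations of $\overline\gamma$ straight, tracking the sign in $\overline\gamma^2 = -\id$ and the factor $2$ in the norm comparison, and phrasing self-adjointness so as to sidestep any subtlety about adjoints of $\C$-antilinear operators. Passing to the real form $\bS_\R$ is precisely the device that makes all of this transparent; everything else is a short computation.
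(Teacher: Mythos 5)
Your proof is correct, and it takes a somewhat different route from the paper's. The paper argues by direct block computation: it writes $\Upsilon$ as a $2\times 2$ matrix of maps with respect to $\bS=(E\otimes S^+)\oplus(E\otimes S^-)$, works out the two relations equivalent to commuting with $\tau$ (namely $\Upsilon_+^+\overline\gamma=\overline\gamma\,\Upsilon_-^-$ and $\Upsilon_-^+\overline\gamma=-\overline\gamma\,\Upsilon_+^-$), identifies $\Upsilon_\R=\Upsilon_+^-+\Upsilon_-^-\overline\gamma$, and then exhibits an explicit reconstruction formula for $\Upsilon$ from the $\C$--linear and $\C$--antilinear parts of $\Upsilon_\R$; part (1) and the self-adjointness equivalence are left implicit in that computation. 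You instead pass through the real form $\Re(E\otimes S)$: you use that restriction and complexification give a bijection between $\tau$--commuting $\C$--linear endomorphisms of $\bS$ and $\R$--linear endomorphisms of $\Re(E\otimes S)$, and then transport along the projection isomorphisms, using $P_-\circ P_+^{-1}=\overline\gamma$, $P_+\circ P_-^{-1}=-\overline\gamma$, and the fact that $P_\pm$ are constant multiples of isometries (the factor $2$ in $\abs{(\alpha,\overline\gamma\alpha)}^2=2\abs{\alpha}^2$). Your sign bookkeeping with $\overline\gamma^2=-\id$ checks out, and your identity $P_+\circ\Upsilon|_{\Re(E\otimes S)}\circ P_+^{-1}=-\overline\gamma\,\Upsilon_\R$ is what makes the self-adjointness equivalence immediate. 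What your structural argument buys is a clean, convention-insensitive proof of part (1) and of the self-adjointness claim, which the paper's terse proof does not spell out; what the paper's computation buys is the explicit matrix expression of $\Upsilon$ in terms of $\Upsilon_\R$, which is reused later (in the remark in \autoref{Sec_ADHM21} recovering $\Upsilon$ from $\fd_\Upsilon$) and which your argument establishes only abstractly, not in closed form. For the statement as given, both arguments are complete.
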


\begin{proof}
	Ageneral map $P \colon E \otimes S \to E\otimes S$ can be written as
	\begin{equation*}
		\Upsilon = 
		\begin{pmatrix}
			\Upsilon_+^+ & \Upsilon_-^+ \\
			\Upsilon_+^- & \Upsilon_-^- 
		\end{pmatrix}
	\end{equation*}
	with respect to decomposition $S = S^+\oplus S^-$. 
	Such a map preserves $\tau$ if and only if
	\begin{equation*}
		\Upsilon_+^+  \overline\gamma = \overline\gamma   \Upsilon_-^- \qandq \Upsilon_-^+   \overline\gamma = - \overline\gamma \Upsilon_+^-. 
	\end{equation*}
	The induced real operator is
	\begin{gather*}
		\Upsilon_\R \colon E \otimes S^+ \to E \otimes S^-, \\
		\Upsilon_\R = \Upsilon_+^- + \Upsilon_-^-\overline\gamma.
	\end{gather*}
	If $\Upsilon$ is $\C$--linear then it can be reconstructed from $\Upsilon_\R$ in the following way.
	Decompose $\Upsilon_\R \colon E \otimes S^+ \to E \otimes S^-$ into $\C$--linear and $\C$--antilinear parts $\Upsilon_\R = \Upsilon_c + \Upsilon_a$.
	Then
	\begin{equation*}
		\Upsilon = 
		\begin{pmatrix}
			-\overline\gamma \Upsilon_a & \overline\gamma \Upsilon_c \overline\gamma  \\
			\Upsilon_c & - \Upsilon_a\overline\gamma 
		\end{pmatrix}. \qedhere
	\end{equation*} 
\end{proof}


\section{Compactness for generalized Seiberg--Witten equations}
\label{Sec_CompactnessQuaternionicVortexEquation}

The quaternionic vortex equation is a dimensional reduction of the three-dimensional generalized Seiberg--Witten equation over $M=C\times S^1$ \cite[Section 3]{Doan2017}. 
We will prove \autoref{Thm_CompactnessQuaternionicVortexEquations} by extending the compactness theorem for generalized Seiberg--Witten equations \cite{Walpuski2019}. 
Unfortunately, this extension requires discussing some technical details of  \cite{Walpuski2019}; the method of the proof still works but some of the estimates and statements have to be adjusted.
Rather than rewriting the full proof, we will assume that the reader is familiar with  \cite{Walpuski2019} and comment on the necessary modifications. 
In particular, we will use the notation introduced in \cite[Section 1.1]{Walpuski2019} and  cite propositions, theorems, equations, or definitions with a number x.xx from \cite{Walpuski2019} simply as [WZ x.xx].

Throughout this section, let $\rho \colon G \to \Sp(V)$ be a quaternionic representation as in \autoref{Sec_QuaternionicVortexEquation}. 
Moreover, we assume \autoref{Hyp_SpinGRepresentation}, \autoref{Hyp_FlavorSymmetry}, and \autoref{Hyp_Compactness}.
Let $M$ be a closed oriented $3$-manifold.
Given a spin$^H$ structure on $M$, we associate with it a principal $G$-bundle $P \to M$ and a vector bundle $\bV \to M$ with fiber $V$, similarly to the $2$-dimensional construction in \autoref{Sec_QuaternionicVortexEquation}.
A connection $A$ on $P$ defines the induced Dirac operator
\begin{equation*}
	D_A \colon \Gamma(\bV) \to \Gamma(\bV)
\end{equation*}
and we have a quaternionic moment map
\begin{equation*}
	\mu \colon \bV \to \Lambda^2 T^*M \otimes \ad P.
\end{equation*}
For details, see \cite[Section 1.1]{Walpuski2019}.

\begin{definition}
	The \defined{generalized Seiberg--Witten equation} for a triple
	\begin{equation*}
		A \in \sA(P), \quad \Psi \in \Gamma(\bV), \quad \zeta \in \Gamma(\ad P)
	\end{equation*}
	reads
	\begin{equation*}
		\begin{split}
			&(D_A+\rho(\zeta))\Psi = 0, \\
	   		&F_A + \ast \rd_A \zeta = \mu(\Psi) .
		\end{split}
	\end{equation*}
\end{definition}

The following perturbation of the equation is relevant to the discussion of quaternionic vortex equations.

\begin{definition}
	Let $\sG$ be the gauge group of $P$. 
	Suppose that $\Upsilon \co \sA(P) \times \Gamma(\bV) \to \End(\bV)$ is $\sG$-invariant,  $\eta \co \sA(P) \times \Gamma(\bV) \to \Omega^2(\ad P)$ is $\sG$-equivariant, and 
	\begin{enumerate}
	\item $\Upsilon(A,\Psi)$ is self-adjoint and commutes with the action of the gauge group,
	\item $\rd_A \eta(A,\Psi) = 0$,
	\item $| \Upsilon(A,\Psi) |, | \nabla_A \Upsilon(A,\Psi) |, | \eta(A,\Psi) | , |\nabla_A\eta(A,\Psi)|$ are uniformly bounded. 
	\end{enumerate}
	(Note that $\nabla_A \Upsilon = \nabla_{A_0}\Upsilon$ for any fixed connection $A_0$ because $\Upsilon$ commutes with $\ad P$.)
	
	The \defined{$(\Upsilon,\eta)$--perturbed generalized Seiberg--Witten equation} for $(A,\Psi,\zeta)$ reads
	\begin{equation}
	\label{Eq_GeneralizedSW}
	\begin{split}
		&(D_A+\Upsilon(A,\Psi)+\rho(\zeta))\Psi = 0, \\
	   &F_A + \ast \rd_A \zeta + \eta(A,\Psi) = \mu(\Psi) .
	\end{split}
	\end{equation}
\end{definition}

It is necessary to include the field $\zeta$ to make the equation elliptic modulo the action of the gauge group. 
However, the next proposition shows that without loss of generality we may assume that $\zeta=0$.

\begin{prop}
  \label{Prop_VanishingExtraField}
If $(A,\Psi,\zeta)$ satisfies \autoref{Eq_GeneralizedSW}, then
\begin{equation*}
	\rho(\zeta)\Psi = 0 \qandq \rd_A \zeta = 0.
\end{equation*}
In particular, if $(A,\Psi)$ is irreducible, then $\zeta = 0$. 
\end{prop}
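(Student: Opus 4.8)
The plan is to run the classical Weitzenböck-type argument from Seiberg--Witten theory, adapted to the generalized equation. After fixing a gauge the solution $(A,\Psi,\zeta)$ is smooth, so all integrations by parts below are legitimate on the closed $3$--manifold $M$.

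First I would pair the second equation in \autoref{Eq_GeneralizedSW} with $\ast\rd_A\zeta\in\Omega^2(\ad P)$ and integrate over $M$. The contribution of $F_A$ vanishes by the Bianchi identity, since $\int_M\langle F_A\wedge\rd_A\zeta\rangle = -\int_M\langle\rd_A F_A\wedge\zeta\rangle = 0$; the contribution of $\eta$ vanishes likewise because $\rd_A\eta(A,\Psi)=0$ by hypothesis. What remains is
\begin{equation*}
  \|\rd_A\zeta\|_{L^2}^2 = \int_M\langle\mu(\Psi),\ast\rd_A\zeta\rangle\,\vol = -\int_M\langle\rd_A(\mu(\Psi))\wedge\zeta\rangle,
\end{equation*}
the last step being Stokes' theorem again.

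The key step is to rewrite $\rd_A(\mu(\Psi))$ with the help of the first equation. Writing $\mu(\Psi)=\sum_a\mu_a(\Psi)\,\iota_{e_a}\vol$ in a local orthonormal frame, using the moment-map definitions $\langle\mu_a(\Psi),\xi\rangle=\langle\gamma(e_a)\rho(\xi)\Psi,\Psi\rangle$ and the quaternion/Clifford identities of \autoref{Sec_Spinors}, and integrating away the divergence terms (using that $\rho(\zeta)$ commutes with the Clifford action and that $D_A$ is pointwise self-adjoint modulo a divergence), one identifies the right-hand side:
\begin{equation*}
  \|\rd_A\zeta\|_{L^2}^2 = -2\int_M\langle D_A\Psi,\rho(\zeta)\Psi\rangle\,\vol,
\end{equation*}
the overall sign being pinned down by tracking the Clifford conventions. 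This is the "standard calculation" referred to in the statement; it is the exact $3$--dimensional analogue of the identity underlying the compactness theory of \cite{Walpuski2019} and, in the unperturbed case, of \cite{Haydys2014}. Now substitute $D_A\Psi=-\Upsilon(A,\Psi)\Psi-\rho(\zeta)\Psi$ from the first equation. The cross term vanishes: $\Upsilon(A,\Psi)$ is self-adjoint and commutes with $\rho(\ad P)$ while $\rho(\zeta)$ is skew-adjoint, so
\begin{equation*}
  \langle\Upsilon(A,\Psi)\Psi,\rho(\zeta)\Psi\rangle = \langle\Psi,\rho(\zeta)\Upsilon(A,\Psi)\Psi\rangle = -\langle\Upsilon(A,\Psi)\Psi,\rho(\zeta)\Psi\rangle,
\end{equation*}
hence it is zero. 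Therefore $\|\rd_A\zeta\|_{L^2}^2 + 2\|\rho(\zeta)\Psi\|_{L^2}^2 = 0$, which forces $\rd_A\zeta=0$ and $\rho(\zeta)\Psi=0$. For the final assertion, if $(A,\Psi)$ is irreducible, i.e.\ has trivial stabilizer in the gauge group, then $\rd_A\zeta=0$ makes $\exp(t\zeta)\in\sG$ covariantly constant, hence it fixes $A$ for every $t\in\R$, and $\rho(\zeta)\Psi=0$ means it also fixes $\Psi$; thus $\exp(t\zeta)$ lies in the stabilizer of $(A,\Psi)$ for all $t$, so $\exp(t\zeta)=1$ identically and $\zeta=0$.

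I expect the main obstacle to be the middle identity relating $\rd_A(\mu(\Psi))$ to $\langle D_A\Psi,\rho(\zeta)\Psi\rangle$: all the representation-theoretic content — the hyperkähler structure on $V$, the identification of $\Lambda^2 T^*M$ with the imaginary quaternions through the Clifford symbol, and the equivariance of the hyperkähler moment map in the quaternionic directions — is concentrated there, and one must check that it is precisely $D_A\Psi$, rather than $\nabla_A\Psi$ together with leftover curvature terms, that appears. The remaining ingredients (the Bianchi identity, the hypothesis $\rd_A\eta=0$, self-adjointness and $\ad$-equivariance of $\Upsilon$, and the stabilizer argument) are routine.
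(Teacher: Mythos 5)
Your argument is essentially the paper's proof: the paper applies $\rd_A$ to the curvature equation, uses the Bianchi identity, $\rd_A\eta=0$, and the moment-map identity $\rho^*\paren{(D_A\Psi)\Psi^*}=\rd_A^*\rd_A\zeta$ (cited there as \cite[Proposition B.4]{Doan2017a} — exactly the ``key identity'' you flag), then substitutes the Dirac equation and kills the cross term by the same self-adjoint/skew-adjoint commutation argument; your pairing with $\ast\rd_A\zeta$ and integrating by parts is the same computation in a different order. The only discrepancy is your coefficient $2$ in front of $\|\rho(\zeta)\Psi\|_{L^2}^2$ (the paper gets coefficient $1$), which is harmless since both terms are nonnegative.
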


\begin{proof}
Apply $\rd_A$ to the second equation.
By the Bianchi identity and \cite[Proposition B.4]{Doan2017a},
\begin{equation*}
	\rd_A^* \rd_A \zeta - \rho^*( (D_A\Psi)\Psi^* ) = 0.
\end{equation*}
Therefore, by the first equation,
\begin{equation*}
	\rd_A^* \rd_A \zeta + \rho^*( (\rho(\zeta)\Psi)\Psi^* + (\Upsilon\Psi)\Psi^*) = 0.
\end{equation*}
Taking the inner product with $\zeta$ and integrating yields
\begin{equation*}
	\| \rd_A \zeta \|_{L^2}^2 + \| \rho(\zeta)\Psi \|_{L^2}^2 + \langle \Upsilon\Psi, \rho(\zeta)\Psi \rangle_{L^2} = 0.
\end{equation*}
The third term is zero because $\rho(\zeta)\Upsilon$ is a skew-adjoint operator:
\begin{equation*}
	(\rho(\zeta)\Upsilon)^* = \Upsilon^*\rho(\zeta)^* = - \Upsilon\rho(\zeta) = - \rho(\zeta)\Upsilon.  \qedhere
\end{equation*} 
\end{proof}

Set $\zeta =0$ in \autoref{Eq_GeneralizedSW}. 
If $\Psi \neq 0$, then after rescaling the equation by $\epsilon = \| \Psi \|_{L^2}^{-1}$ we can write it as an equation for the triple $(A,\Psi,\epsilon)$:
\begin{equation}
\label{Eq_GeneralizedSWSimplified}
\begin{split}
	&(D_A+\Upsilon(A,\Psi))\Psi = 0, \\
   &\epsilon^2(F_A + \eta(A,\Psi)) = \mu(\Psi) , \\
   &\| \Psi \|_{L^2} = 1.
\end{split}
\end{equation}
The following generalizes the compactness theorem \cite[Theorem 1.28]{Walpuski2019} to the perturbation \autoref{Eq_GeneralizedSWSimplified} of the generalized Seiberg--Witten equation. 

\begin{prop}
If $(A_n,\Psi_n,\epsilon_n)$ is a sequence of solutions of \autoref{Eq_GeneralizedSWSimplified}, then, after passing to a subsequence and applying gauge transformations, $(A_n,\Psi_n,\epsilon_n)$ converges in the $C^\infty$ topology to a solution $(A,\Psi,\epsilon)$  of \autoref{Eq_GeneralizedSWSimplified}. 
\end{prop}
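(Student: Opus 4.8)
The plan is to follow the proof of the compactness theorem [WZ~1.28] essentially verbatim, the only new ingredient being the two perturbation terms $\Upsilon = \Upsilon(A_n,\Psi_n)$ and $\eta = \eta(A_n,\Psi_n)$, which by assumption are uniformly bounded together with their first covariant derivatives, self-adjoint, $\sG$--equivariant, and (for $\Upsilon$) commute with $\Ad P$. I may also assume, by \autoref{Prop_VanishingExtraField}, that the extra field $\zeta$ has been set to zero, so that the configurations in question solve \eqref{Eq_GeneralizedSWSimplified}. First I would establish a uniform $C^0$--bound on $\Psi_n$. Starting from $D_{A_n}\Psi_n = -\Upsilon\Psi_n$, the Weitzenböck formula gives $\tfrac12\Delta\abs{\Psi_n}^2 \geq \abs{\nabla_{A_n}\Psi_n}^2 + \epsilon_n^{-2}\abs{\mu(\Psi_n)}^2 - C\abs{\Psi_n}^2$, where the good quadratic term comes from the algebraic identity for the Clifford action of $\mu$ and the curvature equation $\epsilon_n^2(F_{A_n}+\eta) = \mu(\Psi_n)$, and where the $\Upsilon$-- and $\eta$--contributions are zeroth-- and first--order with uniformly bounded coefficients. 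Combined with $\|\Psi_n\|_{L^2}=1$ this yields $\|\Psi_n\|_{C^0}\leq C$ exactly as in [WZ].

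Next I would pass to a subsequence along which $\epsilon_n \to \epsilon_\infty \in [0,\infty)$ and distinguish two cases. If $\epsilon_\infty > 0$, then $F_{A_n} = \epsilon_n^{-2}\mu(\Psi_n) - \eta$ is uniformly bounded in $C^0$, so Uhlenbeck's theorem (there is no curvature concentration on a $3$--manifold under an $L^p$--bound with $p>\tfrac32$) produces, after gauge transformations, a limit $A_n \to A_\infty$ weakly in $W^{1,p}$ for every $p$. Feeding this into the elliptic system formed by the Dirac equation, the curvature equation, and a Coulomb gauge condition, and bootstrapping (using that $\Upsilon,\eta$ are as regular as required), upgrades this to $C^\infty$ convergence of $(A_n,\Psi_n,\epsilon_n)$ to a solution of \eqref{Eq_GeneralizedSWSimplified} with $\epsilon=\epsilon_\infty$.

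The case $\epsilon_n \to 0$ is the heart of the matter and the place where \autoref{Hyp_Compactness} and the homogeneity of $\Upsilon$ are used; here one must prevent $\|F_{A_n}\|$ from blowing up. I would reproduce [WZ]'s argument: first the energy identity $\|\nabla_{A_n}\Psi_n\|_{L^2}^2 + \epsilon_n^2\|F_{A_n}\|_{L^2}^2 \leq C$, obtained by pairing the equations with $\Psi_n$ and with the moment map and invoking the $C^0$--bound; then the pointwise estimate $\abs{\mu(\Phi)} \leq c\,\abs{\Gamma_\Phi\mu(\Phi)}$ of \autoref{Hyp_Compactness}, valid for $\abs{\Phi}$ near $1$ and $\mu(\Phi)$ small, which drives [WZ]'s concentration--compactness for the rescaled curvature $\epsilon_n^2 F_{A_n} = \mu(\Psi_n) + O(\epsilon_n^2)$. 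The one new point to check is that the differential inequality for $\abs{\mu(\Psi_n)}$ underlying this argument still closes with the $\Upsilon$--term present: since $\Upsilon \in \Gamma(\HOMT(\bV,\rho))$ satisfies $\Upsilon\Phi \in \ker\rd_\Phi\mu$ whenever $\mu(\Phi)=0$ --- this is exactly why \autoref{Def_ZerothOrderPerturbation} restricts to $\HOMT(\bV,\rho)$ rather than all $\SerreOperator$--self-adjoint perturbations --- one has $\rd_\Phi\mu(\Upsilon\Phi) = O(\abs{\mu(\Phi)})$, so the extra term generated by $D_{A_n}\Psi_n = -\Upsilon\Psi_n$ is absorbed into the error already present in [WZ]. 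This yields a uniform $L^p$--bound on $F_{A_n}$ with $p>\tfrac32$, after which the bootstrap of the previous case applies verbatim and gives $C^\infty$ convergence to $(A_\infty,\Psi_\infty,0)$; since $C^0$ convergence preserves $\|\Psi_\infty\|_{L^2}=1$ and $\epsilon_n^2(F_{A_n}+\eta)=\mu(\Psi_n)$ forces $\mu(\Psi_\infty)=0$, the limit indeed solves \eqref{Eq_GeneralizedSWSimplified} with $\epsilon=0$.

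The main obstacle is thus entirely in the $\epsilon_n \to 0$ case: verifying that [WZ]'s delicate estimates controlling $\epsilon_n F_{A_n}$ in the degenerating limit survive the insertion of the homogeneous perturbation, the saving grace being precisely that $\Upsilon$ is infinitesimally tangent to $\mu^{-1}(0)$. Everything else --- the $C^0$--estimate, the Uhlenbeck/bootstrap machinery, and the passage to the limit in the equations --- is a routine adaptation of [WZ] in which the uniformly bounded terms $\Upsilon$ and $\eta$ are simply carried along; no limiting configuration with a nonempty singular set appears at this stage, which is exactly what is then exploited, via \autoref{Thm_CompactnessQuaternionicVortexEquations}, in the applications to the ADHM vortex equations.
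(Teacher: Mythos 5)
Your first case ($\liminf \epsilon_n>0$) is fine and is the standard argument the paper leaves implicit: the Weitzenböck formula with the bounded perturbations gives a $C^0$ bound on $\Psi_n$, hence a $C^0$ bound on $F_{A_n}=\epsilon_n^{-2}\mu(\Psi_n)-\eta(A_n,\Psi_n)$, and Uhlenbeck gauge fixing plus elliptic bootstrap yield $C^\infty$ subconvergence to a solution of \eqref{Eq_GeneralizedSWSimplified}. This is the regime the proposition is actually meant to cover (it is the rescaled form of the case $\limsup_n\|\Psi_n\|_{L^2}<\infty$ in \autoref{Thm_CompactnessQuaternionicVortexEquations}), and the paper treats it as routine, reserving all of the work for the degenerate limit.

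The gap is in your second case. When $\epsilon_n\to 0$ you claim a uniform $L^p$ bound, $p>\tfrac32$, on $F_{A_n}$, deduced from \autoref{Hyp_Compactness} together with the tangency property $\Upsilon\Phi\in\ker\rd_\Phi\mu$ on $\mu^{-1}(0)$, and conclude $C^\infty$ convergence on all of $M$. No such bound holds in general: the energy identity only gives $\epsilon_n^2\|F_{A_n}\|_{L^2}^2\lesssim 1$, i.e. $\|F_{A_n}\|_{L^2}\lesssim \epsilon_n^{-1}$, and \autoref{Hyp_Compactness} controls $\mu(\Psi_n)$ only where $|\Psi_n|$ is bounded away from zero; near the zero set of the limit the curvature can and does concentrate. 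Indeed, if your argument were correct, every sequence with $\epsilon_n\to 0$ would subconverge smoothly to a configuration with $\mu(\Psi)=0$ and $\|\Psi\|_{L^2}=1$, so \autoref{Thm_SWCompactness} (whose conclusion is only Hölder convergence of $|\Psi_n|$ and weak convergence away from a nowhere dense singular set $Z=|\Psi|^{-1}(0)$), the limiting configurations of \autoref{Def_LimitingConfiguration}, and the entire wall $\sW_2$/chambered-invariant structure of the paper would be vacuous. The correct treatment of $\epsilon_n\to 0$ is precisely the frequency-function, monotonicity and regularity-scale analysis adapted from \cite{Walpuski2019} in the rest of the appendix, and it does not give smooth convergence across $Z$. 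So either restrict the proposition to $\liminf\epsilon_n>0$ (as intended) and delete your second case, or accept that in the degenerate regime only the weaker conclusion of \autoref{Thm_SWCompactness} is available; the sentence asserting that ``no limiting configuration with a nonempty singular set appears at this stage'' is exactly what cannot be proved.
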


\begin{theorem}
\label{Thm_SWCompactness}
If $(A_n,\Psi_n,\epsilon_n)$ is a sequence of solutions to \autoref{Eq_GeneralizedSWSimplified} such that Hypothesis 1.22 from \cite{Walpuski2019} holds and $\epsilon_n \to 0$, then there exist a closed, nowhere dense subset $Z \subset M$, a connection $A$ over $M\setminus Z$, and section a $\Psi \in \Gamma(M\setminus Z, \bV)$ with the following properties.
\begin{enumerate}
	\item $A$ and $\Psi$ satisfy
	\begin{gather*}
		(D_A + \Upsilon(A,\Psi))\Psi = 0, \\
		\mu(\Psi) = 0, \\
		\| \Psi \|_{L^2} = 1. 
	\end{gather*}
	\item The function $|\Psi|$ extends to a Hölder continuous on $M$ such that $Z = |\Psi|^{-1}(0)$. 
	\item After passing to a subsequence and applying gauge transformations over $M\setminus Z$,
	\begin{enumerate}
		\item $|\Psi_n|$ converges to $|\Psi|$ in the $C^{0,\alpha}$ topology over $M$ for some $\alpha \in (0,1)$,
		\item $A_n$ converges to $A$ in the weak $W^{1,2}$ topology over every compact subset of $M\setminus Z$,
		\item $\Psi_n$ converges to $\Psi$ in the weak $W^{2,2}$ topology over every compact subset of $M\setminus Z$.
	\end{enumerate} 
\end{enumerate}
\end{theorem}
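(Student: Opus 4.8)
The plan is to deduce \autoref{Thm_SWCompactness} from the compactness theorem for generalized Seiberg--Witten equations \cite[Theorem 1.28]{Walpuski2019} by carrying the perturbations $\Upsilon$ and $\eta$ through its proof. By \autoref{Prop_VanishingExtraField} we may take $\zeta = 0$, so the sequence consists of solutions of \autoref{Eq_GeneralizedSWSimplified}; the first equation reads $(D_{A_n} + \Upsilon(A_n,\Psi_n))\Psi_n = 0$, and $D_{A_n} + \Upsilon(A_n,\Psi_n)$ is a self-adjoint first-order elliptic operator differing from $D_{A_n}$ by a zeroth-order term which, together with its covariant derivative, is uniformly bounded by hypothesis. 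The second equation reads $\epsilon_n^2\bigl(F_{A_n} + \eta(A_n,\Psi_n)\bigr) = \mu(\Psi_n)$, so since $\epsilon_n \to 0$ the form $\epsilon_n^2\eta(A_n,\Psi_n)$ is uniformly bounded and tends to zero and will affect neither the a priori estimates nor the limiting equations.

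The first substantive step is the a priori estimates. Applying the Weitzenböck formula to the perturbed Dirac equation yields, as in \cite{Walpuski2019}, a differential inequality for $|\Psi_n|$; the new contributions are a pointwise error bounded by $\bigl(\|\Upsilon\|_{C^0} + \|\nabla\Upsilon\|_{C^0}\bigr)\bigl(|\Psi_n| + |\nabla_{A_n}\Psi_n|\bigr)$ and the curvature term, which by the second equation equals $\epsilon_n^{-2}$ times the action of $\mu(\Psi_n) - \epsilon_n^2\eta(A_n,\Psi_n)$ on $\Psi_n$. The algebraic hypothesis \autoref{Hyp_Compactness}, in force throughout this section, controls the term involving $\mu(\Psi_n)$ near $\mu^{-1}(0)$ exactly as in the unperturbed case, while the $\Upsilon$-- and $\eta$--contributions are of lower order and absorbed. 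One thus obtains a uniform bound $\sup_n \|\nabla_{A_n}\Psi_n\|_{L^2} < \infty$ and a weak differential inequality $\Delta|\Psi_n| \leq K|\Psi_n|$ with $K$ independent of $n$.

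Next comes the frequency function, the singular set, and the convergence. The heart of \cite{Walpuski2019} is an Almgren-type frequency monotonicity for $|\Psi_n|$; the perturbation turns the relevant identity into one with controlled error terms, and the standard modification of the frequency function by a bounded zeroth-order potential (in the spirit of Garofalo--Lin) restores almost-monotonicity with constants uniform in $n$. This gives uniform interior $C^{0,\alpha}$ bounds on $|\Psi_n|$; after passing to a subsequence $|\Psi_n| \to f_\infty$ in $C^{0,\alpha'}_{\loc}$, and $Z \coloneqq f_\infty^{-1}(0)$ is closed and nowhere dense (indeed of finite Minkowski content). On $M \setminus Z$ one has $|\Psi_n| \geq c > 0$ locally, so the second equation furnishes local $L^\infty$ bounds on $F_{A_n}$; Uhlenbeck gauge-fixing and elliptic bootstrapping off the perturbed Dirac and curvature equations yield weak $W^{1,2}_{\loc}$ convergence $A_n \to A$ and weak $W^{2,2}_{\loc}$ convergence $\Psi_n \to \Psi$. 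Passing to the limit, $\epsilon_n \to 0$ with $\eta(A_n,\Psi_n)$ bounded gives $\mu(\Psi) = 0$; the convergence $(A_n,\Psi_n) \to (A,\Psi)$ together with the $C^0$ bound on $\nabla\Upsilon$ gives $\Upsilon(A_n,\Psi_n) \to \Upsilon(A,\Psi)$ and hence $(D_A + \Upsilon(A,\Psi))\Psi = 0$; and $\|\Psi\|_{L^2} = 1$ survives from the $C^{0,\alpha}$ convergence of $|\Psi_n|$.

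The main difficulty will be showing that the frequency-monotonicity argument of \cite{Walpuski2019} survives the $(A,\Psi)$--dependent perturbation $\Upsilon$: one has to check that the error terms it introduces into the monotonicity formula are integrable and uniformly bounded in $n$ in terms of $\|\Upsilon\|_{C^0}$ and $\|\nabla\Upsilon\|_{C^0}$ alone (the observation that $\nabla_A\Upsilon$ is independent of $A$ is convenient here), and that unique continuation for $D_A + \Upsilon(A,\Psi)$ --- needed to conclude $Z$ is nowhere dense and $\Psi \not\equiv 0$ --- still holds, which is Aronszajn's theorem since $\Upsilon$ is a bounded potential. Beyond this, the proof is a careful re-examination of the estimates and statements of \cite{Walpuski2019}, whose architecture --- Weitzenböck, frequency monotonicity, extraction of the singular set, elliptic regularity away from it, and passage to the limit --- is otherwise unchanged.
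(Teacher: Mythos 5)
Your overall architecture coincides with the paper's: the proof there is precisely a pass through \cite{Walpuski2019}, keeping track of the new terms that $\Upsilon$ and $\eta$ contribute to the Weitzenböck formula (\autoref{Prop_Weitzenbock}), to the a priori estimates [WZ 2.9--2.23] (\autoref{Prop_WZEstimates}), and to the frequency function, whose monotonicity survives only in the weakened form $N_x(s)\leq 2N_x(r)+cr$ and $\tfrac12(r/s)^{N_x(s)-cr}m_x(s)\leq m_x(r)\leq 2(r/s)^{4N_x(r)+cr}m_x(s)$, with correspondingly modified versions of [WZ 3.15] and [WZ 3.16]. You correctly isolate this almost-monotonicity as the main difficulty, and the paper's resolution is in the spirit you gesture at: the troublesome first-order term $D_A(\Upsilon\Psi)$ in the squared equation is handled by the pointwise Young estimate \autoref{Eq_EstimateNaive} and, crucially, by the integration-by-parts identity \autoref{Eq_EstimateByParts}, which trades the derivative for boundary integrals inside the remainder terms $\fr_{D'}$, $\fr_D$, $\fr_{m'}$ of [WZ 3.6, 3.10, 3.11]; note this term is genuinely first order, not merely a bounded potential, so the Garofalo--Lin analogy needs exactly this kind of care.

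Two specific steps would not work as written. First, away from $Z$ the curvature bound is not ``furnished by the second equation'': that equation gives $F_{A_n}=\epsilon_n^{-2}\mu(\Psi_n)-\eta(A_n,\Psi_n)$, which is naively of size $\epsilon_n^{-2}\to\infty$; the local bounds on $F_{A_n}$ where $|\Psi_n|$ is bounded below are precisely the content of the adapted a priori estimates (e.g.\ [WZ 2.17--2.19]) and of the regularity-scale bound [WZ 3.2], which sits downstream of the frequency monotonicity --- so this belongs to the ``main difficulty'' rather than being an immediate consequence of the equation. Second, Aronszajn's unique continuation cannot establish that $Z$ is nowhere dense: the limit $(A,\Psi)$ is constructed only on $M\setminus Z$, so on the interior of $Z$ there is no solution to which unique continuation could be applied (and a one-sided inequality for $|\Psi|$ does not yield unique continuation either). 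The paper instead adapts the integration argument of [WZ 4.1], integrating $\langle\nabla_A^*\nabla_A\Psi,\Psi\rangle$ over a neighborhood $Z_\epsilon$ of $Z$ and checking that the new $\Upsilon$-- and $\eta$--contributions are dominated by integrals already present in that proof. With these two corrections your plan matches the paper's proof.
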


\autoref{Thm_SWCompactness} implies a compactness theorem for quaternionic vortex equations. 

\begin{proof}[Proof of \autoref{Thm_CompactnessQuaternionicVortexEquations} assuming \autoref{Thm_SWCompactness}]

Let $M = S^1 \times C$.
Without loss of generality assume that solutions $(A_n,\Psi_n)$ on $C$ are irreducible solutions such that $\lim_{n \to \infty} \| \Psi_n \|_{L^2} = \infty$.
Set $\hat \Psi_n = \Psi_n / \| \Psi_n \|_{L^2}$.
Consider the pull-back of $(A_n, \hat\Psi_n)$ to $S^1 \times C$ (which, by abuse of notation, we denote by the same symbols) as a sequence of solutions to
\autoref{Eq_GeneralizedSWSimplified} with $\epsilon_n = 1 / \| \Psi_n \|_{L^2}$
These solutions satisfy the $S^1$-invariance property
\begin{equation}
	\label{Eq_CircleInvariance}
	\nabla_t (A - A_0) = 0 \qandq \nabla_t \hat\Psi = 0,
\end{equation}
where $t$ is the coordinate on $S^1$ and $A_0$ is a fixed connection pulled back from $C$.  

By \autoref{Thm_SWCompactness}, there exist a closed, nowhere dense subset $\sZ \subset M$, a connection $A$ over $M\setminus \sZ$, and a section $\Psi \in \Gamma(M\setminus \sZ, \bV)$ with the following properties:
\begin{enumerate}
		\item $A$ and $\Psi$ satisfy
		\begin{gather*}
			\bD_{A,\Upsilon} \Psi = 0, \\
			\mu(\Psi) = 0, \\
			\| \Psi \|_{L^2} = 1,
		\end{gather*}
		where
	\begin{equation*}
		\bD_{A,\Upsilon} = i\nabla_t + D_A + \Upsilon(A,\Psi)
	\end{equation*}
	is the $3$-dimensional Dirac operator acting on sections of $\bV$ over $M$. 
		\item The function $|\Psi|$ extends to a Hölder continuous on $M$ such that $\sZ = |\Psi|^{-1}(0)$. 
		\item After passing to a subsequence and applying gauge transformations over $M\setminus \sZ$,
		\begin{enumerate}
			\item $|\Psi_n|$ converges to $|\Psi|$ in the $C^{0,\alpha}$ topology over $M$ for some $\alpha \in (0,1)$,
			\item $A_n$ converges to $A$ in the weak $W^{1,2}$ topology over every compact subset of $M\setminus \sZ$,
			\item $\Psi_n$ converges to $\Psi$ in the weak $W^{2,2}$ topology over every compact subset of $M\setminus \sZ$.
		\end{enumerate} 
	\end{enumerate}
	The weak convergence implies that for every $\hat a$ and $\hat\psi$,
	\begin{gather*}
		\langle \nabla_t (A-A_0), \hat a \rangle_{L^2} = \lim_{n\to \infty} \langle \nabla_t (A_n-A_0), \hat a \rangle_{L^2} = 0, \\
		\langle \nabla_t \Psi, \hat\psi \rangle_{L^2} = \lim_{n\to\infty} \langle \nabla_t \Psi_n, \hat\psi \rangle_{L^2} = 0,
	\end{gather*}
	so that $(A,\Psi)$ satisfy \autoref{Eq_CircleInvariance} and that $\sZ = Z \times S^1$ for a closed subset $Z \subset C$. 
	After applying a gauge transformation which puts $A$ in a temporal gauge, $(A,\Psi)$ are pulled back from a configuration on $C$. 
\end{proof}

In the remaining part of this section we discuss how to adapt the proof of \cite[Theorem 1.28]{Walpuski2019} to the perturbed equation. 
The main tool in the proof of \cite[Theorem 1.28]{Walpuski2019} is the Lichnerowicz--Weitzenböck formula.
The perturbation terms $\Upsilon = \Upsilon(A,\Psi)$ and $\eta = \eta(A,\Psi)$ contribute to new terms in this formula. 

\begin{prop}
\label{Prop_Weitzenbock}
For every $(A,\Psi)$, 
\begin{equation*}
	(D_A+\Upsilon)^2\Psi = \nabla_A^*\nabla_A\Psi + \gamma(F_A)\Psi + \fR\Psi + D_A(\Upsilon\Psi) + \Upsilon(D_A+\Upsilon)\Psi,
\end{equation*}
where $\fR$ is the zeroth order operator defined in \cite[Definition2.1]{Walpuski2019} and depending on the Riemann curvature of $M$. 
In particular, if $(A,\Psi)$ is a solution to \autoref{Eq_GeneralizedSWSimplified}, then
\begin{equation*}
\nabla_A^*\nabla_A\Psi - \gamma(\eta)\Psi + \epsilon^{-2}\gamma(\mu(\Psi))\Psi + \fR\Psi + D_A(\Upsilon\Psi) = 0,
\end{equation*}
\end{prop}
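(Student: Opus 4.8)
The statement to prove is \autoref{Prop_Weitzenbock}: a Lichnerowicz--Weitzenböck formula for the perturbed Dirac operator $D_A + \Upsilon$, together with its specialization to solutions of \autoref{Eq_GeneralizedSWSimplified}.

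The plan is to reduce everything to the unperturbed Lichnerowicz--Weitzenböck formula, which is already available (it is the content of \cite[Definition 2.1 and the surrounding discussion]{Walpuski2019}, giving $D_A^2\Psi = \nabla_A^*\nabla_A\Psi + \gamma(F_A)\Psi + \fR\Psi$), and then expand the square of the perturbed operator algebraically. First I would write
\begin{equation*}
  (D_A+\Upsilon)^2\Psi = D_A^2\Psi + D_A(\Upsilon\Psi) + \Upsilon(D_A\Psi) + \Upsilon^2\Psi
\end{equation*}
and then regroup the last two terms as $\Upsilon(D_A\Psi + \Upsilon\Psi) = \Upsilon(D_A+\Upsilon)\Psi$, which exactly produces the claimed shape $\nabla_A^*\nabla_A\Psi + \gamma(F_A)\Psi + \fR\Psi + D_A(\Upsilon\Psi) + \Upsilon(D_A+\Upsilon)\Psi$ once the unperturbed formula is substituted for $D_A^2\Psi$. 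This is the entire content of the first assertion; no curvature computation beyond the known unperturbed identity is needed, because the extra pieces are grouped rather than expanded.

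For the second assertion, I would substitute a solution $(A,\Psi)$ of \autoref{Eq_GeneralizedSWSimplified} into the formula just derived. The first equation of \autoref{Eq_GeneralizedSWSimplified} gives $(D_A+\Upsilon)\Psi = 0$, which kills the term $\Upsilon(D_A+\Upsilon)\Psi$ and also lets me replace $(D_A+\Upsilon)^2\Psi$ by $(D_A+\Upsilon)\big((D_A+\Upsilon)\Psi\big) = 0$, so the left-hand side vanishes entirely. The second equation, $\epsilon^2(F_A + \eta(A,\Psi)) = \mu(\Psi)$, gives $F_A = \epsilon^{-2}\mu(\Psi) - \eta(A,\Psi)$, so $\gamma(F_A)\Psi = \epsilon^{-2}\gamma(\mu(\Psi))\Psi - \gamma(\eta)\Psi$; here I am using the convention of \cite{Walpuski2019} by which $F_A$ enters the Weitzenböck formula through Clifford multiplication (and in $3$ dimensions $\ast$ identifies $2$--forms with $1$--forms, so $\mu(\Psi)\in\Lambda^2 T^*M\otimes\ad P$ is Clifford-multiplied after applying $\ast$, as in \cite[§1.1]{Walpuski2019}; I would state this identification explicitly). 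Collecting terms yields
\begin{equation*}
  0 = \nabla_A^*\nabla_A\Psi + \epsilon^{-2}\gamma(\mu(\Psi))\Psi - \gamma(\eta)\Psi + \fR\Psi + D_A(\Upsilon\Psi),
\end{equation*}
which is the displayed identity.

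I do not expect a genuine obstacle here: the proposition is a bookkeeping consequence of the already-cited unperturbed Weitzenböck formula and the two defining equations, and the phrase ``follows from a standard calculation in Seiberg--Witten theory'' in the surrounding text signals exactly this. The only point requiring care — and the one I would flag explicitly — is the sign and identification conventions in the curvature term $\gamma(F_A)$ and in $\gamma(\mu(\Psi))$, i.e. making sure the Clifford-contraction conventions match those of \cite{Walpuski2019} so that substituting $F_A = \epsilon^{-2}\mu(\Psi) - \eta$ reproduces the stated signs; this is the ``hard part'' only in the sense that it is the one place an error could creep in. A secondary remark worth including is that $\Upsilon$ need not be parallel, but since it is differentiated by $D_A$ only through the explicit term $D_A(\Upsilon\Psi)$ (which is left unexpanded), no Leibniz-rule commutator with $\nabla_A$ is generated — consistent with the parenthetical remark in the surrounding text that $\nabla_A\Upsilon = \nabla_{A_0}\Upsilon$.
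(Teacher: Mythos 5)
Your proposal is correct and follows essentially the same route as the paper: expand $(D_A+\Upsilon)^2\Psi$, regroup the cross terms as $D_A(\Upsilon\Psi)+\Upsilon(D_A+\Upsilon)\Psi$, substitute the standard Weitzenböck formula from \cite[Proposition 2.2]{Walpuski2019} for $D_A^2\Psi$, and then use the first equation of \autoref{Eq_GeneralizedSWSimplified} to kill both the left-hand side and the $\Upsilon(D_A+\Upsilon)\Psi$ term and the second equation to replace $\gamma(F_A)\Psi$ by $\epsilon^{-2}\gamma(\mu(\Psi))\Psi-\gamma(\eta)\Psi$. Your explicit remarks about the Clifford-multiplication conventions and the fact that no Leibniz commutator with $\nabla_A$ arises are sensible but go slightly beyond what the paper records.
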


\begin{proof}
We have
\begin{equation*}
	0=(D_A+\Upsilon)^2\Psi = D_A^2\Psi + D_A(\Upsilon\Psi) + \Upsilon(D_A+\Upsilon)\Psi  
\end{equation*}
and the formula follows from the standard Weitzenböck formula for the Dirac operator and the second equation in \autoref{Eq_GeneralizedSWSimplified}, see \cite[Proposition 2.2]{Walpuski2019}. 
\end{proof}

The new term involving $\eta$ is harmless.
The term involving $\Upsilon$ is more problematic at it involes the first derivative of $\Psi$. 
It can be estimated pointwise using Young's inequality
\begin{equation}
\label{Eq_EstimateNaive}
|D_A(\Upsilon\Psi)| \lesssim \delta|\nabla_A\Psi|^2 + \delta^{-1}|\Psi|^2
\end{equation}
for any $\delta > 0$. 
Alternatively, using integration by parts, we can trade the integral of the derivative term for a boundary integral.

\begin{prop}
\label{Eq_EstimateByParts}
If $(A,\Psi)$ satisfies 
\begin{equation*}
(D_A+\Upsilon)\Psi = 0,
\end{equation*}
then for every open set $U \subset M$ with smooth boundary
\begin{equation*}
	\int_U \langle D_A(\Upsilon\Psi), \Psi \rangle \lesssim \int_U |\Psi|^2 + \int_{\partial U} |\Psi|^2.
\end{equation*}
\end{prop}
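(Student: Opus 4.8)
The plan is to integrate by parts, moving $D_A$ off $\Upsilon\Psi$ and onto $\Psi$, and then to use the equation $(D_A+\Upsilon)\Psi = 0$ to dispose of the resulting interior term. Here $\Upsilon$ abbreviates the zeroth‑order bundle endomorphism $\Upsilon(A,\Psi)$, which is uniformly bounded by the standing hypotheses on the perturbation, and $(A,\Psi)$ may be assumed smooth by elliptic regularity for the perturbed Dirac equation (otherwise one approximates).

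First I would recall that $D_A$ is a formally self‑adjoint operator of Dirac type, so Green's formula on the domain $U$ with smooth boundary reads
\begin{equation*}
  \int_U \langle D_A(\Upsilon\Psi),\Psi\rangle = \int_U \langle \Upsilon\Psi, D_A\Psi\rangle + \int_{\partial U}\langle \gamma(\nu)\Upsilon\Psi,\Psi\rangle,
\end{equation*}
where $\nu$ is the outward unit conormal of $\partial U$ and $\gamma$ denotes Clifford multiplication; the precise sign of the boundary term and any universal constant are irrelevant below. For the interior term I would substitute $D_A\Psi = -\Upsilon\Psi$, which gives the pointwise identity $\langle \Upsilon\Psi, D_A\Psi\rangle = -|\Upsilon\Psi|^2 \leq 0$, hence $\int_U \langle \Upsilon\Psi, D_A\Psi\rangle \leq 0 \lesssim \int_U |\Psi|^2$. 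For the boundary term, Clifford multiplication by a unit conormal is a pointwise isometry, so $|\gamma(\nu)\Upsilon\Psi| = |\Upsilon\Psi| \leq \|\Upsilon\|_{C^0}\,|\Psi|$, and therefore $\bigl|\int_{\partial U}\langle \gamma(\nu)\Upsilon\Psi,\Psi\rangle\bigr| \leq \|\Upsilon\|_{C^0}\int_{\partial U}|\Psi|^2 \lesssim \int_{\partial U}|\Psi|^2$. Adding the two bounds gives the assertion.

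There is no real obstacle here: the content is just self‑adjointness of $D_A$ on a domain with boundary, combined with the equation, and the only input from the hypotheses is the uniform $C^0$‑bound on $\Upsilon$. If anything, the only point worth a remark is that it is the use of the equation — rather than a naive pointwise estimate of $D_A(\Upsilon\Psi)$ as in \eqref{Eq_EstimateNaive} — that makes the interior term harmless: it trades the term involving $\nabla_A\Psi$ for $-|\Upsilon\Psi|^2 \leq 0$ at the cost of the extra boundary integral, which is precisely the trade‑off the proposition advertises.
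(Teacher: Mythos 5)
Your argument is correct and is essentially the paper's proof: integrate by parts to move $D_A$ onto $\Psi$, use $D_A\Psi=-\Upsilon\Psi$ for the interior term, and bound the boundary term via the uniform $C^0$ bound on $\Upsilon$. The only cosmetic difference is that you invoke the invariant Green's formula with $\gamma(\nu)$, whereas the paper integrates by parts in a local orthonormal frame and carries along a bounded frame/curvature remainder $R\Psi$, which it likewise absorbs into $\int_U|\Psi|^2$.
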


\begin{proof}
Let $e_1,e_2,e_3$ be a local orthonormal frame of $TM$ and set $\nabla_{A,i} = \nabla_{A,e_i}$. 
\begin{align*}
	\int_U \langle D_A (\Upsilon\Psi), \Psi \rangle &= - \sum_i \int_U \langle \nabla_{A,i} (\Upsilon\Psi), \gamma(e_i) \Psi \rangle \\
	&= \sum_i \int_U \langle \Upsilon\Psi, \nabla_{A,i}(\gamma(e_i)\Psi)\rangle - \sum_i \int_{\partial U} \langle \Upsilon\Psi,  \gamma(e_i)\Psi \rangle (e_i \lrcorner \vol) \\
	&= \int_U \langle \Upsilon\Psi, D_A \Psi + R\Psi \rangle - \sum_i \int_{\partial U} \langle \Upsilon\Psi,  \gamma(e_i)\Psi \rangle (e_i \lrcorner \vol) \\
	&\lesssim \int_U  |\Psi|^2 + \int_{\partial U} |\Psi|^2,
\end{align*}
where $R = \gamma(\sum_i \nabla_i e_i)$ is a Riemannian curvature term and in the last line we use $D_A \Psi = - \Upsilon \Psi$ and $|\Upsilon| \leq C$. 
\end{proof}

In what follows, let $(A,\Psi, \epsilon)$ be a solution of \autoref{Eq_GeneralizedSWSimplified}.
In the case $\Upsilon=0$ and $\eta=0$, section 2 of \cite{Walpuski2019} establishes a priori bounds on $A$ and $\Psi$ in terms of the frequency function defined below.

\begin{definition}
Let $r_0$ be the injectivity radius of $M$.
Given $x \in M$ and $r \in (0,r_0]$ let $B_r(x) \subset M$ be the geodesic ball of radius $r$ centered at $x$.
Define
\begin{gather*}
	m_x(r) = \frac{1}{4\pi r^2} \int_{\partial B_r(x)} |\Psi|^2, \\
	D_x(r) = \frac{1}{4\pi r} \int_{B_r(x)} |\nabla_A\Psi|^2 + 2\epsilon^{-2}|\mu(\Psi)|^2.
\end{gather*}
The \defined{frequency function} associated with $(A,\Psi,\epsilon)$ is
\begin{equation*}
	N_x(r) = \frac{D_x(r)}{m_x(r)}.
\end{equation*}
\end{definition}

\begin{prop}
\label{Prop_WZEstimates}
Let $(A,\Psi,\epsilon)$ be a solution of \autoref{Eq_GeneralizedSWSimplified}.
The a priori bounds on $A$ and $\Psi$ in terms of $m$ and $N$ from \cite[Propositions 2.9--2.23]{Walpuski2019}, proved in the case $\Upsilon=0$ and $\eta=0$, hold also in the general case.
\end{prop}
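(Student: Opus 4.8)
The plan is to follow the structure of \cite[Section 2]{Walpuski2019} and check, proposition by proposition, that the perturbation terms $\Upsilon$ and $\eta$ do not destroy the a priori estimates. The key point is that all the estimates in \cite[Propositions 2.9--2.23]{Walpuski2019} are consequences of the Weitzenböck formula and monotonicity of the frequency function $N_x$, and both of these acquire only \emph{lower-order} corrections from $\Upsilon$ and $\eta$. First I would record the perturbed Weitzenböck identity (\autoref{Prop_Weitzenbock}) in the form
\begin{equation*}
  \nabla_A^*\nabla_A\Psi + \epsilon^{-2}\gamma(\mu(\Psi))\Psi = \gamma(\eta)\Psi - \fR\Psi - D_A(\Upsilon\Psi),
\end{equation*}
and observe that by the uniform bounds on $|\Upsilon|, |\nabla_A\Upsilon|, |\eta|, |\nabla_A\eta|$ in the hypotheses, the right-hand side is controlled by $C(|\Psi| + |\nabla_A\Psi|)$ pointwise, with the only genuinely new feature being the first-derivative term $D_A(\Upsilon\Psi)$. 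I would then re-run the Agmon--Nirenberg / Almgren monotonicity computation for $N_x(r)$, which amounts to differentiating $m_x(r)$ and $D_x(r)$ in $r$: the derivative of $D_x$ produces $\int_{B_r} \langle \nabla_A^*\nabla_A\Psi + \epsilon^{-2}\gamma(\mu(\Psi))\Psi, \Psi\rangle$ (up to curvature terms already handled in \cite{Walpuski2019}), and substituting the perturbed Weitzenböck identity introduces exactly the extra terms $\int_{B_r}\langle \gamma(\eta)\Psi,\Psi\rangle$, $\int_{B_r}\langle\fR\Psi,\Psi\rangle$, and $\int_{B_r}\langle D_A(\Upsilon\Psi),\Psi\rangle$.

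The $\eta$-term and the $\fR$-term are zeroth order and are absorbed exactly as the Riemannian curvature term $\fR$ is absorbed in \cite{Walpuski2019}, producing an extra $Cr$ in the almost-monotonicity inequality $\frac{d}{dr}\log N_x(r) \geq -C(1 + N_x(r))$, which changes no qualitative conclusion. The $\Upsilon$-term is the one requiring care, and here is where \autoref{Eq_EstimateByParts} is used: integrating by parts converts $\int_{B_r}\langle D_A(\Upsilon\Psi),\Psi\rangle$ into $\int_{B_r}|\Psi|^2$ plus a boundary term $\int_{\partial B_r}|\Psi|^2 = 4\pi r^2 m_x(r)$, so after dividing by $m_x(r)$ in the logarithmic derivative one again gets only a contribution bounded by $C(1 + N_x(r))r^{-1}\cdot r^2 \cdot (\text{something bounded})$ — schematically a term of the form $Cr$ and a term of the form $C$, both of which are harmless. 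Alternatively, when a boundary term is not available (e.g. in the interior Caccioppoli-type estimates), one uses the naive pointwise bound \autoref{Eq_EstimateNaive}: choosing $\delta$ small lets $\delta|\nabla_A\Psi|^2$ be absorbed into the left-hand side of each energy inequality, at the cost of an extra $\delta^{-1}\int|\Psi|^2$ which is of the same type as terms already present. So every estimate in \cite[Propositions 2.9--2.23]{Walpuski2019} goes through with constants depending additionally on the uniform bounds for $\Upsilon$ and $\eta$.

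Concretely, I would organize the proof as a walk through \cite{Walpuski2019}'s numbered statements: (i) the $\epsilon$-regularity / smallness estimate, where the extra terms are absorbed by \autoref{Eq_EstimateNaive} with $\delta$ small; (ii) the almost-monotonicity of $N_x$ and the doubling estimate for $m_x$, where the $\eta$, $\fR$ terms add $Cr$ and the $\Upsilon$ term is handled by \autoref{Eq_EstimateByParts}; (iii) the resulting uniform bounds on $\|\nabla_A\Psi\|$ on compact subsets of $M\setminus Z$ and the Hölder bound on $|\Psi|$, which are formal consequences of (i)--(ii) once the monotonicity inequality has the stated form; (iv) the $W^{1,2}$ bound on $A$ away from $Z$, which uses only the second equation of \autoref{Eq_GeneralizedSWSimplified} together with the pointwise bound on $|\eta|$ and is otherwise identical. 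The main obstacle I anticipate is bookkeeping in step (ii): one must be careful that the $\Upsilon$-correction to the derivative of $N_x(r)$, after the integration by parts in \autoref{Eq_EstimateByParts}, really does divide cleanly by $m_x(r)$ and does not secretly reintroduce a bad $\int_{B_r}|\nabla_A\Psi|^2$ term that cannot be absorbed; this is exactly why the hypothesis demands a \emph{uniform} bound on $\nabla_A\Upsilon$ (so that $\nabla_A(\Upsilon\Psi)$ never costs more than one derivative of $\Psi$) and why $\Upsilon$ is taken self-adjoint (so that the cross term $\langle\Upsilon\Psi,\rho(\zeta)\Psi\rangle$ and similar pairings have a sign or vanish, cf.\ \autoref{Prop_VanishingExtraField}). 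Once this is checked the remainder is routine substitution, and \autoref{Prop_WZEstimates} follows.
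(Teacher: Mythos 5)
Your overall strategy coincides with the paper's: write down the perturbed Weitzenböck identity (\autoref{Prop_Weitzenbock}) and then re-verify the estimates of \cite[\S 2]{Walpuski2019} one by one, controlling the new terms through the uniform bounds on $\Upsilon$, $\nabla_A\Upsilon$, $\eta$, $\nabla_A\eta$, through the Young-type bound \autoref{Eq_EstimateNaive} with absorption of $\delta\abs{\nabla_A\Psi}^2$, and through the integration-by-parts bound \autoref{Eq_EstimateByParts}. However, your write-up has a scoping problem and skips the two places where the verification is genuinely not ``routine substitution''. On scope: Propositions 2.9--2.23 of \cite{Walpuski2019} are the interior a priori estimates (the differential inequality for the spherical average of $\abs{\Psi}^2$, the $L^\infty$ bound via the Green kernel, the second-derivative and curvature estimates, and the estimates on $\mu(\Psi)$); the Almgren-type almost-monotonicity of $N_x$, the doubling of $m_x$, the H\"older bound on $\abs{\Psi}$ and the $W^{1,2}$ convergence of $A$ away from the zero set belong to \cite[\S 3--4]{Walpuski2019} and are handled by the \emph{subsequent} propositions in the paper, so your steps (ii)--(iv) argue about statements outside this proposition, while the estimates 2.19--2.21, which are inside it, are never discussed.

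On the hard cases: for [WZ 2.11] the original route does not merely acquire remainder terms, because the perturbation contaminates the identity for $\rd_A^*F_A$ through the curvature equation. One must rearrange the computation: start from the commutator identity [WZ 2.14], pair with $\nabla_A\Psi$, and use $\epsilon^2(F_A+\eta)=\mu(\Psi)$ together with \cite[Proposition B.4]{Doan2017} to produce the good terms $-\epsilon^2\abs{\rd_A^*F_A}^2$ and $-\epsilon^2\abs{\nabla_AF_A}^2$ on the left. Moreover the $\Upsilon$-contribution there is $\Inner{\nabla_A\paren{D_A(\Upsilon\Psi)},\nabla_A\Psi}$, which costs a genuine second derivative: one must absorb $\delta\abs{\nabla_A^2\Psi}^2$ (not merely $\delta\abs{\nabla_A\Psi}^2$, which is all your sketch absorbs) into the left-hand side. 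Similarly, in [WZ 2.19]--[WZ 2.21] the relation $\mu(\Psi)=\epsilon^2 F_A$ is replaced by $\mu(\Psi)=\epsilon^2(F_A+\eta)$, so the bound on $\Gamma_\Psi F_A$ requires the extra splitting $\abs{\Gamma_\Psi F_A}^2\lesssim\abs{\Gamma_\Psi(F_A+\eta)}^2+\abs{\Gamma_\Psi\eta}^2$, and one needs the normalisation $m_x(r)=1$ (hence $\abs{\Psi}$ bounded) to control the new terms in [WZ 2.20]. Without these adjustments the argument does not close, so the proposal as written has a gap precisely at the estimates this proposition is about.
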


\begin{proof}
Most of the proofs in \cite[Section 2]{Walpuski2019} rely on the Lichnerowicz--Weitzenböck formula, and therefore are affected by the new terms appearing in \autoref{Prop_Weitzenbock}. 
The new terms are estimated using the bounds on $\eta$ and $\Upsilon$, \autoref{Eq_EstimateNaive}, \autoref{Eq_EstimateByParts}, and rearranging some of the inequalities.
Below we comment on the necessary modifications in the proof of every estimate in \cite[Section 2]{Walpuski2019}.

\defined{[WZ 2.9]} 
The Lichnerowicz--Weitzenbock formula from \autoref{Prop_Weitzenbock} produces extra terms corresponding to $\Upsilon$ and $\eta$, which can be estimated using \autoref{Eq_EstimateNaive},  \autoref{Eq_EstimateByParts}, and the $L^\infty$ bound on $\eta$. 
This leads to
\begin{equation*}
\frac{\rd}{\rd r} \int_{\partial B_r(x)}|\Psi|^2 \geq \int_{\partial B_r(x)} \(\frac{2}{r}-c_1r - c_2\)|\Psi|^2 + \int_{B_r(x)} (2-c_3r^2)\int_{B_r(x)}|\nabla_A \Psi|^2,
\end{equation*}
for constants $c_1,c_2,c_3$.
The right-hand side is positive for small $r$, which completes the proof.

\defined{[WZ 2.10]}
In this proof, we multiply the Lichnerowicz--Weitzenböck formula by the function $\chi^2 G_y$ where $\chi$ is a cut-off function and $G_y = G(y,x)$ with $G$ being the Green kernel for the Laplacian on $B_r(x)$ and $y \in B_{r/2}(x)$. 
\autoref{Prop_Weitzenbock} again produces two extra terms:
\begin{equation*}
\int_{B_r(x)} \chi^2 G_y \langle\gamma(\eta)\Psi,\Psi\rangle \leq cr^2 \| \chi\Psi\|_{L^\infty}^2,
\end{equation*}
and, using \eqref{Eq_EstimateNaive}, 
\begin{align*}
\int_{B_r(x)} \chi^2 G_y \langle D_A(\Upsilon\Psi),\Psi \rangle &\lesssim \delta\int_{B_r(x)} \chi^2 G_y |\nabla_A\Psi|^2 + \delta^{-1} \int_{B_r(x)} \chi^2 G_y  |\Psi|^2 \\
&\leq \delta\int_{B_r(x)} \chi^2 G_y |\nabla_A\Psi|^2 + \delta^{-1}r^2 \| \chi\Psi\|_{L^\infty}^2. 
\end{align*}
For $\delta$ sufficiently small, the first term on the right-hand side can be rearranged and absorbed by the integral
\begin{equation*}
\int_{B_r(x)} \chi^2 G_y |\nabla_A\Psi|^2 
\end{equation*}
which appears on the left-hand side of the main inequality in the proof of Proposition 2.10.

\defined{[WZ 2.11]} 
The original proof uses [WZ 2.12], [WZ 2.14], and \cite[Proposition B.4]{Doan2017}. 
While [WZ 2.14] is independent of the equation, the other two formulae obtain additional terms from the Lichnerowicz--Weitzenbock formula and the $\eta$ term in $F_A = \mu(\Psi) - \eta$. 
To prove the estimate, we arrange the term in a different way than in the proof of [WZ 2.12]. 
We use the following convention: $R$ is any remainder term satisfying
\begin{equation*}
| R | \lesssim |\Psi| + |\nabla_A\Psi|
\end{equation*}
and $A \ast B$ denotes any bilinear algebraic operation involving tensors $A$ and $B$.
Both $R$ and $\ast$ can denote different terms and operations in different lines.
By [WZ 2.14],
\begin{equation*}
[\nabla_A^*\nabla_A,\nabla_A]\Psi = \rho(\rd_A^*F_A)\Psi + F_A \ast \nabla_A\Psi + R.
\end{equation*}
Therefore, by \autoref{Prop_Weitzenbock},
\begin{equation}
\label{Eq_DerivativeEstimate}
\begin{split}
\nabla_A^*\nabla_A\nabla_A \Psi &= [\nabla_A^*\nabla_A,\nabla_A]\Psi - \gamma(F_A)\nabla_A\Psi - \gamma(\nabla_A F_A)\Psi - \nabla_A( D_A(\Upsilon\Psi)) + R \\
&= \rho(\rd_A^*F_A)\Psi + F_A \ast \nabla_A\Psi - \gamma(F_A)\nabla_A\Psi - \gamma(\nabla_A F_A)\Psi - \nabla_A( D_A(\Upsilon\Psi)) + R.
\end{split}
\end{equation}
Next, we take the inner product \eqref{Eq_DerivativeEstimate} with $\nabla_A\Psi$ and estimate all terms. 
By \cite[Proposition B.4]{Doan2017},
\begin{align*}
\langle \rho(\rd_A^*F_A)\Psi, \nabla_A\Psi \rangle &= \langle \rd_A^*F_A, \rho^*(\nabla_A\Psi\Psi^*) \rangle  \\
&= -\langle \rd_A^*F_A, \rd_A^*\mu(\Psi) \rangle  + \langle \rd_A^*F_A, D_A\Psi\ast\Psi \rangle \\
&= -\epsilon^2|\rd_A^*F_A|^2 - \epsilon^2 \langle \rd_A^*F_A, \eta \rangle - \langle \rd_A^*F_A, \Upsilon\Psi\ast\Psi \rangle \\
&\lesssim -\epsilon^2|\rd_A^*F_A|^2 + |F_A||\Psi|^2 + |F_A|.
\end{align*} 
We similarly estimate
\begin{align*}
-\langle \gamma(\nabla_A F_A)\Psi,\nabla_A\Psi \rangle &= - \langle \nabla_A F_A, \nabla_A \mu(\Psi) \rangle \\
&= - \epsilon^2 |\nabla_A F_A|^2 - \epsilon^2 \langle \nabla_A F_A , \nabla_A \eta \rangle \\
&\lesssim - \epsilon^2 |\nabla_A F_A|^2 + 1.
\end{align*}
The term involving $\Upsilon$ can be estimated by
\begin{align*}
\langle \nabla_A(D_A(\Upsilon\Psi)), \nabla_A \Psi \rangle 
 &\lesssim |\nabla_A^2 \Psi||\nabla_A \Psi| + |\nabla_A\Psi|^2 + |\Psi||\nabla_A\Psi|  \\
&\lesssim \delta|\nabla_A^2\Psi|^2 + \delta^{-1}|\nabla_A\Psi|^2 + |\Psi||\nabla_A\Psi|.
\end{align*}
We choose $\delta > 0$ small.  
Let $x \in M$, $r > 0$, and let $\chi$ be a cut-off function supported in $B_{r/2}(x)$ as in the proof of [WZ 2.11]. 
Taking the inner product of \eqref{Eq_DerivativeEstimate} with $\nabla_A\Psi$, multiplying by $r\chi^2$ and integrating over $B_r(x)$, we obtain
\begin{equation*}
r \int_{B_r(x)} \chi^2( |\nabla_A^2\Psi|^2 + \epsilon^2 |\nabla_A F_A|^2 + \epsilon^2 | \rd_A^*F_A|^2 \lesssim r \int_{B_r(x)} \chi^2(1 + |\Psi|^2 + |\nabla\Psi|^2 + |F_A| |\nabla_A \Psi|^2 + \delta |\nabla^2_A\Psi|^2). 
\end{equation*}
Only the last two terms on the right-hand side are problematic. 
The one involving $F_A$ is estimated in the same way as in the proof of [WZ 2.11]. 
The second term can be moved to the left-hand side provided $\delta$ is sufficiently small. 
This leads to the desired inequality from [WZ 2.11].
 
\defined{[WZ 2.17], [WZ 2.18]} follow once [WZ 2.10], [WZ 2.11] are proved.

\defined{[WZ 2.19]} follows from the previous estimates combined with [WZ 2.20].
The proof of [WZ 2.20] uses [WZ 2.21] and [WZ 2.23]. 
[WZ 2.23] does not use the equation so it still holds.
However, the proof of [WZ 2.21] relies on the Lichnerowicz--Weitzenbock formula which now acquires additional terms. 
The first term involving $\Upsilon$ is
\begin{equation*}
\mu( D_A(\Upsilon\Psi),\Psi) 
\end{equation*}
whose contribution to the proof of [WZ 2.20] can be estimated by
\begin{equation}
\label{Eq_220Estimate}
r \int_{B_r(x)} \chi^2 (|F_A|+|\eta|)|\Psi|(|\Psi|+|\nabla_A\Psi|).
\end{equation} 
where $x$, $r$, and $\chi$ are as before. 
One of the assumptions of [WZ 2.20] is that $m_x(r)=1$ which implies that $|\Psi|$ is bounded. 
The term $|F_A||\nabla_A \Psi|^2$ is estimated as before.
The other terms in \autoref{Eq_220Estimate} are harmless and do not affect the proof of [WZ 2.20]. 
Similarly, the new term in [WZ 2.21] involving $\eta$ has the form
\begin{equation*}
2\epsilon^{-2} \mu(\gamma(\eta)\Psi, \Psi)
\end{equation*}
and can be estimated by $|\Psi|$. 

Finally, in the proof of [WZ 2.20] we use the formula for $\nabla_A^*\nabla_A \mu(\Psi)$ from [WZ 2.19] and use the original equation $\mu(\Psi) = F_A$
In our setup, $\mu(\Psi) = F_A + \eta$, so to obtain estimate on the integral of $\Gamma_\Psi F_A$ we use
\begin{equation*}
|\Gamma_\Psi F_A|^2 \lesssim   |\Gamma_\Psi(F_A+\eta)|^2  + |\Gamma_\Psi\eta|^2.
\end{equation*}	
The estimate on the first term follows from [WZ 2.19] as in the original proof while the second term is uniformly bounded. 

\end{proof}

The key step in the proof of the compactness theorem in \cite{Walpuski2019} is the monotonicity formulae [3.5, 3.12, 3.13] for the functions $m$ and $N$. 
In the perturbed setup, the monotonicity formulae have to be modified.

\begin{prop}
There exist constants $r_0, c > 0$ with the following property.
Let $(A,\Psi,\epsilon)$ be a solution of \autoref{Eq_GeneralizedSWSimplified}.  
For every $x \in M$ and $0 \leq s \leq r \leq r_0$, the functions $m_x$ and $N_x$ satisfy
\begin{align}
\label{Eq_MMonotonicity}
m_x(s) &\leq 2m_x(r), \\
\label{Eq_NMonotonicity}
N_x(s) &\leq 2N_x(r) + cr,
   \end{align}
and
\begin{equation}
\label{Eq_MMonotonicity2}
\frac12 \( \frac{r}{s} \)^{N_x(s) - cr} m_x(s) \leq m_x(r) \leq 2\( \frac{r}{s} \)^{4N_x(r) + cr} m_x(s).
\end{equation}
\end{prop}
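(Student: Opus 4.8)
The plan is to reduce everything to adapting the monotonicity formulae [WZ 3.5], [WZ 3.12], [WZ 3.13] to the perturbed equation \autoref{Eq_GeneralizedSWSimplified}, using the a priori estimates of \autoref{Prop_WZEstimates} as input and carefully tracking the contributions of the perturbation terms $\Upsilon$ and $\eta$. Throughout, $c$ and $r_0$ denote constants depending only on the geometry of $M$ and the uniform bounds on $\Upsilon,\eta,\nabla_A\Upsilon,\nabla_A\eta$, and they are allowed to change from line to line.

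\emph{Step 1: the differential inequality for $m_x$.} Differentiating $m_x(r)=\frac{1}{4\pi r^2}\int_{\partial B_r(x)}|\Psi|^2$ and applying the divergence theorem together with \autoref{Prop_Weitzenbock} gives an identity of the schematic form
\[
  \frac{\rd}{\rd r}\log m_x(r)=\frac{2}{r}\,N_x(r)+E_1(r),
\]
where $N_x(r)\ge 0$ and the error $E_1(r)$ collects the geometric terms controlled as in [WZ 2.9] together with the two \emph{new} bulk integrals $\int_{B_r(x)}\langle D_A(\Upsilon\Psi),\Psi\rangle$ and $\int_{B_r(x)}\langle\gamma(\eta)\Psi,\Psi\rangle$ produced by the perturbed Weitzenböck formula. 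The first is handled by the integration-by-parts identity \autoref{Eq_EstimateByParts}, which trades the derivative of $\Psi$ for harmless bulk and boundary terms, and the second by the $L^\infty$-bound on $\eta$; hence $|E_1(r)|\le c$ for $r\le r_0$. Since $N_x(r)\ge 0$, this yields $\frac{\rd}{\rd r}\log m_x(r)\ge -c$, and integrating from $s$ to $r$ gives $m_x(s)\le e^{c r_0}m_x(r)\le 2 m_x(r)$ once $r_0$ is chosen small, which is \autoref{Eq_MMonotonicity}.

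\emph{Step 2: almost-monotonicity of $N_x$.} This is the Rellich--Pohozaev step and the real heart of the matter. Following the proof of [WZ 3.12], one computes $\frac{\rd}{\rd r}D_x(r)$ by pairing the equation with the radial vector field and uses the reverse Cauchy--Schwarz inequality relating $\bigl(\int_{\partial B_r(x)}\langle\nabla_{A,r}\Psi,\Psi\rangle\bigr)^2$ to $D_x(r)\,m_x(r)$, together with the identities of [WZ 2.17]--[WZ 2.23] which hold in the perturbed case by \autoref{Prop_WZEstimates}. The perturbation contributes terms of the form $r\int_{B_r(x)}\langle D_A(\Upsilon\Psi),\nabla_A\Psi\rangle$ and $r\varepsilon^{-2}\int_{B_r(x)}\mu(\gamma(\eta)\Psi,\Psi)$; the former is absorbed by combining \autoref{Eq_EstimateNaive} with the second-derivative estimate [WZ 2.11], choosing the Young parameter $\delta$ small enough that the $\delta|\nabla_A^2\Psi|^2$ contribution can be moved to the left-hand side, while the latter is $O(r^2)$ by the bound on $\eta$. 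The outcome is a differential inequality of the form $\frac{\rd}{\rd r}\bigl(e^{c r}(N_x(r)+c r)\bigr)\ge 0$, whose integration gives $N_x(s)\le e^{c(r-s)}\bigl(N_x(r)+c(r-s)\bigr)\le 2N_x(r)+c r$, that is \autoref{Eq_NMonotonicity}. I expect this step to be the main obstacle: the term $D_A(\Upsilon\Psi)$ is of first order in $\nabla_A\Psi$ and, treated carelessly, would destroy the coercivity needed to close the estimate — this is exactly why \autoref{Def_ZerothOrderPerturbation} imposes $\SerreOperator$-self-adjointness and why the uniform $C^1$-bounds on $\Upsilon$ enter.

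\emph{Step 3: the two-sided estimate \autoref{Eq_MMonotonicity2}.} Finally, feed the bounds of \autoref{Eq_NMonotonicity} back into the identity of Step 1 and integrate. For the upper bound, \autoref{Eq_NMonotonicity} (after adjusting constants) gives $N_x(r')\le 4N_x(r)+c r$ for all $s\le r'\le r$, so integrating $\frac{\rd}{\rd r'}\log m_x(r')\le \frac{4N_x(r)+c r}{r'}+c$ from $s$ to $r$ yields $\log\frac{m_x(r)}{m_x(s)}\le \bigl(4N_x(r)+c r\bigr)\log\frac{r}{s}+c(r-s)$; absorbing $e^{c(r-s)}\le 2$ into the prefactor gives the right-hand inequality of \autoref{Eq_MMonotonicity2}. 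For the lower bound, one uses instead $N_x(r')\ge \tfrac12 N_x(s)-c r$ for $s\le r'\le r$, integrates, and again absorbs the exponential error into the constant $\tfrac12$. Since all error contributions from $\Upsilon$ and $\eta$ were shown in Steps 1--2 to be $O(1)$ after integration, no new phenomena arise here and the argument is a routine bookkeeping of the unperturbed proof of [WZ 3.13].
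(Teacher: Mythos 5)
Your proposal is correct and follows essentially the paper's route: derive the perturbed differential identities for $\log m_x$ and for $N_x$, control the new $\Upsilon$-- and $\eta$--terms via \autoref{Eq_EstimateByParts}, \autoref{Eq_EstimateNaive} and the uniform bounds (allowing remainders of size $m_x$ in $m_x'$ and of size $D_x+m_x$ in $D_x'$, which is exactly what produces the factors $2$, $4$ and the $+cr$), and then integrate the resulting inequalities. The one inaccuracy is in your Step 2: the pairing $\langle D_A(\Upsilon\Psi),\nabla_A\Psi\rangle$ is bounded pointwise by $|\nabla_A\Psi|^2+|\Psi|\,|\nabla_A\Psi|$, so there is no need (and in fact no room) to absorb a $\delta|\nabla_A^2\Psi|^2$ term via [WZ 2.11], since no second-derivative term appears on the left-hand side of the $D_x'$ identity --- the paper instead feeds this pointwise bound into the stress--energy tensor argument of [WZ 3.6], obtaining $|\fr_{D'}|\lesssim D_x(r)+m_x(r)$, which is precisely the size of error that your claimed differential inequality for $N_x$ requires.
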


\begin{proof}
The proof of the monotonicity formula [WZ 3.5] relies on formulae [WZ 3.6, 3.10, 3.11].
Similar formulae hold in the general case, except the estimates on the remainder terms $\fr_{D'}, \fr_D, \fr_{m'}$ appearing in these formulae are weaker. 

\defined{[WZ 3.6]} The remainder term is now estimated by
\begin{equation}
\label{Eq_DRemainder}
|\fr_{D'}|\lesssim D_x(r) + m_x(r).
\end{equation}

Indeed, we use the same argument as in the proof of [WZ 3.6] using the tensor $T$ defined therein.
In our setup, the estimate on $|\nabla^*T|$, based on the Lichnerowicz--Weitzenböck formula, has new contributions from $\eta$ and $\Upsilon$, which are estimated as follows.
The terms involving $\eta$ do not pose any problem as the contribution to $\nabla^*T(e_i)$ is bounded by
\begin{equation*}
	\langle \gamma(\eta)\Psi + \sum_ \rho(\eta(e_i,e_j))\Psi , \nabla_{A,e_j}\Psi \rangle \lesssim |\Psi||\nabla_A \Psi|,
\end{equation*}
which is the same as the original bound on $|\nabla^*T|$ in [WZ 3.8]. 
The contribution of $\Upsilon$ to $\nabla^*T$ is
\begin{equation*}
	\langle D_A(\Upsilon\Psi), \nabla_A \Psi \rangle  \lesssim |\Psi||\nabla_A\Psi| + |\nabla_A\Psi|^2.
\end{equation*}
Therefore, the total contribution to the integral
\begin{equation*}
	\frac{1}{4\pi r^2} \int_{B_r(x)} 2r_x \nabla^*T(\partial_r)
\end{equation*}
is estimated, up to a constant, by
\begin{equation*}
	\frac{1}{r} \int_{B_r(x)} |\nabla\Psi|^2 + |\Psi|^2 \lesssim D_x(r) + r^2 m_x(r),
\end{equation*}
where to estimate the integral of $|\Psi|^2$ we use [WZ 2.9].

\defined{[WZ 3.10]} The same formula holds but now the remainder term is estimated by
\begin{equation*}
|\fr_D| \lesssim r m_r(x).
\end{equation*}

Indeed, we follow the same proof as in [WZ 3.10] and estimate the new terms involving $\eta$ and $\Upsilon$. 
By \autoref{Eq_EstimateByParts}, their contribution to the formula is bounded, up to a constant, by
\begin{equation*}
	\frac{1}{r} \int_{B_r(x)} |\Psi|^2 + \frac{1}{r}\int_{\partial B_r(x)} |\Psi|^2 \lesssim r^2 m_x(r) + r m_x(r),
\end{equation*}
where we use [WZ 2.9] and the definition of $m_x(r)$. 

\defined{[WZ 3.11]} The same formula holds but now the remainder term is estimated by
\begin{equation*}
|\fr_{m'}| \lesssim m_x(r)
\end{equation*}
This is because the proof uses [WZ 3.10] which now has an estimate with a lower power of $r$.
In particular, we have
\begin{equation*}
m_x'(r) \geq - cm_x(r).
\end{equation*}
Therefore, $e^{cr}m_x$ is a non-decreasing function of $r$, which implies \autoref{Eq_MMonotonicity}.

To prove \autoref{Eq_NMonotonicity}, we proceed as in the proof of [WZ 3.3] and use the modified estimates on the remainder terms:
\begin{align*}
N_x'(r) &\geq \frac{\fr_{D'}}{m_x(r)} - \frac{4\fr_D N_x(r)}{rm_x(r)} - \frac{\fr_{m'}N_x(r)}{m_x(r)} \\
&\geq -c(1+N_x(r)).  
\end{align*}
Therefore,
\begin{equation*}
(e^{cr}N_x)'(r) = e^{cr}(cN_x(r) + N_x'(r)) \geq - ce^{cr} \geq -c,
\end{equation*}
which implies that the function
\begin{equation*}
r \mapsto e^{cr}N_x(r) + cr
\end{equation*}
is non-decreasing on $[0,r_0]$.
If $0 < s \leq r \leq r_0$ and $r_0$ is small, then
 \begin{equation*}
N_x(s) \leq e^{c(r-s)} N_x(r) + c(r-s) \leq 2N_x(r) + cr .
\end{equation*}

Finally, to prove \autoref{Eq_MMonotonicity2}, we use [WZ 3.11] with a weaker estimate on the remainder established above. 
For every $t \in [s,r]$, 
\begin{equation*}
(\log m_x)'(t) = \frac{m_x'(t)}{m_x(t)} = \frac{2N_x(t)}{t} + \frac{\fr_{m'}(t)}{m_x(t)} \leq \frac{4N_x(r)+cr}{t} + c.
\end{equation*}
Integrating over $t \in [s,r]$ and exponentiating yields
\begin{equation*}
  \frac{m_x(r)}{m_x(s)}   \leq e^{c(r-s)} \( \frac{r}{s} \)^{4N_x(r) + cr} \leq 2\( \frac{r}{s} \)^{4N_x(r) + cr},
\end{equation*}
which implies the upper bound on $m_x(r)$.
The lower bound is proved in the same way by integrating
\begin{equation*}
(\log m_x)'(t) = \frac{m_x'(t)}{m_x(t)} = \frac{2N_x(t)}{t} + \frac{\fr_{m'}(t)}{m_x(t)} \geq \frac{N_x(s)-cr}{t} - c.
\end{equation*}

\end{proof}

The monotonicity formula is used in \cite{Walpuski2019} to prove a lower bound in terms of $|\Psi(x)|$ on the regularity scale
\begin{equation*}
r_A(x) = \sup\left\{ r\in[0,r_0] : r\int_{B_r(x)} |F_A|^2 \leq c_F \right\}, 
\end{equation*}
where $c_F > 0$ is a fixed positive constant chosen so that the a priori estimates from \cite[section 3]{Walpuski2019} hold. 
This lower bound is the content of [WZ 3.2].
This is the key step in the proof of the compactness theorem.
Together with [WZ 2.11], it implies a uniform upped bound on the Hölder norm of $|\Psi|$ and the convergence of $A$, $\Psi$ on $M\setminus Z$, where $Z$ is the zero set of the limit of $|\Psi_n|$ in the Hölder norm.

Since [WZ 2.11] holds in our setting by \autoref{Prop_WZEstimates}, the same argument can be used provided that that [WZ 3.2] holds as well.
In addition to [WZ 2.9], which is still true by \autoref{Prop_WZEstimates}, the proof of [WZ 3.2] uses  [WZ 3.15] and [WZ. 3.16]. 
While the statements of these propositions have to be modified, the proofs are the same as those of [WZ 3.15] and [WZ 3.16] except that we have to use \autoref{Eq_MMonotonicity2}. 

\begin{prop}
For all $0 < s \leq r \leq r_0$ and $\delta > 0$, if
\begin{equation*}
s \leq r\left( \frac{|\Psi(x)|^2}{cm_x(r)} \right)^{1/\delta},
\end{equation*}
then
\begin{equation*}
N_x(s) \leq \delta + cr.	
\end{equation*}
\end{prop}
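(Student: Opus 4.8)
The plan is to follow the proof of [WZ 3.16], replacing the unperturbed monotonicity formula by \autoref{Eq_MMonotonicity2}; the $cr$ correction in the exponents there is exactly what turns the clean bound $N_x(s)\le\delta$ of [WZ 3.16] into $N_x(s)\le\delta+cr$. Throughout, $c$ denotes a constant depending only on $(M,g)$ and $\rho$ which may be enlarged as needed, consistently with its earlier occurrences; fix it once at a value $\ge 5$. The assertion is vacuous when $|\Psi(x)|=0$, since then the hypothesis would force $s\le 0$; so assume $|\Psi(x)|>0$. We may also assume $m_x(r)>0$, for otherwise $m_x$ vanishes identically on $[0,r]$ by the monotonicity recalled below, and hence $|\Psi(x)|=0$.

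First I would record the lower bound $m_x(s)\ge\tfrac12|\Psi(x)|^2$. By the proof of the preceding proposition, $t\mapsto e^{ct}m_x(t)$ is non-decreasing on $[0,r_0]$. Since the solution $(A,\Psi,\epsilon)$ is smooth, $m_x(t)=(4\pi t^2)^{-1}\int_{\partial B_t(x)}|\Psi|^2\to|\Psi(x)|^2$ as $t\to 0^+$, so letting $t\to 0^+$ in $e^{ct}m_x(t)\le e^{cs}m_x(s)$ gives $|\Psi(x)|^2\le e^{cs}m_x(s)$; after shrinking $r_0$ so that $e^{cr_0}\le 2$, this is the claimed bound.

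The rest is an elementary chain of inequalities, argued by contradiction. If $N_x(s)-cr\le 0$ there is nothing to prove, so suppose $N_x(s)-cr>\delta>0$. Because $0<s\le r$ we have $r/s\ge 1$, hence $(r/s)^{N_x(s)-cr}\ge(r/s)^{\delta}$. On the other hand, the lower bound in \autoref{Eq_MMonotonicity2} combined with $m_x(s)\ge\tfrac12|\Psi(x)|^2$ yields
\begin{equation*}
  \Bigl(\tfrac{r}{s}\Bigr)^{N_x(s)-cr}\le\frac{2m_x(r)}{m_x(s)}\le\frac{4m_x(r)}{|\Psi(x)|^2}.
\end{equation*}
Meanwhile the hypothesis $s\le r\bigl(|\Psi(x)|^2/(cm_x(r))\bigr)^{1/\delta}$ rearranges to $(r/s)^{\delta}\ge c\,m_x(r)/|\Psi(x)|^2$. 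Stringing these together gives $c\,m_x(r)/|\Psi(x)|^2\le(r/s)^{\delta}\le(r/s)^{N_x(s)-cr}\le 4m_x(r)/|\Psi(x)|^2$, i.e.\ $c\le 4$, contradicting $c\ge 5$. Hence $N_x(s)-cr\le\delta$, which is the assertion.

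I expect the only genuine obstacle to be bookkeeping rather than analysis: one must verify that the constant $c$ occurring in \autoref{Eq_MMonotonicity2}, in the hypothesis, and in the conclusion can legitimately be taken to be one and the same constant (it can, since each is a universal constant that only ever needs to be increased), and that the weaker remainder estimates underlying \autoref{Eq_MMonotonicity2} do not secretly introduce dependence on $\epsilon$ or on the particular solution. Granting this, the proposition follows, and it feeds --- exactly as [WZ 3.16] does in the unperturbed case --- into the lower bound on the regularity scale $r_A(x)$ of [WZ 3.2], which is the crux of the compactness theorem.
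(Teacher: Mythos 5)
Your proof is correct and is essentially the paper's own argument: the paper simply reruns the [WZ 3.15] proof with the perturbed monotonicity inequality \autoref{Eq_MMonotonicity2} together with the near-monotonicity of $m_x$ (which gives $m_x(s)\geq\tfrac12\abs{\Psi(x)}^2$ after shrinking $r_0$), and that is exactly your chain of inequalities producing the extra $+cr$; the constant bookkeeping is handled just as you say, by enlarging $c$ once and for all. Only a cosmetic point: this statement is the analogue of [WZ 3.15], not [WZ 3.16] (the latter corresponds to the subsequent proposition about $N_y(s)\leq c(N_x(4r)+r)$).
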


\begin{prop}
For every $r \in (0,r_0/4]$, $s \in (0,2r]$, and $y \in B_r(x)$, if 
\begin{equation*}
	N_x(4r) \leq 1,
\end{equation*}
then
\begin{equation*}
	N_y(s) \leq c(N_x(4r) + r).
\end{equation*}
\end{prop}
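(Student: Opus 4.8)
The plan is to adapt the proof of [WZ 3.16] essentially line by line, with the perturbed monotonicity formulae \eqref{Eq_MMonotonicity}, \eqref{Eq_NMonotonicity}, and \eqref{Eq_MMonotonicity2} taking the place of the unperturbed [WZ 3.5], [WZ 3.12], [WZ 3.13]. Apart from these, the only ingredients are the ball inclusions $B_{2r}(y) \subset B_{3r}(x)$ and $B_r(x) \subset B_{2r}(y)$, both immediate from $d(x,y) < r$, together with the elementary fact that $\rho \mapsto 4\pi\rho\,D_x(\rho) = \int_{B_\rho(x)}(|\nabla_A\Psi|^2 + 2\epsilon^{-2}|\mu(\Psi)|^2)$ is nondecreasing. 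First I would reduce to the single scale $s = 2r$: applying \eqref{Eq_NMonotonicity} to the radii $s \leq 2r$ gives $N_y(s) \leq 2N_y(2r) + 2cr$, so it suffices to bound $N_y(2r) = D_y(2r)/m_y(2r)$ by a uniform multiple of $N_x(4r)$.

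For the numerator, monotonicity of $\rho \mapsto \rho\,D_x(\rho)$ together with $B_{2r}(y) \subset B_{3r}(x) \subset B_{4r}(x)$ gives $2r\,D_y(2r) \leq 4r\,D_x(4r)$, hence $D_y(2r) \leq 2\,N_x(4r)\,m_x(4r)$. For the denominator I would first extract from the hypothesis a lower bound for $m_x$ at small scales: since $4r \leq r_0$ and $N_x(4r) \leq 1$, \eqref{Eq_NMonotonicity} bounds $N_x(t) \leq 2 + 4cr$ for $t \leq 4r$, and then the upper-bound half of \eqref{Eq_MMonotonicity2} (with radii $t \leq 4r$) gives $m_x(t) \geq \tfrac12 (t/4r)^{4N_x(4r) + 4cr}\,m_x(4r) \geq \tfrac12 (t/4r)^{4 + 4cr}\,m_x(4r)$. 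Integrating $\int_{B_r(x)}|\Psi|^2 = \int_0^r 4\pi t^2 m_x(t)\,\dt$ then yields $\int_{B_r(x)}|\Psi|^2 \geq c_1 r^3 m_x(4r)$ for a constant $c_1 > 0$ depending only on $r_0$, using that $4cr \leq c\,r_0$ stays bounded. On the other hand \eqref{Eq_MMonotonicity} gives $m_y(t) \leq 2m_y(2r)$ for $t \leq 2r$, so $\int_{B_{2r}(y)}|\Psi|^2 = \int_0^{2r} 4\pi t^2 m_y(t)\,\dt \leq \tfrac{8\pi}{3}(2r)^3 m_y(2r)$; combining with $B_r(x) \subset B_{2r}(y)$ gives $m_y(2r) \geq c_2\,m_x(4r)$ for some $c_2 > 0$. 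Therefore $N_y(2r) \leq 2N_x(4r)/c_2$, and feeding this back into the first step gives $N_y(s) \leq (4/c_2)\,N_x(4r) + 2cr \leq c\,(N_x(4r) + r)$, as required.

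I do not expect a real obstacle: every step is either one of the already-established perturbed monotonicity inequalities or an elementary ball inclusion, and there is no feedback loop, since the numerator and denominator of $N_y(2r)$ are estimated independently. The one point demanding care is that the perturbed formulae carry weaker remainders than in [WZ] — an additive $O(r)$ term in \eqref{Eq_NMonotonicity} and $O(r)$ corrections to the exponents in \eqref{Eq_MMonotonicity2} — so one must check that these only ever feed into the harmless additive $cr$ in the conclusion; this is precisely why one works at scales $r \leq r_0/4$, so that bounds such as $4cr \leq c\,r_0$ keep every constant uniform. The companion proposition stated just above (the bound $N_x(s) \leq \delta + cr$, which is [WZ 3.15], and which is used in tandem with the present one to control the regularity scale $r_A$ in [WZ 3.2]) is proved in the same way, with \eqref{Eq_MMonotonicity2} replacing [WZ 3.13].
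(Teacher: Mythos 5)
Your proposal is correct and follows essentially the same route as the paper, which proves this proposition by repeating the argument of [WZ 3.16] with the perturbed monotonicity inequalities \eqref{Eq_MMonotonicity}, \eqref{Eq_NMonotonicity}, \eqref{Eq_MMonotonicity2} in place of [WZ 3.5, 3.12, 3.13]; your center-switching via $B_{2r}(y)\subset B_{4r}(x)$ and $B_r(x)\subset B_{2r}(y)$, the monotone ball integral for $D$, and the doubling bound for $m$ from \eqref{Eq_MMonotonicity2} is exactly that adaptation, with the $O(r)$ remainders absorbed into the additive $cr$ as intended.
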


With these modifications to [WZ 3.15] and [WZ 3.16], the rest of the proof of [WZ 3.2] follows, and therefore also [WZ 3.20] and the convergence of $|\Psi_n|$, $A_n$, and $\Psi_n$. 
Finally, the proof [WZ 4.1], which implies that $Z$ is nowhere dense, is almost unchanged. 
It uses of the Lichnerowicz--Weitzenböck formula by integrating $\langle \nabla_A^*\nabla_A\Psi, \Psi \rangle$ over a neighborhood $Z_\epsilon$ of $Z$.
The new terms, however, do not change the final inequality because they are estimated by integrals
\begin{equation*}
\int_{Z_{2\epsilon}} |\eta||\Psi|^2 + |D_A(\Upsilon\Psi)||\Psi| \lesssim 	\int_{Z_{2\epsilon}} |\Psi|^2 + |\nabla_A\Psi||\Psi|
\end{equation*}
which already appear on the right-hand side of the inequality in the proof of [WZ 4.1]. 

\printbibliography

\end{document}
